\newcolumntype{M}[1]{>{\centering\arraybackslash}m{#1}}
\numberwithin{equation}{section}
\newtheorem{theoreme}[equation]{Theorem}%[section]
\newtheorem{proposition}[equation]{Proposition}
\newtheorem{lemme}[equation]{Lemma}
\newtheorem{corollaire}[equation]{Corollary}
\theoremstyle{definition}
\newtheorem{definition}[equation]{Definition}
\theoremstyle{remark}
\newtheorem{remarque}[equation]{Remark}
\renewcommand*{\H}{\mathcal{H}}
\newcommand*{\sH}{\mathcal{H}} %straight H
\newcommand*{\qH}{\mathcal{R}}
\newcommand*{\sqH}{\mathcal{R}}
\newcommand*{\V}{V}
\newcommand*{\tuple}[1]{\boldsymbol{#1}}
\newcommand*{\comp}{\models}
\renewcommand*{\varLambda}{\mathsf{\Lambda}}
\newcommand*{\vertex}[1]{#1}
\DeclareMathOperator{\nrelbar}{\,\not\!\!\!\!\;\text{---}\,}
\author{Salim \textsc{Rostam}}
\title{Cyclotomic quiver Hecke algebras and Hecke algebra of $G(r,p,n)$}
\date{}
\author{Salim \textsc{Rostam}\thanks{Laboratoire de Mathématiques de Versailles, UVSQ, CNRS, Université Paris-Saclay, 78035 Versailles, France.}}
\begin{document}
\maketitle

\abstract{Given a quiver automorphism with nice properties, we give a presentation of the fixed point subalgebra of the associated cyclotomic quiver Hecke algebra. Generalising an isomorphism of Brundan and Kleshchev between the cyclotomic Hecke algebra of type $G(r, 1, n)$ and the cyclotomic quiver Hecke algebra of type A, we apply the previous result to find  a  presentation of the cyclotomic Hecke algebra of type $G(r,p,n)$ which looks very similar to the one of a cyclotomic quiver Hecke algebra. In addition, we give an explicit isomorphism which realises a well-known Morita equivalence between Ariki--Koike algebras.}

\section*{Introduction}
\addcontentsline{toc}{section}{Introduction}

Generalising real reflection groups, also known as finite Coxeter groups, complex reflection groups are finite groups generated by complex reflections, that is, by endomorphisms of $\mathbb{C}^n$ which fix a hyperplane. As for Coxeter groups, there is a classification of irreducible complex reflection groups (\cite{ShTo}). This classification is given by an infinite series $\{G(r,p,n)\}$ where $r,p,n$ are positive integers with $r = dp$ for $d \in \mathbb{N}^*$, together with 34 exceptional groups. More precisely, the group $G(r,p,n)$ can be seen as the group consisting of all  $n \times n$ monomial matrices such that each non-zero entry is a complex $r$th root of unity, and the product of all non-zero entries is a $d$th root of unity. If $\xi \in \mathbb{C}^\times$ is a primitive $r$th root of unity, the latter group is generated by the elements:
\begin{gather*}
s \coloneqq \xi^p E_{1,1} + \sum_{k = 2}^n E_{k, k}, \qquad \widetilde{t}_1 \coloneqq \xi E_{1, 2} + \xi^{-1} E_{2, 1} + \sum_{k = 3}^n E_{k, k},
\\
t_a \coloneqq E_{a, a+1} + E_{a+1, a} + \sum_{\substack{1 \leq k \leq n \\ k \neq a, a+1}} E_{k, k}, \qquad \text{for all } a \in \{1, \dots, n-1\},
\end{gather*}
where $E_{k, \ell}$ is the elementary $n\times n$ matrix with $1$ as the $(k, \ell)$-entry and $0$ everywhere else.

Inspired by work in the context of finite Chevalley groups, an algebra was associated with each real reflection group $W$: the \emph{Iwahori--Hecke algebra} $\H(W)$. This deformation of the group algebra of $W$ was the starting point of many connections with other objects and theories, for instance, the theory of quantum groups or knot theory. Aiming at generalising this construction, Broué, Malle and Rouquier \cite{BMR} defined such a deformation for every complex reflection group, also known as Hecke algebra. Besides, Ariki and Koike \cite{ArKo} defined such a Hecke algebra $\H_n(q, \tuple{u})$  for $G(r, 1, n)$ where $q$ and $\tuple{u} = (u_1, \dots, u_r)$ are some parameters, followed by Ariki \cite{Ar_rep} who did the same thing for $G(r, p, n)$. In particular, for a suitable choice of parameters $q$ and $\tuple{u}$, this Hecke algebra $\H_{p, n}^{\tuple{\Lambda}}(q)$ of $G(r, p, n)$ can be seen as a subalgebra of $\H_n(q, \tuple{u})$.

In the semisimple case, Ariki and Koike have determined all irreducible modules for $\H_n(q, \tuple{u})$. The modular case was treated by Ariki and Mathas \cite{ArMa, Ar_class}, and also by Graham and Lehrer~\cite{GrLe} and Dipper, James and Mathas~\cite{DJM}, using the theory of cellular algebras. Moreover, in  \cite{Ar_dec}, Ariki proved a conjecture of Lascoux, Leclerc and Thibon \cite{LLT}. The theorem has the following consequence in characteristic $0$. 
If all $u_k$ for $1 \leq k \leq r$ are powers of $q$,
%makes a connection between the decomposition matrix of $\H_n(\tuple{u})$ and the canonical basis of a certain integrable highest weight $\widehat{\mathfrak{sl}}_e$-module $L(\tuple{\Lambda})$, where $\widehat{\mathfrak{sl}}_e$ is the Kac--Moody algebra of type $A_{e-1}^{(1)}$.
then determining the decomposition matrix of $\H_n(q, \tuple{u})$ or the canonical basis of a certain integrable highest weight $\widehat{\mathfrak{sl}}_e$-module $L(\tuple{\Lambda})$ are equivalent problems, where $\widehat{\mathfrak{sl}}_e$ denotes the Kac--Moody algebra of type $A_{e-1}^{(1)}$.
Together with the work of Uglov \cite{Ug} which computes this canonical basis, we are thus able to explicitly describe the decomposition matrix of $\H_n(q, \tuple{u})$. 

Once again in the semisimple case, Ariki \cite{Ar_rep} used Clifford theory to determine all irreducible modules for $\H_{p, n}^{\tuple{\Lambda}}(q)$. In the modular case, Genet and Jacon~\cite{GeJa} and Chlouveraki and Jacon~\cite{ChJa} gave a parametrisation of the simple modules of $\H_{p, n}^{\tuple{\Lambda}}(q)$ over $\mathbb{C}$, and  Hu~\cite{Hu_modular,Hu_crystal}  classified them over a field containing a primitive $p$th root of unity.
%, and Hu~\cite{Hu_number} gave explicit formulas for the number of simples modules in characteristic coprime to $p$.
Furthermore, Hu and Mathas~\cite{HuMa_Mo,HuMa_dec} gave a procedure to compute the decomposition matrix of $\H_{p, n}^{\tuple{\Lambda}}(q)$ in characteristic $0$, under a separation condition (where the Hecke algebra is not semisimple in general). Let us also mention the work of Geck~\cite{Ge}, who deals with the case of type $D$ (corresponding to $r = p = 2$).

Partially motivated by Ariki's theorem, Khovanov and Lauda \cite{KhLau1, KhLau2} and Rouquier \cite{Rou} independently introduced the algebra $\qH_n(\Gamma)$, known as a \emph{quiver Hecke algebra} or \emph{KLR algebra}. This led to a categorification result:
\[
U_q^-(\mathfrak{g}_\Gamma) \simeq \bigoplus_{n \geq 0} \left[\mathrm{Proj}(\qH_n(\Gamma))\right],
\]
where $U_q^-(\mathfrak{g}_\Gamma)$ is the negative part of the quantum group of $\mathfrak{g}_\Gamma$, the Kac--Moody algebra associated with the quiver $\Gamma$, and $ [\mathrm{Proj}(\qH_n(\Gamma))]$ denotes the Grothendieck group of the additive category of finitely generated graded projective $\qH_n(\Gamma)$-modules.
 Moreover, considering some cyclotomic quotients $\qH_n^{\tuple{\Lambda}}(\Gamma)$ of the quiver Hecke algebra, Kang and Kashiwara~\cite{KanKa} also proved a categorification result for the highest weight $U_q(\mathfrak{g}_\Gamma)$-modules.

With $\Gamma$ a quiver of type $A_{e-1}^{(1)}$ and a suitable choice of $\tuple{u}$, we thus obtain a connection between the Ariki--Koike algebra $\H_n(q, \tuple{u})$ and $\qH_n^{\tuple{\Lambda}}(\Gamma)$.  Brundan and Kleshchev \cite{BrKl} (and independently Rouquier \cite{Rou}) gave an explicit isomorphism between these two algebras. In particular, the Ariki--Koike algebra inherits the natural $\mathbb{Z}$-grading of $\qH_n^{\tuple{\Lambda}}(\Gamma)$. Further, Hu and Mathas \cite{HuMa_gra} constructed a homogeneous (cellular) basis of the Ariki--Koike algebra.

We aim to generalise the previous results concerning $G(r,1,n)$ to the remaining elements of the infinite series $\{G(r,p,n)\}$ (and thus for all but finitely many complex reflection groups). In this paper, we restrict the isomorphism of \cite{BrKl} to the subalgebra $\H_{p, n}^{\tuple{\Lambda}}(q)$ of $\H_n(q, \tuple{u})$ (for a suitable choice of $\tuple{u}$), and then study its image in the cyclotomic quiver Hecke algebra $\qH_n^{\tuple{\Lambda}}(\Gamma)$. Our two main results are the following.
\begin{itemize}
\item We obtain a cyclotomic quiver Hecke-like presentation for $\H_{p, n}^{\tuple{\Lambda}}(q)$ (see Corollary~\ref{corollary:presentation_H(G(rpn))}).
\item We find an isomorphism which makes explicit the Morita equivalence of Ariki--Koike algebras of \cite{DiMa} (see \textsection\ref{subsection:unexpected_corollary}).
\end{itemize}
We will first need to generalise the main result of \cite{BrKl}. Surprisingly, combined with a theorem of \cite{Ro}, this leads to the isomorphism realising the above Morita equivalence. We then exploit the fact that $\H_{p, n}^{\tuple{\Lambda}}(q)$ is the fixed point subalgebra of $\H_n(q, \tuple{u})$ for a particular automorphism $\sigma$, the \emph{shift automorphism}. After some technical work, we obtain the cyclotomic quiver Hecke-like presentation for $\H_{p, n}^{\tuple{\Lambda}}(q)$. We deduce that $\H_{p, n}^{\tuple{\Lambda}}(q)$ depends only on the quantum characteristic and is a graded subalgebra of $\H_n(q, \tuple{u})$.
We note that Boys and Mathas \cite{Bo, BoMa} already studied a restriction of the isomorphism of \cite{BrKl} to the fixed point subalgebra of an automorphism, in order to study \emph{alternating} (cyclotomic) Hecke algebras. The approach taken here is globally similar, however, we are here able to use a trick of Stroppel and Webster~\cite{StWe} to simplify the final proof (see \textsection\ref{subsection:nice_family}).

We now give a brief overview of this article.
Let $r, p, d,n \in \mathbb{N}^*$ be some integers with $r = d p$, an element $q \neq 0, 1$ of a field $F$, a primitive $p$th root of unity $\zeta \in F^\times$ and $J \coloneqq \mathbb{Z}/p\mathbb{Z} \simeq \langle \zeta \rangle$. We set $e \in \mathbb{N}_{\geq 2} \cup \{\infty\}$ the order of $q$ in $F^\times$ and we define $I \coloneqq \mathbb{Z}/e\mathbb{Z}$ (with $I \coloneqq \mathbb{Z}$ if $e = \infty$). Finally, we consider a tuple $\tuple{\varLambda} = (\varLambda_{i, j})$ where $(i, j) \in I \times J$ and $\varLambda_{i, j} \in \mathbb{N}$.
We begin Section~\ref{section:AK} by defining in  \textsection\ref{subsection:Gr1n} the  Hecke algebra $\sH_n(q, \tuple{u})$ of type $G(r,1,n)$ (the Ariki--Koike algebra), which we write $\sH_n^{\tuple{\varLambda}}(q, \zeta)$ when each $u_k$ is of the form $\zeta^j q^i$. The algebra $\sH_n^{\tuple{\varLambda}}(q, \zeta)$ is generated by some elements $S, T_1, \dots, T_{n-1}$, subject to relations \eqref{relation:hecke_ordre}--\eqref{relation:hecke_braid3} and the ``cyclotomic'' one:
\[
\prod_{i \in I} \prod_{j \in J} {(S - \zeta^j q^i)}^{\varLambda_{i, j}} = 0
\]
(see \eqref{relation:hecke_cyclo_S}).
We define in Proposition~\ref{proposition:G(r1n)-def_sigma} an important object of this paper, the \emph{shift automorphism} $\sigma$ of $\H_n^{\tuple{\varLambda}}(q, \zeta)$: it maps $S$ to $\zeta S$ and is the identity on the remaining generators $T_1, \dots, T_{n-1}$.
We then start \textsection\ref{subsection:G(rpn)} by defining the cyclotomic Hecke algebra $\H_{p, n}^{\tuple{\varLambda}}(q)$ of type $G(r,p,n)$. Our definition differs from Ariki's \cite{Ar_rep}. However, in Appendix~\ref{section:appendix_induction}, after we gave a short proof that $\H_{1, n}^{\tuple{\varLambda}}(q)$ is the usual Ariki--Koike algebra, we prove that if $p \geq 2$ then our definition is equivalent to Ariki's. In particular, this shows that Ariki indeed defined  a Hecke algebra of $G(r,p,n)$ as defined in \cite{BMR} (this fact is mentionned in \cite{BMR} but we did not find any proof in the literature).
We then prove in Corollary~\ref{corollary:H(G(rpn))_fixe} that $\H_{p, n}^{\tuple{\varLambda}}(q)$ is the fixed point subalgebra of $\H_n^{\tuple{\varLambda}}(q, \zeta)$ under the shift automorphism.
We introduce in \textsection\ref{subsection:removing_repetitions} a divisor $p'$ of $p$, together with the set $J' \coloneqq \{1, \dots, p'\}$, such that the map $I \times J' \ni (i, j) \mapsto \zeta^j q^i$ is one-to-one and has the same image as $I \times J \ni (i, j) \mapsto \zeta^j q^i$. We then define a finitely-supported tuple $\tuple{\Lambda}$,  indexed by $I \times J'$, associated with the tuple $\tuple{\varLambda} \in \mathbb{N}^{(I \times J)}$ (see Proposition~\ref{proposition:removing_repetitions-_Lambda}). We also introduce the notation $\H_n^{\tuple{\Lambda}}(q, \zeta)$ and $\H_{p, n}^{\tuple{\Lambda}}(q)$. The reader should not be afraid of confusing the two notations $\tuple{\varLambda}$ and $\tuple{\Lambda}$: we will not use $\tuple{\varLambda}$ after Section~\ref{section:AK}.

Now let $\Gamma$ be a loop-free quiver with no repeated edges. Let $K$ be the vertex set of $\Gamma$ and let $\tuple{\Lambda}$ be a tuple of non-negative integers indexed by $K$. We define in Section~\ref{section:CQHA} the quiver Hecke algebra $\sqH_n(\Gamma)$ and its cyclotomic quotient $\sqH_n^{\tuple{\Lambda}}(\Gamma)$.
In \textsection\ref{subsection:fixed_subalgebra}, given a permutation of the vertices of $\Gamma$, we associate in Theorem~\ref{theorem:definition_sigma_quiver} an  automorphism of $\sqH_n(\Gamma)$ and we easily give in \textsection\ref{subsubsection:fixed_affine_subalgebra} a presentation of the fixed point subalgebra (Corollary~\ref{corollary:presentation_quiver_fixed}). In contrast, we need a little bit more work in \textsection\ref{subsubsection:fixed_cyclotomic_subalgebra} to do the same thing for the cyclotomic quotient $\sqH_n^{\tuple{\Lambda}}(\Gamma)$. We give a presentation for the fixed point subalgebra in Theorem~\ref{theorem:quiver_two_subalgebra_same}. Note that  characteristic-free statements can be found in the author's PhD thesis~\cite[\textsection 1.4]{Ro_phd}.

 We generalise in Section~\ref{section:BK} the main result of \cite{BrKl}. The calculations are entirely similar, hence we do not write them down. More precisely, we prove the following $F$-isomorphism (Theorem~\ref{theorem:BK_generalised}):
\[
\sH_n(q, \tuple{u}) \simeq \sqH_n^{\tuple{\Lambda}}(\Gamma).
\]
The quiver $\Gamma$ is given by $p'$ copies of the cyclic quiver $\Gamma_e$ with $e$ vertices (which is a two-sided infinite line if $e = \infty$), where $\tuple{u}$ is such that the set $\{u_1, \dots, u_r\}$ is a union of $p'$ orbits for the action of $\langle q \rangle$ on $F^\times$. In particular, in the setting of \cite{BrKl} we have $p' = 1$. Moreover, we deduce that  this isomorphism realises the well-known Morita equivalence of \cite{DiMa} involving Ariki--Koike algebras, see \textsection\ref{subsection:unexpected_corollary}.

In Section~\ref{section:presentation}, we show that the isomorphism of Theorem~\ref{theorem:BK_generalised} can be chosen such that the shift automorphism of $\H_n^{\tuple{\Lambda}}(q, \zeta)$
corresponds to a nice automorphism of $\qH_n^{\tuple{\Lambda}}(\Gamma)$. This automorphism is built from the automorphism of $\Gamma$ which maps a vertex $\vertex{v} = \zeta^j q^i$ for $(i, j) \in K \coloneqq I \times J'$ to $\sigma(\vertex{v}) \coloneqq \zeta \vertex{v}$. This explains why we chose the term ``shift automorphism''. We finally deduce with Corollary~\ref{corollary:presentation_H(G(rpn))} a cyclotomic quiver Hecke-like presentation for $\H_{p, n}^{\tuple{\Lambda}}(q)$. In particular, this implies that $\H_{p, n}^{\tuple{\Lambda}}(q)$ is a graded subalgebra of $\H_n^{\tuple{\Lambda}}(q, \zeta)$ (Corollary~\ref{corollary:intertwining-graded_subalgebra}) and  that $\H_{p, n}^{\tuple{\Lambda}}(q)$ does not depend on $q$ but on the quantum characteristic $e$ (Corollary~\ref{corollary:intertwining-independence_q}).

\paragraph*{Acknowledgements} I am grateful to Maria Chlouveraki, Nicolas Jacon, Noah White and the referee for their numerous corrections. I also thank Andrew Mathas for his comments on a preliminary version of this paper, and Ivan Marin for discussions concerning the presentation of $G(r,p,n)$.

%\bigskip
\paragraph*{Notation} Let $d, p, n \in \mathbb{N}^*$ with $n \geq 2$ and set $r \coloneqq pd$. We work over a field $F$ that contains a primitive $p$th root of unity $\zeta$, in particular, the characteristic of $F$ does not divide $p$. We consider an element $q \in F \setminus \{0, 1\}$ and we write $e \in \mathbb{N}_{\geq 2} \cup\{\infty\}$ its order in $F^\times$. We define:
\[
I \coloneqq \begin{cases}
\mathbb{Z} / e\mathbb{Z} & \text{if } e < \infty,
\\
\mathbb{Z} & \text{otherwise } (e = \infty),
\end{cases}
\]
and $J \coloneqq \mathbb{Z} /p\mathbb{Z} \simeq \langle \zeta \rangle$.

\section{The Hecke algebra of \texorpdfstring{$G(r,p,n)$}{G(r,p,n)}}
\label{section:AK}

Here we review two constructions of Ariki and Koike. Let $\tuple{u} = (u_1, \dots, u_r)$ be a tuple of elements of $F^\times$.

\subsection{The Hecke algebra of \texorpdfstring{$G(r,1,n)$}{G(r,1,n)}}
\label{subsection:Gr1n}

We recall here the definition of the cyclotomic Hecke algebra $\sH_n(q, \tuple{u})$ of type $G(r,1,n)$, also known as Ariki--Koike algebra.

\begin{definition}[\cite{ArKo}]
The algebra $\sH_n(q, \tuple{u})$ is the unitary associative $F$-algebra generated by the elements $S, T_1, \dots, T_{n-1}$, subject to the following relations:
\begin{subequations}
\begin{align}
\label{relation:hecke_cyclo_S_general}
\prod_{k = 1}^r (S - u_k)  &= 0,
\\
\label{relation:hecke_ordre}
\quad (T_a + 1)(T_a - q) &= 0,
\\
\label{relation:hecke_ST1ST1}
S T_1 S T_1 &= T_1 S T_1 S,
\\
\label{relation:hecke_STa}
S T_a &= T_a S \quad \text{if } a > 1,
\\
\label{relation:hecke_braid2}
T_a T_b &= T_b T_a \quad \text{if } |a-b| > 1,
\\
\label{relation:hecke_braid3}
T_a T_{a+1} T_a &= T_{a+1} T_a T_{a+1}.
\end{align}
\end{subequations}
\end{definition}

%This definition is usually given with $p = 1$, that is, with $J = \{1\}$ and $\zeta = 1$. Our choice  fits with the definition of the algebra $\mathfrak{h}_{n, r}(F)$ of Ariki~\cite{Ar_rep}, and is motivated by Proposition~\ref{proposition:G(r1n)-def_sigma} and  Theorem~\ref{theorem:homomorphism_Ariki}.

According to \cite{BMR}, the algebra $\sH_n(q, \tuple{u})$ is a Hecke algebra of the complex reflection group $G(r, 1, n)$.
Let $X_1 \coloneqq S$ and define for $a \in \{1, \dots, n-1\}$ the elements $X_{a+1} \in \sH_n(q, \tuple{u})$ by:
\begin{equation}
\label{equation:definition_Xa+1}
q X_{a+1} \coloneqq T_a X_a T_a.
\end{equation}
These elements $X_1, \dots, X_n$ pairwise commute (\cite[Lemma 3.3.(2)]{ArKo}).
Moreover, Matsumoto's theorem (see, for instance, \cite[Theorem 1.2.2]{GePf}) ensures that \eqref{relation:hecke_braid2} and \eqref{relation:hecke_braid3} allow us to define $T_w \coloneqq T_{a_1} \cdots T_{a_m}$ for any reduced expression $w = s_{a_1} \cdots s_{a_m} \in \mathfrak{S}_n$, where $s_a \in \mathfrak{S}_n$ is the transposition $(a, a+1)$.

\begin{theoreme}[\protect{\cite[Theorem 3.10]{ArKo}}]
\label{theorem:basis_AK}
The elements
\begin{equation}
\label{equation:basis_AK}
X_1^{m_1} \cdots X_n^{m_n} T_w
\end{equation}
for $m_1, \dots, m_n \in \{0, \dots, r - 1\}$ and  $w \in \mathfrak{S}_n$
form a basis of the $F$-vector space $\sH_n(q, \tuple{u})$.
\end{theoreme}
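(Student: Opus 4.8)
The plan is to prove Theorem~\ref{theorem:basis_AK} by first establishing that the elements in \eqref{equation:basis_AK} span $\sH_n(\tuple{u})$ over $F$, and then that they are linearly independent; since the number of these elements is exactly $r^n \cdot n!$, which is the order of $G(r,1,n)$, either statement combined with a dimension count would in principle suffice, but the spanning argument is the constructive one and the independence argument requires building a module of the right dimension.

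\emph{Spanning.} First I would show that the $F$-span $M$ of the proposed basis elements is a left ideal, hence all of $\sH_n(\tuple{u})$ since it contains $1$. Concretely, it suffices to check that $M$ is stable under left multiplication by each generator $S$ and $T_a$. For the $T_a$: using relation \eqref{equation:definition_Xa+1} and its consequences one derives straightening rules of the form $T_a X_b = X_b T_a$ when $b \neq a, a+1$, together with $T_a X_a = X_{a+1} T_a^{-1} \cdot q$ (rewriting \eqref{equation:definition_Xa+1}) and a similar expression for $T_a X_{a+1}$, which let one move $T_a$ past a monomial $X_1^{m_1}\cdots X_n^{m_n}$ at the cost of introducing lower-degree $X$-monomials and, via the quadratic relation \eqref{relation:hecke_ordre}, extra $T_w$ factors; then one absorbs the resulting $T_a T_w$ into the span of $\{T_{w'}\}$ using the braid relations \eqref{relation:hecke_braid2}--\eqref{relation:hecke_braid3} (Matsumoto). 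For $S = X_1$: left multiplication by $X_1$ on $X_1^{m_1}\cdots X_n^{m_n} T_w$ raises $m_1$ by one, and if $m_1$ reaches $r$ one uses the cyclotomic relation \eqref{relation:hecke_cyclo_S_general} to rewrite $X_1^r$ as an $F$-combination of lower powers of $X_1$; since the commuting elements $X_1,\dots,X_n$ make $X_1^r$-reduction behave well, this stays within $M$. Care is needed because $X_1 = S$ commutes with all $T_a$ for $a > 1$ but only braids with $T_1$, so moving $X_1$ to the front of a word in the $T_a$'s requires an induction, typically on the length of $w$, peeling off the first letter $s_{a_1}$ and handling the two cases $a_1 = 1$ and $a_1 > 1$ separately.

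\emph{Linear independence.} This is the genuinely hard part. The standard route is to construct an explicit $\sH_n(\tuple{u})$-module $V$ of dimension $r^n \cdot n!$ on which the images of the elements \eqref{equation:basis_AK} are linearly independent --- for instance a faithful realisation on the free module with basis indexed by $\{0,\dots,r-1\}^n \times \mathfrak{S}_n$, defining the actions of $S$ and the $T_a$ by explicit formulas (a Jucys--Murphy / seminormal-type action) and checking by direct computation that all the defining relations \eqref{relation:hecke_cyclo_S_general}--\eqref{relation:hecke_braid3} hold. Once such a module exists, a nonzero linear combination of the \eqref{equation:basis_AK} that vanished in $\sH_n(\tuple{u})$ would act as zero on $V$, contradicting linear independence of their actions (which one arranges by ordering monomials and reading off leading terms). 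Combined with spanning, this pins down $\dim_F \sH_n(\tuple{u}) = r^n \cdot n!$ and forces \eqref{equation:basis_AK} to be a basis. The main obstacle is precisely verifying the module relations --- especially \eqref{relation:hecke_ST1ST1} and the braid relation \eqref{relation:hecke_braid3} --- and organising the straightening algorithm in the spanning step so that the induction genuinely terminates; I would expect to cite \cite[Theorem 3.10]{ArKo} (and the auxiliary \cite[Lemma 3.3]{ArKo}) for the bookkeeping rather than reproduce it, since these computations, while elementary, are lengthy.
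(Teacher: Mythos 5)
The paper does not prove this statement at all: Theorem~\ref{theorem:basis_AK} is quoted verbatim from Ariki--Koike \cite[Theorem 3.10]{ArKo}, so there is no internal argument to compare against. Your sketch is the classical route by which the cited result is established --- spanning by a straightening algorithm (commutation of the $X_a$, the rewriting rules coming from \eqref{equation:definition_Xa+1}, the quadratic and braid relations, and the cyclotomic relation to cap the exponent of $X_1$), and linear independence by exhibiting a faithful module of dimension $r^n\, n!$ with explicitly defined actions of $S$ and the $T_a$ --- and it contains no wrong step; the one sloppy aside is the claim that either half ``combined with a dimension count would in principle suffice,'' since the dimension is not known in advance and is exactly what the two halves together establish, but you do carry out both halves, so this does not affect the plan. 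Deferring the lengthy verification of the module relations and of \cite[Lemma 3.3]{ArKo} to the citation is consistent with what the paper itself does, which is simply to cite \cite{ArKo}.
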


Let $\tuple{\varLambda} = (\varLambda_{i, j}) \in \mathbb{N}^{(I \times J)}$ be a finitely-supported tuple of non-negative integers with $\sum_{i \in I} \sum_{j \in J} \varLambda_{i, j} = r$. We say that $\tuple{\varLambda}$ is a \emph{weight of level $r$}. In particular, we can choose the parameters $u_1, \dots, u_r$ such that the relation \eqref{relation:hecke_cyclo_S_general} in $\sH_n(q, \tuple{u})$ becomes the following one:
\begin{equation}
\label{relation:hecke_cyclo_S}
\prod_{i \in I} \prod_{j \in J} {(S - \zeta^j q^i)}^{\varLambda_{i, j}} = 0.
\end{equation}

\begin{definition}
In the above setting, we define $\sH_n^{\tuple{\varLambda}}(q, \zeta) \coloneqq \sH_n(q, \tuple{u})$.
\end{definition}

We will often need the following condition on $\tuple{\varLambda}$:
\begin{equation}
\label{equation:G(r1n)-condition_varLambda}
\varLambda_{i, j} = \varLambda_{i, j'} \eqqcolon \varLambda_i
\qquad \text{for all } i \in I \text{ and } j, j' \in J.
\end{equation}
In this case, the weight $(\varLambda_i)_{i \in I}$ has level $d = \frac{r}{p}$. Moreover, we can write \eqref{relation:hecke_cyclo_S} as:
\[
\prod_{i \in I} \prod_{j \in J} {(S - \zeta^j q^i)}^{\varLambda_i} = \prod_{i \in I} {(S^p - q^{pi})}^{\varLambda_i} = 0.
\]
Thus, we get the following result.

\begin{proposition}
\label{proposition:G(r1n)-def_sigma}
Suppose that $\tuple{\varLambda}$ satisfies \eqref{equation:G(r1n)-condition_varLambda}.
There is a well-defined algebra homomorphism $\sigma : \H_n^{\tuple{\varLambda}}(q, \zeta) \to \H_n^{\tuple{\varLambda}}(q, \zeta)$ given by:
\begin{align*}
\sigma(S) &\coloneqq \zeta S,
\\
\sigma(T_a) &\coloneqq T_a, \qquad \text{for all } a \in \{1, \dots, n-1\}.
\end{align*}
\end{proposition}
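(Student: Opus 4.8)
The plan is to verify that the assignment $\sigma(S) \coloneqq \zeta S$, $\sigma(T_a) \coloneqq T_a$ extends to an algebra endomorphism of $\sH_n^{\tuple{\varLambda}}(q,\zeta)$ by checking that the images of the generators satisfy all the defining relations \eqref{relation:hecke_cyclo_S}, \eqref{relation:hecke_ordre}, \eqref{relation:hecke_ST1ST1}, \eqref{relation:hecke_STa}, \eqref{relation:hecke_braid2}, \eqref{relation:hecke_braid3}. By the universal property of a presentation, a map on generators satisfying the relations extends uniquely to an algebra homomorphism, so this is all that is needed. (The proposition only claims a homomorphism; the fact that $\sigma$ is an \emph{automorphism} — invertible with inverse $S \mapsto \zeta^{-1} S$, $T_a \mapsto T_a$, which exists since $\zeta^{-1}$ also lies in $F^\times$ and the same relations hold — is presumably recorded separately, but I would remark on it.)

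First I would dispatch the easy relations: all relations among the $T_a$ alone — \eqref{relation:hecke_ordre}, \eqref{relation:hecke_braid2}, \eqref{relation:hecke_braid3} — are preserved because $\sigma$ fixes each $T_a$. For \eqref{relation:hecke_STa} with $a>1$, we need $\sigma(S)T_a = T_a\sigma(S)$, i.e. $\zeta S T_a = T_a \zeta S$, which is immediate from \eqref{relation:hecke_STa} for $S$ after pulling the scalar $\zeta$ out. For \eqref{relation:hecke_ST1ST1}, we need $(\zeta S)T_1(\zeta S)T_1 = T_1(\zeta S)T_1(\zeta S)$; both sides acquire exactly the scalar factor $\zeta^2$, so this reduces to \eqref{relation:hecke_ST1ST1} for $S$. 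Thus the only relation requiring genuine thought is the cyclotomic relation \eqref{relation:hecke_cyclo_S}.

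The main obstacle, then, is to show $\prod_{i\in I}\prod_{j\in J}(\zeta S - \zeta^j q^i)^{\varLambda_{i,j}} = 0$. Here I would invoke hypothesis \eqref{equation:G(r1n)-condition_varLambda}: since $\varLambda_{i,j} = \varLambda_i$ is independent of $j$, the product collapses as computed just before the proposition, namely $\prod_{i\in I}\prod_{j\in J}(S-\zeta^j q^i)^{\varLambda_i} = \prod_{i\in I}(S^p - q^{pi})^{\varLambda_i}$, because as $j$ ranges over $J \simeq \mathbb{Z}/p\mathbb{Z}$ the scalars $\zeta^j q^i$ run over all $p$-th roots of $q^{pi}$, so $\prod_{j\in J}(S-\zeta^j q^i) = S^p - q^{pi}$. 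Applying the same identity to $\zeta S$ in place of $S$ gives $\prod_{i\in I}\prod_{j\in J}(\zeta S - \zeta^j q^i)^{\varLambda_i} = \prod_{i\in I}((\zeta S)^p - q^{pi})^{\varLambda_i} = \prod_{i\in I}(\zeta^p S^p - q^{pi})^{\varLambda_i} = \prod_{i\in I}(S^p - q^{pi})^{\varLambda_i}$, using $\zeta^p = 1$. This last expression is exactly the left-hand side of the (rewritten) relation \eqref{relation:hecke_cyclo_S}, hence equals $0$ in $\sH_n^{\tuple{\varLambda}}(q,\zeta)$. This verifies the cyclotomic relation for $\sigma(S)$ and completes the check, so $\sigma$ is a well-defined algebra homomorphism.
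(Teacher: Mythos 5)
Your proposal is correct and follows the same route the paper takes: the relations involving only the $T_a$ (and the commutation/braid relations with $S$) are preserved trivially since scalars pull out, and the cyclotomic relation is handled by rewriting it, under hypothesis \eqref{equation:G(r1n)-condition_varLambda}, as $\prod_{i \in I}(S^p - q^{pi})^{\varLambda_i} = 0$, which is invariant under $S \mapsto \zeta S$ because $\zeta^p = 1$. This is exactly the computation the paper records immediately before the proposition, so nothing is missing.
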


\begin{remarque}
The homomorphism $\sigma$ has order $p$, in particular $\sigma$ is bijective. We will refer to $\sigma$ as the \emph{shift automorphism} of $\H_n^{\tuple{\varLambda}}(q, \zeta)$.
\end{remarque}

In the remaining part of this subsection, we assume that \eqref{equation:G(r1n)-condition_varLambda} is satisfied, so that the shift automorphism is defined.
The following lemma is an easy induction.
\begin{lemme}
\label{lemma:action_sigma_Xa}
For every $a \in \{1, \dots, n\}$ we have $\sigma(X_a) = \zeta X_a$.
\end{lemme}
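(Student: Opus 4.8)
The plan is to proceed by induction on $a \in \{1, \dots, n\}$. The base case $a = 1$ is immediate since $X_1 = S$ and $\sigma(S) = \zeta S$ by the definition of $\sigma$ in Proposition~\ref{proposition:G(r1n)-def_sigma}. For the inductive step, I would use the defining relation \eqref{equation:definition_Xa+1}, namely $q X_{a+1} = T_a X_a T_a$. Applying the algebra homomorphism $\sigma$ to both sides and using that $\sigma$ fixes each $T_a$, we get $q \sigma(X_{a+1}) = \sigma(T_a) \sigma(X_a) \sigma(T_a) = T_a \sigma(X_a) T_a$. By the induction hypothesis $\sigma(X_a) = \zeta X_a$, so $q \sigma(X_{a+1}) = T_a (\zeta X_a) T_a = \zeta (T_a X_a T_a) = \zeta q X_{a+1}$.

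Finally I would cancel $q$: since $q \in F \setminus \{0, 1\}$ is invertible in $F$, multiplying by $q^{-1}$ yields $\sigma(X_{a+1}) = \zeta X_{a+1}$, completing the induction. This is indeed ``an easy induction'' as the statement advertises; there is no real obstacle, the only point worth noting is that $\sigma$ being an algebra homomorphism is precisely what lets it pass through the product $T_a X_a T_a$, and that $q$ is a unit so the relation \eqref{equation:definition_Xa+1} can be used to extract $\sigma(X_{a+1})$ cleanly.
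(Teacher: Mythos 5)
Your proof is correct and is exactly the ``easy induction'' the paper alludes to without writing out: induct on $a$, using $\sigma(S)=\zeta S$, $\sigma(T_a)=T_a$, the relation $qX_{a+1}=T_aX_aT_a$, and the invertibility of $q$. Nothing is missing.
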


\begin{proposition}
\label{proposition:basis_fixed_point_Hecke}
The elements of $\H_n^{\tuple{\varLambda}}(q, \zeta)$ fixed by $\sigma$ are exactly the elements in the $F$-span of $X_1^{m_1} \cdots X_n^{m_n} T_w$ for $m_1, \dots, m_n \in \{0, \dots, r-1\}$ and  $w \in \mathfrak{S}_n$, with the additional following condition:
\[
m_1 + \dots + m_n = 0 \pmod{p}.
\]
\end{proposition}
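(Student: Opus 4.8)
The plan is to diagonalise the action of $\sigma$ on the basis of Theorem~\ref{theorem:basis_AK} and read off the fixed space. By Lemma~\ref{lemma:action_sigma_Xa} and the definition of $\sigma$ on the generators $T_a$, the action of $\sigma$ on the basis element $X_1^{m_1} \cdots X_n^{m_n} T_w$ is
\[
\sigma\bigl(X_1^{m_1} \cdots X_n^{m_n} T_w\bigr) = \zeta^{m_1 + \dots + m_n}\, X_1^{m_1} \cdots X_n^{m_n} T_w,
\]
since $\sigma$ is an algebra homomorphism and each $T_w$ is a product of $T_a$'s, hence fixed. Thus each basis vector is an eigenvector for $\sigma$, and the eigenvalue depends only on the residue of $m_1 + \dots + m_n$ modulo $p$ (recall $\zeta$ has order $p$).

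First I would invoke Theorem~\ref{theorem:basis_AK} to write an arbitrary element $h \in \H_n^{\tuple{\varLambda}}(q,\zeta)$ uniquely as $h = \sum_{m_1,\dots,m_n,w} c_{\tuple{m},w}\, X_1^{m_1} \cdots X_n^{m_n} T_w$ with $c_{\tuple{m},w} \in F$ and $0 \le m_a \le r-1$. Applying $\sigma$ and using the displayed eigenvalue formula, $\sigma(h) = h$ is equivalent, by uniqueness of the coefficients in the basis, to the system of equations $(\zeta^{m_1+\dots+m_n} - 1)\,c_{\tuple{m},w} = 0$ for every tuple and every $w$. Since $\zeta$ is a primitive $p$th root of unity, $\zeta^{m_1+\dots+m_n} = 1$ if and only if $m_1 + \dots + m_n \equiv 0 \pmod p$; hence $h$ is $\sigma$-fixed precisely when $c_{\tuple{m},w} = 0$ whenever $m_1 + \dots + m_n \not\equiv 0 \pmod p$. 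This is exactly the claimed description of the fixed space as the $F$-span of those basis elements satisfying the congruence.

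There is essentially no serious obstacle here: the only mild subtlety is making sure the exponents in the basis are bounded ($0 \le m_a \le r-1$) and not reduced any further, so that the basis elements remain linearly independent and the coefficient-matching argument is valid — this is guaranteed by Theorem~\ref{theorem:basis_AK}. One should perhaps remark that the statement does not assert a change of basis inside the fixed subalgebra, only an $F$-span description; this follows immediately from the eigenvector decomposition since the fixed space is the sum of the $1$-eigenlines. I would close by noting that $\sigma$ being of order $p$ (hence semisimple, as $\operatorname{char} F \nmid p$) gives an alternative conceptual phrasing: $\H_n^{\tuple{\varLambda}}(q,\zeta)$ decomposes as the direct sum of the $\zeta^k$-eigenspaces of $\sigma$ for $k \in J$, each spanned by the basis elements with $m_1 + \dots + m_n \equiv k$, and the fixed subalgebra is the $k = 0$ summand.
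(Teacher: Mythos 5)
Your proposal is correct and follows essentially the same route as the paper's proof: expand an arbitrary element in the basis of Theorem~\ref{theorem:basis_AK}, apply $\sigma$ via Lemma~\ref{lemma:action_sigma_Xa} (noting $\sigma(T_w)=T_w$), and compare coefficients to get the condition $\zeta^{m_1+\dots+m_n}=1$, i.e.\ $m_1+\dots+m_n\equiv 0 \pmod p$. The closing eigenspace remark is a harmless extra; nothing is missing.
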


\begin{proof}
Let $h$ be an arbitrary element of $\H_n^{\tuple{\varLambda}}(q, \zeta)$. By Theorem~\ref{theorem:basis_AK}, we can write, with $\tuple{m} = {(m_a)_a}$:
\[
h = \sum_{\substack{\tuple{m} \in \mathbb{N}^n, w \in \mathfrak{S}_n \\ 0 \leq m_a < r}}
h_{\tuple{m}, w} X_1^{m_1} \cdots X_n^{m_n} T_w,
\]
for some $h_{\tuple{m}, w} \in F$. Applying Lemma~\ref{lemma:action_sigma_Xa}, we have:
\[
\sigma(h) = \sum_{\substack{\tuple{m} \in \mathbb{N}^n, w \in \mathfrak{S}_n \\ 0 \leq m_a < r}}
h_{\tuple{m}, w} \zeta^{m_1 + \dots + m_n} X_1^{m_1} \cdots X_n^{m_n} T_w,
\]
thus $\sigma(h) = h$ if and only if $\zeta^{m_1 + \dots + m_n} = 1$ when $h_{\tuple{m}, w} \neq 0$. We conclude since $\zeta$ is a primitive $p$th root of unity.
\end{proof}

Note that the family in Proposition~\ref{proposition:basis_fixed_point_Hecke} is a basis, since it is free (by Theorem~\ref{theorem:basis_AK}).

\subsection{The Hecke algebra of \texorpdfstring{$G(r,p,n)$}{G(r,p,n)}}
\label{subsection:G(rpn)}

We continue to assume here that the weight $\tuple{\varLambda}$ satisfies the condition \eqref{equation:G(r1n)-condition_varLambda}. In particular, for any $i \in I$ and $j \in J$ we have $\varLambda_{i, j} = \varLambda_i$. We will first define the algebra that Ariki~\cite{Ar_rep} associated with $G(r,p,n)$, and then relate this algebra to \textsection\ref{subsection:Gr1n}.

\begin{definition}[\cite{Ar_rep}]
\label{definition:hecke_G(rpn)}
We denote by $\H_{p, n}^{\tuple{\varLambda}}(q)$ the unitary associative $F$-algebra generated by $s, t'_1, t_1, \dots, t_{n-1}$, subject to the following relations:
\begin{subequations}
\begin{align}
\label{relation:hecke_G(rpn)_cyclo_s}
\prod_{i \in I} {(s - q^{pi})}^{\varLambda_i} &= 0,
\\
\label{relation:hecke_G(rpn)_ordre_ta}
(t'_1 + 1)(t'_1 - q) &= (t_a + 1)(t_a - q) = 0,
\\
t'_1 t_2 t'_1 &= t_2 t'_1 t_2, \quad t_a t_{a+1} t_a = t_{a+1} t_a t_{a+1},
\\
\label{relation:hecke_G(rpn)_braid6}
(t'_1 t_1 t_2)^2 &= (t_2 t'_1 t_1)^2,
\\
t'_1 t_a &= t_a t'_1 \quad \text{if } a \in \{3, \dots, n-1\},
\\
t_a t_b &= t_b t_a \quad \text{if } |a-b| > 1,
\\
\label{relation:hecke_G(rpn)_sta}
s t_a &= t_a s \quad \text{if } a \in \{2, \dots, n-1\},
\\
\label{relation:hecke_G(rpn)_braid3}
s t'_1 t_1 &= t'_1 t_1 s,
\\
%\label{relation:hecke_G(rpn)_strange}
%&= {(q^{-1} t'_1 t_1)}^{2-p} t_1 s t'_1 + (q-1) \sum_{k = 1}^{p-2} {(q^{-1} t'_1 t_1)}^{1-k} s t'_1.
\label{relation:hecke_G(rpn)_bigbraid}
\underbrace{s t'_1 t_1 t'_1 t_1 \dots}_{p + 1 \text{ factors}} &= \underbrace{t_1 s t'_1 t_1 t'_1 \dots}_{p + 1 \text{ factors}}.
\end{align}
\end{subequations}
\end{definition}

According to \cite{BMR}, the algebra $\H_{p, n}^{\tuple{\varLambda}}(q)$ is a Hecke algebra of the complex reflection group $G(r,p,n)$.

\begin{remarque}
\label{remark:G(rpn)_case_p=1}
We prove in \textsection\ref{subsection:appendix-p1} that the above presentation considered for $p = 1$ yields the Ariki--Koike algebra $\H_n^{\tuple{\varLambda}}(q, 1)$ as defined in \textsection\ref{subsection:Gr1n}.
\end{remarque}

\begin{remarque}
\label{remark:hecke_G(rpn)-equivalent_presentation}
Assume here that $p \geq 2$. 
The reader may have noticed that the above presentation is not the one given by Ariki~\cite{Ar_rep}. Instead of \eqref{relation:hecke_G(rpn)_bigbraid} Ariki gives the following relation:
\begin{equation}
\label{relation:hecke_G(rpn)_strange}
s t'_1 t_1 = {(q^{-1} t'_1 t_1)}^{2-p} t_1 s t'_1 + (q-1) \sum_{k = 1}^{p-2} {(q^{-1} t'_1 t_1)}^{1-k} s t'_1.
\tag{\ref*{relation:hecke_G(rpn)_bigbraid}'}
\end{equation}
We claim that these two presentations define isomorphic algebras. Using \eqref{relation:hecke_G(rpn)_ordre_ta}, we can show by induction the following equality:
\[
{(q^{-1} t'_1 t_1)}^{2-p} t_1 s t'_1 + (q-1) \sum_{k = 1}^{p-2} {(q^{-1} t'_1 t_1)}^{1-k} s t'_1
=
(\underbrace{t_1^{-1} {t'_1}^{-1} t_1^{-1} {t'_1}^{-1}\dots}_{p-2 \text{ factors}}) (\underbrace{\dots t_1 t'_1 t_1 t'_1}_{p - 2 \text{ factors}}) t_1 s t'_1.
\]
Hence, using \eqref{relation:hecke_G(rpn)_braid3} we get that \eqref{relation:hecke_G(rpn)_bigbraid} and \eqref{relation:hecke_G(rpn)_strange} are equivalent, which proves the claim (we refer to \textsection\ref{subsection:appendix-two_equivalent_relations} for more details). We conclude this remark by mentioning that the generators of \cite{Ar_rep} are given by $a_0 = s, a_1 = t'_1$ and $a_k = t_{k-1}$ for $k \in \{2, \dots, n\}$.
%In particular, the relations of Definition~\ref{definition:hecke_G(rpn)}  come from the general braid diagram of $G(r, p, n)$ (see, for example, \cite[Appendix 2]{BMR}).
\end{remarque}

We state the main result of this section (note that the case $p = 1$ is proved by Theorems~\ref{theorem:basis_AK} and \ref{theorem:appendix-case_p=1}).

\begin{theoreme}[\protect{\cite[Proposition 1.6]{Ar_rep}}]
\label{theorem:homomorphism_Ariki}
The algebra homomorphism $\phi : \H_{p, n}^{\tuple{\varLambda}}(q) \to \H_n^{\tuple{\varLambda}}(q, \zeta)$ given by:
\begin{align*}
\phi(s) &\coloneqq S^p,
\\
\phi(t'_1) &\coloneqq S^{-1} T_1 S,
\\
\phi(t_a) &\coloneqq T_a, \qquad \text{for all } a \in \{1, \dots, n-1\}.
\end{align*}
is well-defined and one-to-one. Moreover, the elements $X_1^{m_1} \cdots X_n^{m_n} T_w$ of Theorem~\ref{theorem:basis_AK} such that $m_1 + \dots + m_n = 0 \pmod{p}$ form an $F$-basis of $\phi(\H_{p, n}^{\tuple{\varLambda}}(q))$.
\end{theoreme}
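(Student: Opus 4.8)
The plan is to verify that $\phi$ respects all the defining relations of $\H_{p,n}^{\tuple{\varLambda}}(q)$ (Definition~\ref{definition:hecke_G(rpn)}), then to identify its image inside $\H_n^{\tuple{\varLambda}}(q,\zeta)$ with the claimed span, and finally to deduce injectivity from the fact that the claimed image set is linearly independent (being a subfamily of the basis in Theorem~\ref{theorem:basis_AK}), provided we know $\dim \H_{p,n}^{\tuple{\varLambda}}(q)$ is at most the cardinality of that family. First I would check well-definedness: relation~\eqref{relation:hecke_G(rpn)_ordre_ta} for $t_a$ is immediate from~\eqref{relation:hecke_ordre}, and for $t'_1$ it follows by conjugating~\eqref{relation:hecke_ordre} (for $T_1$) by $S$; the braid relations among the $t_a$ come from~\eqref{relation:hecke_braid2}--\eqref{relation:hecke_braid3}; the relation $s t_a = t_a s$ for $a \geq 2$ follows from~\eqref{relation:hecke_STa} applied to $S^p$; and~\eqref{relation:hecke_G(rpn)_cyclo_s} is exactly the factorisation $\prod_i (S^p - q^{pi})^{\varLambda_i} = 0$ derived just before Proposition~\ref{proposition:G(r1n)-def_sigma}. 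The relations $\phi(s)\phi(t'_1)\phi(t_1) = \phi(t'_1)\phi(t_1)\phi(s)$ and the two mixed braid relations~\eqref{relation:hecke_G(rpn)_braid6} and~\eqref{relation:hecke_G(rpn)_bigbraid} are the ones requiring genuine computation; here I would rewrite everything in terms of $S, T_1$ and use~\eqref{relation:hecke_ST1ST1}, i.e. the length-$4$ braid relation $ST_1ST_1 = T_1ST_1S$, together with the fact that $S^p$ is central in the subalgebra generated by $S$ and $T_1$ modulo conjugation—more precisely, one checks $\phi(t'_1 t_1) = S^{-1}T_1ST_1$ and iterates~\eqref{relation:hecke_ST1ST1}.

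For the mixed braid relation~\eqref{relation:hecke_G(rpn)_bigbraid}, note that the word $s t'_1 t_1 t'_1 t_1 \cdots$ with $p+1$ factors maps under $\phi$ to $S^p \cdot (S^{-1}T_1S) \cdot T_1 \cdot (S^{-1}T_1 S) \cdot T_1 \cdots$; using $\phi(t'_1 t_1) = S^{-1}T_1 S T_1$ and then $S^{-1}(T_1 S T_1) = S^{-1} q X_2 $—actually cleaner: set $Y := S^{-1}T_1 S$, so $\phi(t'_1) = Y$, $\phi(t_1) = T_1$, and~\eqref{relation:hecke_ST1ST1} is equivalent to $S^p$ commuting appropriately; I expect that after these substitutions both sides of~\eqref{relation:hecke_G(rpn)_bigbraid} reduce to the same word by a short induction on $p$, and similarly for~\eqref{relation:hecke_G(rpn)_braid6} which is the $p=2$-flavoured four-term relation. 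This is precisely the content the paper attributes to \cite[Proposition 1.6]{Ar_rep}, and since the excerpt explicitly allows us to cite earlier results, I would either reproduce this verification or, more economically, simply invoke it; if reproducing, I would organise it as a sequence of lemmas isolating the identity $S^{-1}T_1ST_1 = T_1^{-1}S^{-1}T_1S \cdot (\text{something central})$ extracted from~\eqref{relation:hecke_ST1ST1}.

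Having established that $\phi$ is a well-defined algebra homomorphism, the image $\phi(\H_{p,n}^{\tuple{\varLambda}}(q))$ is the subalgebra of $\H_n^{\tuple{\varLambda}}(q,\zeta)$ generated by $S^p$, $S^{-1}T_1S$, and $T_1, \dots, T_{n-1}$. I claim this subalgebra is contained in the $\sigma$-fixed subalgebra: indeed $\sigma(S^p) = \zeta^p S^p = S^p$, $\sigma(T_a) = T_a$, and $\sigma(S^{-1}T_1S) = (\zeta S)^{-1}T_1(\zeta S) = S^{-1}T_1S$, so every generator is fixed. By Proposition~\ref{proposition:basis_fixed_point_Hecke} the $\sigma$-fixed subalgebra has as basis the elements $X_1^{m_1}\cdots X_n^{m_n} T_w$ with $\sum m_a \equiv 0 \pmod p$. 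Thus $\dim \phi(\H_{p,n}^{\tuple{\varLambda}}(q)) \leq |\{(\tuple{m},w) : 0 \leq m_a < r, \ \sum m_a \equiv 0 \pmod p\}| = \frac{r^n}{p}\cdot n!$ (using $p \mid r$, so exactly a $1/p$-fraction of the $r^n$ exponent tuples satisfy the congruence). On the other hand, by a standard spanning argument for $\H_{p,n}^{\tuple{\varLambda}}(q)$—which either is proved in the appendix for $p=1$ and can be bootstrapped, or follows from Ariki's work—the algebra $\H_{p,n}^{\tuple{\varLambda}}(q)$ is spanned by at most $\frac{r^n}{p}\cdot n!$ elements; the cleanest route is to exhibit a spanning set of that size directly from the presentation (monomials in a Jucys--Kazhdan-type family times $t_w$), mirroring Theorem~\ref{theorem:basis_AK}. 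Combining the two inequalities forces equality of dimensions, so $\phi$ must be injective and its image is the full $\sigma$-fixed subalgebra, whose basis is exactly the stated family. The main obstacle is the dimension lower bound for $\H_{p,n}^{\tuple{\varLambda}}(q)$, i.e. producing an explicit spanning set of the right size from the presentation—this is the delicate combinatorial part, and it is where invoking \cite{Ar_rep} (or the material deferred to Appendix~\ref{section:appendix_induction}) genuinely does work for us rather than being mere bookkeeping.
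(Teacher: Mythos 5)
Your final step does not work as stated, and the failure is exactly at the substantive point of the theorem. You establish two facts: (a) $\phi(\H_{p,n}^{\tuple{\varLambda}}(q))$ is contained in the $\sigma$-fixed subalgebra, so $\dim \phi(\H_{p,n}^{\tuple{\varLambda}}(q)) \leq \frac{r^n}{p}\, n!$, and (b) $\H_{p,n}^{\tuple{\varLambda}}(q)$ is spanned by at most $\frac{r^n}{p}\, n!$ elements. These are both \emph{upper} bounds, and two upper bounds force nothing: a priori $\phi$ could have a large kernel and a small image while (a) and (b) both hold, so ``combining the two inequalities'' does not yield injectivity nor that the image is the whole fixed subalgebra. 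What is missing is the lower bound on the image: you must show that every monomial $X_1^{m_1}\cdots X_n^{m_n}T_w$ with $m_1+\dots+m_n \equiv 0 \pmod p$ actually lies in the subalgebra generated by $S^p$, $S^{-1}T_1S$ and $T_1,\dots,T_{n-1}$ (for instance by producing the elements $X_a^{-1}X_{a+1}$, starting from $X_1^{-1}X_2 = q^{-1}(S^{-1}T_1S)T_1$, and then rewriting an arbitrary admissible monomial in terms of $X_1^p$, such ratios, and the $T_w$). Only with that surjectivity onto the fixed subalgebra in hand does the count $\frac{r^n}{p}\,n!$ pin down all dimensions and give injectivity together with the claimed basis of the image. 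This identification of the image is precisely the content of Ariki's Proposition 1.6, so in your write-up the hard part has been assumed rather than proved; the same remark applies to your spanning set for $\H_{p,n}^{\tuple{\varLambda}}(q)$, which is asserted but not exhibited, and to the verification of \eqref{relation:hecke_G(rpn)_braid6} and \eqref{relation:hecke_G(rpn)_bigbraid}, where ``I expect both sides reduce to the same word'' is not yet an argument (compare the explicit computation done for $p=1$ in \textsection\ref{subsection:appendix-p1}).

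For comparison, the paper does not reprove this statement at all: it quotes \cite[Proposition 1.6]{Ar_rep} (after checking in Appendix~\ref{section:appendix_induction} that its presentation of $\H_{p,n}^{\tuple{\varLambda}}(q)$ agrees with Ariki's, via the equivalence of \eqref{relation:hecke_G(rpn)_bigbraid} and \eqref{relation:hecke_G(rpn)_strange}, and treating $p=1$ separately), and then deduces Corollary~\ref{corollary:H(G(rpn))_fixe} from Proposition~\ref{proposition:basis_fixed_point_Hecke}. So simply invoking Ariki, as you suggest as an alternative, is exactly what the paper does and is legitimate here; but if you want the self-contained proof you sketch, you must supply the two ingredients above (a spanning set of $\H_{p,n}^{\tuple{\varLambda}}(q)$ of size $\frac{r^n}{p}\,n!$ that $\phi$ sends onto the linearly independent family of Theorem~\ref{theorem:basis_AK} with $\sum m_a \equiv 0 \pmod p$), since the containment in the fixed subalgebra alone cannot do the job.
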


In particular, using Proposition~\ref{proposition:basis_fixed_point_Hecke} we get the following one.

\begin{corollaire}
\label{corollary:H(G(rpn))_fixe}
The algebra $\H_{p, n}^{\tuple{\varLambda}}(q)$ is isomorphic via $\phi$ to $\H_n^{\tuple{\varLambda}}(q, \zeta)^\sigma$, the fixed point subalgebra of $\H_n^{\tuple{\varLambda}}(q, \zeta)$ under the shift automorphism $\sigma$.
\end{corollaire}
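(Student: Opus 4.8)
The plan is simply to combine Theorem~\ref{theorem:homomorphism_Ariki} with Proposition~\ref{proposition:basis_fixed_point_Hecke}; no new computation is required. First I would note that $\H_n^{\tuple{\varLambda}}(q, \zeta)^\sigma$ is genuinely a unital subalgebra of $\H_n^{\tuple{\varLambda}}(q, \zeta)$, since the set of fixed points of any $F$-algebra automorphism is a subalgebra containing $1$. Next, Theorem~\ref{theorem:homomorphism_Ariki} provides an injective algebra homomorphism $\phi$, so $\phi$ is an isomorphism of $F$-algebras from $\H_{p, n}^{\tuple{\varLambda}}(q)$ onto its image $\phi\bigl(\H_{p, n}^{\tuple{\varLambda}}(q)\bigr)$. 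It therefore only remains to identify this image with $\H_n^{\tuple{\varLambda}}(q, \zeta)^\sigma$.

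For this identification I would compare $F$-bases. Theorem~\ref{theorem:homomorphism_Ariki} asserts that the elements $X_1^{m_1} \cdots X_n^{m_n} T_w$ with $0 \leq m_a < r$, $w \in \mathfrak{S}_n$ and $m_1 + \dots + m_n \equiv 0 \pmod{p}$ form an $F$-basis of $\phi\bigl(\H_{p, n}^{\tuple{\varLambda}}(q)\bigr)$. On the other hand, Proposition~\ref{proposition:basis_fixed_point_Hecke}, together with the observation (recorded just after its proof) that the family it produces is free by Theorem~\ref{theorem:basis_AK}, shows that exactly the same family is an $F$-basis of $\H_n^{\tuple{\varLambda}}(q, \zeta)^\sigma$. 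Two subspaces of $\H_n^{\tuple{\varLambda}}(q, \zeta)$ that are spanned by the same linearly independent family coincide, hence $\phi\bigl(\H_{p, n}^{\tuple{\varLambda}}(q)\bigr) = \H_n^{\tuple{\varLambda}}(q, \zeta)^\sigma$, and $\phi$ restricts to the desired isomorphism.

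As a consistency check one can verify directly that $\phi$ does land in the fixed subalgebra: $\sigma(\phi(s)) = \sigma(S^p) = (\zeta S)^p = S^p$ since $\zeta^p = 1$, $\sigma(\phi(t'_1)) = \sigma(S^{-1} T_1 S) = (\zeta S)^{-1} T_1 (\zeta S) = S^{-1} T_1 S$, and $\sigma(\phi(t_a)) = T_a$, so that $\sigma \circ \phi = \phi$ on generators and hence $\phi\bigl(\H_{p, n}^{\tuple{\varLambda}}(q)\bigr) \subseteq \H_n^{\tuple{\varLambda}}(q, \zeta)^\sigma$; the reverse inclusion is where the basis comparison does the real work. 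Since all the ingredients are already established, there is no serious obstacle; the only point needing a moment's care is that the two bases are indexed by literally the same set of pairs $(\tuple{m}, w)$, which is immediate from the two statements being invoked.
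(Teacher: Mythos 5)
Your proof is correct and is essentially the paper's own argument: the corollary is obtained there precisely by combining Theorem~\ref{theorem:homomorphism_Ariki} (injectivity of $\phi$ and the basis of its image) with Proposition~\ref{proposition:basis_fixed_point_Hecke} (the same family spans, and in fact is a basis of, the fixed point subalgebra). Your additional check that $\sigma \circ \phi = \phi$ on generators is a harmless, correct consistency verification not needed for the conclusion.
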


\subsection{Removing repetitions}
\label{subsection:removing_repetitions}

The following map:
\[
\begin{array}{|rcl}
I \times J & \longrightarrow & F^\times
\\
(i, j) & \longmapsto & \zeta^j q^i
\end{array},
\]
is not one-to-one. The first aim of this subsection is to find a subset $J' \subseteq \{1, \dots, p\} \simeq J$ such that the restriction of the previous map to $I \times J'$ has the same image and is one-to-one. Moreover, for our purposes, we would like relation \eqref{relation:hecke_cyclo_S} to be of the form:
\begin{equation}
\label{relation:removing_repetitions-would_like}
\prod_{i \in I} \prod_{j \in J'} {(S - \zeta^j q^i)}^{\Lambda_{i, j}} = 0,
\end{equation}
where $\tuple{\Lambda} = (\Lambda_{i, j})_{i \in I, j \in J'} \in \mathbb{N}^{(I \times J')}$ is a weight of level $r$. The second aim of this subsection is to know for which tuples $\tuple{\Lambda} \in \mathbb{N}^{(I \times J')}$ of level $r$ there is some $\tuple{\varLambda} \in \mathbb{N}^{(I \times J)}$ such that the relation \eqref{relation:hecke_cyclo_S} in $\H_n^{\tuple{\varLambda}}(q, \zeta)$ is exactly \eqref{relation:removing_repetitions-would_like}. We will be particularly interested in the case where $\tuple{\varLambda}$ satisfies \eqref{equation:G(r1n)-condition_varLambda}. This will require some quite long but easy computations.

Let us define the following integer:
\begin{equation}
\label{equation:definition_p'}
p' \coloneqq \min\{m \in \mathbb{N}^* : \zeta^m \in \langle q \rangle\} \in \{1, \dots, p\},
\end{equation}
together with the following set:
\[
J' \coloneqq \{1, \dots, p'\}.
\]

\begin{lemme}
\label{lemma:introduction-p'_gcd}
The integer $p'$ is given by:
\begin{itemize}
\item if $e = \infty$ then $p' = p$;
\item if $e < \infty$ then  $p' = \frac{p}{\mathrm{gcd}(p, e)}$.
\end{itemize}
In particular, the integer $p'$ divides $p$ and depends only on $p$ and $e$.
\end{lemme}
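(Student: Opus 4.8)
The plan is to analyze the condition $\zeta^m \in \langle q\rangle$ directly, splitting on whether $q$ has finite or infinite order. First I would recall that $\zeta$ has order exactly $p$ in $F^\times$ and that $\langle q\rangle$ is the cyclic subgroup generated by $q$, whose order is $e$ (by definition of $e$, with the convention that $e=\infty$ means $q$ has infinite order).

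\textbf{The case $e=\infty$.} Here $\langle q\rangle$ is infinite cyclic, hence torsion-free. Since $\zeta^m$ is always a root of unity (it has finite order dividing $p$), the only way $\zeta^m\in\langle q\rangle$ is $\zeta^m=1$, i.e. $p\mid m$. Thus the minimum positive such $m$ is $p$, so $p'=p$.

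\textbf{The case $e<\infty$.} Now $\langle q\rangle$ is the unique cyclic subgroup of $F^\times$ of order $e$ generated by $q$. I would use that a finite subgroup of $F^\times$ is cyclic, so $\langle\zeta\rangle$ and $\langle q\rangle$ both sit inside $F^\times$; but more simply, $\zeta^m$ has order $\frac{p}{\gcd(p,m)}$, and $\zeta^m\in\langle q\rangle$ if and only if this order divides $e$, i.e. $\frac{p}{\gcd(p,m)}\mid e$. (One direction is clear since every element of $\langle q\rangle$ has order dividing $e$; for the converse, $F^\times$ has at most one subgroup of each finite order $k$, namely the $k$-th roots of unity, and when $k\mid e$ that subgroup is contained in $\langle q\rangle$.) So $p'$ is the least positive $m$ with $p\mid e\cdot\gcd(p,m)$. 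Writing $g=\gcd(p,e)$, the condition $\frac{p}{\gcd(p,m)}\mid e$ is equivalent to $\frac{p}{g}\mid \gcd(p,m)$, hence to $\frac{p}{g}\mid m$ (since $\frac{p}{g}\mid p$ automatically, $\frac{p}{g}\mid\gcd(p,m)\iff\frac{p}{g}\mid m$). The least positive such $m$ is therefore $\frac{p}{g}=\frac{p}{\gcd(p,e)}$, which proves $p'=\frac{p}{\gcd(p,e)}$.

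\textbf{Conclusion.} In both cases $p'$ divides $p$: in the infinite case trivially, and in the finite case because $\gcd(p,e)\mid p$. And $p'$ is manifestly a function of $p$ and $e$ alone. The main (minor) obstacle is justifying the equivalence $\zeta^m\in\langle q\rangle\iff\frac{p}{\gcd(p,m)}\mid e$ cleanly; this rests on the standard fact that $F^\times$ contains at most one subgroup of any given finite order, together with the observation that when $k\mid e$ the group of $k$-th roots of unity is a subgroup of $\langle q\rangle$. Everything else is elementary arithmetic with orders of elements in a cyclic-ish setting.
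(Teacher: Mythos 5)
Your proof is correct and follows essentially the same route as the paper: in the infinite-order case you use torsion-freeness of $\langle q\rangle$, and in the finite case you compute the order of $\zeta^m$, identify $\langle q\rangle$ with the elements of $F^\times$ of order dividing $e$, and carry out the same gcd manipulation to get $p'=\frac{p}{\gcd(p,e)}$. The only difference is that you spell out the justification that $\langle q\rangle$ contains all elements of order dividing $e$ (uniqueness of finite subgroups of $F^\times$ of a given order), which the paper simply asserts.
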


\begin{proof}
The statement for $e = \infty$ is obvious since each element of $\langle q \rangle \setminus \{1\}$ has infinite order. Thus, we now assume that $e < \infty$.
For any $m \in \mathbb{N}^*$, the order of $\zeta^m$ in $F^\times$ is $\frac{p}{\mathrm{gcd}(p,m)}$. Since $q$ is a primitive $e$th root of unity, the set $\langle q \rangle$ is precisely the set of elements of $F^\times$ of order dividing $e$. Hence:
\begin{align*}
\zeta^m \in \langle q \rangle
&\iff
\frac{p}{\mathrm{gcd}(p, m)} \text{ divides } e
\\
&\iff
\frac{p}{\mathrm{gcd}(p, m)} \text{ divides } \mathrm{gcd}(p, e)
\\
&\iff
\frac{p}{\mathrm{gcd}(p, e)} \text{ divides } \mathrm{gcd}(p, m).
\end{align*}
We conclude that the minimal $m \in \mathbb{N}^*$ such that $\zeta^m \in \langle q \rangle$ is $p' =  \frac{p}{\mathrm{gcd}(p,e)}$.
\end{proof}

The first aim of this subsection is achieved thanks to the next lemma, which is a immediate consequence of the minimality of $p'$.

\begin{lemme}
\label{lemma:zeta^j_q^i_distinct}
The elements $\zeta^j q^i$ for $i \in I$ and $j \in J'$ are pairwise distinct.
\end{lemme}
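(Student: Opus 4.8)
The plan is to prove that the map $(i,j) \mapsto \zeta^j q^i$ is injective on $I \times J'$ by a direct argument: suppose $\zeta^j q^i = \zeta^{j'} q^{i'}$ for $(i,j), (i',j') \in I \times J'$ and deduce that $(i,j) = (i',j')$. Rewriting the equality gives $\zeta^{j-j'} = q^{i'-i} \in \langle q \rangle$, so the key point is to understand which powers of $\zeta$ lie in $\langle q \rangle$.

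First I would note that by definition \eqref{equation:definition_p'}, $p'$ is the smallest positive integer with $\zeta^{p'} \in \langle q \rangle$; since $\zeta^{p'} \in \langle q \rangle$ and $\zeta$ has order $p$, the set $\{m \in \mathbb{Z} : \zeta^m \in \langle q \rangle\}$ is a subgroup of $\mathbb{Z}$ containing $p' \mathbb{Z}$ and $p\mathbb{Z}$, hence containing $\gcd$-type combinations; but by minimality of $p'$ it must equal exactly $p' \mathbb{Z}$ (any positive element of this subgroup is a multiple of $p'$, by division with remainder and minimality). In particular $p' \mid p$, recovering part of Lemma~\ref{lemma:introduction-p'_gcd}. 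Now from $\zeta^{j-j'} \in \langle q \rangle$ we conclude $p' \mid (j - j')$. But $j, j' \in J' = \{1, \dots, p'\}$, so $|j - j'| < p'$, forcing $j = j'$. Then $q^{i'-i} = 1$, i.e. $i' - i \equiv 0$ in $I = \mathbb{Z}/e\mathbb{Z}$ (or $i = i'$ outright if $e = \infty$, since $q$ has infinite order), which is exactly $i = i'$ as elements of $I$. This gives $(i,j) = (i',j')$.

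There is no real obstacle here — the statement is indeed, as the text says, an immediate consequence of the minimality of $p'$. The only mild subtlety is bookkeeping the two regimes $e < \infty$ and $e = \infty$ uniformly: in the finite case one works modulo $e$ throughout and uses that $\langle q \rangle$ is the group of $e$th roots of unity, while in the infinite case $\langle q \rangle$ is torsion-free so $\zeta^{j-j'} \in \langle q \rangle$ already forces $\zeta^{j-j'} = 1$ directly, and the argument is even shorter. Alternatively, one could simply invoke Lemma~\ref{lemma:introduction-p'_gcd} and argue that the $p'$ values $\zeta^1, \dots, \zeta^{p'}$ are distinct modulo $\langle q \rangle$ (as $\zeta$ generates a cyclic group of order $p$ and $\langle q \rangle \cap \langle \zeta \rangle$ has index $p'$ in $\langle \zeta \rangle$), whence the products $\zeta^j q^i$ are distinct. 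I would present the first, self-contained version, since it does not even require the explicit $\gcd$ formula.
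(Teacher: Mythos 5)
Your argument is correct and is exactly the justification the paper has in mind: the paper dismisses this lemma as an immediate consequence of the minimality of $p'$, and your observation that $\{m \in \mathbb{Z} : \zeta^m \in \langle q\rangle\}$ is the subgroup $p'\mathbb{Z}$, forcing $j = j'$ and then $i = i'$, is precisely that argument spelled out. Nothing to add.
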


%\begin{proof}
%Let $i, i' \in I$ and $j, j' \in J'$ such that $\zeta^j q^i = \zeta^{j'} q^{i'}$. Without loss of generality we can suppose that $j \geq j'$; we have $\zeta^{j - j'} = q^{i' - i} \in \langle q \rangle$. Since $0 \leq j - j' \leq j - 1 < p'$, by minimality of $p'$ we deduce that $j - j'= 0$. Hence, we get $i' - i = 0$ thus $(i, j) = (i', j')$.
%\end{proof}

Let us denote by $\eta$ the (unique) element of $I$ such that:
\begin{equation}
\label{equation:introduction-zetap'_qeta}
\zeta^{p'} = q^\eta.
\end{equation}
Note that $p' = p \iff \eta = 0 \iff \langle q \rangle \cap \langle \zeta \rangle = \{1\}$. In particular, if $\eta \neq 0$ then $e < \infty$. In that case, we  are not necessarily in the setting of \cite{HuMa_dec} (see [Lemma~$2.6.(a)$, \textit{loc. cit.}]). 
We now consider the  following map:
\begin{equation}
\label{equation:map_J'_ZomegaZ_J}
\begin{array}{|rcl}
J' \times \mathbb{Z}/\omega\mathbb{Z} & \longrightarrow & J
\\
(j, a) & \longmapsto & j + p' a
\end{array},
\end{equation}
where $\omega \coloneqq \frac{p}{p'}$.
It is well-defined and surjective, hence bijective by a counting argument. Equation \eqref{relation:hecke_cyclo_S} becomes:
\begin{align}
\prod_{i \in I} \prod_{j \in J} {(S - \zeta^j q^i)}^{\varLambda_{i, j}}
&=
\prod_{i \in I} \prod_{j \in J'} \prod_{a \in \mathbb{Z}/\omega\mathbb{Z}} {\left(S - \zeta^j (\zeta^{p'})^a q^i\right)}^{\varLambda_{i, j + p'a}}
\notag
\\
&=
\prod_{i \in I} \prod_{j \in J'} \prod_{a\in \mathbb{Z}/\omega\mathbb{Z}} {\left(S - \zeta^j q^{i + \eta a}\right)}^{\varLambda_{i, j + p'a}}.
\label{equation:removing_repetitions-construction_Lambda}
\end{align}
For each $(i, j) \in I \times J'$, we define:
\begin{equation}
\label{equation:removing_repetitions-definition_Lambda}
\Lambda_{i, j} \coloneqq \sum_{i' \in I}  \sum_{\substack{a \in \mathbb{Z}/\omega\mathbb{Z} \\ i' + \eta a = i}} \varLambda_{i', j + p'a},
\end{equation}
so that, by \eqref{equation:removing_repetitions-construction_Lambda}:
\[%begin{equation}
%\label{equation:special_choice_Lambda}
\prod_{i \in I} \prod_{j \in J} {(S - \zeta^j q^i)}^{\varLambda_{i, j}} =
\prod_{i \in I} \prod_{j \in J'} {(S - \zeta^j q^i)}^{\Lambda_{i, j}}.
\]%end{equation}
Hence, relation \eqref{relation:hecke_cyclo_S} transforms to the desired one \eqref{relation:removing_repetitions-would_like}. Conversely, it is clear that each weight $\tuple{\Lambda} \in \mathbb{N}^{(I \times J')}$ comes from some $\tuple{\varLambda} \in \mathbb{N}^{(I \times J)}$ through \eqref{equation:removing_repetitions-definition_Lambda}, that is, for each $\tuple{\Lambda} \in \mathbb{N}^{(I \times J')}$ there is some $\tuple{\varLambda} \in \mathbb{N}^{(I \times J)}$ such that \eqref{equation:removing_repetitions-definition_Lambda} is satisfied. Indeed, given any $\tuple{\Lambda} \in \mathbb{N}^{(I \times J')}$ it suffices to set:
\[
\varLambda_{i, j} \coloneqq
\begin{cases}
\Lambda_{i, \jmath} &\text{if }  j \text{ is the image of } (\jmath,0) \text{ by the bijection of } \eqref{equation:map_J'_ZomegaZ_J},
\\
0 & \text{otherwise,}
\end{cases}
\]
for all $(i, j) \in I \times J$.

\begin{definition}
\label{definition:removing_definition-removing_repetitions_sH}
Let $\tuple{\Lambda} \in \mathbb{N}^{(I \times J')}$ be a weight of level $r$. We consider $\tuple{\varLambda} \in \mathbb{N}^{(I \times J)}$ a weight of level $r$ which gives $\tuple{\Lambda}$ through \eqref{equation:removing_repetitions-definition_Lambda}.
We write $\sH_n^{\tuple{\Lambda}}(q, \zeta) \coloneqq \sH_n^{\tuple{\varLambda}}(q, \zeta)$. In particular, the relation \eqref{relation:hecke_cyclo_S} becomes  \eqref{relation:removing_repetitions-would_like}.
\end{definition}

We now assume that the weight $\tuple{\varLambda} \in \mathbb{N}^{(I \times J)}$ satisfies the condition \eqref{equation:G(r1n)-condition_varLambda}, that is, factors to $\tuple{\varLambda} \in \mathbb{N}^{(I)}$: we want to know which condition we recover on $\tuple{\Lambda}$. The defining equality \eqref{equation:removing_repetitions-definition_Lambda} becomes:
\[
\Lambda_{i, j} = \sum_{i' \in I} \sum_{\substack{a \in \mathbb{Z}/\omega\mathbb{Z} \\ i' + \eta a = i}} \varLambda_{i'},
\]
for $i \in I$ and $j \in J'$.
In particular, for any $i \in I$ and $j, j' \in J'$ we have $\Lambda_{i, j} = \Lambda_{i, j'} \eqqcolon \Lambda_i$, so that $\tuple{\Lambda} \in \mathbb{N}^{(I)}$ is a weight of level $\omega d$, and:
\begin{equation}
\label{equation:removing_repetitions-definition_Lambda_w/o_j}
\Lambda_i = \sum_{i' \in I} \sum_{\substack{a \in \mathbb{Z}/\omega\mathbb{Z} \\ i' + \eta a = i}} \varLambda_{i'},
\end{equation}
for all $i \in I$.

\begin{lemme}
\label{lemma:remove_repetitions}
For any $i \in I$ we have:
\begin{numcases}
{\#\{a \in \mathbb{Z}/\omega\mathbb{Z} : \eta a = i\} =}
\label{equation:lemma_choice_parameters_notin}
0 & if  $i \notin \eta I$,
\\
\label{equation:lemma_choice_parameters_in}
1 & if  $i \in \eta I$.
\end{numcases}
\end{lemme}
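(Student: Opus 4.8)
The plan is to interpret the set in question as a fibre of the group homomorphism $a \mapsto \eta a$ from $\mathbb{Z}/\omega\mathbb{Z}$ to $I$, to check this homomorphism is injective with image $\eta I$, and then simply read off the two cases.

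First I would verify that $a \mapsto \eta a$ is a well-defined map $\mathbb{Z}/\omega\mathbb{Z} \to I$, that is, that $\omega\eta = 0$ in $I$. Raising \eqref{equation:introduction-zetap'_qeta} to the power $\omega = \frac{p}{p'}$ gives $q^{\omega\eta} = \bigl(\zeta^{p'}\bigr)^{\omega} = \zeta^{p} = 1$, since $\zeta$ is a primitive $p$th root of unity; as $q$ has order $e$ in $F^\times$, this forces $e \mid \omega\eta$, which is exactly the required vanishing. (When $e = \infty$ we have $p' = p$ by Lemma~\ref{lemma:introduction-p'_gcd}, hence $\omega = 1$ and $\eta = 0$ by \eqref{equation:introduction-zetap'_qeta}, so the statement is immediate; thus one may as well assume $e < \infty$ and $\omega$ finite.)

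Next I would compute the order of $\eta$ in the additive group $I$. By \eqref{equation:introduction-zetap'_qeta} it equals the multiplicative order of $q^\eta = \zeta^{p'}$ in $F^\times$; since $\zeta$ has order $p$ and $p' \mid p$ (Lemma~\ref{lemma:introduction-p'_gcd}), this order is $\frac{p}{p'} = \omega$. Consequently the subgroup $\eta I = \langle\eta\rangle$ of $I$ has exactly $\omega$ elements, namely $0, \eta, 2\eta, \dots, (\omega-1)\eta$, and these are pairwise distinct. In other words, the homomorphism $\mathbb{Z}/\omega\mathbb{Z} \to I$, $a \mapsto \eta a$, is injective with image exactly $\eta I$.

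The conclusion then follows at once: for $i \in I$, the set $\{a \in \mathbb{Z}/\omega\mathbb{Z} : \eta a = i\}$ is the preimage of $i$ under this injection, hence empty if $i \notin \eta I$ and a singleton if $i \in \eta I$. There is no genuine obstacle here; the only points needing care are the well-definedness of $\eta a$ for $a \in \mathbb{Z}/\omega\mathbb{Z}$ and the identification of the order of $\eta$ in $I$ with the multiplicative order of $\zeta^{p'}$ in $F^\times$, both being direct consequences of \eqref{equation:introduction-zetap'_qeta} and Lemma~\ref{lemma:introduction-p'_gcd}.
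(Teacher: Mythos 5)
Your proof is correct and takes essentially the same route as the paper: both interpret the set as a fibre of the homomorphism $a \mapsto \eta a$ defined on $\mathbb{Z}/\omega\mathbb{Z}$, and both extract the key fact from $\zeta^{p'} = q^\eta$, namely that the order of $\eta$ in $I$ equals $\omega$ (the paper phrases this as $\omega = \frac{e}{\mathrm{gcd}(e,\eta)}$). The only cosmetic difference is that you establish injectivity directly, while the paper shows the induced map $\mathbb{Z}/\omega\mathbb{Z} \to \eta I$ is surjective and concludes bijectivity by a counting argument.
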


\begin{proof}
The result is straightforward if $\eta = 0$, in particular in that case we have $\omega = 1$. Thus we assume $\eta \neq 0$, in particular $e < \infty$ and $I = \mathbb{Z}/e\mathbb{Z}$.
Let us compute the cardinality of the fibre of $i$ under the following group homomorphism:
\[
\begin{array}{c|rcl}
\phi : &\mathbb{Z} & \longrightarrow & I
\\
&a & \longmapsto & \eta a.
\end{array}
\]
First, the image of $\phi$ is $\eta I$; this proves \eqref{equation:lemma_choice_parameters_notin}. The element $\omega$ lies in $\ker \phi$. Indeed, we have $\mathrm{order}(\zeta^{p'}) = \mathrm{order}(q^\eta)$, hence:
\begin{equation}
\label{equation:lemma_choice_parameters_omega}
\omega = \frac{e}{\mathrm{gcd}(e, \eta)},
\end{equation}
thus $e = \mathrm{gcd}(e, \eta)\omega \mid \eta\omega$.
As a consequence, we have a well-defined \emph{surjective} map:
\[
\begin{array}{c|rcl}
\overline{\phi} : &\mathbb{Z}/\omega\mathbb{Z} & \longrightarrow & \eta I
\\
&a & \longmapsto & \eta a
\end{array}.
\]
We have, using \eqref{equation:lemma_choice_parameters_omega}:
\[
\eta I =  (\eta\mathbb{Z} + e\mathbb{Z})/e\mathbb{Z} \simeq \mathbb{Z}/{\textstyle \frac{e}{\mathrm{gcd}(e, \eta)}}\mathbb{Z} = \mathbb{Z}/\omega\mathbb{Z},
\]
thus, by a counting argument we get that the map $\overline{\phi}$ is  bijective. This concludes the proof.
\end{proof}

The second aim of this subsection is achieved thanks to the following proposition.

\begin{proposition}
\label{proposition:removing_repetitions-_Lambda}
A weight $\tuple{\Lambda} \in \mathbb{N}^{(I)}$ of level $\omega d$ comes  from a weight $\tuple{\varLambda} \in \mathbb{N}^{(I)}$ of level $d$ through \eqref{equation:removing_repetitions-definition_Lambda_w/o_j} if and only if for all $i \in I$,
\[
\Lambda_i = \Lambda_{i + \eta},
\]
that is, if and only if the weight $\tuple{\Lambda}$ factors to a weight $\tuple{\Lambda} \in \mathbb{N}^{(I/\eta I)}$ of level $d$.
\end{proposition}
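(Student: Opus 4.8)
The plan is to first make the defining relation \eqref{equation:removing_repetitions-definition_Lambda_w/o_j} more legible. Fix $i \in I$. For each $a \in \mathbb{Z}/\omega\mathbb{Z}$ the product $\eta a$ is a well-defined element of $I$ (this is the divisibility $e \mid \eta\omega$ already used in the proof of Lemma~\ref{lemma:remove_repetitions}), and the condition $i' + \eta a = i$ forces $i' = i - \eta a$ uniquely, so the double sum in \eqref{equation:removing_repetitions-definition_Lambda_w/o_j} collapses to $\Lambda_i = \sum_{a \in \mathbb{Z}/\omega\mathbb{Z}} \varLambda_{i - \eta a}$. Since by Lemma~\ref{lemma:remove_repetitions} the map $a \mapsto \eta a$ is a bijection $\mathbb{Z}/\omega\mathbb{Z} \to \eta I$, the map $a \mapsto i - \eta a$ is a bijection onto the coset $i + \eta I$, and I would rewrite the relation as
\[
\Lambda_i = \sum_{i' \in i + \eta I} \varLambda_{i'} \qquad (i \in I).
\]
This identity is the whole point: its right-hand side depends only on the coset of $i$ modulo $\eta I$, so if $\tuple{\Lambda}$ comes from some $\tuple{\varLambda}$ then $\Lambda_i = \Lambda_{i+\eta}$ for all $i$, i.e.\ $\tuple{\Lambda}$ is invariant under $i \mapsto i + \eta$; and since $\eta I$ is precisely the subgroup of $I$ generated by $\eta$, this is the same as saying that $\tuple{\Lambda}$ factors through $I/\eta I$. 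I would also record here that $\eta I$ has $\omega$ elements (again by Lemma~\ref{lemma:remove_repetitions}, or by \eqref{equation:lemma_choice_parameters_omega}), so a tuple on $I$ that factors through $I/\eta I$ has level $\omega$ times the level of the induced tuple on $I/\eta I$; this is what will reconcile the levels $\omega d$ and $d$ appearing in the statement.

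For the ``only if'' direction it then remains to sum the displayed identity over a full set of coset representatives: this gives $\sum_{C \in I/\eta I} \Lambda_C = \sum_{i' \in I} \varLambda_{i'} = d$, so the weight induced on $I/\eta I$ has level $d$. For the converse, I am given $\tuple{\Lambda} \in \mathbb{N}^{(I)}$ of level $\omega d$ with $\Lambda_i = \Lambda_{i+\eta}$ for all $i$ — equivalently, by the above, a weight $\tuple{\Lambda} \in \mathbb{N}^{(I/\eta I)}$ of level $d$ — and I would build $\tuple{\varLambda}$ by brute force: pick one representative $i_C$ in each coset $C$ of $\eta I$, set $\varLambda_{i_C} \coloneqq \Lambda_{i_C}$ and $\varLambda_{i'} \coloneqq 0$ for the remaining $i' \in C$. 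Then $\tuple{\varLambda} \in \mathbb{N}^{(I)}$ has level $\sum_C \Lambda_{i_C} = d$, and for every $i \in I$ the sum of $\varLambda$ over the coset $i + \eta I$ equals $\Lambda_{i_C} = \Lambda_i$, which is exactly the rewritten form of \eqref{equation:removing_repetitions-definition_Lambda_w/o_j}; hence $\tuple{\Lambda}$ comes from $\tuple{\varLambda}$.

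I do not expect a genuine obstacle: the entire content is the reindexing that turns the double sum of \eqref{equation:removing_repetitions-definition_Lambda_w/o_j} into a single sum over the coset $i + \eta I$, after which both implications are immediate. The only points requiring a little care are the degenerate case $e = \infty$ (then $\eta = 0$, $\omega = 1$, $\eta I = \{0\}$, and the statement is vacuous, $\tuple{\varLambda} = \tuple{\Lambda}$), and making sure that the well-definedness of $\eta a$ for $a \in \mathbb{Z}/\omega\mathbb{Z}$ and the bijectivity of $a \mapsto \eta a$ onto $\eta I$ are quoted from (the proof of) Lemma~\ref{lemma:remove_repetitions} rather than reproved.
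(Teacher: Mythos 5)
Your proof is correct and follows essentially the same route as the paper: both rewrite \eqref{equation:removing_repetitions-definition_Lambda_w/o_j} via Lemma~\ref{lemma:remove_repetitions} as $\Lambda_i = \sum_{i' \in i + \eta I} \varLambda_{i'}$, read off the coset-invariance for the necessary direction, and for the converse construct $\tuple{\varLambda}$ by distributing each $\Lambda_\gamma$ over the coset $\gamma$ (your choice of putting all of $\Lambda_\gamma$ on one representative is just a particular instance of the paper's arbitrary choice of $\omega$ non-negative integers summing to $\Lambda_\gamma$). Your extra bookkeeping of the levels $d$ and $\omega d$ is a welcome, if implicit in the paper, verification.
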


\begin{proof}
First, by applying Lemma~\ref{lemma:remove_repetitions} to  \eqref{equation:removing_repetitions-definition_Lambda_w/o_j}, we obtain the equivalent equality:
\begin{equation}
\label{equation:removing_repetitions-Lambda_widetildeLambda_w/o_j} 
\Lambda_i = \sum_{\substack{i' \in I \\ i' - i \in \eta I}} \varLambda_{i'} = \sum_{i' \in i + \eta I} \varLambda_{i'},
\end{equation}
for all $i \in I$.
The necessary condition is hence straightforward. We now suppose that $\tuple{\Lambda} \in \mathbb{N}^{(I)}$ factors to a weight  $\tuple{\Lambda} \in \mathbb{N}^{(I/\eta I)}$ of level $d$. 
 For any $\gamma \in I/\eta I$, we choose any $\omega$ non-negative integers $\varLambda_i$ for $i \in \gamma$ such that $\sum_{i \in \gamma} \varLambda_i = \Lambda_\gamma$. We conclude that \eqref{equation:removing_repetitions-Lambda_widetildeLambda_w/o_j} and thus \eqref{equation:removing_repetitions-definition_Lambda} hold since $\Lambda_i = \Lambda_\gamma$ if $i \in \gamma$.
\end{proof}

\begin{definition}
\label{definition:removing_repetitions-Hn_Hpn}
We write $\H_n^{\tuple{\Lambda}}(q, \zeta) \coloneqq \H_n^{\tuple{\varLambda}}(q, \zeta)$ and $\H_{p, n}^{\tuple{\Lambda}}(q) \coloneqq \H_{p, n}^{\tuple{\varLambda}}(q)$ if $\tuple{\Lambda} \in \mathbb{N}^{(I)}$ of level $\omega d$ and $\tuple{\varLambda} \in \mathbb{N}^{(I)}$ of level $d$ are as in Proposition~\ref{proposition:removing_repetitions-_Lambda}. In particular, the cyclotomic relation \eqref{relation:hecke_cyclo_S} in $\H_n^{\tuple{\Lambda}}(q, \zeta)$ is:
\[
\prod_{i \in I} \prod_{j \in J'} {(S - \zeta^j q^i)}^{\Lambda_i} = 0.
\]
\end{definition}

\section{Cyclotomic quiver Hecke algebras}
\label{section:CQHA}

Let $K$ be a set and $\Gamma$ be a loop-free quiver without multiple (directed) edges, with vertex set $K$. For $k, k' \in K$:
\begin{itemize}
\item we write $k \nrelbar k'$ if neither $(k, k')$ nor $(k', k)$ is an edge of $\Gamma$;
\item we write $k \to k'$ if $(k, k')$ is an edge of $\Gamma$ and $(k', k)$ is not;
\item we write $k \leftarrow k'$ if $k' \to k$;
\item we write $k \leftrightarrows k'$ if both $(k, k')$ and $(k', k)$ are edges of $\Gamma$.
\end{itemize}
If the next section we will define the (cyclotomic) quiver Hecke algebra associated with $\Gamma$. Furthermore, given an automorphism of this algebra built from an automorphism of the quiver, we will give a presentation of the fixed point subalgebra. Note that what follows has roughly the same outline as \cite{Bo, BoMa}.

\subsection{Definition}

Let $n \in \mathbb{N}^*$ and $\alpha = (\alpha_k)_{k \in K}$ be a (finitely supported) tuple of non-negative integers, whose sum is equal to $n$. We say that $\alpha$ is a \emph{$K$-composition of $n$} and we write $\alpha \comp_{K} n$.
%Let $K^\alpha$ be the subset of $K^n$ consisting of the elements $(k_1, \dots, k_n) \in K^n$ such that for all $k \in K$, there are exactly $\alpha_k$ integers $s \in \{1, \dots, n\}$ for which $k_s = k$.
Let $K^\alpha$ be the subset of $K^n$ consisting of the elements of $K^n$ which have, for any $k \in K$, exactly $\alpha_k$ components equal to $k$.
The sets $K^{\alpha}$ are the $\mathfrak{S}_n$-orbits of $K^n$, in particular they are finite.

Following \cite{KhLau1, KhLau2} and \cite{Rou}, we denote by $\sqH_\alpha(\Gamma)$ the \emph{quiver Hecke algebra  at $\alpha$ associated with $\Gamma$}. It is the unitary associative $F$-algebra  with generating set
\[
\{e(\tuple{k})\}_{\tuple{k} \in K^\alpha} \cup \{y_1, \dots, y_n\} \cup \{\psi_1, \dots, \psi_{n-1}\}
\]
together with the relations
\begin{subequations}
\label{relations:affine_quiver}
\begin{align}
\label{relation:quiver_sum_e(k)}
\sum_{\tuple{k} \in K^\alpha} e(\tuple{k}) &= 1,
\\
\label{relation:quiver_e(k)e(k')}
e(\tuple{k})e(\tuple{k}') &= \delta_{\tuple{k}, \tuple{k}'} e(\tuple{k}) ,
\\
\label{relation:quiver_y_ae(k)}
y_a e(\tuple{k}) &= e(\tuple{k}) y_a,
\\
\label{relation:quiver_psiae(k)}
\psi_a e(\tuple{k}) &= e(s_a \cdot \tuple{k}) \psi_a,
\\
\label{relation:quiver_ya_yb}
y_a y_b &= y_b y_a,
\\
\label{relation:quiver_psia_yb}
\psi_a y_b &= y_b \psi_a \quad \text{if } b \neq a, a+1,
\\
\label{relation:quiver_psia_psib}
\psi_a \psi_b &= \psi_b \psi_a \quad \text{if } |a-b| > 1,
\\
\label{relation:quiver_psia_ya+1}
\psi_a y_{a+1} e(\tuple{k}) &= \begin{cases}
(y_a \psi_a + 1)e(\tuple{k}) & \text{if } k_a = k_{a+1}, \\
y_a \psi_a e(\tuple{k}) & \text{if } k_a \neq k_{a+1},
\end{cases}
\\
\label{relation:quiver_ya+1_psia}
y_{a+1} \psi_a e(\tuple{k}) &= \begin{cases}
(\psi_a y_a + 1)e(\tuple{k}) & \text{if } k_a = k_{a+1}, \\
\psi_a y_a e(\tuple{k}) & \text{if } k_a \neq k_{a+1},
\end{cases}
\\
\displaybreak[0]
\label{relation:quiver_psia^2}
\psi_a^2 e(\tuple{k}) &= \begin{cases}
0 & \text{if } k_a = k_{a+1}, \\
e(\tuple{k}) & \text{if } k_a \nrelbar k_{a+1}, \\
(y_{a+1} - y_a)e(\tuple{k}) & \text{if } k_a \to k_{a+1}, \\
(y_a - y_{a+1})e(\tuple{k}) & \text{if } k_a \leftarrow k_{a+1}, \\
(y_{a+1} - y_a)(y_a - y_{a+1})e(\tuple{k}) & \text{if } k_a \leftrightarrows k_{a+1},
\end{cases}
\\
\label{relation:quiver_tresse}
\psi_{a+1}\psi_a \psi_{a+1}e(\tuple{k}) &= \begin{cases}
(\psi_a \psi_{a+1}\psi_a -1)e(\tuple{k}) & \text{if } k_{a+2} = k_a \to k_{a+1}, \\
(\psi_a \psi_{a+1}\psi_a +1)e(\tuple{k}) & \text{if } k_{a+2} = k_a \leftarrow k_{a+1}, \\
(\psi_a \psi_{a+1} \psi_a + 2y_{a+1} - y_a - y_{a+2})e(\tuple{k}) & \text{if } k_{a+2} = k_a \leftrightarrows k_{a+1}, \\
\psi_a \psi_{a+1} \psi_a e(\tuple{k}) & \text{otherwise.}
\end{cases}
\end{align}
\end{subequations}

The following proposition is easy to check (see \cite{KhLau1,KhLau2} or \cite{Rou}).

\begin{proposition}
\label{proposition:gradation_qH}
There is a unique $\mathbb{Z}$-grading on 
 $\sqH_\alpha(\Gamma)$ such that:
\begin{itemize}
\item for all $\tuple{k} \in K^\alpha, \deg e(\tuple{k}) \coloneqq 0$;
\item for all $\tuple{k} \in K^\alpha$ and $a \in \{1, \dots, n\}, \deg y_a e(\tuple{k}) \coloneqq 2$;
\item for all $\tuple{k} \in K^\alpha$ and $a \in \{1, \dots, n-1\}$:
\[
\deg \psi_a e(\tuple{k}) \coloneqq \begin{cases}
-2 & \text{if } k_a = k_{a+1},
\\
0 & \text{if } k_a \nrelbar k_{a+1},
\\
1 & \text{if } k_a \to k_{a+1} \text{ or } k_a \leftarrow k_{a+1},
\\
2 & \text{if } k_a \leftrightarrows k_{a+1}.
\end{cases}
\]
\end{itemize}
\end{proposition}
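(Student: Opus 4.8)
The plan is to verify that the proposed degree assignments are compatible with every defining relation of $\sqH_\alpha(\Gamma)$, and that they are forced, so that the grading exists and is unique. Since $\sqH_\alpha(\Gamma)$ is presented by generators and relations, giving a $\mathbb{Z}$-grading is the same as choosing a degree for each generator so that every relation in \eqref{relations:affine_quiver} is homogeneous; the grading is then unique because the generators generate. First I would observe that the relations \eqref{relation:quiver_sum_e(k)}, \eqref{relation:quiver_e(k)e(k')}, \eqref{relation:quiver_y_ae(k)}, \eqref{relation:quiver_ya_yb}, \eqref{relation:quiver_psia_yb}, \eqref{relation:quiver_psia_psib} are automatically homogeneous once the generators have well-defined degrees, since they only assert commutativity or orthogonality of idempotents (degree $0$ on each side), with the one subtlety that in \eqref{relation:quiver_psiae(k)} we must know $\deg \psi_a e(\tuple{k}) = \deg \psi_a e(s_a\cdot\tuple{k})$, i.e.\ that the degree of $\psi_a e(\tuple{k})$ depends only on the unordered pair $\{k_a, k_{a+1}\}$ — which is visibly the case from the displayed formula.

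Next I would check the remaining, genuinely constraining relations one at a time, splitting into the cases $k_a = k_{a+1}$, $k_a \nrelbar k_{a+1}$, $k_a \to k_{a+1}$, $k_a \leftarrow k_{a+1}$, $k_a \leftrightarrows k_{a+1}$. For \eqref{relation:quiver_psia_ya+1} and \eqref{relation:quiver_ya+1_psia}: when $k_a = k_{a+1}$ the left side has degree $-2 + 2 = 0$ and the right side is $(y_a\psi_a + 1)e(\tuple{k})$, with $\deg y_a \psi_a e(\tuple{k}) = 2 + (-2) = 0 = \deg e(\tuple{k})$, so both summands have degree $0$; when $k_a \neq k_{a+1}$ both sides equal $y_a\psi_a e(\tuple{k})$ and there is nothing to check. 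For \eqref{relation:quiver_psia^2} I would go through the five cases and confirm $2\deg\psi_a e(\tuple{k})$ matches the degree of the right-hand side: $k_a=k_{a+1}$ gives $-4$ but the right side is $0$ (the zero element is homogeneous of every degree, so this is fine); $k_a\nrelbar k_{a+1}$ gives $0=0$; $k_a\to k_{a+1}$ or $k_a\leftarrow k_{a+1}$ gives $2 = \deg(y_{a+1}-y_a)e(\tuple{k}) = 2$; $k_a\leftrightarrows k_{a+1}$ gives $4 = \deg(y_{a+1}-y_a)(y_a-y_{a+1})e(\tuple{k}) = 4$. For the braid relation \eqref{relation:quiver_tresse}, in the relevant cases $k_{a+2}=k_a$, so $\deg \psi_{a+1}e(\cdot) = \deg \psi_a e(\cdot)$ on the intermediate idempotents; one checks $3\deg\psi_a e(\tuple{k})$ on the left equals the degree of $\psi_a\psi_{a+1}\psi_a e(\tuple{k})$ plus a correction term, and the correction terms $\mp 1$, $2y_{a+1}-y_a-y_{a+2}$ have degrees $0$, $0$, $2$ respectively — matching $3\cdot 1 = 3 = \deg\psi_a\psi_{a+1}\psi_a e(\tuple{k})$ in the $\to$/$\leftarrow$ cases (so the $\mp 1$ correction, being degree $0$, would spoil homogeneity unless $3 = 3$, which it is — wait, $\deg\psi_a\psi_{a+1}\psi_a e = 3$ but the correction $\mp 1$ has degree $0 \neq 3$; here I must be careful). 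Let me reconsider: the right-hand side $(\psi_a\psi_{a+1}\psi_a \mp 1)e(\tuple{k})$ is homogeneous only if $\deg\psi_a\psi_{a+1}\psi_a e(\tuple{k}) = 0$, which holds precisely because in these cases one of the three adjacent pairs has $k_a\to k_{a+1}$ (degree $1$) and composing with the step $k_{a+1}\to k_{a+2}=k_a$ reversed etc.; the honest computation is $\deg\psi_a e + \deg\psi_{a+1} e + \deg\psi_a e$ where the middle $\psi_{a+1}$ sits between indices coloured $k_{a+1}$ and $k_{a+2}=k_a$, so its degree is that of the reversed edge, giving total $1 + 1 + 1$? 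No — if $k_a\to k_{a+1}$ then $k_{a+1}\leftarrow k_a = k_{a+2}$ so the middle factor has degree $1$ as well, total $3$, and then $\mp 1$ is inhomogeneous. So actually one must observe that the correct reading gives degree... I would work this out carefully in the writeup; the point is that the stated degree function does make \eqref{relation:quiver_tresse} homogeneous (this is a known fact from \cite{KhLau1,KhLau2,Rou}), and the main obstacle is simply organising the case analysis for \eqref{relation:quiver_psia^2} and \eqref{relation:quiver_tresse} cleanly rather than any real difficulty.

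Finally, for uniqueness: any $\mathbb{Z}$-grading on $\sqH_\alpha(\Gamma)$ in which the $e(\tuple{k})$ are idempotents summing to $1$ must put each $e(\tuple{k})$ in degree $0$ (a nonzero idempotent in a graded algebra with the relations above lies in degree $0$, or more simply we are choosing the grading so this holds and the problem only asks for the essentially unique one with these normalisations as displayed). Given that $\deg e(\tuple{k}) = 0$, $\deg y_a e(\tuple{k}) = 2$, and $\deg \psi_a e(\tuple{k})$ is prescribed, and these elements together with the relation $\sum e(\tuple{k}) = 1$ generate the algebra, the degree of every element is determined; hence the grading is unique. I would conclude by citing \cite{KhLau1, KhLau2, Rou} for the verification, presenting the above as the sketch, and note that the one step requiring care — and therefore the ``main obstacle'' — is the sign/degree bookkeeping in the braid relation \eqref{relation:quiver_tresse} and the quadratic relation \eqref{relation:quiver_psia^2}, where one must track that the listed correction terms are homogeneous of the correct degree in each oriented-edge case.
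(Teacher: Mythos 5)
Your overall strategy is the right one, and it is essentially what the paper delegates to \cite{KhLau1,KhLau2,Rou}: since $\sqH_\alpha(\Gamma)$ is given by generators and relations, it suffices to check that the prescribed degrees make every relation in \eqref{relations:affine_quiver} homogeneous, and uniqueness follows because $e(\tuple{k})$, $y_a=\sum_{\tuple{k}}y_ae(\tuple{k})$, $\psi_a=\sum_{\tuple{k}}\psi_ae(\tuple{k})$ generate. However, your verification breaks down at exactly the point you flag, the braid relation \eqref{relation:quiver_tresse}, and you leave it unresolved (``I would work this out carefully in the writeup'') before falling back on the citation; as written this is a genuine gap, and moreover the intermediate computation you record is wrong. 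The error is that you compute the degree of the middle factor of $\psi_{a+1}\psi_a\psi_{a+1}e(\tuple{k})$ using the original tuple $\tuple{k}$. You must instead push the idempotent through via \eqref{relation:quiver_psiae(k)}: writing $\psi_{a+1}\psi_a\psi_{a+1}e(\tuple{k})=\psi_{a+1}\,\psi_a e(s_{a+1}\cdot\tuple{k})\,\psi_{a+1}e(\tuple{k})$, the middle factor is governed by the entries in positions $a,a+1$ of $s_{a+1}\cdot\tuple{k}$, which are $k_a$ and $k_{a+2}=k_a$ --- \emph{equal} residues, hence degree $-2$, not $+1$. In the case $k_{a+2}=k_a\to k_{a+1}$ the total degree of the left-hand side is therefore $1+(-2)+1=0$, and the same bookkeeping on the right-hand side gives $\deg\psi_a\psi_{a+1}\psi_ae(\tuple{k})=1-2+1=0$, so the correction $\mp 1$ (degree $0$) is homogeneous of the same degree; the apparent total of $3$ that worried you never occurs. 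The same mechanism handles the case $k_{a+2}=k_a\leftrightarrows k_{a+1}$, where both sides have degree $2+(-2)+2=2$, matching the degree-$2$ correction $(2y_{a+1}-y_a-y_{a+2})e(\tuple{k})$.

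Once you insert this corrected computation (and note in general that whenever you move a $\psi$ past an idempotent the relevant pair of residues is the one read off the permuted tuple), the rest of your case analysis for \eqref{relation:quiver_psia_ya+1}, \eqref{relation:quiver_ya+1_psia} and \eqref{relation:quiver_psia^2} is fine, and your uniqueness argument is acceptable in the intended reading (uniqueness of the grading with the stated normalisations, which is immediate from the generators having prescribed degrees). So the proposal needs repair only at the braid relation, but since that is the one place where the verification is not purely mechanical, the fix above is required for the argument to stand on its own rather than reduce to the citation.
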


If $K$ is finite, we can define the quiver Hecke algebra $\sqH_n(\Gamma)$ as above, with generators
\[
\{e(\tuple{k})\}_{\tuple{k} \in K^n} \cup \{y_1, \dots, y_n\} \cup \{\psi_1, \dots, \psi_{n-1}\}
\]
and the relations
%\eqref{relation:quiver_e(k)e(k')}--\eqref{relation:quiver_tresse}
\eqref{relations:affine_quiver}, where the sum over $K^\alpha$ in \eqref{relation:quiver_sum_e(k)} is now over $K^n$. In that case, define for $\alpha \comp_{K} n$:
\begin{equation}
\label{equation:definition_e(alpha)_QH}
e(\alpha) \coloneqq \sum_{\tuple{k} \in K^\alpha} e(\tuple{k}) \in \sqH_n(\Gamma).
\end{equation}
The element $e(\alpha)$ is central and $\sqH_n(\Gamma)e(\alpha) \simeq \sqH_\alpha(\Gamma)$. We have:
\[
\sqH_n(\Gamma) \simeq \bigoplus_{\alpha \comp_{K} n} \sqH_\alpha(\Gamma),
\]
and this isomorphism can be considered as a definition of $\sqH_n(\Gamma)$ when  $K$ is infinite (note that in that case, the algebra $\sqH_n(\Gamma)$ is not unitary).

Now let $\tuple{\Lambda} = (\Lambda_k)_{k \in K} \in \mathbb{N}^{(K)}$ be a finitely-supported tuple of non-negative integers. We  define a particular case of cyclotomic quotient of $\sqH_\alpha(\Gamma)$ (see \cite{KanKa} for the general case).

\begin{definition}
\label{definition:cyclo_qH}
The \emph{cyclotomic} quiver Hecke algebra $\sqH_\alpha^{\tuple{\Lambda}}(\Gamma)$ is the quotient of the quiver Hecke algebra $\sqH_\alpha(\Gamma)$ by the two-sided ideal $\mathcal{I}_\alpha^{\tuple{\Lambda}}$ generated by the following relations:
\begin{equation}
\label{relation:quiver_cyclo}
y_1^{\Lambda_{k_1}} e(\tuple{k}) = 0, \qquad \text{for all }  \tuple{k} \in K^\alpha.
\end{equation}
\end{definition}
Note that the grading on $\sqH_\alpha(\Gamma)$ gives a grading on $\sqH_\alpha^{\tuple{\Lambda}}(\Gamma)$.
If $K$ is finite, the cyclotomic quotient $\sqH_n^{\tuple{\Lambda}}(\Gamma)$ is obtained by quotienting $\sqH_n(\Gamma)$ by the relations:
\[
y_1^{\Lambda_{k_1}} e(\tuple{k}) = 0, \qquad \text{for all } \tuple{k} \in K^n.
\]
Moreover, we have $e(\alpha)\sqH_n^{\tuple{\Lambda}}(\Gamma) \simeq \sqH_\alpha^{\tuple{\Lambda}}(\Gamma)$ and:
\[
\sqH_n^{\tuple{\Lambda}}(\Gamma) \simeq \bigoplus_{\alpha \comp_{K} n} \sqH_\alpha^{\tuple{\Lambda}}(\Gamma),
\]
and this can be considered as a definition of $\sqH_n^{\tuple{\Lambda}}(\Gamma)$ if $K$ is infinite.

\subsection{Properties of the underlying vector spaces}

For each $w \in \mathfrak{S}_n$, we choose a reduced expression $w = s_{a_1} \cdots s_{a_r}$ and we set:
\begin{equation}
\label{equation:definition_psi_w}
\psi_w \coloneqq \psi_{a_1} \cdots \psi_{a_r} \in \sqH_n(\Gamma).
\end{equation}
Let $\alpha \comp_{K} n$. We have the following theorem (\cite[Theorem 3.7]{Rou}, \cite[Theorem 2.5]{KhLau1}).

\begin{theoreme}
\label{theorem:base_quiver}
The family $\mathcal{B}_\alpha \coloneqq \{\psi_w y_1^{m_1} \cdots y_n^{m_n} e(\tuple{k}) : w \in \mathfrak{S}_n, m_a \in \mathbb{N}, \tuple{k} \in K^\alpha\}$ is a basis of the $F$-vector space $\sqH_\alpha(\Gamma)$.
\end{theoreme}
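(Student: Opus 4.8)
The plan is to prove the two parts of the statement --- that $\mathcal{B}_\alpha$ spans $\sqH_\alpha(\Gamma)$ over $F$ and that it is $F$-linearly independent --- by quite different methods.

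\emph{Spanning.} Set $V := \mathrm{span}_F(\mathcal{B}_\alpha)$. Since $1 = \sum_{\tuple{k}} e(\tuple{k})$ and each $e(\tuple{k})$ is the element of $\mathcal{B}_\alpha$ with $w$ trivial and all $m_a = 0$, it suffices to show, by induction on the number of $\psi$-generators in a product, that every product of generators lies in $V$. Given such a product, relations \eqref{relation:quiver_sum_e(k)}, \eqref{relation:quiver_e(k)e(k')}, \eqref{relation:quiver_y_ae(k)}, \eqref{relation:quiver_psiae(k)} reduce it to a word $\psi_{a_1}\cdots\psi_{a_m}\, y_1^{c_1}\cdots y_n^{c_n}\, e(\tuple{k})$: relations \eqref{relation:quiver_psia_yb}, \eqref{relation:quiver_psia_ya+1}, \eqref{relation:quiver_ya+1_psia}, \eqref{relation:quiver_ya_yb} are used to push all $y$'s to the right (the coloring $\tuple{k}$ dictating which case applies), the correction terms having strictly fewer $\psi$'s and lying in $V$ by induction. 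If $s_{a_1}\cdots s_{a_m}$ is not reduced in $\mathfrak{S}_n$, the braid relations \eqref{relation:quiver_psia_psib}, \eqref{relation:quiver_tresse} together with the quadratic relation \eqref{relation:quiver_psia^2} shorten the $\psi$-word, the corrections again having fewer $\psi$'s; if it is reduced, say for $w$, Matsumoto's theorem lets one pass from $\psi_{a_1}\cdots\psi_{a_m}$ to the fixed reduced word defining $\psi_w$ using only \eqref{relation:quiver_psia_psib} and \eqref{relation:quiver_tresse}, whose correction terms are again shorter in $\psi$-length. Thus the original product lies in $V$, closing the induction.

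\emph{Linear independence.} Here I would build the polynomial (``Demazure'') representation. Let $\mathcal{P}_\alpha := \bigoplus_{\tuple{k} \in K^\alpha} F[y_1,\dots,y_n]\,e(\tuple{k})$, a finite direct sum since $K^\alpha$ is finite. Let $e(\tuple{k})$ act as the projection onto the $\tuple{k}$-summand, let $y_a$ act by multiplication by $y_a$ on every summand, and let $\psi_a$ act on the $\tuple{k}$-summand by a twisted divided-difference operator: by $f \mapsto (s_a f - f)/(y_a - y_{a+1})$, staying in the $\tuple{k}$-summand, when $k_a = k_{a+1}$, and by $f \mapsto P_{k_a, k_{a+1}}(y_a, y_{a+1}) \cdot (s_a f)$, landing in the $(s_a\cdot\tuple{k})$-summand, when $k_a \neq k_{a+1}$, where $s_a$ swaps $y_a$ and $y_{a+1}$ and the polynomials $P$ are chosen so that $\psi_a^2$ reproduces the right-hand side of \eqref{relation:quiver_psia^2}. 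One then checks that these operators satisfy all of \eqref{relations:affine_quiver}; every relation but the deformed braid relation \eqref{relation:quiver_tresse} is a short computation, while \eqref{relation:quiver_tresse} requires a direct but finite verification with the chosen $P$'s. This yields an $F$-algebra homomorphism $\rho : \sqH_\alpha(\Gamma) \to \mathrm{End}_F(\mathcal{P}_\alpha)$, well defined straight from the generators, so the choice of reduced words in the definition of $\psi_w$ plays no role here.

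\emph{Extracting the basis.} Finally I would read off linear independence from $\rho$ by a triangularity argument. Evaluating $\rho\big(\psi_w\, y_1^{c_1}\cdots y_n^{c_n}\, e(\tuple{l})\big)$ on $1 \cdot e(\tuple{k})$ gives $\delta_{\tuple{l},\tuple{k}}$ times an explicit polynomial supported in the $(w\cdot\tuple{k})$-summand: the $\psi_w$-part sends $e(\tuple{k})$ to a definite product of the ``colored'' factors $P$ times $e(w\cdot\tuple{k})$, and the subsequent multiplications by $y_a$ multiply this by $y_1^{c_1}\cdots y_n^{c_n}$. Comparing leading terms in an order refining total degree, and recording which summand is hit, one recovers in turn $\tuple{k}$, then $w$, then the exponents $(c_a)$; hence these leading terms are pairwise distinct over $\mathcal{B}_\alpha$, so $\{\rho(b) : b \in \mathcal{B}_\alpha\}$, and a fortiori $\mathcal{B}_\alpha$ itself, is $F$-linearly independent. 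Combined with the spanning statement, $\mathcal{B}_\alpha$ is a basis. I expect the main obstacle to be the verification that the polynomial operators satisfy \eqref{relation:quiver_tresse} --- the only genuinely delicate relation, requiring the right choice of the $P$'s and a small identity among them --- together with the combinatorial bookkeeping (colorings, reduced words, term orders) needed to make the two triangularity-style arguments precise.
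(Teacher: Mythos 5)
This theorem is not proved in the paper at all: it is quoted from Khovanov--Lauda and Rouquier, and your outline (spanning by a straightening induction on the number of $\psi$'s, linear independence via a faithful polynomial representation) is precisely the strategy of those cited sources. Your spanning argument and your construction of the representation $\rho$ on $\mathcal{P}_\alpha$ are sound, including the choice of the twisting polynomials $P$ so that \eqref{relation:quiver_psia^2} holds and the acknowledgement that \eqref{relation:quiver_tresse} is the delicate verification.

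The genuine gap is in the final extraction step, where you test each $\rho\big(\psi_w y_1^{c_1}\cdots y_n^{c_n} e(\tuple{l})\big)$ only against the vectors $1\cdot e(\tuple{k})$. Note first that the order of operations is the reverse of what you describe: acting on $1\cdot e(\tuple{k})$ one applies $\rho(y_1^{c_1}\cdots y_n^{c_n}e(\tuple{k}))$ first and $\rho(\psi_w)$ last, so the relevant quantity is $\rho(\psi_w)$ applied to the monomial $y_1^{c_1}\cdots y_n^{c_n}$. More seriously, whenever $\tuple{k}$ has repeated entries the crossings on equal colours act by divided differences, which lower degree and kill constants and symmetric polynomials; the output is then \emph{not} ``a definite product of coloured factors times $e(w\cdot\tuple{k})$'', and evaluation at $1\cdot e(\tuple{k})$ is not injective on $\mathrm{span}_F(\mathcal{B}_\alpha)$. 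Concretely, for $n=2$ and $\tuple{k}=(i,i)$ one has $\rho\big(\psi_1 y_1 e(\tuple{k})\big)\big(1\,e(\tuple{k})\big)=\partial_1(y_1)\,e(\tuple{k})=-e(\tuple{k})$ and $\rho\big(\psi_1 y_1 y_2 e(\tuple{k})\big)\big(1\,e(\tuple{k})\big)=0$, so the nonzero elements $\psi_1 y_1 e(\tuple{k})+e(\tuple{k})$ and $\psi_1 y_1 y_2 e(\tuple{k})$ are annihilated by all your test vectors, and no comparison of leading terms of these outputs can separate $\mathcal{B}_\alpha$; moreover the summand reached only records the coset of $w$ modulo the stabiliser of $\tuple{k}$, so ``recovering $w$'' from it fails exactly in the repeated-colour case. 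The standard repair --- and what the cited proofs do --- is to argue at the level of operators rather than of a single evaluation: over the fraction field $F(y_1,\dots,y_n)$ the operator $\rho(\psi_w)e(\tuple{k})$ lies in the twisted group ring $F(y_1,\dots,y_n)\ast\mathfrak{S}_n$ and has the triangular form $f_w\,w+\sum_{u<w}f_u\,u$ (Bruhat order) with $f_w\neq 0$, because each crossing acts either as $P\,s_a$ or as $\tfrac{1}{y_a-y_{a+1}}(s_a-1)$. Linear independence of the group elements over $F(y_1,\dots,y_n)$ then forces all coefficients in a vanishing combination $\sum c_{w,\tuple{m},\tuple{k}}\,\psi_w y_1^{m_1}\cdots y_n^{m_n}e(\tuple{k})=0$ to vanish. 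With that replacement your proof matches the argument of the references; as written, the last step does not go through.
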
 

Let $\tuple{\Lambda} \in \mathbb{N}^{(K)}$ be a weight. It is not obvious that we can deduce a basis of the cyclotomic quiver Hecke algebra from the basis of Theorem~\ref{theorem:base_quiver}. With this in mind, let us give the following lemma.

\begin{lemme}[\protect{\cite[Lemma 2.1]{BrKl}}]
\label{lemma:ya_nilpotent}
The elements $y_a \in \sqH_\alpha^{\tuple{\Lambda}}(\Gamma)$ are nilpotent for any $a \in \{1, \dots, n\}$.
\end{lemme}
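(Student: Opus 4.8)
The plan is to induct on $a$. The base case $a=1$ is essentially the defining cyclotomic relation: for each $\tuple{k}\in K^\alpha$ we have $y_1^{\Lambda_{k_1}}e(\tuple{k})=0$ by \eqref{relation:quiver_cyclo}, and since $\sum_{\tuple{k}\in K^\alpha}e(\tuple{k})=1$ with the $e(\tuple{k})$ orthogonal idempotents commuting with $y_1$ (by \eqref{relation:quiver_y_ae(k)}), we get $y_1^{N}=0$ where $N\coloneqq\max_{\tuple{k}\in K^\alpha}\Lambda_{k_1}\le\max_{k\in K}\Lambda_k$. Since $K^\alpha$ is finite, $N$ is finite, so $y_1$ is nilpotent.

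For the inductive step, assume $y_a$ is nilpotent, say $y_a^{M}=0$, and show $y_{a+1}$ is nilpotent. The key is to express a power of $y_{a+1}$ in terms of $y_a$, $\psi_a$, $\psi_a^2$ using relations \eqref{relation:quiver_psia_ya+1} and \eqref{relation:quiver_ya+1_psia}. Working idempotent-block by idempotent-block (again legitimate since the $e(\tuple{k})$ sum to $1$, are orthogonal, and commute with all $y_b$), fix $\tuple{k}\in K^\alpha$. If $k_a\ne k_{a+1}$, then \eqref{relation:quiver_psia_ya+1} and \eqref{relation:quiver_ya+1_psia} give $\psi_a y_{a+1}e(\tuple{k})=y_a\psi_a e(\tuple{k})$ and $y_{a+1}\psi_a e(\tuple{k})=\psi_a y_a e(\tuple{k})$; if $k_a=k_{a+1}$ they give the ``$+1$'' versions. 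In either case one derives, after multiplying on both sides by $\psi_a$ and using \eqref{relation:quiver_psia^2} to rewrite $\psi_a^2 e(\tuple{k})$ as a polynomial in $y_a,y_{a+1}$ (one of $0$, $e(\tuple{k})$, $\pm(y_{a+1}-y_a)e(\tuple{k})$, or $-(y_{a+1}-y_a)^2 e(\tuple{k})$), a relation of the form $q(y_{a+1})e(\tuple{k})=\big(\text{element involving }y_a\text{ and }\psi_a\big)$. The cleanest route: observe $\psi_a^2$ commutes with $y_{a+1}e(\tuple{k})$ and, using the above, $\psi_a^2 y_{a+1}^m e(\tuple{k})$ can be pushed to an expression in which $\psi_a^2$ is replaced by a polynomial in $y_a,y_{a+1}$ while each application of $\psi_a$ swaps the roles of $y_a$ and $y_{a+1}$ up to lower-order correction terms in $y_a$. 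Iterating, any sufficiently high power $y_{a+1}^{L}e(\tuple{k})$ becomes a sum of terms each containing a factor $y_a^{M}$ (hence zero) — intuitively, $y_{a+1}$ and $y_a$ generate a commutative ring modulo the $\psi_a$-relations and $y_a^M=0$ forces $y_{a+1}$ to be integral over a nilpotent element, hence nilpotent. Summing over the finitely many $\tuple{k}\in K^\alpha$ yields $y_{a+1}^{L'}=0$ for some finite $L'$.

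The main obstacle is making the bookkeeping in the inductive step precise: one must track how the correction terms (the ``$+1$''s from \eqref{relation:quiver_psia_ya+1}/\eqref{relation:quiver_ya+1_psia} and the $y$-polynomials from \eqref{relation:quiver_psia^2}) proliferate, and argue that after enough applications \emph{every} surviving monomial carries a high enough power of $y_a$. I would formalize this by the following cleaner induction: show that for each $\tuple{k}$, the subalgebra of $e(\tuple{k})\sqH_\alpha^{\tuple{\Lambda}}(\Gamma)e(\tuple{k})$ generated by $y_a e(\tuple{k})$ and $y_{a+1}e(\tuple{k})$ has the property that $y_{a+1}e(\tuple{k})$ satisfies a monic polynomial with coefficients that are polynomials in $y_a e(\tuple{k})$ without constant term — this follows from a direct computation using only \eqref{relation:quiver_psia_ya+1}, \eqref{relation:quiver_ya+1_psia}, \eqref{relation:quiver_psia^2} and \eqref{relation:quiver_psia_yb} — and then invoke that in any (non-commutative) ring, an element satisfying such a polynomial over a nilpotent central-enough subring is nilpotent. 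Alternatively, and most economically, I would simply cite the fact that this is precisely \cite[Lemma~2.1]{BrKl}, whose argument (originally for the cyclic quiver, but formally identical here) runs exactly along these lines; since the paper attributes the lemma to that reference, reproducing Brundan–Kleshchev's short induction verbatim suffices.
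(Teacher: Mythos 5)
Your fallback option is in fact exactly what the paper does: the paper offers no argument of its own for this lemma, it simply cites \cite[Lemma 2.1]{BrKl} and adds a remark that the proof there, written for the quiver $\Gamma_e$, is immediately valid for the quivers used here. So on the citation route you and the paper coincide, and your base case and your treatment of the cases $k_a\nrelbar k_{a+1}$, $k_a\to k_{a+1}$, $k_a\leftarrow k_{a+1}$, $k_a\leftrightarrows k_{a+1}$ (where $y_{a+1}^N\psi_a^2e(\tuple{k})=\psi_a y_a^N e(s_a\cdot\tuple{k})\psi_a=0$ and one iterates) are consistent with the Brundan--Kleshchev induction.

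If, however, the sketch is meant as an actual proof, it has a genuine gap precisely where the content of \cite[Lemma 2.1]{BrKl} lies: the case $k_a=k_{a+1}$. There $\psi_a^2e(\tuple{k})=0$, so ``multiplying by $\psi_a$ and rewriting $\psi_a^2e(\tuple{k})$ as a polynomial in $y_a,y_{a+1}$'' gives nothing, and manipulations using only \eqref{relation:quiver_psia_ya+1} and \eqref{relation:quiver_ya+1_psia} telescope to tautologies such as $y_{a+1}^Me(\tuple{k})=(y_{a+1}^M-y_a^M)e(\tuple{k})$. Moreover, your ``cleaner induction'' claim --- that $y_{a+1}e(\tuple{k})$ satisfies a monic polynomial with coefficients in $y_aF[y_ae(\tuple{k})]$ ``by a direct computation using only'' \eqref{relation:quiver_psia_ya+1}, \eqref{relation:quiver_ya+1_psia}, \eqref{relation:quiver_psia^2} and \eqref{relation:quiver_psia_yb} --- is false as stated: these relations also hold in the affine algebra $\sqH_\alpha(\Gamma)$, where no such integrality relation exists, so the inductive nilpotency $y_a^N=0$ (ultimately the cyclotomic relation) must enter the computation itself, not only afterwards. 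One correct way to close the gap, close in spirit to Brundan--Kleshchev: for every polynomial $f$ one has $0=\psi_a y_a^N f\psi_a e(\tuple{k})$, and expanding the right-hand $\psi_a$ with Lemma~\ref{lemma:f_psia}, using $\psi_a^2e(\tuple{k})=0$, the Leibniz rule for $\partial_a$ and $y_a^N=0$, one gets $\psi_a\,h_{N-1}(y_a,y_{a+1})\,g\,e(\tuple{k})=0$ for all polynomials $g$ (here $h_{N-1}=-\partial_a(y_a^N)$ is the complete homogeneous symmetric polynomial); multiplying on the left by $y_{a+1}$ and using \eqref{relation:quiver_ya+1_psia} then yields $h_{N-1}(y_a,y_{a+1})e(\tuple{k})=0$, which is exactly the monic-in-$y_{a+1}$ relation modulo $y_a$ you wanted, whence $y_{a+1}^{N(N-1)}e(\tuple{k})=0$. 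Without an argument of this kind (or the verbatim citation), the inductive step is incomplete.
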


\begin{remarque}
The proof in \cite{BrKl} is given for the quiver $\Gamma_e$. However, we can see immediately that their proof is valid for the quivers we use here.
\end{remarque}

We obtain the following theorem (see also \cite[\textsection 4.1]{KanKa}).

\begin{theoreme}
\label{theorem:generating_family_quiver_cyclo}
The family:
\[
\mathcal{B}_\alpha^{\tuple{\Lambda}} \coloneqq \big\lbrace \psi_w y_1^{m_1} \cdots y_n^{m_n} e(\tuple{k}) : w \in \mathfrak{S}_n, m_a \in \mathbb{N}, \tuple{k} \in K^\alpha\big\rbrace,
\]
is finite and spans $\sqH_\alpha^{\tuple{\Lambda}}(\Gamma)$ over $F$. In particular, the $F$-vector space $\sqH_\alpha^{\tuple{\Lambda}}(\Gamma)$ is finite-dimensional.
\end{theoreme}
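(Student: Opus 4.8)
The plan is to push the affine basis $\mathcal{B}_\alpha$ of Theorem~\ref{theorem:base_quiver} through the canonical surjection $\sqH_\alpha(\Gamma) \to \sqH_\alpha^{\tuple{\Lambda}}(\Gamma) = \sqH_\alpha(\Gamma)/\mathcal{I}_\alpha^{\tuple{\Lambda}}$ and then use Lemma~\ref{lemma:ya_nilpotent} to collapse all but finitely many of the resulting elements to zero. First I would observe that, since $\mathcal{B}_\alpha$ is a basis — in particular a spanning set — of $\sqH_\alpha(\Gamma)$, its image under the quotient map spans $\sqH_\alpha^{\tuple{\Lambda}}(\Gamma)$ over $F$; and that image is, by definition, the set $\mathcal{B}_\alpha^{\tuple{\Lambda}}$ viewed inside the quotient. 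So the spanning assertion is immediate, and the real content is finiteness.

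For finiteness, I would invoke Lemma~\ref{lemma:ya_nilpotent}: each $y_a$ is nilpotent in $\sqH_\alpha^{\tuple{\Lambda}}(\Gamma)$. Since there are only finitely many indices $a \in \{1, \dots, n\}$, there is a single integer $N \geq 1$ with $y_a^N = 0$ in $\sqH_\alpha^{\tuple{\Lambda}}(\Gamma)$ for every $a$. Using that the $y_a$ commute (relation~\eqref{relation:quiver_ya_yb}), any monomial $y_1^{m_1} \cdots y_n^{m_n} e(\tuple{k})$ with $m_a \geq N$ for some $a$ vanishes in the quotient. Consequently
\[
\mathcal{B}_\alpha^{\tuple{\Lambda}} \subseteq \{0\} \cup \big\{ \psi_w y_1^{m_1} \cdots y_n^{m_n} e(\tuple{k}) : w \in \mathfrak{S}_n,\ 0 \le m_a < N,\ \tuple{k} \in K^\alpha \big\}.
\]
The right-hand side is a finite set: $\mathfrak{S}_n$ is finite, the exponent tuples range over $\{0, \dots, N-1\}^n$, and $K^\alpha$ is finite (being an $\mathfrak{S}_n$-orbit in $K^n$, as noted when it was introduced). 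Hence $\mathcal{B}_\alpha^{\tuple{\Lambda}}$ is finite, and since it spans $\sqH_\alpha^{\tuple{\Lambda}}(\Gamma)$ over $F$, the algebra is finite-dimensional.

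There is no real obstacle beyond Lemma~\ref{lemma:ya_nilpotent} itself, which we are allowed to assume; the only points that require care are extracting a nilpotency exponent $N$ valid for all $a$ simultaneously (rather than one $N_a$ per index) and recalling that $K^\alpha$ is finite. It is also worth a sentence to stress that, although $\mathcal{B}_\alpha^{\tuple{\Lambda}}$ is indexed by the same infinite data as $\mathcal{B}_\alpha$, as a \emph{set} of elements of the quotient it is finite precisely because infinitely many of those indexing tuples now produce the zero element.
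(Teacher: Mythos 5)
Your argument is correct and is exactly the one the paper intends (the paper states the theorem without writing out the proof, but it places Lemma~\ref{lemma:ya_nilpotent} immediately before it precisely so that the image of the affine basis of Theorem~\ref{theorem:base_quiver} spans the quotient and the nilpotency of the commuting $y_a$ kills all but finitely many monomials). Your added care about choosing a single nilpotency exponent $N$ and recalling that $K^\alpha$ is finite is exactly what is needed, so there is nothing to correct.
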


\subsection{Fixed point subalgebra}
\label{subsection:fixed_subalgebra}

Let $\sigma$ be an automorphism of $\Gamma$, that is:
\begin{itemize}
\item the map $\sigma : K \to K$ is a bijection;
\item if $(k, k') \in K^2$ is an edge of $\Gamma$ then $(\sigma(k), \sigma(k')) \in K^2$ is also an edge of $\Gamma$.
\end{itemize}

We assume that for any $p_1 \in \{1, \dots, p-1\}$ and for any $k \in K$ we have $\sigma^{p_1}(k) \neq k = \sigma^p(k)$, in particular, we have $\sigma^p = \mathrm{id}_K$.

\begin{remarque}
Everything in this subsection \textsection\ref{subsection:fixed_subalgebra} remains true if we only assume that $\sigma^p = \mathrm{id}_K$: we just have to modify the sentence containing \eqref{equation:gamma_p'} and equalities \eqref{equation:e(gamma)_sum_e(sigma^m(k))}, \eqref{equation:cyclotomic_case-to_change_p'}. Note that the automorphism involved in the proof of the main theorem (the automorphism which is defined in \eqref{equation:intertwining-definition_sigma}) satisfies the above stronger condition.
\end{remarque}

Since $\sigma^p = \mathrm{id}_K$, we have, for $k, k' \in K$:
\[
(k, k') \in K^2 \text{ is an edge of } \Gamma \iff (\sigma(k), \sigma(k')) \in K^2 \text{ is an edge of } \Gamma,
\]
thus we deduce the following lemma.

\begin{lemme}
\label{lemma:->_compatible}
Let $k, k' \in K$. We have:
\[
k \to k' \iff \sigma(k) \to \sigma(k),
\]
and there are similar equivalences for $\leftarrow, \leftrightarrows, =$ and $\neq$.
\end{lemme}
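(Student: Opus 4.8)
The plan is to prove Lemma~\ref{lemma:->_compatible} by directly unwinding the definitions of the relations $\to$, $\leftarrow$, $\leftrightarrows$, $\nrelbar$ in terms of which ordered pairs are edges of $\Gamma$, and then invoking the biconditional displayed just before the statement, namely that $(k,k')$ is an edge of $\Gamma$ if and only if $(\sigma(k),\sigma(k'))$ is an edge of $\Gamma$. That biconditional is itself an immediate consequence of $\sigma^p = \mathrm{id}_K$: the forward implication is part of the definition of a quiver automorphism, and applying it $p-1$ further times to the pair $(\sigma(k),\sigma(k'))$ gives back $(\sigma^p(k),\sigma^p(k')) = (k,k')$, hence the reverse implication. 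So the only real content is a bookkeeping argument on the four mutually exclusive cases.

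Concretely, I would argue as follows. Write $E$ for the edge set of $\Gamma$, viewed as a set of ordered pairs. By the displayed biconditional, for all $k,k' \in K$ we have $(k,k') \in E \iff (\sigma(k),\sigma(k')) \in E$, and also (applying this with $k,k'$ swapped) $(k',k) \in E \iff (\sigma(k'),\sigma(k)) \in E$. Now $k \to k'$ means by definition $(k,k') \in E$ and $(k',k) \notin E$; conjoining the two equivalences above shows this holds if and only if $(\sigma(k),\sigma(k')) \in E$ and $(\sigma(k'),\sigma(k)) \notin E$, i.e. if and only if $\sigma(k) \to \sigma(k')$. (I note in passing that the displayed statement ``$\sigma(k) \to \sigma(k)$'' in the lemma is a typo for $\sigma(k) \to \sigma(k')$; the proof establishes the latter.) The cases $\leftarrow$, $\leftrightarrows$, $\nrelbar$ are handled the same way: $k \leftarrow k'$ is $k' \to k$, which by the case just proved is equivalent to $\sigma(k') \to \sigma(k)$, i.e. $\sigma(k) \leftarrow \sigma(k')$; the relation $k \leftrightarrows k'$ is $(k,k') \in E$ and $(k',k) \in E$, equivalent to $(\sigma(k),\sigma(k')) \in E$ and $(\sigma(k'),\sigma(k)) \in E$; and $k \nrelbar k'$ is the negation of all of these, so it too transfers. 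Since these four relations together with $=$ and $\neq$ exhaust and partition the possibilities for an (unordered) pair of vertices, and $=$, $\neq$ are visibly preserved by the bijection $\sigma$, the lemma follows.

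There is essentially no obstacle here: the statement is a routine transfer of a structural property along an automorphism, and everything reduces to the one-line observation that $\sigma^p = \mathrm{id}_K$ upgrades the ``edges map to edges'' half of the automorphism axiom to a genuine equivalence. The only thing to be careful about is to phrase the case analysis so that each of $\to$, $\leftarrow$, $\leftrightarrows$, $\nrelbar$ is expressed purely as a Boolean combination of the atomic statements ``$(x,y)$ is an edge'', after which the equivalence propagates mechanically; this is exactly why the displayed biconditional is stated immediately before the lemma. I would keep the written proof to two or three sentences, treating the $\to$ case in full and remarking that the others are identical.
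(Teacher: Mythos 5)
Your proof is correct and follows exactly the paper's route: the paper states the edge biconditional $(k,k')\in E \iff (\sigma(k),\sigma(k'))\in E$ (a consequence of $\sigma^p = \mathrm{id}_K$) immediately before the lemma and simply ``deduces'' the lemma from it, which is precisely the case-by-case unwinding you spell out. You are also right that ``$\sigma(k) \to \sigma(k)$'' in the statement is a typo for $\sigma(k) \to \sigma(k')$.
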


The map $\sigma$ naturally induces a map $\sigma : K^n \to K^n$, defined by $\sigma(\tuple{k}) \coloneqq (\sigma(k_1), \dots, \sigma(k_n))$. 
For $\alpha \comp_{K} n$, the following lemma explains how $\sigma : K^n \to K^n$ restricts to $K^\alpha$ (compare to \cite[after Lemma 5.3.2]{Bo}).

\begin{lemme}
\label{lemma:fixed_subalgebra-sigma_cdot_alpha}
For $\alpha \comp_{K} n$, the map $\sigma : K^n \to K^n$ maps $K^\alpha$ onto $K^{\sigma \cdot \alpha}$, where $\sigma \cdot \alpha$ is the $K$-composition of $n$ given by:
\[
(\sigma \cdot \alpha)_k \coloneqq \alpha_{\sigma^{-1}(k)}, \qquad \text{for all } k \in K.
\]
\end{lemme}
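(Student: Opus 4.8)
The plan is to check this directly from the defining multiplicity description of the sets $K^\alpha$, the only real input being that $\sigma\colon K\to K$ is a bijection. First I would verify that $\sigma\cdot\alpha$ is indeed a $K$-composition of $n$: it is finitely supported since $\alpha$ is and $\sigma$ is a bijection, and $\sum_{k\in K}(\sigma\cdot\alpha)_k=\sum_{k\in K}\alpha_{\sigma^{-1}(k)}=\sum_{k'\in K}\alpha_{k'}=n$ after reindexing the sum along the bijection $\sigma$; hence $\sigma\cdot\alpha\comp_K n$.

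Next, I would fix $\tuple{k}\in K^\alpha$ together with a vertex $k\in K$, and count the number of indices $a\in\{1,\dots,n\}$ for which the $a$-th component $\sigma(k_a)$ of $\sigma(\tuple{k})$ equals $k$. Since $\sigma$ is injective, this is exactly the number of $a$ with $k_a=\sigma^{-1}(k)$, which equals $\alpha_{\sigma^{-1}(k)}=(\sigma\cdot\alpha)_k$ because $\tuple{k}\in K^\alpha$. As this holds for every $k\in K$, we conclude $\sigma(\tuple{k})\in K^{\sigma\cdot\alpha}$, so $\sigma$ maps $K^\alpha$ into $K^{\sigma\cdot\alpha}$.

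For surjectivity, I would observe that the componentwise map induced by $\sigma^{-1}$ (equivalently by $\sigma^{p-1}$, using $\sigma^p=\mathrm{id}_K$) is a two-sided inverse of the componentwise map induced by $\sigma$ on $K^n$. Applying the previous paragraph to $\sigma^{-1}$ in place of $\sigma$ shows that this inverse sends $K^{\sigma\cdot\alpha}$ into $K^{\sigma^{-1}\cdot(\sigma\cdot\alpha)}$, and a direct computation gives $(\sigma^{-1}\cdot(\sigma\cdot\alpha))_k=(\sigma\cdot\alpha)_{\sigma(k)}=\alpha_{\sigma^{-1}(\sigma(k))}=\alpha_k$, i.e.\ $\sigma^{-1}\cdot(\sigma\cdot\alpha)=\alpha$. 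Therefore $\sigma$ restricts to a bijection from $K^\alpha$ onto $K^{\sigma\cdot\alpha}$, as claimed.

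There is essentially no obstacle here; the computation is pure bookkeeping. The one point requiring a little care is to keep the inverse in the definition $(\sigma\cdot\alpha)_k\coloneqq\alpha_{\sigma^{-1}(k)}$ consistent throughout, so that the identity $\sigma^{-1}\cdot(\sigma\cdot\alpha)=\alpha$ (and, more generally, the fact that $\alpha\mapsto\sigma\cdot\alpha$ defines a left action of the cyclic group generated by $\sigma$ on $K$-compositions of $n$) comes out with the correct variance.
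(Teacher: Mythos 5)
Your proof is correct and follows essentially the same component-counting argument as the paper; the paper packages the two directions as a single chain of equivalences ($\tuple{k}\in K^\alpha \iff \sigma(\tuple{k})\in K^{\sigma\cdot\alpha}$), whereas you prove the inclusion and then obtain surjectivity by running the same count for $\sigma^{-1}$, which amounts to the same thing.
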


\begin{proof}
Let $\tuple{k} \in K^n$. We have:
\begin{align*}
&
\text{for all } k \in K, \tuple{k} \text{ has } \alpha_k \text{ components equal to } k
\\
\iff&
\text{for all }  k \in K, \sigma(\tuple{k}) \text{ has } \alpha_k \text{ components equal to } \sigma(k)
\\
\iff&
\text{for all }  k \in K, \sigma(\tuple{k}) \text{ has } \alpha_{\sigma^{-1}(k)} \text{ components equal to } k
\\
\iff&
\text{for all }  k \in K, \sigma(\tuple{k}) \text{ has } (\sigma \cdot \alpha)_k \text{ components equal to } k.
\end{align*}
We conclude that $\tuple{k} \in K^\alpha \iff \sigma(\tuple{k}) \in K^{\sigma \cdot \alpha}$.
%
%For any $k \in K$, the tuple $\tuple{k}$ has $\alpha_k$ components equal to $k$ if and only if for any $k$  the tuple $\sigma(\tuple{k})$ has $\alpha_k$ components equal to $k$  We have:
%\begin{align*}
%\tuple{k} \in K^\alpha
%&\iff
%\forall k \in K, \#\{s : k_s = k\} = \alpha_k
%\\
%&\iff
%\forall k \in K, \#\{s : \sigma(k_s) = \sigma(k) \} = \alpha_k
%\\
%&\iff
%\forall k \in K, \#\{s : \sigma(k_s) = k \} = \alpha_{\sigma^{-1}(k)}
%\\
%&\iff
%\forall k \in K, \#\{s : \sigma(k_s) = k\} = (\sigma \cdot \alpha)_k
%\\
%\tuple{k} \in K^\alpha
%&\iff
%\sigma \cdot \tuple{k} \in K^{\sigma \cdot \alpha}.
%\end{align*}
\end{proof}

We  can now explain how $\sigma$ induces an isomorphism between (cyclotomic) quiver Hecke algebras. We will also give a presentation for the fixed point subalgebras.

\subsubsection{Affine case}
\label{subsubsection:fixed_affine_subalgebra}

In the affine case, we will be able to give a basis for the subalgebra of the fixed points of $\sigma$. As an easy consequence, we will give a presentation of this subalgebra.

\begin{theoreme}
\label{theorem:definition_sigma_quiver}
Let $\alpha \comp_{K} n$. There is a well-defined algebra homomorphism $\sigma : \sqH_\alpha(\Gamma) \to \sqH_{\sigma \cdot \alpha}(\Gamma)$ given by:
\begin{align*}
\sigma(e(\tuple{k})) &\coloneqq e(\sigma(\tuple{k})), &&\text{for all } \tuple{k} \in K^\alpha,
\\
\sigma(y_a) &\coloneqq y_a, &&\text{for all } a \in \{1, \dots, n\},
\\
\sigma(\psi_a) &\coloneqq \psi_a, &&\text{for all } a \in \{1, \dots, n-1\}.
\end{align*}
\end{theoreme}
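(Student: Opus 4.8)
The plan is to verify that the defining relations of $\sqH_\alpha(\Gamma)$ listed in \eqref{relations:affine_quiver} are preserved by the assignment above, which suffices since $\sqH_\alpha(\Gamma)$ is given by generators and relations. By Lemma~\ref{lemma:fixed_subalgebra-sigma_cdot_alpha} the map $\sigma$ sends $K^\alpha$ bijectively onto $K^{\sigma\cdot\alpha}$, so the proposed images of the idempotents $e(\tuple{k})$ genuinely lie in $\sqH_{\sigma\cdot\alpha}(\Gamma)$ and, moreover, $\{e(\sigma(\tuple{k}))\}_{\tuple{k}\in K^\alpha} = \{e(\tuple{k}')\}_{\tuple{k}'\in K^{\sigma\cdot\alpha}}$; this immediately gives relations \eqref{relation:quiver_sum_e(k)} and \eqref{relation:quiver_e(k)e(k')} on the image side. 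The relations \eqref{relation:quiver_y_ae(k)}, \eqref{relation:quiver_ya_yb}, \eqref{relation:quiver_psia_yb}, \eqref{relation:quiver_psia_psib} involve no colour data at all, so they transport verbatim. For \eqref{relation:quiver_psiae(k)} one uses that $\sigma$ commutes with the $\mathfrak{S}_n$-action on $K^n$, i.e.\ $\sigma(s_a\cdot\tuple{k}) = s_a\cdot\sigma(\tuple{k})$, which is clear from the componentwise definition of $\sigma$ on $K^n$.

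For the remaining relations \eqref{relation:quiver_psia_ya+1}, \eqref{relation:quiver_ya+1_psia}, \eqref{relation:quiver_psia^2} and \eqref{relation:quiver_tresse}, the right-hand sides are governed entirely by the relative positions (in $\Gamma$) of the consecutive entries $k_a, k_{a+1}$ (and $k_{a+2}$ for the braid relation). The key point is Lemma~\ref{lemma:->_compatible}: since $\sigma$ is a quiver automorphism with $\sigma^p = \mathrm{id}_K$, we have $k_a = k_{a+1} \iff \sigma(k_a) = \sigma(k_{a+1})$, $k_a \nrelbar k_{a+1} \iff \sigma(k_a) \nrelbar \sigma(k_{a+1})$, $k_a \to k_{a+1} \iff \sigma(k_a)\to\sigma(k_{a+1})$, and likewise for $\leftarrow$ and $\leftrightarrows$. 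Hence for each $\tuple{k}\in K^\alpha$ the case of the relation that applies to $e(\tuple{k})$ is exactly the case that applies to $e(\sigma(\tuple{k}))$, and since $\sigma$ fixes every $y_a$ and every $\psi_a$, both sides of each such relation are sent to the corresponding relation in $\sqH_{\sigma\cdot\alpha}(\Gamma)$. Therefore all defining relations hold in the image, and $\sigma$ extends to a well-defined algebra homomorphism $\sqH_\alpha(\Gamma)\to\sqH_{\sigma\cdot\alpha}(\Gamma)$.

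The main (and only mild) obstacle is purely bookkeeping: one must match up the branches of the case-split definitions in \eqref{relation:quiver_psia^2} and \eqref{relation:quiver_tresse} under $\sigma$, checking that no two distinct branches can collapse — this is precisely what Lemma~\ref{lemma:->_compatible} rules out, since the five alternatives $=$, $\nrelbar$, $\to$, $\leftarrow$, $\leftrightarrows$ for an ordered pair are mutually exclusive and each is $\sigma$-invariant. No hypothesis beyond $\sigma^p = \mathrm{id}_K$ (hence $\sigma$ bijective) is needed for this statement; the stronger freeness assumption on $\sigma$ will only intervene later, when passing to the cyclotomic quotient and to the fixed-point subalgebra. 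One last remark: bijectivity of $\sigma$ as a map of algebras follows formally afterwards, since applying the construction to $\sigma^{-1}$ (another quiver automorphism of the same order) yields a two-sided inverse; but this is not part of the present statement, which only asserts that the homomorphism is well defined.
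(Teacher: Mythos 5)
Your proposal is correct and follows essentially the same route as the paper: check the defining relations \eqref{relations:affine_quiver}, using Lemma~\ref{lemma:->_compatible} for the case-split relations, the componentwise action $\sigma(\tuple{k})_a = \sigma(k_a)$, and the bijectivity of $\sigma : K^\alpha \to K^{\sigma\cdot\alpha}$ for \eqref{relation:quiver_sum_e(k)}. Your added remarks (mutual exclusivity of the five branch conditions, that only $\sigma^p = \mathrm{id}_K$ is needed) are consistent with the paper's own observations.
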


\begin{proof}
We check the different relations
%\eqref{relation:quiver_sum_e(k)}--\eqref{relation:quiver_tresse}
\eqref{relations:affine_quiver}, thanks to Lemma~\ref{lemma:->_compatible} and the following fact:
\[
\sigma(\tuple{k})_a = \sigma(k_a),
\]
for all $a \in \{1, \dots, n\}$ and $\tuple{k} \in K^n$.
Note that to prove \eqref{relation:quiver_sum_e(k)} we use the additional fact that $\sigma : K^\alpha \to K^{\sigma \cdot \alpha}$ is a bijection.
\end{proof}

\begin{remarque}
\label{remark:sigma_qH_graded}
By Lemma~\ref{lemma:->_compatible}, the homomorphism $\sigma : \sqH_\alpha(\Gamma) \to \sqH_{\sigma \cdot \alpha}(\Gamma)$ is homogeneous with respect to the grading given in Proposition~\ref{proposition:gradation_qH}.
\end{remarque}

As in Section~\ref{section:AK}, we want to study the fixed points of $\sigma$. To that extent, we first need to find an algebra which is stable under $\sigma$.
Let $[\alpha]$ be the orbit of $\alpha$ under the action of $\langle\sigma\rangle$. Note that since $\sigma^p = \mathrm{id}_{K}$, the cardinality of $[\alpha]$ is at most $p$. For $\alpha \comp_{K} n$ we define the following finite subset of $K^n$:
\begin{equation}
\label{equation:affine_case-K[alpha]}
K^{[\alpha]} \coloneqq \bigsqcup_{\beta \in [\alpha]} K^\beta,
\end{equation}
and similarly we define the following unitary algebra:
\begin{equation}
\label{equation:definition_qH[alpha]}
\sqH_{[\alpha]}(\Gamma) \coloneqq \bigoplus_{\beta \in [\alpha]} \sqH_\beta(\Gamma).
\end{equation}
 We obtain an \emph{automorphism} $\sigma : \sqH_{[\alpha]}(\Gamma) \to \sqH_{[\alpha]}(\Gamma)$.

\begin{remarque}
We have $\sqH_n(\Gamma) \simeq \oplus_{[\alpha]} \sqH_{[\alpha]}(\Gamma)$, in particular,  for $\tuple{k} \in K^n$ the idempotent $e(\tuple{k})$ of $\sqH_n(\Gamma)$ belongs to $\sqH_{[\alpha]}(\Gamma)$ if and only if $\tuple{k} \in K^{[\alpha]}$.
\end{remarque}

We consider the equivalence relation $\sim$ on $K$ generated by:
\begin{equation}
\label{equation:definition_tilde}
k \sim \sigma(k), \qquad \text{for all }  k \in K.
\end{equation}
We extend it to $K^{[\alpha]}$ by:
\begin{equation}
\label{equation:definition_tilde_tuple}
\tuple{k} \sim \sigma(\tuple{k}), \qquad \text{for all } \tuple{k} \in K^{[\alpha]}.
\end{equation}

\begin{definition}
\label{definition:affine_case-Kalphasigma}
We write $K^{[\alpha]}_\sigma$ for the quotient set $K^{[\alpha]} / {\sim}$.
\end{definition}

 In particular, each element $\gamma \in K^{[\alpha]}_\sigma$ has cardinality $p$ and is of the form:
\begin{equation}
\label{equation:gamma_p'}
\gamma = \{\tuple{k}, \sigma(\tuple{k}), \dots, \sigma^{p - 1}(\tuple{k})\},
\end{equation}
with $\tuple{k} \in K^{[\alpha]}$.

\begin{definition}
For $\gamma \in K^{[\alpha]}_\sigma$, we define:
\[
e(\gamma) \coloneqq \sum_{\substack{\tuple{k} \in K^{[\alpha]} \\ \tuple{k} \in \gamma}} e(\tuple{k}).
\]
\end{definition}

These elements $e(\gamma)$ have the property of being fixed by $\sigma$. Note that for any $\tuple{k} \in \gamma$, by \eqref{equation:gamma_p'} we have:
\begin{equation}
\label{equation:e(gamma)_sum_e(sigma^m(k))}
e(\gamma) = \sum_{m = 0}^{p-1} e(\sigma^m(\tuple{k})).
\end{equation}

We now give the analogue of Proposition~\ref{proposition:basis_fixed_point_Hecke}, by describing all the fixed points of $\sigma$.

\begin{theoreme}
\label{theorem:quiver_fixed_point}
The following family:
\[
\mathcal{B}_{[\alpha]}^\sigma \coloneqq \big\lbrace\psi_w y_1^{m_1} \cdots y_n^{m_n} e(\gamma) : w \in \mathfrak{S}_n, m_a \in \mathbb{N}, \gamma \in K^{[\alpha]}_\sigma\big\rbrace,
\]
is an $F$-basis of $\sqH_{[\alpha]}(\Gamma)^\sigma$, the vector space of $\sigma$-fixed points of $\sqH_{[\alpha]}(\Gamma)$.
\end{theoreme}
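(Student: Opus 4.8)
\emph{Proof strategy.} The plan is to mimic the proof of Proposition~\ref{proposition:basis_fixed_point_Hecke}: I will show that $\sigma$ merely permutes a known $F$-basis of $\sqH_{[\alpha]}(\Gamma)$, and then invoke the elementary fact that the fixed subspace of a permutation module has the orbit sums as a basis.

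First I would record a basis of $\sqH_{[\alpha]}(\Gamma)$. Applying Theorem~\ref{theorem:base_quiver} to each $\beta \in [\alpha]$ and using the decomposition \eqref{equation:definition_qH[alpha]}, the family
\[
\mathcal{B}_{[\alpha]} \coloneqq \bigl\{\psi_w y_1^{m_1} \cdots y_n^{m_n} e(\tuple{k}) : w \in \mathfrak{S}_n,\ m_a \in \mathbb{N},\ \tuple{k} \in K^{[\alpha]}\bigr\}
\]
is an $F$-basis of $\sqH_{[\alpha]}(\Gamma)$, where $K^{[\alpha]}$ is the set of \eqref{equation:affine_case-K[alpha]}. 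Next I would observe, using the defining formulas of Theorem~\ref{theorem:definition_sigma_quiver} together with $\sigma(\psi_a)=\psi_a$ (so that $\sigma(\psi_w)=\psi_w$ for the fixed reduced expression chosen to define $\psi_w$) and $\sigma(y_a)=y_a$, that
\[
\sigma\bigl(\psi_w y_1^{m_1} \cdots y_n^{m_n} e(\tuple{k})\bigr) = \psi_w y_1^{m_1} \cdots y_n^{m_n} e(\sigma(\tuple{k})).
\]
Thus $\sigma$ acts on the index set of $\mathcal{B}_{[\alpha]}$ by fixing the pair $(w,\tuple{m})$ and sending $\tuple{k}$ to $\sigma(\tuple{k})$; in particular $\mathcal{B}_{[\alpha]}$ is $\sigma$-stable, and $\sqH_{[\alpha]}(\Gamma)$ becomes a permutation module over the group ring of the cyclic group $\langle\sigma\rangle$ of order $p$.

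I would then conclude by the standard fact that, for a permutation action of a group on an $F$-basis of a vector space, the fixed subspace has basis the orbit sums (this is a statement about permutation modules and involves no division, so the characteristic of $F$ is irrelevant). The orbits on $\mathcal{B}_{[\alpha]}$ are indexed by triples $(w,\tuple{m},\gamma)$ where $\gamma$ runs over the quotient set $K^{[\alpha]}_\sigma = K^{[\alpha]}/{\sim}$ of Definition~\ref{definition:affine_case-Kalphasigma}, and the orbit sum attached to $(w,\tuple{m},\gamma)$ is, by \eqref{equation:gamma_p'} and \eqref{equation:e(gamma)_sum_e(sigma^m(k))},
\[
\sum_{m=0}^{p-1} \psi_w y_1^{m_1} \cdots y_n^{m_n} e(\sigma^m(\tuple{k})) = \psi_w y_1^{m_1} \cdots y_n^{m_n} e(\gamma)
\]
for any $\tuple{k}\in\gamma$. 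This is precisely $\mathcal{B}_{[\alpha]}^\sigma$, which is therefore an $F$-basis of $\sqH_{[\alpha]}(\Gamma)^\sigma$.

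There is no genuine obstacle here. The only points needing care are: (i) that $\sigma$ really does restrict to an automorphism of $\sqH_{[\alpha]}(\Gamma)$ — already granted in the discussion after \eqref{equation:definition_qH[alpha]}, which rests on Lemma~\ref{lemma:fixed_subalgebra-sigma_cdot_alpha} and on $[\alpha]$ being a single $\langle\sigma\rangle$-orbit; and (ii) that one is applying the permutation-module argument to the \emph{same} fixed reduced expressions used to define the $\psi_w$, so that $\sigma$ fixes each $\psi_w$ on the nose. I would also remark that every $\langle\sigma\rangle$-orbit on $\mathcal{B}_{[\alpha]}$ has size exactly $p$, since the hypothesis $\sigma^{p_1}(k)\neq k$ for $k\in K$ and $1\le p_1\le p-1$ forces $\sigma^{p_1}(\tuple{k})\neq\tuple{k}$; this matches the observation, made after \eqref{equation:gamma_p'}, that each $\gamma\in K^{[\alpha]}_\sigma$ has cardinality $p$, though it is not needed for the basis statement itself.
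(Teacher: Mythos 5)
Your proposal is correct and follows essentially the same route as the paper: both use the basis $\mathcal{B}_{[\alpha]}$ from Theorem~\ref{theorem:base_quiver}, note that $\sigma$ fixes $\psi_w$ and the $y_a$ and permutes the idempotents $e(\tuple{k})$, and conclude that the $\sigma$-fixed space is spanned by the orbit sums $\psi_w y_1^{m_1}\cdots y_n^{m_n} e(\gamma)$. The paper simply carries out the coefficient-constancy argument explicitly rather than citing the general fact about fixed points of permutation modules.
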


\begin{proof}
First, by Theorem~\ref{theorem:base_quiver} we know that $\mathcal{B}_{[\alpha]} \coloneqq \sqcup_{\beta \in [\alpha]} \mathcal{B}_\beta$ is a linear basis of $\sqH_{[\alpha]}(\Gamma)$.
Hence, the family $\mathcal{B}_{[\alpha]}^\sigma$ is linearly independent. Moreover, each element of $\mathcal{B}_{[\alpha]}^\sigma$ is fixed by $\sigma$.
Now let $h \in \sqH_{[\alpha]}(\Gamma)$ be fixed by $\sigma$: we want to prove that $h$ lies in $\mathrm{span}_F(\mathcal{B}_{[\alpha]}^\sigma)$. Using Theorem~\ref{theorem:base_quiver}, we can write, with $\tuple{m} = (m_a)_a$:
\[
h = \sum_{w \in \mathfrak{S}_n} \sum_{\tuple{m} \in \mathbb{N}^n} \sum_{\tuple{k} \in K^{[\alpha]}} h_{w, \tuple{m}, \tuple{k}} \psi_w y_1^{m_1} \cdots y_n^{m_n} e(\tuple{k}),
\]
where $h_{w, \tuple{m}, \tuple{k}} \in F$. We have:
\[
h = \sigma(h) =  \sum_{w \in \mathfrak{S}_n} \sum_{\tuple{m} \in \mathbb{N}^n} \sum_{\tuple{k} \in K^{[\alpha]}} h_{w, \tuple{m}, \tuple{k}} \psi_w y_1^{m_1} \cdots y_n^{m_n} e(\sigma(\tuple{k})),
\]
and thus, since $\mathcal{B}_{[\alpha]}$  is linearly independent:
\[
h_{w, \tuple{m}, \tuple{k}} = h_{w, \tuple{m}, \sigma(\tuple{k})},
\]
for all $w \in \mathfrak{S}_n, \tuple{m} \in \mathbb{N}^n$ and $\tuple{k} \in K^{[\alpha]}$. In particular, for each $\gamma \in K^{[\alpha]}_\sigma$ there is a well-defined scalar $h_{w, \tuple{m}, \gamma}$. We obtain:
\begin{align*}
h 
&= \sum_{w \in \mathfrak{S}_n} \sum_{\tuple{m} \in \mathbb{N}^n} \sum_{\gamma \in K^{[\alpha]}_\sigma} h_{w, \tuple{m}, \gamma} \psi_w y_1^{m_1} \cdots y_n^{m_n} \sum_{\tuple{k} \in \gamma} e(\tuple{k})
\\
&= \sum_{w \in \mathfrak{S}_n} \sum_{\tuple{m} \in \mathbb{N}^n} \sum_{\gamma \in K^{[\alpha]}_\sigma} h_{w, \tuple{m}, \gamma} \psi_w y_1^{m_1} \cdots y_n^{m_n} e(\gamma),
\end{align*}
thus $h$ lies in $\mathrm{span}_F(\mathcal{B}_{[\alpha]}^\sigma)$.
\end{proof}

Theorem~\ref{theorem:quiver_fixed_point} allows us to give a presentation of the algebra $\sqH_{[\alpha]}(\Gamma)^\sigma$. First, let us note that for any $a, b, c \in \{1, \dots, n\}$ with $c < n$ and $\gamma \in K^{[\alpha]}_\sigma$,   the expressions:
\begin{gather*}
s_c \cdot \gamma, \quad
\gamma_a = \gamma_b, \quad \gamma_a \neq \gamma_b,
\\
\gamma_a \to \gamma_b, \quad \gamma_a \leftarrow \gamma_b, \quad \gamma_a \leftrightarrows \gamma_b,
\end{gather*}
are well-defined,  thanks to Lemma~\ref{lemma:->_compatible}.

\begin{remarque}
\label{remark:gammaa_gamma'a}
In contrast, if $\gamma'$ is another element of $K^{[\alpha]}_\sigma$ then the expression $\gamma_a = \gamma'_a$ (for instance) is not well-defined. In particular, recalling \eqref{equation:definition_tilde} and \eqref{equation:definition_tilde_tuple}, instead of subsets of $K^n_\sigma = K^n / {\sim}$ we may want to consider subsets of $(K / {\sim})^n$. This set $(K/{\sim})^n$ has much worse properties. For instance, denoting by $\tuple{\kappa} \in (K /{\sim})^n$ the image of  $\tuple{k} \in K^n$ (or $\gamma \in K^n_\sigma$), if $\kappa_a = \kappa_{a+1}$ then we may have $k_a \neq k_{a+1}$ (or $\gamma_a \neq \gamma_{a+1}$).
\end{remarque}

\begin{corollaire}
\label{corollary:presentation_quiver_fixed}
The algebra $\sqH_{[\alpha]}(\Gamma)^\sigma$ has the following presentation. The generating set is:
\begin{equation}
\label{equation:generators_quiver_fixed}
\{e(\gamma)\}_{\gamma \in K^{[\alpha]}_\sigma} \cup \{y_1, \dots, y_n\} \cup \{\psi_1, \dots, \psi_{n-1}\},
\end{equation}
and the relations are:
\begin{subequations}
\label{relations:affine_quiver_fixed}
\begin{align}
\label{relation:quiver_sum_e(gamma)_fixed}
\sum_{\gamma \in K^{[\alpha]}_\sigma} e(\gamma) &= 1,
\\
\label{relation:quiver_e(gamma)e(gamma')_fixed}
e(\gamma)e(\gamma') &= \delta_{\gamma, \gamma'} e(\gamma),
\\
\label{relation:quiver_y_ae(gamma)_fixed}
y_a e(\gamma) &= e(\gamma) y_a,
\\
\label{relation:quiver_psiae(gamma)_fixed}
\psi_a e(\gamma) &= e(s_a \cdot \gamma) \psi_a,
\\
\label{relation:quiver_ya_yb_fixed}
y_a y_b &= y_b y_a,
\\
\label{relation:quiver_psia_yb_fixed}
\psi_a y_b &= y_b \psi_a \quad \text{if } b \neq a, a+1,
\\
\label{relation:quiver_psia_psib_fixed}
\psi_a \psi_b &= \psi_b \psi_a \quad \text{if } |a-b| > 1,
\\
\label{relation:quiver_psia_ya+1_fixed}
\psi_a y_{a+1} e(\gamma) &= \begin{cases}
(y_a \psi_a + 1)e(\gamma) & \text{if } \gamma_a = \gamma_{a+1}, \\
y_a \psi_a e(\gamma) & \text{if } \gamma_a \neq \gamma_{a+1},
\end{cases}
\\
\label{relation:quiver_ya+1_psia_fixed}
y_{a+1} \psi_a e(\gamma) &= \begin{cases}
(\psi_a y_a + 1)e(\gamma) & \text{if } \gamma_a = \gamma_{a+1}, \\
\psi_a y_a e(\gamma) & \text{if } \gamma_a \neq \gamma_{a+1},
\end{cases}
\\
\label{relation:quiver_psia^2_fixed}
\psi_a^2 e(\gamma) &= \begin{cases}
0 & \text{if } \gamma_a = \gamma_{a+1}, \\
e(\gamma) & \text{if } \gamma_a \nrelbar \gamma_{a+1}, \\
(y_{a+1} - y_a)e(\gamma) & \text{if } \gamma_a \to \gamma_{a+1}, \\
(y_a - y_{a+1})e(\gamma) & \text{if } \gamma_a \leftarrow \gamma_{a+1}, \\
(y_{a+1} - y_a)(y_a - y_{a+1})e(\gamma) & \text{if } \gamma_a \leftrightarrows \gamma_{a+1},
\end{cases}
\\
\label{relation:quiver_tresse_fixed}
\psi_{a+1}\psi_a \psi_{a+1}e(\gamma) &= \begin{cases}
(\psi_a \psi_{a+1}\psi_a -1)e(\gamma) & \text{if } \gamma_{a+2} = \gamma_a \to \gamma_{a+1}, \\
(\psi_a \psi_{a+1}\psi_a +1)e(\gamma) & \text{if } \gamma_{a+2} = \gamma_a \leftarrow \gamma_{a+1}, \\
(\psi_a \psi_{a+1} \psi_a + 2y_{a+1} - y_a - y_{a+2})e(\gamma) & \text{if } \gamma_{a+2} = \gamma_a \leftrightarrows \gamma_{a+1}, \\
\psi_a \psi_{a+1} \psi_a e(\gamma) & \text{otherwise.}
\end{cases}
\end{align}
\end{subequations}
\end{corollaire}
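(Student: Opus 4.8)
The plan is to identify the algebra $A$ presented by the generators \eqref{equation:generators_quiver_fixed} and relations \eqref{relations:affine_quiver_fixed} with $\sqH_{[\alpha]}(\Gamma)^\sigma$, via the map sending each named generator to the element of $\sqH_{[\alpha]}(\Gamma)$ bearing the same name. First I would check that this assignment extends to an algebra homomorphism $\Phi : A \to \sqH_{[\alpha]}(\Gamma)$. Its image lands in the fixed subalgebra $\sqH_{[\alpha]}(\Gamma)^\sigma$: the $e(\gamma)$ are $\sigma$-fixed by their very definition, and $\sigma$ fixes each $y_a$ and $\psi_a$ by construction (Theorem~\ref{theorem:definition_sigma_quiver}). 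To verify that the relations \eqref{relations:affine_quiver_fixed} hold, I would expand $e(\gamma) = \sum_{m=0}^{p-1} e(\sigma^m(\tuple{k}))$ using \eqref{equation:e(gamma)_sum_e(sigma^m(k))}, apply the corresponding relation from \eqref{relations:affine_quiver} to each summand, and repackage the result in terms of the $e(\gamma)$. This goes through because the comparisons $\gamma_a = \gamma_b$, $\gamma_a \nrelbar \gamma_b$, $\gamma_a \to \gamma_b$, $\gamma_a \leftarrow \gamma_b$, $\gamma_a \leftrightarrows \gamma_b$ are well-defined by Lemma~\ref{lemma:->_compatible}; because the $\mathfrak{S}_n$-action on $K^n$ commutes with $\sigma$, so it descends to $K^{[\alpha]}_\sigma$ and $s_a\cdot\gamma$ makes sense, giving $\psi_a e(\gamma) = e(s_a\cdot\gamma)\psi_a$; and because $\sum_{\gamma} e(\gamma) = \sum_{\tuple{k}\in K^{[\alpha]}} e(\tuple{k}) = 1$ and $e(\gamma)e(\gamma') = \delta_{\gamma,\gamma'}e(\gamma)$ follow from the analogous identities among the $e(\tuple{k})$ in $\sqH_{[\alpha]}(\Gamma)$.

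Surjectivity of $\Phi$ is then immediate: fixing a reduced expression for each $w\in\mathfrak{S}_n$ and defining $\psi_w\in A$ accordingly, the element $\psi_w y_1^{m_1}\cdots y_n^{m_n} e(\gamma)$ of $A$ is mapped by $\Phi$ onto the element of $\mathcal{B}_{[\alpha]}^\sigma$ with the same name, and $\mathcal{B}_{[\alpha]}^\sigma$ spans $\sqH_{[\alpha]}(\Gamma)^\sigma$ by Theorem~\ref{theorem:quiver_fixed_point}.

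For injectivity the key point is that $A$ is spanned over $F$ by $\mathcal{C} \coloneqq \{\psi_w y_1^{m_1}\cdots y_n^{m_n} e(\gamma) : w\in\mathfrak{S}_n,\ m_a\in\mathbb{N},\ \gamma\in K^{[\alpha]}_\sigma\}$. This is exactly the normal-form (straightening) argument used to prove Theorem~\ref{theorem:base_quiver} for $\sqH_\alpha(\Gamma)$: one uses \eqref{relation:quiver_e(gamma)e(gamma')_fixed}--\eqref{relation:quiver_psiae(gamma)_fixed} to move idempotents to the right and merge them, then \eqref{relation:quiver_psia_yb_fixed} together with \eqref{relation:quiver_psia_ya+1_fixed}--\eqref{relation:quiver_ya+1_psia_fixed} to collect the $y_a$ in a middle block, and finally \eqref{relation:quiver_psia_psib_fixed}, \eqref{relation:quiver_psia^2_fixed}, \eqref{relation:quiver_tresse_fixed} to rewrite any monomial in the $\psi_a$ as some $\psi_w$ with $w$ reduced, modulo terms involving fewer $\psi$'s, by induction on the number of $\psi$'s. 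Since \eqref{relations:affine_quiver_fixed} is obtained from \eqref{relations:affine_quiver} by the purely formal substitution $\tuple{k}\rightsquigarrow\gamma$, $K^\alpha\rightsquigarrow K^{[\alpha]}_\sigma$, and all the combinatorial input this argument uses (an $\mathfrak{S}_n$-set of idempotent labels, and well-defined "$=/\nrelbar/\to/\leftarrow/\leftrightarrows$" comparisons on pairs of components of a label) is available for $K^{[\alpha]}_\sigma$, the argument applies verbatim. Granting this, $\Phi$ restricts to a bijection from the spanning set $\mathcal{C}$ of $A$ onto the linearly independent family $\mathcal{B}_{[\alpha]}^\sigma$ of Theorem~\ref{theorem:quiver_fixed_point}; hence any linear dependence among the elements of $\mathcal{C}$ in $A$ would push forward to a linear dependence in $\mathcal{B}_{[\alpha]}^\sigma$ with the same coefficients, so $\mathcal{C}$ is linearly independent and thus a basis of $A$. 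Then $\Phi$ carries a basis to a basis and is an isomorphism.

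The main obstacle is the spanning claim for $A$ in the last paragraph. Conceptually it is routine — it is the quiver Hecke algebra normal-form argument with the notation changed — but carrying it out requires re-running that induction and checking at each step that the comparison being invoked is between components of $\gamma$ (not of an underlying tuple $\tuple{k}$, and not in $(K/{\sim})^n$), which is exactly what Lemma~\ref{lemma:->_compatible} legitimises; see Remark~\ref{remark:gammaa_gamma'a} for why this distinction matters. Once this is in place, no computation beyond what already proves Theorem~\ref{theorem:base_quiver} is needed, which is why I would simply refer to that argument rather than reproduce it.
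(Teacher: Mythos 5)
Your proposal is correct and follows essentially the same route as the paper: one defines the homomorphism from the presented algebra to $\sqH_{[\alpha]}(\Gamma)$, observes that the monomials $\psi_w y_1^{m_1}\cdots y_n^{m_n}e(\gamma)$ span the presented algebra, and concludes by matching this spanning set with the basis $\mathcal{B}_{[\alpha]}^\sigma$ of Theorem~\ref{theorem:quiver_fixed_point}. The spanning step you single out as the main obstacle is exactly the point the paper dispatches with ``we can notice'', tacitly invoking the same straightening argument you describe.
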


\begin{proof}
Let us temporary write $e(\gamma)^\sigma, y_a^\sigma$ and $\psi_a^\sigma$ for the generators of Corollary~\ref{corollary:presentation_quiver_fixed}, and write $\sqH^\sigma$ for the algebra which admits this presentation. Given the defining relations
%\eqref{relation:quiver_sum_e(k)}--\eqref{relation:quiver_tresse}
\eqref{relations:affine_quiver} of $\sqH_{[\alpha]}(\Gamma)$ and Lemma~\ref{lemma:->_compatible}, there is a well-defined  algebra homomorphism $f : \sqH^\sigma \to \sqH_{[\alpha]}(\Gamma)$ given by:
\begin{align*}
f(e(\gamma)^\sigma) &\coloneqq e(\gamma), &&\text{for all } \gamma \in K^{[\alpha]}_\sigma,
\\
f(y_a^\sigma) &\coloneqq y_a, &&\text{for all } a \in \{1, \dots, n\},
\\
f(\psi_a^\sigma) &\coloneqq \psi_a, &&\text{for all } a \in \{1, \dots, n-1\}.
\end{align*}

We can notice that the family:
\[
\mathcal{B}^\sigma \coloneqq \big\lbrace\psi_w^\sigma {(y_1^\sigma)}^{m_1} \cdots {(y_n^\sigma)}^{m_n} e(\gamma)^\sigma : w \in \mathfrak{S}_n, m_a \in \mathbb{N}, \gamma \in K^{[\alpha]}_\sigma\big\rbrace,
\]
spans $\sqH^\sigma$ over $F$, where the elements $\psi_w^\sigma$ are defined as in \eqref{equation:definition_psi_w}, with the same reduced expressions. We recall from Theorem~\ref{theorem:quiver_fixed_point}  that the family
\[
\mathcal{B}_{[\alpha]}^\sigma = \big\lbrace\psi_w y_1^{m_1} \cdots y_n^{m_n} e(\gamma) : w \in \mathfrak{S_n}, m_a \in \mathbb{N}, \gamma \in K^{[\alpha]}_\sigma\big\rbrace
\]
is an $F$-basis of $\sqH_{[\alpha]}^\sigma(\Gamma)$. Noticing that the algebra homomorphism $f$ maps $\mathcal{B}^\sigma$ onto $\mathcal{B}_{[\alpha]}^\sigma$, we deduce that:
\begin{itemize}
\item the family $\mathcal{B}^\sigma$ is linearly independent;
\item the map $f$ surjects onto $\sqH_{[\alpha]}(\Gamma)^\sigma$.
\end{itemize}
Finally, the family $\mathcal{B}^\sigma$ is a basis of $\sqH^\sigma$. In particular, the homomorphism $f$ sends a basis to a basis hence $f$ is an isomorphism.
\end{proof}

The reader may have noticed the similarity between the relations~\eqref{relations:affine_quiver_fixed} defining $\sqH_{[\alpha]}(\Gamma)^\sigma$ and the relations~\eqref{relations:affine_quiver} defining $\sqH_\alpha(\Gamma)$. However, now the indexing set for the idempotents is generally not an $\mathfrak{S}_n$-stable subset of $\mathcal{I}^n$ for $\mathcal{I}$ an indexing set.

\begin{remarque}
\label{remark:gradation_qH_fixed}
Since $\sigma : \sqH_{[\alpha]}(\Gamma) \to \sqH_{[\alpha]}(\Gamma)$ is homogeneous (cf. Remark~\ref{remark:sigma_qH_graded}), the subalgebra $\sqH_{[\alpha]}(\Gamma)^\sigma$ is a graded subalgebra of $\sqH_{[\alpha]}(\Gamma)$. More precisely, as in Proposition~\ref{proposition:gradation_qH} there is a unique $\mathbb{Z}$-grading on $\sqH_{[\alpha]}(\Gamma)^\sigma$ such that $e(\gamma)$ is of degree $0$, the element $y_a$ is of degree $2$ and $\psi_a e(\gamma)$ is of degree $-c_{\gamma_a, \gamma_{a+1}}$ where:
\[
c_{\gamma_a, \gamma_{a+1}} \coloneqq \begin{cases}
2 & \text{if } \gamma_a = \gamma_{a+1},
\\
0 & \text{if } \gamma_a \nrelbar \gamma_{a+1},
\\
-1 & \text{if } \gamma_a \to \gamma_{a+1} \text{ or } \gamma_{a+1} \to \gamma_a,
\\
-2 & \text{if } \gamma_a \leftrightarrows \gamma_{a+1}.
\end{cases}
\]
\end{remarque}

\subsubsection{Cyclotomic case}
\label{subsubsection:fixed_cyclotomic_subalgebra}

Recall the Definition~\ref{definition:cyclo_qH} of a cyclotomic quiver Hecke algebra.
For $\alpha \comp_{K} n$, we want the algebra homomorphism $\sigma : \sqH_\alpha(\Gamma) \to \sqH_{\sigma \cdot \alpha}(\Gamma)$ to factor through cyclotomic quotients. Contrary to the affine case, it will be more difficult to get a presentation for the fixed point subalgebra of the cyclotomic quiver Hecke algebra (recall that we do not have an analogue of Theorem~\ref{theorem:base_quiver}). In particular, the whole proof relies on the map $\mu$ which will be introduced in \eqref{equation:cyclotomic-case_mu}.

Let $\tuple{\Lambda} \in \mathbb{N}^{(K)}$ be a weight. As for $K$-compositions, we define the weight $\sigma \cdot \tuple{\Lambda} \in \mathbb{N}^{(K)}$ by:
\[
(\sigma \cdot \tuple{\Lambda})_k \coloneqq \Lambda_{\sigma^{-1}(k)}, \qquad \text{for all } k \in K.
\]

\begin{lemme}
\label{lemma:sigma_Ilambda}
We have $\sigma(\mathcal{I}_\alpha^{\tuple{\Lambda}}) = \mathcal{I}_{\sigma \cdot \alpha}^{\sigma \cdot \tuple{\Lambda}}$. In particular, the algebra homomorphism $\sigma : \sqH_\alpha(\Gamma) \to \sqH_{\sigma \cdot \alpha}(\Gamma)$ induces an algebra homomorphism $\sigma^{\tuple{\Lambda}} : \sqH_\alpha^{\tuple{\Lambda}}(\Gamma) \to \sqH_{\sigma \cdot \alpha}^{\sigma \cdot \tuple{\Lambda}}(\Gamma)$.
\end{lemme}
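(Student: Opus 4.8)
The plan is to show the two-sided ideals coincide, and then the induced map on quotients follows immediately. Recall from Definition~\ref{definition:cyclo_qH} that $\mathcal{I}_\alpha^{\tuple{\Lambda}}$ is generated by the elements $y_1^{\Lambda_{k_1}} e(\tuple{k})$ for $\tuple{k} \in K^\alpha$. Since $\sigma : \sqH_\alpha(\Gamma) \to \sqH_{\sigma \cdot \alpha}(\Gamma)$ is an algebra \emph{isomorphism} (it has an obvious inverse $\sigma^{p-1}$, using that $\sigma^p = \mathrm{id}_K$, so in fact $\sigma^{-1} = \sigma^{p-1}$ makes sense at the level of these homomorphisms), it suffices to check that $\sigma$ maps the generating set of $\mathcal{I}_\alpha^{\tuple{\Lambda}}$ bijectively onto the generating set of $\mathcal{I}_{\sigma \cdot \alpha}^{\sigma \cdot \tuple{\Lambda}}$.

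The key computation is the following: for $\tuple{k} \in K^\alpha$, using the definition of $\sigma$ from Theorem~\ref{theorem:definition_sigma_quiver}, we have $\sigma(y_1^{\Lambda_{k_1}} e(\tuple{k})) = y_1^{\Lambda_{k_1}} e(\sigma(\tuple{k}))$. Now write $\tuple{l} \coloneqq \sigma(\tuple{k})$, so that $l_1 = \sigma(k_1)$, hence $k_1 = \sigma^{-1}(l_1)$ and therefore $\Lambda_{k_1} = \Lambda_{\sigma^{-1}(l_1)} = (\sigma \cdot \tuple{\Lambda})_{l_1}$ by definition of $\sigma \cdot \tuple{\Lambda}$. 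Thus $\sigma(y_1^{\Lambda_{k_1}} e(\tuple{k})) = y_1^{(\sigma \cdot \tuple{\Lambda})_{l_1}} e(\tuple{l})$, which is exactly a generator of $\mathcal{I}_{\sigma \cdot \alpha}^{\sigma \cdot \tuple{\Lambda}}$. Conversely, by Lemma~\ref{lemma:fixed_subalgebra-sigma_cdot_alpha} the map $\tuple{k} \mapsto \sigma(\tuple{k})$ is a bijection $K^\alpha \to K^{\sigma \cdot \alpha}$, so every generator $y_1^{(\sigma \cdot \tuple{\Lambda})_{l_1}} e(\tuple{l})$ of $\mathcal{I}_{\sigma \cdot \alpha}^{\sigma \cdot \tuple{\Lambda}}$ arises this way from a generator of $\mathcal{I}_\alpha^{\tuple{\Lambda}}$. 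Since an algebra isomorphism sends the two-sided ideal generated by a set $S$ to the two-sided ideal generated by the image of $S$, we conclude $\sigma(\mathcal{I}_\alpha^{\tuple{\Lambda}}) = \mathcal{I}_{\sigma \cdot \alpha}^{\sigma \cdot \tuple{\Lambda}}$.

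Finally, from $\sigma(\mathcal{I}_\alpha^{\tuple{\Lambda}}) = \mathcal{I}_{\sigma \cdot \alpha}^{\sigma \cdot \tuple{\Lambda}}$ we obtain that the composite $\sqH_\alpha(\Gamma) \xrightarrow{\sigma} \sqH_{\sigma \cdot \alpha}(\Gamma) \twoheadrightarrow \sqH_{\sigma \cdot \alpha}^{\sigma \cdot \tuple{\Lambda}}(\Gamma)$ kills $\mathcal{I}_\alpha^{\tuple{\Lambda}}$, hence factors through the quotient $\sqH_\alpha^{\tuple{\Lambda}}(\Gamma)$, giving the desired $\sigma^{\tuple{\Lambda}} : \sqH_\alpha^{\tuple{\Lambda}}(\Gamma) \to \sqH_{\sigma \cdot \alpha}^{\sigma \cdot \tuple{\Lambda}}(\Gamma)$. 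There is no real obstacle here; the only point requiring slight care is the bookkeeping of the index shift, namely that the exponent $\Lambda_{k_1}$ attached to $e(\tuple{k})$ becomes $(\sigma \cdot \tuple{\Lambda})_{l_1}$ attached to $e(\sigma(\tuple{k}))$ — which is precisely why the weight must be twisted by $\sigma$ on the target. One should also note that $\sigma^{\tuple{\Lambda}}$ is an isomorphism, with inverse induced by $\sigma^{p-1}$, although the statement as phrased only asserts it is a homomorphism.
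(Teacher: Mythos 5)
Your proof is correct and follows essentially the same route as the paper: the key point in both is the computation $\sigma(y_1^{\Lambda_{k_1}}e(\tuple{k})) = y_1^{\Lambda_{k_1}}e(\sigma(\tuple{k}))$ together with $\Lambda_{k_1} = (\sigma\cdot\tuple{\Lambda})_{\sigma(\tuple{k})_1}$, after which the quotient map factors. The paper obtains the equality of ideals by proving one inclusion and repeating the argument with $\sigma^{-1}$, while you invoke the bijection $K^\alpha \to K^{\sigma\cdot\alpha}$ and the fact that an isomorphism carries the ideal generated by a set to the ideal generated by its image — a negligible difference.
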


\begin{proof}
We notice that for $\tuple{k} \in K^\alpha$ we have:
\[
\sigma(y_1^{\Lambda_{k_1}} e(\tuple{k})) = y_1^{\Lambda_{k_1}} e(\sigma(\tuple{k})) \in \mathcal{I}_{\sigma \cdot \alpha}^{\sigma \cdot \tuple{\Lambda}},
\]
since $\Lambda_{k_1} = (\sigma \cdot \tuple{\Lambda})_{\sigma(\tuple{k})_1}$.
Hence $\sigma(\mathcal{I}_\alpha^{\tuple{\Lambda}}) \subseteq \mathcal{I}_{\sigma \cdot \alpha}^{\sigma \cdot \tuple{\Lambda}}$ and we have equality by repeating the argument with $\sigma^{-1}$.
\end{proof}

Until the end of this section, we make the following $\sigma$-stability assumption on our weight $\tuple{\Lambda} \in \mathbb{N}^{(K)}$:
\begin{equation}
\label{equation:weight_adapted}
\Lambda_k = \Lambda_{\sigma(k)}, \qquad \text{for all }  k \in K.
\end{equation}
that is, we assume that $\tuple{\Lambda} = \sigma \cdot \tuple{\Lambda}$. Equivalently, the weight $\tuple{\Lambda}$ factors to an element of $\mathbb{N}^{(K / {\sim})}$ (with the notation of \eqref{equation:definition_tilde}). The reader may have noticed the similarity with the equation of Proposition~\ref{proposition:removing_repetitions-_Lambda}. In \textsection\ref{subsection:intertwining} we will explicitly make the link between these two assumptions.

Similarly to \eqref{equation:definition_qH[alpha]}, we define:
\[
\qH_{[\alpha]}^{\tuple{\Lambda}}(\Gamma) \coloneqq \bigoplus_{\beta \in [\alpha]} \qH_\beta^{\tuple{\Lambda}}(\Gamma).
\]
This algebra is the quotient of $\sqH_{[\alpha]}(\Gamma)$ by the two sided ideal
\[
\mathcal{I}_{[\alpha]}^{\tuple{\Lambda}} \coloneqq \bigoplus_{\beta \in [\alpha]} \mathcal{I}_\beta^{\tuple{\Lambda}}
\]
generated by the elements $y_1^{\Lambda_{k_1}} e(\tuple{k})$ for $\tuple{k} \in K^{[\alpha]}$. We deduce from  Lemma~\ref{lemma:sigma_Ilambda} the following statement.
 
\begin{lemme}
\label{lemma:sigma_I[alpha]}
We have $\sigma(\mathcal{I}^{\tuple{\Lambda}}_{[\alpha]}) = \mathcal{I}^{\tuple{\Lambda}}_{[\alpha]}$. Moreover, $\sigma : \qH_{[\alpha]}(\Gamma) \to \qH_{[\alpha]}(\Gamma)$ induces an algebra homomorphism $\sigma^{\tuple{\Lambda}} : \qH_{[\alpha]}^{\tuple{\Lambda}}(\Gamma) \to \qH_{[\alpha]}^{\tuple{\Lambda}}(\Gamma)$.
\end{lemme}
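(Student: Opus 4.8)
The plan is to reduce everything to Lemma~\ref{lemma:sigma_Ilambda} together with the $\sigma$-stability hypothesis \eqref{equation:weight_adapted}. First I would record that $\qH_{[\alpha]}(\Gamma) = \bigoplus_{\beta \in [\alpha]} \qH_\beta(\Gamma)$ is a direct sum of blocks, so that any two-sided ideal of it splits as the direct sum of its intersections with the summands; in particular $\mathcal{I}^{\tuple{\Lambda}}_{[\alpha]} = \bigoplus_{\beta \in [\alpha]} \mathcal{I}_\beta^{\tuple{\Lambda}}$, as already noted just above the statement. By Theorem~\ref{theorem:definition_sigma_quiver} the automorphism $\sigma$ of $\qH_{[\alpha]}(\Gamma)$ maps the block $\qH_\beta(\Gamma)$ onto $\qH_{\sigma \cdot \beta}(\Gamma)$, and $\beta \mapsto \sigma \cdot \beta$ permutes the finite orbit $[\alpha]$ (by definition $[\alpha]$ is the $\langle\sigma\rangle$-orbit of $\alpha$).

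Next I would apply Lemma~\ref{lemma:sigma_Ilambda} blockwise: for each $\beta \in [\alpha]$ it gives $\sigma(\mathcal{I}_\beta^{\tuple{\Lambda}}) = \mathcal{I}_{\sigma \cdot \beta}^{\sigma \cdot \tuple{\Lambda}}$, and since \eqref{equation:weight_adapted} says exactly $\sigma \cdot \tuple{\Lambda} = \tuple{\Lambda}$, this becomes $\sigma(\mathcal{I}_\beta^{\tuple{\Lambda}}) = \mathcal{I}_{\sigma \cdot \beta}^{\tuple{\Lambda}}$. Summing over $\beta$ and using that $\sigma \cdot$ is a bijection of $[\alpha]$,
\[
\sigma(\mathcal{I}^{\tuple{\Lambda}}_{[\alpha]}) = \bigoplus_{\beta \in [\alpha]} \sigma(\mathcal{I}_\beta^{\tuple{\Lambda}}) = \bigoplus_{\beta \in [\alpha]} \mathcal{I}_{\sigma \cdot \beta}^{\tuple{\Lambda}} = \bigoplus_{\beta \in [\alpha]} \mathcal{I}_\beta^{\tuple{\Lambda}} = \mathcal{I}^{\tuple{\Lambda}}_{[\alpha]},
\]
which is the first assertion.

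For the second assertion I would invoke that an automorphism stabilising a two-sided ideal descends to the quotient: $\sigma$ passes to $\qH_{[\alpha]}^{\tuple{\Lambda}}(\Gamma) = \qH_{[\alpha]}(\Gamma) / \mathcal{I}^{\tuple{\Lambda}}_{[\alpha]}$, giving $\sigma^{\tuple{\Lambda}}$ (in fact an automorphism, with inverse induced by $\sigma^{-1}$). Alternatively one could check directly that $e(\gamma), y_a, \psi_a \mapsto e(\sigma(\gamma)), y_a, \psi_a$ respects \eqref{relation:quiver_cyclo} using $\Lambda_{k_1} = \Lambda_{\sigma(k_1)}$, but the ideal-theoretic route is shorter. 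There is essentially no real obstacle here; the only point needing a word of care is that the splitting $\mathcal{I}^{\tuple{\Lambda}}_{[\alpha]} = \bigoplus_\beta \mathcal{I}_\beta^{\tuple{\Lambda}}$ is compatible with $\sigma$, which is exactly the blockwise form of Lemma~\ref{lemma:sigma_Ilambda} combined with \eqref{equation:weight_adapted}.
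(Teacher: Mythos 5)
Your argument is correct and is exactly the deduction the paper intends: it simply invokes Lemma~\ref{lemma:sigma_Ilambda} blockwise, uses the $\sigma$-stability assumption \eqref{equation:weight_adapted} to replace $\sigma\cdot\tuple{\Lambda}$ by $\tuple{\Lambda}$, sums over the orbit $[\alpha]$, and passes to the quotient. Nothing further is needed.
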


If $\pi_{[\alpha]} : \qH_{[\alpha]}(\Gamma) \twoheadrightarrow \qH_{[\alpha]}^{\tuple{\Lambda}}(\Gamma)$ is the canonical projection, by definition the induced automorphism $\sigma^{\tuple{\Lambda}}$ satisfies:
\begin{equation}
\label{equation:definition_sigma^Lambda}
\sigma^{\tuple{\Lambda}} \circ \pi_{[\alpha]} = \pi_{[\alpha]} \circ \sigma.
\end{equation}
We will often write $\sigma$ as well for the automorphism $\sigma^{\tuple{\Lambda}}$.

\begin{definition}
We define ${\qH_{[\alpha]}^{\tuple{\Lambda}}(\Gamma)}^\sigma$ as the $F$-algebra of the fixed points of $\qH_{[\alpha]}^{\tuple{\Lambda}}(\Gamma)$ under the automorphism $\sigma^{\tuple{\Lambda}}$.
\end{definition}

We recall the notation of \textsection\ref{subsubsection:fixed_affine_subalgebra}.
Since $\tuple{\Lambda}$ satisfies the $\sigma$-stability assumption \eqref{equation:weight_adapted} and considering the canonical map $K^n_\sigma = K^n / {\sim} \to (K/ {\sim})^n$, we may also consider the algebra ${(\qH_{[\alpha]}(\Gamma)^\sigma)}^{\tuple{\Lambda}}$, the quotient of $\qH_{[\alpha]}(\Gamma)^\sigma$ by the two-sided ideal $\mathcal{I}_{[\alpha], \sigma}^{\tuple{\Lambda}}$ generated by the following relations:
\begin{equation}
\label{relation:quiver_cyclo_fixed}
y_1^{\Lambda_{\gamma_1}} e(\gamma) = 0, \qquad \text{for all } \gamma \in K^{[\alpha]}_\sigma.
\end{equation}
In order to give a presentation of $\qH_{[\alpha]}^{\tuple{\Lambda}}(\Gamma)^\sigma$,  we want to prove that this algebra is isomorphic to the following one:
\[
{(\qH_{[\alpha]}(\Gamma)^\sigma)}^{\tuple{\Lambda}} = 
\qH_{[\alpha]}(\Gamma)^\sigma \left/ \mathcal{I}_{[\alpha], \sigma}^{\tuple{\Lambda}}\right.,
\]
for which we know a presentation. Recalling that the characteristic of $F$ does not divide $p$, we can define the following linear map:
\begin{equation}
\label{equation:cyclotomic-case_mu}
\begin{array}{c|ccl}
\mu : & \qH_{[\alpha]}(\Gamma) &\longrightarrow& \qH_{[\alpha]}(\Gamma)
\\
& h & \longmapsto & \frac{1}{p}\sum_{m = 0}^{p-1} \sigma^m(h)
\end{array}.
\end{equation}
We now give a succession of lemmas involving this map $\mu$.

\begin{lemme}
\label{lemma:mu_projection}
The following properties are satisfied by the linear map $\mu$:
\begin{align*}
\mu(\qH_{[\alpha]}(\Gamma)) &= \qH_{[\alpha]}(\Gamma)^\sigma,
\\
\mu(h) &= h, \qquad \text{for all } h \in \qH_{[\alpha]}(\Gamma)^\sigma.
\end{align*}
Moreover, we have:
\[
\mu(\mathcal{I}_{[\alpha]}^{\tuple{\Lambda}}) = \mathcal{I}_{[\alpha]}^{\tuple{\Lambda}} \cap \qH_{[\alpha]}(\Gamma)^\sigma.
\]
\end{lemme}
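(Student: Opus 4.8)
The plan is to recognise $\mu$ as the classical averaging (Reynolds) operator for the cyclic group $\langle \sigma \rangle$ acting on $\qH_{[\alpha]}(\Gamma)$. This is legitimate precisely because $\sigma^p = \mathrm{id}$ holds on the whole algebra (it suffices to check it on the generators $e(\tuple{k})$, $y_a$, $\psi_a$, using $\sigma^p = \mathrm{id}_K$) and because $\mathrm{char}\,F$ does not divide $p$, so that $\tfrac{1}{p} \in F$. All three assertions then reduce to elementary manipulations with this operator.

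First I would establish the first two displayed equalities together. For any $h \in \qH_{[\alpha]}(\Gamma)$ one computes, using $\sigma^p = \mathrm{id}$,
\[
\sigma(\mu(h)) = \frac{1}{p}\sum_{m=0}^{p-1}\sigma^{m+1}(h) = \frac{1}{p}\sum_{m=1}^{p}\sigma^{m}(h) = \frac{1}{p}\sum_{m=0}^{p-1}\sigma^{m}(h) = \mu(h),
\]
so $\mu$ takes values in $\qH_{[\alpha]}(\Gamma)^\sigma$, giving the inclusion $\mu(\qH_{[\alpha]}(\Gamma)) \subseteq \qH_{[\alpha]}(\Gamma)^\sigma$. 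Conversely, if $h$ is $\sigma$-fixed then every summand $\sigma^m(h)$ equals $h$, whence $\mu(h) = h$; this is the second equality, and it also exhibits every element of $\qH_{[\alpha]}(\Gamma)^\sigma$ as lying in the image of $\mu$, which yields the reverse inclusion and hence $\mu(\qH_{[\alpha]}(\Gamma)) = \qH_{[\alpha]}(\Gamma)^\sigma$.

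For the last equality I would invoke Lemma~\ref{lemma:sigma_I[alpha]}, which gives $\sigma(\mathcal{I}_{[\alpha]}^{\tuple{\Lambda}}) = \mathcal{I}_{[\alpha]}^{\tuple{\Lambda}}$ and therefore $\sigma^m(\mathcal{I}_{[\alpha]}^{\tuple{\Lambda}}) = \mathcal{I}_{[\alpha]}^{\tuple{\Lambda}}$ for all $m \geq 0$. Since $\mathcal{I}_{[\alpha]}^{\tuple{\Lambda}}$ is in particular an $F$-subspace, for $x \in \mathcal{I}_{[\alpha]}^{\tuple{\Lambda}}$ the average $\mu(x) = \tfrac{1}{p}\sum_m \sigma^m(x)$ again lies in $\mathcal{I}_{[\alpha]}^{\tuple{\Lambda}}$, and by the first part it also lies in $\qH_{[\alpha]}(\Gamma)^\sigma$; hence $\mu(\mathcal{I}_{[\alpha]}^{\tuple{\Lambda}}) \subseteq \mathcal{I}_{[\alpha]}^{\tuple{\Lambda}} \cap \qH_{[\alpha]}(\Gamma)^\sigma$. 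For the reverse inclusion, if $x \in \mathcal{I}_{[\alpha]}^{\tuple{\Lambda}} \cap \qH_{[\alpha]}(\Gamma)^\sigma$ then $x = \mu(x)$ by the second equality, so $x \in \mu(\mathcal{I}_{[\alpha]}^{\tuple{\Lambda}})$.

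There is no genuine obstacle here: the statement is a routine property of the Reynolds operator. The only two points deserving a word of care are the invertibility of $p$ in $F$ (part of the standing hypotheses) and the $\sigma$-stability $\sigma(\mathcal{I}_{[\alpha]}^{\tuple{\Lambda}}) = \mathcal{I}_{[\alpha]}^{\tuple{\Lambda}}$, which is exactly Lemma~\ref{lemma:sigma_I[alpha]} and ultimately rests on the $\sigma$-stability assumption \eqref{equation:weight_adapted} imposed on the weight $\tuple{\Lambda}$.
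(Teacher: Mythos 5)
Your proof is correct and follows exactly the route of the paper, which simply states that the first two equalities follow from $\sigma^p = \mathrm{id}$ and deduces the last from the $\sigma$-stability of $\mathcal{I}_{[\alpha]}^{\tuple{\Lambda}}$ (Lemma~\ref{lemma:sigma_I[alpha]}); you have merely written out the index-shift computation and the two inclusions that the paper leaves implicit. No gaps.
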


\begin{proof}
The first two statements follow from $\sigma^p = \mathrm{id}$.
% we have $\mu(\qH_{[\alpha]}(\Gamma)) \subseteq \qH_{[\alpha]}(\Gamma)^\sigma$.
We deduce the last one using Lemma~\ref{lemma:sigma_I[alpha]}.
\end{proof}

\begin{remarque}
The linear map $\mu$ is a linear projection onto the subspace $\qH_{[\alpha]}(\Gamma)^\sigma$.
\end{remarque}

\begin{lemme}
\label{lemma:image_mu_ideal}
We have the following equality:
\[
\mathcal{I}_{[\alpha]}^{\tuple{\Lambda}} \cap \qH_{[\alpha]}(\Gamma)^\sigma = \mathcal{I}_{[\alpha], \sigma}^{\tuple{\Lambda}}.
\]
In particular, $\mu(\mathcal{I}_{[\alpha]}^{\tuple{\Lambda}}) = \mathcal{I}_{[\alpha], \sigma}^{\tuple{\Lambda}}$.
\end{lemme}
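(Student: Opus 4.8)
The inclusion $\mathcal{I}_{[\alpha],\sigma}^{\tuple{\Lambda}} \subseteq \mathcal{I}_{[\alpha]}^{\tuple{\Lambda}} \cap \qH_{[\alpha]}(\Gamma)^\sigma$ is the easy direction: the generators $y_1^{\Lambda_{\gamma_1}} e(\gamma)$ of $\mathcal{I}_{[\alpha],\sigma}^{\tuple{\Lambda}}$ are fixed by $\sigma$ (since $\Lambda_{k_1}=\Lambda_{\sigma(k)_1}$ by the $\sigma$-stability assumption \eqref{equation:weight_adapted}), and they clearly lie in $\mathcal{I}_{[\alpha]}^{\tuple{\Lambda}}$ because $y_1^{\Lambda_{\gamma_1}} e(\gamma) = \sum_{m=0}^{p-1} y_1^{\Lambda_{k_1}} e(\sigma^m(\tuple{k}))$ for any $\tuple{k}\in\gamma$, using \eqref{equation:e(gamma)_sum_e(sigma^m(k))}. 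So $\mathcal{I}_{[\alpha],\sigma}^{\tuple{\Lambda}}$ is generated inside the $\sigma$-fixed subalgebra by elements of $\mathcal{I}_{[\alpha]}^{\tuple{\Lambda}}$, and since $\qH_{[\alpha]}(\Gamma)^\sigma$ is a subalgebra, the whole two-sided ideal it generates there sits inside $\mathcal{I}_{[\alpha]}^{\tuple{\Lambda}} \cap \qH_{[\alpha]}(\Gamma)^\sigma$.

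For the reverse inclusion, I would use the averaging map $\mu$. Let $h \in \mathcal{I}_{[\alpha]}^{\tuple{\Lambda}} \cap \qH_{[\alpha]}(\Gamma)^\sigma$. Since $h$ is $\sigma$-fixed, $\mu(h)=h$ by Lemma~\ref{lemma:mu_projection}, so it suffices to show $\mu(x) \in \mathcal{I}_{[\alpha],\sigma}^{\tuple{\Lambda}}$ for every $x \in \mathcal{I}_{[\alpha]}^{\tuple{\Lambda}}$. Write $x = \sum_i a_i\, g_i\, b_i$ where each $g_i$ is a generator $y_1^{\Lambda_{k_1}}e(\tuple{k})$ and $a_i, b_i \in \qH_{[\alpha]}(\Gamma)$. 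Then $\mu(x) = \frac1p\sum_{m=0}^{p-1}\sum_i \sigma^m(a_i)\,\sigma^m(g_i)\,\sigma^m(b_i)$. Now $\sigma^m(g_i) = y_1^{\Lambda_{k_1}}e(\sigma^m(\tuple{k}))$ is again a cyclotomic generator; the key point is to rewrite it so that it manifestly lands in $\mathcal{I}_{[\alpha],\sigma}^{\tuple{\Lambda}}$. Here I would use the idempotent identity $e(\sigma^m(\tuple{k})) = e(\sigma^m(\tuple{k}))\, e(\gamma)$ where $\gamma = \{\tuple{k},\sigma(\tuple{k}),\dots,\sigma^{p-1}(\tuple{k})\}$, which holds because $e(\gamma) = \sum_{\ell} e(\sigma^\ell(\tuple{k}))$ is a sum of orthogonal idempotents containing $e(\sigma^m(\tuple{k}))$. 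Hence $\sigma^m(g_i) = y_1^{\Lambda_{k_1}} e(\sigma^m(\tuple k))\, e(\gamma)$, and since $\Lambda_{k_1}=\Lambda_{\gamma_1}$ and $y_1$ commutes with $e(\gamma)$, this equals $\big(y_1^{\Lambda_{\gamma_1}}e(\gamma)\big)$ left-multiplied by the idempotent $e(\sigma^m(\tuple{k}))$ — i.e. an element of $\qH_{[\alpha]}(\Gamma)\cdot\big(y_1^{\Lambda_{\gamma_1}}e(\gamma)\big)\cdot\qH_{[\alpha]}(\Gamma)$. Consequently each summand $\sigma^m(a_i)\,\sigma^m(g_i)\,\sigma^m(b_i)$ lies in the two-sided ideal of $\qH_{[\alpha]}(\Gamma)$ generated by the $y_1^{\Lambda_{\gamma_1}}e(\gamma)$, hence so does $\mu(x)$; and $\mu(x)$ is $\sigma$-fixed by construction of $\mu$. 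Applying $\mu$ to the generators of that ideal (and using that $\mu$ restricted to $\qH_{[\alpha]}(\Gamma)^\sigma$ is the identity while $\mu$ commutes appropriately with the $\sigma$-fixed multipliers) shows $\mu(x)$ lies in the ideal of $\qH_{[\alpha]}(\Gamma)^\sigma$ generated by the $y_1^{\Lambda_{\gamma_1}}e(\gamma)$, which is exactly $\mathcal{I}_{[\alpha],\sigma}^{\tuple{\Lambda}}$.

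More carefully, the cleanest route is: $\mathcal{I}_{[\alpha]}^{\tuple{\Lambda}} \cap \qH_{[\alpha]}(\Gamma)^\sigma = \mu(\mathcal{I}_{[\alpha]}^{\tuple{\Lambda}})$ is already given by Lemma~\ref{lemma:mu_projection}, so I only need $\mu(\mathcal{I}_{[\alpha]}^{\tuple{\Lambda}}) = \mathcal{I}_{[\alpha],\sigma}^{\tuple{\Lambda}}$. The inclusion $\supseteq$ follows from the first paragraph together with $\mu|_{\qH_{[\alpha]}(\Gamma)^\sigma}=\mathrm{id}$. For $\subseteq$, take a generic element $a\,g\,b$ of $\mathcal{I}_{[\alpha]}^{\tuple{\Lambda}}$ with $g = y_1^{\Lambda_{k_1}}e(\tuple{k})$, and compute $\mu(a\,g\,b) = \frac1p\sum_{m} \sigma^m(a)\,\sigma^m(g)\,\sigma^m(b)$. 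By the idempotent rewriting above, $\sigma^m(g) = e(\sigma^m(\tuple k))\,\big(y_1^{\Lambda_{\gamma_1}}e(\gamma)\big)$, so $\sigma^m(a)\sigma^m(g)\sigma^m(b) = \big(\sigma^m(a)e(\sigma^m(\tuple k))\big)\,\big(y_1^{\Lambda_{\gamma_1}}e(\gamma)\big)\,\sigma^m(b)$ lies in $\qH_{[\alpha]}(\Gamma)\cdot z_\gamma\cdot\qH_{[\alpha]}(\Gamma)$ where $z_\gamma \coloneqq y_1^{\Lambda_{\gamma_1}}e(\gamma)$; summing over $m$ and dividing by $p$ keeps it in this two-sided ideal, and it is $\sigma$-fixed, hence it lies in $\mu\big(\qH_{[\alpha]}(\Gamma)\,z_\gamma\,\qH_{[\alpha]}(\Gamma)\big)$. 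Finally $\mu(c\,z_\gamma\,d) = \frac1p\sum_m \sigma^m(c)\,z_\gamma\,\sigma^m(d)$ since $z_\gamma$ is $\sigma$-fixed, and averaging a $\sigma$-orbit of such products is visibly an element of $\qH_{[\alpha]}(\Gamma)^\sigma\,z_\gamma\,\qH_{[\alpha]}(\Gamma)^\sigma$ after one further averaging — more directly, $\mu(c\,z_\gamma\,d) = \mu\big(\mu(c z_\gamma)\cdot \text{(stuff)}\big)$ — so it belongs to $\mathcal{I}_{[\alpha],\sigma}^{\tuple{\Lambda}}$. The main obstacle is precisely this last bookkeeping step, making sure that averaging a two-sided combination $\sum c_i z_\gamma d_i$ with arbitrary (not necessarily $\sigma$-fixed) $c_i, d_i$ still lands in the ideal generated by the $z_\gamma$ \emph{inside} the fixed subalgebra; this is handled by the fact that $\mu$ is a $\qH_{[\alpha]}(\Gamma)^\sigma$-bimodule projection, so $\mu\big(\sum_i c_i z_\gamma d_i\big)$ can be rewritten, after pushing the fixed parts of the $c_i,d_i$ out of $\mu$ and averaging the rest, as a sum $\sum_j c_j' z_\gamma d_j'$ with $c_j', d_j' \in \qH_{[\alpha]}(\Gamma)^\sigma$; I would spell this out using that $\{z_\gamma\}_\gamma$ is $\sigma$-stable and $\sigma^m(c\,z_\gamma\,d) = \sigma^m(c)\,z_{\sigma^m\cdot\gamma}\,\sigma^m(d)$ together with $\sigma^m\cdot\gamma=\gamma$.
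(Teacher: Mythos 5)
Your overall strategy is the same as the paper's: both inclusions reduce, via Lemma~\ref{lemma:mu_projection}, to showing $\mu(\mathcal{I}_{[\alpha]}^{\tuple{\Lambda}}) \subseteq \mathcal{I}_{[\alpha],\sigma}^{\tuple{\Lambda}}$, and your treatment of the easy inclusion is correct. But the step you yourself flag as ``the main obstacle'' is exactly the mathematical content of the lemma, and the principle you invoke to dispose of it is false in general. You argue: since $\mu$ is a $\qH_{[\alpha]}(\Gamma)^\sigma$-bimodule projection and the $z_\gamma = y_1^{\Lambda_{\gamma_1}}e(\gamma)$ are $\sigma$-fixed, the average $\mu\bigl(\sum_i c_i z_\gamma d_i\bigr) = \frac1p\sum_{m,i}\sigma^m(c_i)\,z_\gamma\,\sigma^m(d_i)$ can be rewritten as $\sum_j c_j' z_\gamma d_j'$ with $c_j', d_j'$ $\sigma$-fixed. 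For an arbitrary algebra $A$ with an automorphism $\sigma$ of finite order $p$ invertible in $F$ and a $\sigma$-fixed element $z$, the inclusion $\mu(AzA) \subseteq A^\sigma z A^\sigma$ simply fails: take $A = \mathrm{Mat}_2(F)$, $\sigma$ conjugation by $\mathrm{diag}(1,-1)$ (so $p=2$, $A^\sigma$ the diagonal matrices) and $z = E_{11}$; then $E_{21}\, z\, E_{12} = E_{22}$ is $\sigma$-fixed, hence equals its own average and lies in $(AzA)^\sigma = \mu(AzA)$, but $A^\sigma z A^\sigma = F E_{11}$ does not contain it. So ``$\mu$ is a bimodule projection'' plus ``$z_\gamma$ and $\gamma$ are $\sigma$-stable'' cannot close the gap; nor does your rewriting $\sigma^m(g_i) = e(\sigma^m(\tuple{k}))\,z_{\gamma}$ help, since it only shows membership in the ideal of the \emph{full} algebra generated by the $z_\gamma$, which is $\mathcal{I}_{[\alpha]}^{\tuple{\Lambda}}$ again.

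What is missing is an input specific to quiver Hecke algebras, and this is precisely how the paper argues. Using the spanning family $\psi_w y_1^{m_1}\cdots y_n^{m_n} e(\tuple{k})$ together with \eqref{relation:quiver_sum_e(k)} and \eqref{relation:quiver_e(k)e(k')}, any $h \in \mathcal{I}_{[\alpha]}^{\tuple{\Lambda}}$ can be written as a linear combination of terms $\psi_{w_1} y_1^{m^1_1}\cdots y_n^{m^1_n}\,\bigl[y_1^{\Lambda_{k_1}}e(\tuple{k})\bigr]\,\psi_{w_2} y_1^{m^2_1}\cdots y_n^{m^2_n}$, in which the left and right factors contain no idempotents and are therefore already $\sigma$-fixed (the idempotents coming from the outer factors are absorbed or killed against $e(\tuple{k})$). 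Applying $\sigma^m$ then changes only the middle idempotent, and summing over $m$ replaces $e(\tuple{k})$ by $e(\gamma_{\tuple{k}})$ via \eqref{equation:e(gamma)_sum_e(sigma^m(k))}, so $p\,\mu(h)$ is visibly a combination of $(\sigma\text{-fixed})\cdot y_1^{\Lambda_{\gamma_1}}e(\gamma)\cdot(\sigma\text{-fixed})$, i.e.\ lies in $\mathcal{I}_{[\alpha],\sigma}^{\tuple{\Lambda}}$. Your proof becomes correct once you replace the bimodule-projection argument by this normal-form step (which is where Theorem~\ref{theorem:generating_family_quiver_cyclo} enters); as written, the decisive implication is unproved.
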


\begin{proof}
Since for $\gamma \in K^{[\alpha]}_\sigma$ we have $y_1^{\Lambda_{\gamma_1}} e(\gamma) \in \mathcal{I}_{[\alpha]}^{\tuple{\Lambda}} \cap \qH_{[\alpha]}(\Gamma)^\sigma$, we get $\mathcal{I}_{[\alpha]}^{\tuple{\Lambda}} \cap \qH_{[\alpha]}(\Gamma)^\sigma \supseteq \mathcal{I}_{[\alpha], \sigma}^{\tuple{\Lambda}}$. We now consider an element $h$ of $\mathcal{I}_{[\alpha]}^{\tuple{\Lambda}}$. 
Because of  \eqref{relation:quiver_sum_e(k)}, \eqref{relation:quiver_e(k)e(k')} and Theorem~\ref{theorem:generating_family_quiver_cyclo}, we know that we have:
\[
h = \sum_{w_1, w_2 \in \mathfrak{S}_n} \sum_{\tuple{m}^1, \tuple{m}^2 \in \mathbb{N}^n} \sum_{\tuple{k} \in K^{[\alpha]}} h_{\tuple{k}} \psi_{w_1} y_1^{m^1_1} \cdots y_n^{m^1_n} \left[y_1^{\Lambda_{k_1}} e(\tuple{k})\right] \psi_{w_2} y_1^{m^2_1} \cdots y_n^{m^2_n},
\]
where the $h_{\tuple{k}} \in F$ are some scalars which depend on $\tuple{k}$ and on the other various indices of the sums. For $m \in \{0, \dots, p-1\}$ we have:
\[
\sigma^m(h) = \sum_{w_1, w_2 \in \mathfrak{S}_n}\sum_{\tuple{m}^1, \tuple{m}^2 \in \mathbb{N}^n} \sum_{\tuple{k} \in K^{[\alpha]}} h_{\tuple{k}} \psi_{w_1} y_1^{m^1_1} \cdots y_n^{m^1_n} \left[y_1^{\Lambda_{k_1}} e(\sigma^m(\tuple{k}))\right] \psi_{w_2} y_1^{m^2_1} \cdots y_n^{m^2_n}.
\]
Summing all these equalities from $m = 0$ to $p-1$ and using \eqref{equation:e(gamma)_sum_e(sigma^m(k))}  we get, where $\gamma_{\tuple{k}} \in K^{[\alpha]}_\sigma$ is such that $\tuple{k} \in \gamma_{\tuple{k}}$:
\begin{equation}
\label{equation:cyclotomic_case-to_change_p'}
p \mu(h) =  \sum_{w_1, w_2 \in \mathfrak{S}_n}\sum_{\tuple{m}^1, \tuple{m}^2 \in \mathbb{N}^n} \sum_{\tuple{k} \in K^{[\alpha]}} h_{\tuple{k}} \psi_{w_1} y_1^{m^1_1} \cdots y_n^{m^1_n} \left[y_1^{\Lambda_{k_1}} e(\gamma_{\tuple{k}})\right] \psi_{w_2} y_1^{m^2_1} \cdots y_n^{m^2_n}.
\end{equation}
Since:
\begin{itemize}
\item we have $p \neq 0$;
\item the elements $\psi_{w_1}  y_1^{m^1_1} \cdots y_n^{m^1_n} e(\gamma_{\tuple{k}})$ and $e(\gamma_{\tuple{k}}) \psi_{w_2} y_1^{m^2_1} \cdots y_n^{m^2_n}$ belong to $\qH_{[\alpha]}(\Gamma)^\sigma$;
\item  with $\gamma \coloneqq \gamma_{\tuple{k}}$ we have $\Lambda_{\gamma_1} = \Lambda_{k_1}$ (recall \eqref{equation:weight_adapted});
\end{itemize}
we deduce that $\mu(h) \in \mathcal{I}_{[\alpha], \sigma}^{\tuple{\Lambda}}$. Hence, if in addition $h \in \qH_{[\alpha]}(\Gamma)^\sigma$ then we have $\mu(h) = h$ thus we conclude that $\mathcal{I}_{[\alpha]}^{\tuple{\Lambda}} \cap \qH_{[\alpha]}(\Gamma)^\sigma \subseteq \mathcal{I}_{[\alpha], \sigma}^{\tuple{\Lambda}}$. Finally, we get $\mathcal{I}_{[\alpha]}^{\tuple{\Lambda}} \cap \qH_{[\alpha]}(\Gamma)^\sigma = \mathcal{I}_{[\alpha], \sigma}^{\tuple{\Lambda}}$, and we deduce the last statement from Lemma~\ref{lemma:mu_projection}.
\end{proof}

\begin{lemme}
\label{lemma:good_lifting}
For each $\mathtt{h} \in \qH_{[\alpha]}^{\tuple{\Lambda}}(\Gamma)^\sigma$, there is some $h \in \qH_{[\alpha]}(\Gamma)^\sigma$ such that $\pi_{[\alpha]}(h) = \mathtt{h}$.
\end{lemme}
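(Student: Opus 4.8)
The plan is to lift an arbitrary $\sigma$-fixed element of the cyclotomic quotient along the canonical projection $\pi_{[\alpha]}$, and then average the lift over $\langle\sigma\rangle$ using the map $\mu$ from \eqref{equation:cyclotomic-case_mu} to make it $\sigma$-fixed, checking that averaging does not destroy the property of being a preimage of $\mathtt{h}$.

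First I would pick any $h_0 \in \qH_{[\alpha]}(\Gamma)$ with $\pi_{[\alpha]}(h_0) = \mathtt{h}$; such $h_0$ exists since $\pi_{[\alpha]}$ is surjective. Then I would set $h \coloneqq \mu(h_0) = \frac{1}{p}\sum_{m=0}^{p-1}\sigma^m(h_0)$. By Lemma~\ref{lemma:mu_projection} we have $h \in \qH_{[\alpha]}(\Gamma)^\sigma$, so it only remains to show $\pi_{[\alpha]}(h) = \mathtt{h}$.

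For that, I would apply $\pi_{[\alpha]}$ to the definition of $h$ and use the intertwining relation \eqref{equation:definition_sigma^Lambda}, namely $\sigma^{\tuple{\Lambda}} \circ \pi_{[\alpha]} = \pi_{[\alpha]} \circ \sigma$ (and hence $(\sigma^{\tuple{\Lambda}})^m \circ \pi_{[\alpha]} = \pi_{[\alpha]} \circ \sigma^m$ for all $m$), together with linearity of $\pi_{[\alpha]}$:
\[
\pi_{[\alpha]}(h) = \frac{1}{p}\sum_{m=0}^{p-1}\pi_{[\alpha]}(\sigma^m(h_0)) = \frac{1}{p}\sum_{m=0}^{p-1}(\sigma^{\tuple{\Lambda}})^m(\pi_{[\alpha]}(h_0)) = \frac{1}{p}\sum_{m=0}^{p-1}(\sigma^{\tuple{\Lambda}})^m(\mathtt{h}).
\]
Since $\mathtt{h}$ is fixed by $\sigma^{\tuple{\Lambda}}$ by hypothesis, each term equals $\mathtt{h}$, so the sum is $\frac{1}{p}\cdot p\cdot\mathtt{h} = \mathtt{h}$, as desired.

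There is no real obstacle here: the statement is a standard ``averaging a lift'' argument, and the only point requiring a little care is to invoke \eqref{equation:definition_sigma^Lambda} to commute $\pi_{[\alpha]}$ past the powers of $\sigma$ correctly, and to recall that $\mathrm{char}\,F \nmid p$ so that $\frac{1}{p}$ makes sense (which is assumed throughout this subsection).
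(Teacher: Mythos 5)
Your proof is correct and follows essentially the same route as the paper: choose any lift $h_0$ of $\mathtt{h}$ and replace it by its average $h = \mu(h_0)$, which lies in $\qH_{[\alpha]}(\Gamma)^\sigma$ by Lemma~\ref{lemma:mu_projection}. The only cosmetic difference is that you verify $\pi_{[\alpha]}(h) = \mathtt{h}$ by commuting $\pi_{[\alpha]}$ past the powers of $\sigma$ via \eqref{equation:definition_sigma^Lambda} and using that $\mathtt{h}$ is $\sigma^{\tuple{\Lambda}}$-fixed, whereas the paper checks directly that $\sigma^m(h_0) - h_0 \in \mathcal{I}_{[\alpha]}^{\tuple{\Lambda}}$ for all $m$, which amounts to the same computation.
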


\begin{proof}
Let $\mathtt{h} \in \qH_{[\alpha]}^{\tuple{\Lambda}}(\Gamma)^\sigma \subseteq  \qH_{[\alpha]}^{\tuple{\Lambda}}(\Gamma)$ and let $h_0 \in \qH_{[\alpha]}(\Gamma)$ be such that $\pi_{[\alpha]}(h_0) = \mathtt{h}$. Since $\mathtt{h}$ is fixed by $\sigma^{\tuple{\Lambda}}$, we have $\sigma(h_0) - h_0 \in \mathcal{I}_{[\alpha]}^{\tuple{\Lambda}}$. Hence, by Lemma~\ref{lemma:sigma_I[alpha]} we obtain:
\[
\sigma^{m'+1}(h_0) - \sigma^{m'}(h_0) \in \mathcal{I}_{[\alpha]}^{\tuple{\Lambda}}, \qquad \text{for all } m' \in \{0, \dots, p-1\},
\]
thus, by summing:
\[
\sigma^m(h_0) - h_0 \in \mathcal{I}_{[\alpha]}^{\tuple{\Lambda}},
\]
for all $m \in \{0, \dots, p-1\}$
(note that this is trivial for $m = 0$). Setting $h \coloneqq \mu(h_0)$ we get $h - h_0 \in \mathcal{I}_{[\alpha]}^{\tuple{\Lambda}}$ thus $\pi_{[\alpha]}(h) = \pi_{[\alpha]}(h_0) = \mathtt{h}$. We conclude since by Lemma~\ref{lemma:mu_projection} we have $h \in \qH_{[\alpha]}(\Gamma)^\sigma$.
\end{proof}

We are now ready to state the main theorem of this section. We recall that the idempotents $\{e(\gamma)\}_\gamma$ of \eqref{equation:generators_quiver_fixed} are indexed by the set $K^{[\alpha]}_\sigma$, which is defined in Definition~\ref{definition:affine_case-Kalphasigma}.

\begin{theoreme}
\label{theorem:quiver_two_subalgebra_same}
The algebras ${\qH_{[\alpha]}^{\tuple{\Lambda}}(\Gamma)}^\sigma$ and ${(\qH_{[\alpha]}(\Gamma)^\sigma)}^{\tuple{\Lambda}}$ are isomorphic. In particular, the generators \eqref{equation:generators_quiver_fixed} together with the relations 
%\eqref{relation:quiver_sum_e(gamma)_fixed}--\eqref{relation:quiver_tresse_fixed}
\eqref{relations:affine_quiver_fixed} and \eqref{relation:quiver_cyclo_fixed} give a presentation of ${\qH_{[\alpha]}^{\tuple{\Lambda}}(\Gamma)}^\sigma$.
\end{theoreme}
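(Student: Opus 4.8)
The plan is to build mutually inverse algebra homomorphisms between ${(\qH_{[\alpha]}(\Gamma)^\sigma)}^{\tuple{\Lambda}}$ and ${\qH_{[\alpha]}^{\tuple{\Lambda}}(\Gamma)}^\sigma$ using the presentation of $\qH_{[\alpha]}(\Gamma)^\sigma$ furnished by Corollary~\ref{corollary:presentation_quiver_fixed}. In one direction, the composite $\qH_{[\alpha]}(\Gamma)^\sigma \hookrightarrow \qH_{[\alpha]}(\Gamma) \xrightarrow{\pi_{[\alpha]}} \qH_{[\alpha]}^{\tuple{\Lambda}}(\Gamma)$ has image inside the $\sigma$-fixed points (because its source is $\sigma$-fixed and $\sigma^{\tuple{\Lambda}}\circ\pi_{[\alpha]} = \pi_{[\alpha]}\circ\sigma$ by \eqref{equation:definition_sigma^Lambda}), and it kills the generators $y_1^{\Lambda_{\gamma_1}}e(\gamma)$ of $\mathcal{I}_{[\alpha],\sigma}^{\tuple{\Lambda}}$ since these lie in $\mathcal{I}_{[\alpha]}^{\tuple{\Lambda}}$; hence it factors through a homomorphism $\Phi : {(\qH_{[\alpha]}(\Gamma)^\sigma)}^{\tuple{\Lambda}} \to {\qH_{[\alpha]}^{\tuple{\Lambda}}(\Gamma)}^\sigma$.

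For the reverse direction I would use the presentation of $\qH_{[\alpha]}(\Gamma)^\sigma$ from Corollary~\ref{corollary:presentation_quiver_fixed} to define a homomorphism $\qH_{[\alpha]}(\Gamma)^\sigma \to {(\qH_{[\alpha]}(\Gamma)^\sigma)}^{\tuple{\Lambda}}$ as the canonical quotient map, then show it descends through the cyclotomic relations on the target side. Concretely: the generators $e(\gamma), y_a, \psi_a$ of $\qH_{[\alpha]}^{\tuple{\Lambda}}(\Gamma)^\sigma$ — more precisely, the images $\mu(e(\gamma)), \mu(y_a e(\gamma)), \mu(\psi_a e(\gamma))$ under the averaging projection, which by Lemma~\ref{lemma:mu_projection} and Theorem~\ref{theorem:quiver_fixed_point}-type reasoning are the natural generators of $\qH_{[\alpha]}(\Gamma)^\sigma$ — satisfy the relations \eqref{relations:affine_quiver_fixed} already in $\qH_{[\alpha]}(\Gamma)$, hence in the quotient $\qH_{[\alpha]}^{\tuple{\Lambda}}(\Gamma)$; and they satisfy $y_1^{\Lambda_{\gamma_1}}e(\gamma) = 0$ there because $y_1^{\Lambda_{\gamma_1}}e(\gamma) = \sum_{m=0}^{p-1} y_1^{\Lambda_{k_1}}e(\sigma^m(\tuple{k}))$ maps into $\mathcal{I}_{[\alpha]}^{\tuple{\Lambda}}$, using the $\sigma$-stability \eqref{equation:weight_adapted}. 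This gives a homomorphism $\Psi : {(\qH_{[\alpha]}(\Gamma)^\sigma)}^{\tuple{\Lambda}} \to \qH_{[\alpha]}^{\tuple{\Lambda}}(\Gamma)$ whose image lands in $\qH_{[\alpha]}^{\tuple{\Lambda}}(\Gamma)^\sigma$.

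I would then check $\Phi$ and $\Psi$ are mutually inverse — this is immediate on generators since both are induced by the obvious maps $e(\gamma)\mapsto e(\gamma)$, $y_a\mapsto y_a$, $\psi_a\mapsto\psi_a$ — so it suffices to prove that one of them, say $\Psi$, is surjective and injective. Surjectivity of $\Psi$ onto $\qH_{[\alpha]}^{\tuple{\Lambda}}(\Gamma)^\sigma$ is exactly the content of Lemma~\ref{lemma:good_lifting}: every $\sigma$-fixed element of the cyclotomic quotient lifts to a $\sigma$-fixed element of the affine algebra, which is then in the image of $\Psi$. Injectivity of $\Psi$ is the heart of the matter: the kernel of $\qH_{[\alpha]}(\Gamma)^\sigma \to \qH_{[\alpha]}^{\tuple{\Lambda}}(\Gamma)$ is $\mathcal{I}_{[\alpha]}^{\tuple{\Lambda}} \cap \qH_{[\alpha]}(\Gamma)^\sigma$, and by Lemma~\ref{lemma:image_mu_ideal} this equals $\mathcal{I}_{[\alpha],\sigma}^{\tuple{\Lambda}}$, which is precisely the ideal we quotiented by to form ${(\qH_{[\alpha]}(\Gamma)^\sigma)}^{\tuple{\Lambda}}$; hence $\Psi$ is injective and therefore an isomorphism.

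The main obstacle — and the reason all the preparatory lemmas about $\mu$ were proved — is the identification $\mathcal{I}_{[\alpha]}^{\tuple{\Lambda}} \cap \qH_{[\alpha]}(\Gamma)^\sigma = \mathcal{I}_{[\alpha],\sigma}^{\tuple{\Lambda}}$ of Lemma~\ref{lemma:image_mu_ideal}: the inclusion $\supseteq$ is trivial, but $\subseteq$ requires the averaging trick to rewrite an arbitrary fixed element of $\mathcal{I}_{[\alpha]}^{\tuple{\Lambda}}$ as a combination of the generators $y_1^{\Lambda_{\gamma_1}}e(\gamma)$ of $\mathcal{I}_{[\alpha],\sigma}^{\tuple{\Lambda}}$ sandwiched between elements of $\qH_{[\alpha]}(\Gamma)^\sigma$, which is where the $\sigma$-stability of $\tuple{\Lambda}$ and the non-divisibility of $p$ in $F$ are both used. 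Once that lemma is in hand the proof is a short diagram chase; I would write it essentially as: define $\Phi$ and $\Psi$, note they are inverse on generators, conclude via Lemmas~\ref{lemma:good_lifting} and~\ref{lemma:image_mu_ideal}.
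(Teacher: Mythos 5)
Your argument is correct and rests on exactly the two lemmas that carry the paper's proof, but you assemble them differently. The paper builds an explicit inverse: it defines $f$ on the presentation of ${(\qH_{[\alpha]}(\Gamma)^\sigma)}^{\tuple{\Lambda}}$, constructs $\overline{\mu}$ from the averaging projection $\mu$ via the commutative diagram \eqref{diagram:overlinemu} (using Lemma~\ref{lemma:image_mu_ideal} to see that $\mu_1$ kills $\mathcal{I}_{[\alpha]}^{\tuple{\Lambda}}$), and then checks both composites $f\circ\overline{\mu}$ and $\overline{\mu}\circ f$, invoking Lemma~\ref{lemma:good_lifting} for one of them. You instead work with the single map induced by restricting $\pi_{[\alpha]}$ to $\qH_{[\alpha]}(\Gamma)^\sigma$ and run a first-isomorphism-theorem argument: its kernel is $\mathcal{I}_{[\alpha]}^{\tuple{\Lambda}}\cap\qH_{[\alpha]}(\Gamma)^\sigma=\mathcal{I}_{[\alpha],\sigma}^{\tuple{\Lambda}}$ by Lemma~\ref{lemma:image_mu_ideal} (injectivity of the induced map), its image lies in the fixed points by \eqref{equation:definition_sigma^Lambda} and exhausts them by Lemma~\ref{lemma:good_lifting} (surjectivity). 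This is slightly shorter and sidesteps the construction of $\overline{\mu}$, at the cost of not exhibiting the inverse explicitly; the paper's remark that building an inverse ``is not obvious since we do not have any presentation of the starting algebra'' is precisely what your bijectivity argument renders unnecessary.

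Two small blemishes, neither fatal. First, your $\Phi$ and $\Psi$ are in fact the same map in the same direction (both are induced by $\pi_{[\alpha]}$ restricted to $\qH_{[\alpha]}(\Gamma)^\sigma$), so the sentence about them being ``mutually inverse'' is vacuous as stated; fortunately you then pivot to proving bijectivity of $\Psi$ directly, which is a complete argument on its own, so you should simply delete the mutual-inverse remark and keep a single map. Second, you do not address the ``in particular'' clause of the theorem: one should note (as the paper does in its opening sentence) that the generators \eqref{equation:generators_quiver_fixed} with relations \eqref{relations:affine_quiver_fixed} and \eqref{relation:quiver_cyclo_fixed} present ${(\qH_{[\alpha]}(\Gamma)^\sigma)}^{\tuple{\Lambda}}$, because this algebra is by definition the quotient of the presented algebra of Corollary~\ref{corollary:presentation_quiver_fixed} by the ideal generated by the cyclotomic relations; the presentation of ${\qH_{[\alpha]}^{\tuple{\Lambda}}(\Gamma)}^\sigma$ then follows from your isomorphism.
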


\begin{proof}
Recalling Corollary~\ref{corollary:presentation_quiver_fixed}, we begin by noticing that the given presentation is a presentation of ${(\qH_{[\alpha]}(\Gamma)^\sigma)}^{\tuple{\Lambda}}$. In particular, we can define a homomorphism of algebras $f : {(\qH_{[\alpha]}(\Gamma)^\sigma)}^{\tuple{\Lambda}} \to {\qH_{[\alpha]}^{\tuple{\Lambda}}(\Gamma)}^\sigma$ by:
\begin{align*}
f(e(\gamma)) &\coloneqq e(\gamma), &&\text{for all } \gamma \in K^{[\alpha]}_\sigma,
\\
f(y_a) &\coloneqq y_a, &&\text{for all } a \in \{1, \dots, n\},
\\
f(\psi_a) &\coloneqq \psi_a, &&\text{for all } a \in \{1, \dots, n-1\}.
\end{align*}
If $\pi_{[\alpha]}^\sigma : \qH_{[\alpha]}(\Gamma)^\sigma \twoheadrightarrow {(\qH_{[\alpha]}(\Gamma)^\sigma)}^{\tuple{\Lambda}}$ is the canonical projection, we have:
\begin{equation}
\label{equation:proof_quiver_sigma_Lambda_commute_easy}
f \circ \pi_{[\alpha]}^\sigma(h) = \pi_{[\alpha]}(h)
\end{equation}
for all $h \in \qH_{[\alpha]}(\Gamma)^\sigma$
(it suffices to check this equality for each generator of $\qH_{[\alpha]}(\Gamma)^\sigma$).
We now want to construct an inverse  ${\qH_{[\alpha]}^{\tuple{\Lambda}}(\Gamma)}^\sigma \to {(\qH_{[\alpha]}(\Gamma)^\sigma)}^{\tuple{\Lambda}}$ for $f$. Note that it is not obvious  since we do not have any presentation of the starting algebra yet. By Lemma~\ref{lemma:mu_projection}, we have a well-defined linear map $\mu_1 :  \qH_{[\alpha]}(\Gamma) \to {(\qH_{[\alpha]}(\Gamma)^\sigma)}^{\tuple{\Lambda}}$ given by $\mu_1 = \pi_{[\alpha]}^\sigma \circ \mu$. By Lemma~\ref{lemma:image_mu_ideal} we have $\ker \mu_1 \supseteq \mathcal{I}_{[\alpha]}^{\tuple{\Lambda}}$, hence we have a well-defined linear map:
\[
\mu_2 : \qH_{[\alpha]}^{\tuple{\Lambda}}(\Gamma) \to  {(\qH_{[\alpha]}(\Gamma)^\sigma)}^{\tuple{\Lambda}},
\]
and by restriction we get a linear map $\overline{\mu} : {\qH_{[\alpha]}^{\tuple{\Lambda}}(\Gamma)}^\sigma \to {(\qH_{[\alpha]}(\Gamma)^\sigma)}^{\tuple{\Lambda}}$. To summarise, we have a commutative diagram \eqref{diagram:overlinemu}.
%%%
\begin{equation}
\label{diagram:overlinemu}
\begin{tikzpicture}
[descr/.style={fill=white,inner sep=5pt},
>=angle 90, baseline=(current  bounding  box.center)
]
\matrix (m) [matrix of math nodes, row sep=4em,
column sep=4em]
{\qH_{[\alpha]}(\Gamma)
&
\qH_{[\alpha]}(\Gamma)^\sigma
\\
\qH_{[\alpha]}^{\tuple{\Lambda}}(\Gamma)
&
{(\qH_{[\alpha]}(\Gamma)^\sigma)}^{\tuple{\Lambda}}
\\
\qH_{[\alpha]}^{\tuple{\Lambda}}(\Gamma)^\sigma
&
\\
};

%\path[->]
%(m-1-1.10)
%edge node[auto]
%{$\mu$}
%(m-1-2.172);
%
%\path[right hook->]
%(m-1-2.188)
%edge
%(m-1-1.350);

\path[->]
(m-1-1)
edge node[auto]
{$\mu$}
(m-1-2);

\path[->>]
(m-1-1)
edge node[auto]
{$\pi_{[\alpha]}$}
(m-2-1);

\path[->>]
(m-1-2)
edge node[auto]
{$\pi_{[\alpha]}^\sigma$}
(m-2-2);

\path[->]
(m-2-1)
edge node[auto]
{$\mu_2$}
(m-2-2);

\path[->]
(m-1-1)
edge node[auto]
{$\mu_1$}
(m-2-2);

\path[right hook->]
(m-3-1)
edge
(m-2-1);

\path[->]
(m-3-1)
edge node[auto]
{$\overline{\mu}$}
(m-2-2);

\end{tikzpicture}
\end{equation}
%%%

Let $h \in \qH_{[\alpha]}(\Gamma)^\sigma$. We want to prove the following equality:
\begin{equation}
\label{equation:proof_quiver_sigma_Lambda_commute_hard}
\overline{\mu} \circ \pi_{[\alpha]}(h)
=
\pi_{[\alpha]}^\sigma(h).
\end{equation}
First, by \eqref{equation:definition_sigma^Lambda} we have $\pi_{[\alpha]}(h) \in {\qH_{[\alpha]}^{\tuple{\Lambda}}(\Gamma)}^\sigma$ hence we can evaluate $\overline{\mu}$ at $\pi_{[\alpha]}(h)$. We now use the commutative diagram \eqref{diagram:overlinemu}:
\[
\overline{\mu}\circ\pi_{[\alpha]}(h)
=
\mu_2 \circ \pi_{[\alpha]}(h)
=
\mu_1(h)
=
\pi_{[\alpha]}^\sigma\circ \mu (h),
\]
and we conclude since $\mu(h) = h$ by Lemma~\ref{lemma:mu_projection}.

Finally, let us  prove that $f$ and $\overline{\mu}$ are mutual inverses.
\begin{itemize}
\item Let $\mathtt{h} \in {(\qH_{[\alpha]}(\Gamma)^\sigma)}^{\tuple{\Lambda}}$. If $h \in \qH_{[\alpha]}(\Gamma)^\sigma$ is such that $\pi_{[\alpha]}^\sigma(h) = \mathtt{h}$, we have, using \eqref{equation:proof_quiver_sigma_Lambda_commute_easy} and \eqref{equation:proof_quiver_sigma_Lambda_commute_hard}:
\[
\overline{\mu} \circ f (\mathtt{h}) = \overline{\mu} \circ f \circ \pi_{[\alpha]}^\sigma (h) = \overline{\mu} \circ \pi_{[\alpha]}(h) = \pi_{[\alpha]}^\sigma(h) = \mathtt{h},
\]
hence $\overline{\mu} \circ f$ is the identity of ${(\qH_{[\alpha]}(\Gamma)^\sigma)}^{\tuple{\Lambda}}$.
\item Let $\mathtt{h} \in  {\qH_{[\alpha]}^{\tuple{\Lambda}}(\Gamma)}^\sigma$. By Lemma~\ref{lemma:good_lifting}, we can find $h \in \qH_{[\alpha]}(\Gamma)^\sigma$ such that $\pi_{[\alpha]}(h) = \mathtt{h}$. Using once again   \eqref{equation:proof_quiver_sigma_Lambda_commute_easy} and \eqref{equation:proof_quiver_sigma_Lambda_commute_hard} we get:
\[
f \circ \overline{\mu}(\mathtt{h}) = f \circ \overline{\mu} \circ \pi_{[\alpha]} (h) = f \circ \pi_{[\alpha]}^\sigma(h) = \pi_{[\alpha]}(h) = \mathtt{h},
\]
thus $f \circ \overline{\mu}$ is the identity of ${\qH_{[\alpha]}^{\tuple{\Lambda}}(\Gamma)}^\sigma$.
\end{itemize}

In particular, the algebra homomorphism $f$ is bijective, hence is an algebra isomorphism between ${\qH_{[\alpha]}^{\tuple{\Lambda}}(\Gamma)}^\sigma$ and ${(\qH_{[\alpha]}(\Gamma)^\sigma)}^{\tuple{\Lambda}}$.
\end{proof}

\begin{remarque}
\label{remark:grading_qH_cyclo_fixed}
The grading of Remark~\ref{remark:gradation_qH_fixed} thus gives a grading on $\qH_{[\alpha]}^{\tuple{\Lambda}}(\Gamma)^\sigma$, for which $\sigma^{\tuple{\Lambda}}$ is homogeneous (recall Remark~\ref{remark:sigma_qH_graded}). Moreover, the algebra $\qH_{[\alpha]}^{\tuple{\Lambda}}(\Gamma)^\sigma$ is a graded subalgebra of $\qH_{[\alpha]}^{\tuple{\Lambda}}(\Gamma)$.
\end{remarque}

\begin{remarque}
The proof of Theorem~\ref{theorem:quiver_two_subalgebra_same} uses the assumption that $p$ is invertible in the base field. A characteristic-free (and in a slightly more general setting) version can be found in the author's PhD thesis~\cite[\textsection 1.4]{Ro_phd}.
\end{remarque}

%Finally, note that we easily obtain an analogue of Ariki's homomorphism of Theorem~\ref{theorem:homomorphism_Ariki} (see also Corollary~\ref{corollary:H(G(rpn))_fixe}).
%
%\begin{proposition}
%\label{proposition:cyclotomic_case-analogue_Ariki_homomorphism}
%There is a well-defined algebra homomorphism $\qH_n^{\tuple{\Lambda}}(\Gamma)^\sigma \to \qH_n^{\tuple{\Lambda}}(\Gamma)$ given by:
%\begin{align*}
%e(\gamma) &\mapsto e(\gamma) = \sum_{\substack{\tuple{k} \in K^{[\alpha]} \\ \tuple{k} \in \gamma}} e(\tuple{k}), && \text{for all } \gamma \in K^{[\alpha]}_\sigma,
%\\
%y_a &\mapsto y_a, && \text{for all } a \in \{1, \dots, n\}, 
%\\
%\psi_a &\mapsto \psi_a, && \text{for all } a \in \{1, \dots, n-1\}.
%\end{align*}
%\end{proposition}

\section{The isomorphism of Brundan and Kleshchev}
\label{section:BK}

In this section, we generalise an isomorphism of Brundan and Kleshchev \cite{BrKl} involving $\H_n^{\tuple{\Lambda}}(q, 1)$ to the case of the algebra $\H_n^{\tuple{\Lambda}}(q, \zeta)$.

\subsection{Statement}
\label{subsection:statement}

We consider the quiver $\Gamma_e$ defined as follows:
\begin{itemize}
\item the vertex set is ${\{q^i\}}_{i \in I}$;
\item there is a directed edge from $\vertex{v}$ to $q\vertex{v}$ for each vertex $\vertex{v}$ of $\Gamma_e$.
\end{itemize}
We will often identify the vertex set with $I$ in the canonical way. In particular, if $i$ is a vertex then there is a directed arrow from $i$ to $i + 1$. For $i, i' \in I$, with the notation of Section~\ref{section:CQHA} we thus have:
\begin{align*}
i \to i' &\iff [i' = i + 1 \text{ and } i \neq i' + 1],
\\
i \leftarrow i' &\iff [i = i' + 1 \text{ and } i' \neq i + 1],
\\
i \leftrightarrows i' &\iff [i = i' + 1 \text{ and } i' = i + 1],
\\
i \nrelbar i' &\iff i \neq i', i'\pm 1.
\end{align*}
The quiver $\Gamma_e$ is the cyclic quiver with $e$ vertices if $e < \infty$, and a two-sided infinite line if $e = \infty$: we give some examples in Figure~\ref{figure:gamma_e}, where we used the identification between  the vertex set of $\Gamma_e$ and $I$.
%%%
\begin{figure}[h]
\centering
\begin{tabular}{lM{10cm}}
Quiver $\Gamma_2$
&
$0 \leftrightarrows 1$
\\
\\
Quiver $\Gamma_4$
&
\begin{tikzpicture}[>=angle 90]
\node (0) at (0, 1) {$0$};
\node (1) at (1, 1) {$1$};
\node (2) at (1, 0) {$2$};
\node (3) at (0, 0) {$3$};

\draw[->] (0) -- (1);
\draw[->] (1) -- (2);
\draw[->] (2) -- (3);
\draw[->] (3) -- (0);
\end{tikzpicture}
\\
\\
Quiver $\Gamma_{\infty}$
&
\begin{tikzpicture}[>=angle 90]
\node (-3) at (-4, 0) {$\cdots$};
\node (-2) at (-2.6, 0) {$-2$};
\node (-1) at (-1.2, 0) {$-1$};
\foreach \i in {0,1,2}
	\node (\i) at (\i, 0) {$\i$};
\node (3) at (3.2, 0) {$\cdots$};
\foreach \i [count=\j from -2] in {-3,...,2}
	\draw[->] (\i) -- (\j);
%\draw[->, shift={(-3, 0)}] (1, 0)--(2, 0);
\end{tikzpicture}
\end{tabular}
\caption{Three examples of quivers $\Gamma_e$}
\label{figure:gamma_e}
\end{figure}
%%%

We recall the notation $p'$ and $J'$ introduced at \textsection\ref{subsection:removing_repetitions}. We set $K \coloneqq I \times J'$.
%in particular, the integer $p'$ is the one defined in \eqref{equation:definition_p'}.
Let us consider $p'$ non-zero elements $v_1, \dots, v_{p'}$ of $F$ which lie in distinct orbits under the action of $\langle q \rangle$ on $F^\times$, that is, for any $k \neq l$ we have:
\begin{equation}
\label{equation:isom_BK-condition_v}
\frac{v_k}{v_l} \notin \langle q \rangle.
\end{equation}
We then consider the quiver $\Gamma$ defined as follows:
\begin{itemize}
\item the vertex set is $\V \coloneqq \{v_j q^i\}_{i \in I, j \in J'}$;
\item there is a directed edge from $\vertex{v}$ to $q\vertex{v}$ for each vertex $\vertex{v}$ of $\Gamma$.
\end{itemize}

Since the $v_k$ lie in different $q$-orbits, the vertex set $\V$ of $\Gamma$ can be identified with $K = I \times J'$.  More precisely, we have the following decomposition:
\begin{equation}
\label{equation:isom_BK-identifying_Gamma}
\V = \bigsqcup_{j \in J'} {\{v_j q^i\}}_{i \in I}.
\end{equation}
Since:
\begin{itemize}
\item the subquiver of $\Gamma$ with vertex set ${\{v_j q^i\}}_{i \in I}$ is a copy of $\Gamma_e$;
\item for $j \neq j' \in J'$, there is no arrow between any element of ${\{v_j q^i\}}_{i \in I}$ and  ${\{v_{j'} q^i\}}_{i \in I}$;
\item the set $J'$ has cardinality $p'$;
\end{itemize}
we conclude from \eqref{equation:isom_BK-identifying_Gamma} that $\Gamma$ is exactly $p'$ disjoint copies of $\Gamma_e$. In particular, the quiver $\Gamma$ is loop-free and has no multiple edges.

As a consequence, we will often write $(i, j) \in I \times J'$ for the vertex $v_j q^i  \in V$ of $\Gamma$. For any $i, i' \in I$ and $j, j' \in J'$, what precedes ensures that the vertices $(i, j)$ and $(i', j')$ are in a same copy of $\Gamma_e$ if and only if $j = j'$. Further, there is a directed edge from $(i, j)$ to $(i', j')$ if and only if $j = j'$ and there is a directed edge in $\Gamma_e$ from $i$ to $i'$.
We give some examples of quivers $\Gamma$ in Figure~\ref{figure:Gamma}, where, for aesthetic reasons, we write $i_j$ instead of $v_j q^i$. We also recall from Lemma~\ref{lemma:introduction-p'_gcd} that $p' = \frac{p}{\mathrm{gcd}(p, e)}$ if $e < \infty$ and $p' = p$ if $e = \infty$.
\begin{figure}[h]
\centering
\begin{tabular}{lM{9.5cm}}
Case $(e, p) = (2, 3)$
&
$0_1 \leftrightarrows 1_1 \qquad 0_2 \leftrightarrows 1_2 \qquad 0_3 \leftrightarrows 1_3$
\\
\\
Case $(e, p) = (2, 6)$
&
$0_1 \leftrightarrows 1_1 \qquad 0_2 \leftrightarrows 1_2 \qquad 0_3 \leftrightarrows 1_3$
\\
\\
Case $(e, p) = (\infty, 2)$
&
\begin{tikzpicture}[>=angle 90]
\node (-3) at (-3.2, 1) {$\cdots$};
\node (-2) at (-1.7, 1) {$-2_1$};
\node (-1) at (-.2, 1) {$-1_1$};
\node (0) at (1.2, 1) {$0_1$};
\node (1) at (2.4, 1) {$1_1$};
\node (2) at (3.6, 1) {$2_1$};
\node (3) at (4.9, 1) {$\cdots$};
\foreach \i [count=\j from -2] in
 {-3,...,2}
	\draw[->] (\i) -- (\j);
%%%%%%%
\node (4) at (-3.2, 0) {$\cdots$};
\node (5) at (-1.7, 0) {$-2_2$};
\node (6) at (-.2, 0) {$-1_2$};
\node (7) at (1.2, 0) {$0_2$};
\node (8) at (2.4, 0) {$1_2$};
\node (9) at (3.6, 0) {$2_2$};
\node (10) at (4.9, 0) {$\cdots$};
\foreach \i [count=\j from 5] in
 {4,...,9}
	\draw[->] (\i) -- (\j);
\end{tikzpicture}
\end{tabular}
\caption{Three examples of quivers $\Gamma$}
\label{figure:Gamma}
\end{figure} 
%%%

Now let $\tuple{\Lambda} = (\Lambda_k)_{k \in K} \in \mathbb{N}^{(K)}$ be a weight of level $r$. Mimicking the definition of $\sH_n^{\tuple{\Lambda}}(q, \zeta)$,
 let us choose a tuple $\tuple{u} \in {(F^\times)}^r$ which is given by exactly $\Lambda_{i, j}$ copies of $v_j q^i$ for each $(i, j) \in I \times J'$ and set $\sH_n^{\tuple{\Lambda}}(q, \tuple{v}) \coloneqq \sH_n(q, \tuple{u})$. As a result, the relation \eqref{relation:hecke_cyclo_S_general} in $\sH_n(q, \tuple{u})$ is:
\begin{equation}
\label{equation:isom_BK-cyclotomic_relation}
\prod_{i \in I} \prod_{j \in J'} {(S - v_j q^i)}^{\Lambda_{i, j}} = 0.
\end{equation}

The remaining part of this section is devoted to the proof of the following theorem.

\begin{theoreme}
\label{theorem:BK_generalised}
There is an explicit $F$-algebra isomorphism:
\[
\sH_n^{\tuple{\Lambda}}(q, \tuple{v}) \simeq \sqH_n^{\tuple{\Lambda}}(\Gamma).
\]
\end{theoreme}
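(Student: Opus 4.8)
The plan is to imitate verbatim the strategy of Brundan and Kleshchev~\cite{BrKl}, which is precisely why the paper says ``the calculations are entirely similar''. The starting observation is that the Ariki--Koike algebra $\sH_n^{\tuple{\Lambda}}(q,\tuple{v})$ acts on itself, and more to the point, the commutative subalgebra $F[X_1,\dots,X_n]$ generated by the Jucys--Murphy-like elements $X_1,\dots,X_n$ has, after extension of scalars or already over $F$ once the eigenvalues are among the $v_jq^i$, a decomposition of the identity into orthogonal idempotents $e(\tuple{k})$ for $\tuple{k}\in K^n$, where $e(\tuple{k})$ projects onto the simultaneous generalised eigenspace for which $X_a$ has eigenvalue $v_{j_a}q^{i_a}$. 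One first shows that all eigenvalues of $X_a$ lie in $\V = \{v_jq^i\}$: for $X_1=S$ this is the cyclotomic relation~\eqref{equation:isom_BK-cyclotomic_relation}, and for the others one uses the recursion $qX_{a+1}=T_aX_aT_a$ together with the quadratic relation~\eqref{relation:hecke_ordre} to see that the possible eigenvalues of $X_{a+1}$ on a given $X_a$-eigenspace are controlled, and crucially stay inside a single $\langle q\rangle$-orbit, so the $j$-coordinate is preserved. This produces a candidate map $\sqH_n^{\tuple{\Lambda}}(\Gamma)\to\sH_n^{\tuple{\Lambda}}(q,\tuple{v})$ sending $e(\tuple{k})$ to the corresponding idempotent.

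The second step is to write down the images of $y_a$ and $\psi_a$. One sets $y_a e(\tuple{k})$ to be a suitable normalisation of $(1-v_{j_a}^{-1}q^{-i_a}X_a)e(\tuple{k})$ (or $X_a v_{j_a}^{-1}q^{-i_a}-1$, up to the convention of \cite{BrKl}), so that $y_a$ becomes nilpotent on each block and the cyclotomic relation~\eqref{relation:quiver_cyclo} is exactly the block-form of~\eqref{equation:isom_BK-cyclotomic_relation}. One defines $\psi_a e(\tuple{k})$ as $(T_a + P_a(X_a,X_{a+1}))Q_a(X_a,X_{a+1})^{-1}e(\tuple{k})$ for appropriate rational functions $P_a,Q_a$ in the two commuting variables, chosen exactly as in \cite{BrKl} so that (i) $Q_a$ is invertible on the relevant block because its only potential zeros occur when $X_a,X_{a+1}$ are equal, which is handled separately, (ii) $\psi_a$ intertwines the idempotents as in~\eqref{relation:quiver_psiae(k)}, and (iii) the quadratic relation~\eqref{relation:hecke_ordre} for $T_a$ translates into the various cases of~\eqref{relation:quiver_psia^2}. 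Because $\Gamma$ is simply $p'$ disjoint copies of $\Gamma_e$, whenever $k_a$ and $k_{a+1}$ lie in different copies (different $j$) we have $k_a\nrelbar k_{a+1}$, $X_a$ and $X_{a+1}$ have eigenvalues in different $q$-orbits, and all the formulas degenerate to the trivial disconnected case; the only genuinely non-trivial computations are internal to one copy of $\Gamma_e$ and are word-for-word those of \cite{BrKl}.

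The third step is verification of all defining relations~\eqref{relations:affine_quiver} of $\sqH_n(\Gamma)$ together with the cyclotomic relations~\eqref{relation:quiver_cyclo} for the proposed assignment, giving a homomorphism $\sqH_n^{\tuple{\Lambda}}(\Gamma)\to\sH_n^{\tuple{\Lambda}}(q,\tuple{v})$; symmetrically one checks that the inverse formulas (expressing $S$ and the $T_a$ in terms of the $e(\tuple{k}),y_a,\psi_a$) satisfy the Ariki--Koike relations~\eqref{relation:hecke_cyclo_S_general}--\eqref{relation:hecke_braid3}, yielding a homomorphism in the other direction; the two are mutually inverse on generators. Finally one invokes a dimension/basis count: by Theorem~\ref{theorem:basis_AK} the algebra $\sH_n^{\tuple{\Lambda}}(q,\tuple{v})$ has $F$-dimension $r^n n!$, and by Theorem~\ref{theorem:generating_family_quiver_cyclo} the family $\mathcal{B}_n^{\tuple{\Lambda}}$ spans $\sqH_n^{\tuple{\Lambda}}(\Gamma)$; either a surjective map between the two combined with the Ariki--Koike dimension, or the standard argument of \cite{BrKl,HuMa_gra} that the spanning family maps to a basis, shows both maps are isomorphisms.

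The main obstacle is the third step in its finer points: proving that the rational-function formulas for $\psi_a$ are actually well-defined (invertibility of the denominators $Q_a(X_a,X_{a+1})$ on each block), and that they satisfy the braid-type relation~\eqref{relation:quiver_tresse} and the $\psi^2$-relation~\eqref{relation:quiver_psia^2} in all their cases --- in particular the cases $k_a\leftrightarrows k_{a+1}$, which only occur when $e=2$. In \cite{BrKl} this is a long but elementary localisation computation in the polynomial ring in the commuting variables $X_a$; here nothing new happens because the extra index $j$ only ever separates the quiver into non-interacting pieces, so the same computation applies verbatim, one copy of $\Gamma_e$ at a time. This is exactly why the author chooses not to reproduce the calculations and instead cites \cite{BrKl}; a complete write-up would simply track the index $j$ through their argument and observe it is constant along each strand on each block.
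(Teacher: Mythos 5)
Your proposal follows essentially the same route as the paper: block-by-block idempotent decomposition from the generalised eigenspaces of the $X_a$ (with the eigenvalue lemma), Brundan--Kleshchev-style formulas for $y_a$ and $\psi_a$ via the series $P_a(\tuple{k}),Q_a(\tuple{k})$, verification of both sets of defining relations, and the observation that the disjointness of the $p'$ copies of $\Gamma_e$ (the $v_j$ lying in distinct $\langle q\rangle$-orbits) makes the cross-copy cases degenerate, so the computations are those of \cite{BrKl} verbatim. The only superfluous element is the closing dimension/basis count: once the two homomorphisms are checked to be mutually inverse on generators (as the paper does, following \cite{BrKl}), the isomorphism is already established.
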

Brundan and Kleshchev \cite{BrKl} proved Theorem~\ref{theorem:BK_generalised} for $p = 1$: in that case, we have $p' = 1$, the tuple $\tuple{v}$ has only one component (that can be taken equal to 1) and $\Gamma = \Gamma_e$. We will see that the same argument proves the general case. Such an isomorphism, for $e < \infty$, was already obtained by Rouquier \cite[Corollary 3.20]{Rou}.

\subsection{Candidate homomorphisms}
\label{subsection:candidate_homomorphisms}
We recall that $V \simeq K = I \times J'$, together with the definitions $X_1 \coloneqq S$ and $q X_{a+1} \coloneqq T_a X_a T_a$ from \eqref{equation:definition_Xa+1}. To prove Theorem~\ref{theorem:BK_generalised}, it suffices to give an isomorphism between $\sH_\alpha^{\tuple{\Lambda}}(q, \tuple{v})$ (see \eqref{equation:candididate_homomorphism-H_alpha}) and $\sqH_\alpha^{\tuple{\Lambda}}(\Gamma)$   for any $\alpha \comp_{K} n$.
Let $M$ be a finite-dimensional $\sH_n^{\tuple{\Lambda}}(q, \tuple{v})$-module.

\begin{lemme}
\label{lemma:eigenvalues_Xa}
For any $a \in \{1, \dots, n\}$, the eigenvalues of $X_a$ on $M$ are of the form $v_j q^i$ for $i \in I$ and $j \in J'$.
\end{lemme}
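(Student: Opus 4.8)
The plan is to argue by induction on $a$, using the recursive definition $qX_{a+1} = T_a X_a T_a$ together with the quadratic relation \eqref{relation:hecke_ordre} on $T_a$. For the base case $a=1$, recall $X_1 = S$, and the cyclotomic relation \eqref{equation:isom_BK-cyclotomic_relation} says $\prod_{i \in I}\prod_{j \in J'}(S - v_j q^i)^{\Lambda_{i,j}} = 0$ holds in $\sH_n^{\tuple{\Lambda}}(q, \tuple{v})$, hence on $M$; since $M$ is finite-dimensional, the minimal polynomial of $X_1$ acting on $M$ divides $\prod_{i,j}(t - v_j q^i)^{\Lambda_{i,j}}$, so every eigenvalue of $X_1$ on $M$ lies in $\{v_j q^i : i \in I, j \in J'\}$.

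For the inductive step, suppose every eigenvalue of $X_a$ on $M$ is of the form $v_j q^i$. I would pass to an eigenvector, or more cleanly to a generalised eigenspace: let $\lambda = v_j q^i$ be an eigenvalue of $X_a$ and let $v \neq 0$ with $X_a v = \lambda v$. From $qX_{a+1} = T_a X_a T_a$ I would like to relate the $X_{a+1}$-action to the $X_a$-action on the (at most two-dimensional) subspace spanned by $v$ and $T_a v$. The standard computation (as in \cite[Lemma 4.2]{BrKl} or \cite[Lemma 3.3]{ArKo}) is: on the span of $\{v, T_a v\}$, using $T_a^2 = (q-1)T_a + q$ and $X_a T_a v = ?$ — one needs that $X_a$ also acts nicely on $T_a v$. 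Since $X_a$ and $X_{a+1}$ commute and $X_a, X_{a+1}$ together are "symmetric" in $T_a$, one checks that the subspace $\mathrm{span}\{v, T_a v\}$ is stable under both $X_a$ and $X_{a+1}$, and on it $X_a$ and $X_{a+1}$ have the same pair of eigenvalues counted suitably; in particular each eigenvalue $\mu$ of $X_{a+1}$ on this subspace equals either $\lambda$ or $q\lambda$ (when the two are distinct) or is forced to be $\lambda$ (when $q\lambda = \lambda$ is impossible since $q \neq 1$). Either way $\mu \in \{\lambda, q\lambda\} \subseteq \{v_j q^{i'} : i' \in I\}$, because the set $\{q^i\}_{i \in I}$ is closed under multiplication by $q$.

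Concretely: decompose $M$ as a direct sum of generalised $X_a$-eigenspaces (possible since $M$ is finite-dimensional and, by induction, the eigenvalues are among the finitely many $v_j q^i$). For a generalised eigenvalue $\lambda$, one shows $T_a$ maps the $\lambda$-generalised eigenspace of $X_a$ into the sum of the $\lambda$- and $q\lambda$-generalised eigenspaces (this follows from $qX_{a+1}T_a = T_a X_a T_a \cdot T_a$ and manipulating with the quadratic relation, or directly from the known identities $X_{a+1}(T_a + 1) = (T_a+1)\cdot(\text{something involving } X_a)$ in $\sH_n(\tuple{u})$). Then $X_{a+1} = q^{-1} T_a X_a T_a$ preserves each $T_a$-image and acts on it with eigenvalues in $\{\lambda, q\lambda\}$; since $M = \sum_\lambda M_\lambda$ and $T_a$ is invertible (as $T_a(T_a - (q-1)) = q \neq 0$), the images $T_a M_\lambda$ span $M$, so every eigenvalue of $X_{a+1}$ on $M$ lies in $\bigcup_\lambda \{\lambda, q\lambda\} \subseteq \{v_j q^{i'}\}$.

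The main obstacle is making the subspace/eigenvalue bookkeeping precise — i.e. verifying rigorously that $T_a$ sends the $\lambda$-generalised eigenspace of $X_a$ into the $\{\lambda, q\lambda\}$-part, and that $X_{a+1}$ restricted there has the claimed eigenvalues. This is exactly the computation underlying \cite[Lemma 4.2]{BrKl}, which the excerpt has told us carries over verbatim (``the calculations are entirely similar''), so I would either cite it or reproduce the two-dimensional matrix calculation on $\mathrm{span}\{v, T_a v\}$. No new ideas are needed beyond the classical Ariki--Koike/Brundan--Kleshchev argument; the only adaptation is that eigenvalues of $X_1$ now range over $p'$ distinct $q$-orbits rather than one, but closure of each orbit under multiplication by $q$ is all that is used.
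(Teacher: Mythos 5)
Your overall route is the paper's: the base case is the cyclotomic relation \eqref{equation:isom_BK-cyclotomic_relation}, and the inductive step is the classical Ariki--Koike/Grojnowski fact that every eigenvalue of $X_{a+1}$ differs from an eigenvalue of $X_a$ by a power of $q$ (the paper simply cites \cite{ArKo} and \cite[Lemma 4.7]{Gr} for this), combined with the closure of $\{v_j q^i\}_{i\in I}$ under multiplication by $q^{\pm 1}$. If you cite that fact, as you offer to, the proof is complete and coincides with the paper's.

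However, the self-contained version you sketch has a concrete flaw. For $v$ merely an eigenvector of $X_a$, the subspace $\mathrm{span}\{v, T_a v\}$ is \emph{not} in general stable under $X_a$ and $X_{a+1}$: from $qX_{a+1}=T_aX_aT_a$ one gets $X_aT_a = T_aX_{a+1}-(q-1)X_{a+1}$, so $X_a(T_av)$ involves $X_{a+1}v$, which need not lie in the span. The standard argument uses a \emph{simultaneous} (generalised) eigenvector $m$ of the commuting family $X_1,\dots,X_n$ (over a splitting field), say $X_am=\lambda_a m$, $X_{a+1}m=\lambda_{a+1}m$; then $\mathrm{span}\{m,T_am\}$ is invariant, $X_a$ and $X_{a+1}$ act on it triangularly with diagonal entries $\{\lambda_a,\lambda_{a+1}\}$, and if $T_am\in Fm$ the relation $qX_{a+1}=T_aX_aT_a$ forces $\lambda_{a+1}\in\{q\lambda_a,\,q^{-1}\lambda_a\}$; if $m,T_am$ are independent, $\lambda_{a+1}$ is itself an eigenvalue of $X_a$. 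Relatedly, your claim that $T_a$ maps the $\lambda$-generalised eigenspace of $X_a$ into the $\lambda$- and $q\lambda$-parts is false (already for $n=2$ with two generic eigenvalues of $X_1$ in distinct $q$-orbits, $T_1$ mixes them), and your list $\{\lambda,q\lambda\}$ omits $q^{-1}\lambda$ — harmless for the final conclusion since $\{v_jq^i\}_{i\in I}$ is a full $\langle q\rangle$-orbit, but symptomatic of the bookkeeping going wrong. With the simultaneous-eigenvector correction (or the citation), the argument closes exactly as in the paper.
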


\begin{proof}
The statement is of course true for $a = 1$ by \eqref{relation:hecke_cyclo_S}. By induction, using \cite{ArKo} or \cite[Lemma 4.7]{Gr} we know that any eigenvalue of $X_{a+1}$ differs from an eigenvalues of $X_a$ by a power of $q$.
\end{proof}

Hence, as the elements $X_1, \dots, X_n$ pairwise commute, we can write $M$ as a direct sum of generalised simultaneous eigenspaces:
\[
M = \bigoplus_{\tuple{k} \in K^n} M(\tuple{k}),
\]
where $M(\tuple{k}) = M(\tuple{i}, \tuple{j})$ is defined by, for $\tuple{k} = (\tuple{i}, \tuple{j}) \in K^n \simeq I^n \times {J'}^n$:
\[
M(\tuple{i}, \tuple{j}) \coloneqq \left\lbrace m \in M : {(X_a - v_{j_a} q^{i_a})}^N m = 0 \text{ for all } 1 \leq a \leq n\right\},
\]
where $N \gg 0$. Note  that all but finitely many $M(\tuple{k})$ are reduced to $\{0\}$. We now consider the family $\{e(\tuple{k})\}_{\tuple{k} \in K^n}$ of projections associated with the  decomposition above. In particular:
\begin{itemize}
\item we have $e(\tuple{k}) e(\tuple{k}')= \delta_{\tuple{k}, \tuple{k}'} e(\tuple{k})$;
\item we have $\sum_{\tuple{k} \in K^n} e(\tuple{k}) = \mathrm{id}$ (this is a finite sum since all but finitely many $e(\tuple{k})$ are zero);
\item we have $e(\tuple{k})M = M(\tuple{k})$.
\end{itemize}

\begin{remarque}
We already used the notation $e(\tuple{k})$ for some generators of $\qH_n^{\tuple{\Lambda}}(\Gamma)$. This abuse of notation will be justified by the proof of Theorem~\ref{theorem:BK_generalised}, where we prove that these elements can be identified.
\end{remarque}

Since $e(\tuple{k})$ is a polynomial in $X_1, \dots, X_n$ we have  $e(\tuple{k}) \in \sH_n^{\tuple{\Lambda}}(q, \tuple{v})$. If now $\alpha \comp_{K} n$ is a $K$-composition of $n$, the following element:
\[
e(\alpha) \coloneqq \sum_{\tuple{k} \in K^\alpha} e(\tuple{k}) \in \sH_n^{\tuple{\Lambda}}(q, \tuple{v}),
\]
is a central idempotent (the reader should compare this definition to \eqref{equation:definition_e(alpha)_QH}). We thus get a subalgebra:
\begin{equation}
\label{equation:candididate_homomorphism-H_alpha}
\sH_\alpha^{\tuple{\Lambda}}(q, \tuple{v}) \coloneqq e(\alpha) \sH_n^{\tuple{\Lambda}}(q, \tuple{v}).
\end{equation}

\begin{remarque}
The subalgebra $\sH_\alpha^{\tuple{\Lambda}}(q, \tuple{v})$ is either $\{0\}$ or a \emph{block} of $\sH_n^{\tuple{\Lambda}}(q, \tuple{v})$ (see \cite{LyMa}). This block has unit $e(\alpha)$.
\end{remarque}

Recall that the elements $y_a \in \sqH_\alpha^{\tuple{\Lambda}}(\Gamma)$ for $1 \leq a \leq n$ are nilpotent (Lemma~\ref{lemma:ya_nilpotent}). Hence, each power series $f(y_1, \dots, y_n) \in F[[y_1, \dots, y_n]]$ in these elements is a well-defined element of $\sqH_\alpha^{\tuple{\Lambda}}(\Gamma)$. In particular, for $a \in \{1, \dots, n-1\}$ and $\tuple{k} \in K^\alpha$ the following power series is well-defined in $\sqH_\alpha^{\tuple{\Lambda}}(\Gamma)$:
\begin{equation}
\label{equation:candidate_homomorphisms-P_a}
P_a(\tuple{k}) \coloneqq \begin{cases}
1 & \text{if } k_a = k_{a+1},
\\
(1 - q){(1 - y_a(\tuple{k})y_{a+1}(\tuple{k})^{-1})}^{-1} & \text{if } k_a \neq k_{a+1},
\end{cases} \in F[[y_a, y_{a+1}]],
\end{equation}
where:
\begin{equation}
\label{equation:definition_ya(k)}
y_a(\tuple{k}) \coloneqq v_{j_a} q^{i_a}(1 - y_a) \text{ if }\tuple{k} = (\tuple{i}, \tuple{j}).
\end{equation}
For $k = (i, j) \in K = I \times J'$, we define:
\begin{equation}
\label{equation:definition_zetak_qk}
\begin{gathered}
q^k \coloneqq q^i, \qquad q^{-k} \coloneqq q^{-i},
\\
v_k \coloneqq v_j, \qquad v_{-k} \coloneqq v_j^{-1},
\end{gathered}
\end{equation}
in particular we obtain $y_a(\tuple{k}) = v_{k_a} q^{k_a} (1 - y_a)$ for any $\tuple{k} \in K^n$.
Note that:
\begin{align*}
1 - y_a(\tuple{k})y_{a+1}(\tuple{k})^{-1}
&=
\frac{y_{a+1}(\tuple{k}) - y_a(\tuple{k})}{y_{a+1}(\tuple{k})}
\\
&= \frac{(v_{k_{a+1}} q^{k_{a+1}} - v_{k_a} q^{k_a}) + v_{k_a} q^{k_a} y_a - v_{k_{a+1}}q^{k_{a+1}}y_{a+1}}{y_{a+1}(\tuple{k})};
\end{align*}
thus, by \eqref{equation:isom_BK-condition_v} we know that this expression is indeed invertible when $k_a \neq k_{a+1}$.

For $(w, f) \in \mathfrak{S}_n \times F[[y_1, \dots, y_n]]$, we denote by $f^w \in F[[y_1, \dots, y_n]]$ the usual right action of $w$ on $f$. For instance, if $w = \tau$ is a transposition then $f^\tau(y_1, \dots, y_n) = f(y_{\tau(1)}, \dots, y_{\tau(n)})$.
Let us give a lemma involving this action (see, for instance, \cite[(2.6)]{BrKl}).

\begin{lemme}
\label{lemma:f_psia}
For any $f \in F[[y_1, \dots, y_n]], a \in \{1, \dots, n-1\}$ and $\tuple{k} \in K^\alpha$ we have:
\[
f \psi_a e(\tuple{k}) = \begin{cases}
\psi_a f^{s_a} e(\tuple{k})  + \partial_a(f) e(\tuple{k}) & \text{if } k_a = k_{a+1},
\\
\psi_a f^{s_a} e(\tuple{k}) & \text{if } k_a \neq k_{a+1},
\end{cases}
\]
where $\partial_a(f) \coloneqq \frac{f^{s_a} - f}{y_a - y_{a+1}} \in F[[y_1, \dots, y_n]]$.
\end{lemme}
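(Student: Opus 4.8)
The plan is to reduce to the case where $f$ is a monomial in $y_1, \dots, y_n$ and then induct on its degree, the base case being essentially a restatement of the defining relations of $\sqH_\alpha^{\tuple{\Lambda}}(\Gamma)$.

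First I would dispose of the passage from power series to polynomials. By Lemma~\ref{lemma:ya_nilpotent} there is an integer $N$ with $(y_1, \dots, y_n)^N = 0$ in $\sqH_\alpha^{\tuple{\Lambda}}(\Gamma)$; since $f \mapsto f^{s_a}$ and $f \mapsto \partial_a(f)$ are continuous for the $(y_1, \dots, y_n)$-adic topology and send $(y_1, \dots, y_n)^{N+1}$ into $(y_1, \dots, y_n)^{N}$, both sides of the claimed identity depend only on the image of $f$ in $F[y_1, \dots, y_n] / (y_1, \dots, y_n)^{N+1}$. Hence it suffices to prove the identity for $f$ a monomial $y_1^{c_1} \cdots y_n^{c_n}$, and then only for $f$ of positive degree (the case $f = 1$ being trivial).

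The base case is $f = y_b$ for a single $b \in \{1, \dots, n\}$. If $b \neq a, a+1$, then $y_b^{s_a} = y_b$ and $\partial_a(y_b) = 0$, and the identity is relation~\eqref{relation:quiver_psia_yb}; if $b = a+1$, then $y_{a+1}^{s_a} = y_a$ and $\partial_a(y_{a+1}) = 1$, and the identity is exactly relation~\eqref{relation:quiver_ya+1_psia}; if $b = a$, then $y_a^{s_a} = y_{a+1}$ and $\partial_a(y_a) = -1$, and the identity is relation~\eqref{relation:quiver_psia_ya+1} after rearrangement. For the inductive step, write $f = y_b g$ with $g$ a monomial of strictly smaller degree. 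Using that $g^{s_a}$ is a polynomial in the $y$'s, hence commutes with $e(\tuple{k})$ by~\eqref{relation:quiver_y_ae(k)}, and applying the inductive hypothesis to $g$ one gets
\[
f \psi_a e(\tuple{k}) = y_b \bigl( g \psi_a e(\tuple{k}) \bigr) = \bigl( y_b \psi_a e(\tuple{k}) \bigr)\, g^{s_a} + \delta_{k_a, k_{a+1}}\, y_b\, \partial_a(g)\, e(\tuple{k});
\]
applying the base case to $y_b$ in the first term then yields $\psi_a\, y_b^{s_a} g^{s_a}\, e(\tuple{k}) = \psi_a f^{s_a} e(\tuple{k})$ plus a correction $\delta_{k_a, k_{a+1}} \bigl( \partial_a(y_b)\, g^{s_a} + y_b\, \partial_a(g) \bigr) e(\tuple{k})$, and the Leibniz rule $\partial_a(y_b g) = \partial_a(y_b)\, g^{s_a} + y_b\, \partial_a(g)$, immediate from the definition of $\partial_a$, identifies this with $\delta_{k_a, k_{a+1}}\, \partial_a(f)\, e(\tuple{k})$, as required.

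There is no genuine obstacle here: the computation is the standard divided-difference manipulation familiar from nil-Hecke algebras, and the only points demanding a little care are the signs in the base case $f = y_a$ and the (routine) justification that one may work modulo a high power of the $y$'s thanks to their nilpotency.
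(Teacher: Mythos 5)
Your proof is correct and follows the same route the paper indicates: the paper simply states that the lemma is a consequence of relations~\eqref{relation:quiver_psia_yb}, \eqref{relation:quiver_psia_ya+1} and \eqref{relation:quiver_ya+1_psia}, and your argument is exactly the standard way to derive it from those relations, namely reduction to monomials (using the nilpotency of the $y_a$ from Lemma~\ref{lemma:ya_nilpotent}) followed by induction on the degree via the twisted Leibniz rule for $\partial_a$. Nothing is missing; you have merely written out the details the paper leaves implicit.
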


\begin{proof}
This is a consequence of \eqref{relation:quiver_psia_yb}, \eqref{relation:quiver_psia_ya+1} and \eqref{relation:quiver_ya+1_psia}.
\end{proof}

We say that a family $\{Q_a(\tuple{k})\}_{a \in \{1, \dots, n-1\}, \tuple{k} \in K^\alpha}$ of elements of $F[[y_1,\dots,y_n]]$ satisfies the property $(\mathrm{BK})$ if:
\begin{gather}
\label{equation:Qa_invertible}
Q_a(\tuple{k}) \text{ is an invertible element of } F[[y_a, y_{a+1}]],
\\
\label{equation:Qa_=}
Q_a(\tuple{k}) = 1 - q + q y_{a+1} - y_a \qquad \text{if } k_a = k_{a+1},
\end{gather}
\begin{subnumcases}{\label{equation:QaQasa} Q_a(\tuple{k}) Q_a(s_a \cdot \tuple{k})^{s_a} =}
\label{equation:Qa_nrelbar}
(1 - P_a(\tuple{k}))(q + P_a(\tuple{k})) & if $k_a \nrelbar k_{a+1}$,
\\
\label{equation:Qa_->}
\frac{(1 - P_a(\tuple{k}))(q + P_a(\tuple{k}))}{y_{a+1} - y_a} & if $k_a \to k_{a+1}$,
\\
\label{equation:Qa_<-}
\frac{(1 - P_a(\tuple{k}))(q + P_a(\tuple{k}))}{y_a - y_{a+1}} & if $k_a \leftarrow k_{a+1}$,
\\
\label{equation:Qa_<->}
\frac{(1 - P_a(\tuple{k}))(q + P_a(\tuple{k}))}{(y_{a+1} - y_a)(y_a - y_{a+1})} & if $k_a \leftrightarrows k_{a+1}$,
\end{subnumcases}
\begin{equation}
\label{equation:Qa_odd}
Q_{a+1}(s_{a+1}s_a \cdot \tuple{k})^{s_a} = Q_a(s_a s_{a+1} \cdot \tuple{k})^{s_{a+1}}.
\end{equation}

We can now give the key of Theorem~\ref{theorem:BK_generalised}.

\begin{theoreme}
\label{theorem:BK_generalised_block}
Let $\{Q_a(\tuple{k})\}_{a \in \{1, \dots, n-1\}, \tuple{k} \in K^\alpha}$ be a family of elements of $F[[y_1, \dots, y_n]]$ which satisfies $(\mathrm{BK})$. There exist unique $F$-algebra homomorphisms $f : \sH^{\tuple{\Lambda}}_\alpha(q, \tuple{v}) \to \sqH_\alpha^{\tuple{\Lambda}}(\Gamma)$
and $g : \sqH_\alpha^{\tuple{\Lambda}}(\Gamma) \to \sH^{\tuple{\Lambda}}_\alpha(q, \tuple{v})$
such that:
\begin{gather*}
f(X_a) \coloneqq \sum_{\tuple{k} \in K^\alpha} y_a(\tuple{k}) e(\tuple{k}),
\\
f(T_a) \coloneqq \sum_{\tuple{k} \in K^\alpha} (\psi_a Q_a(\tuple{k}) - P_a(\tuple{k})) e(\tuple{k}),
\end{gather*}
and, recalling \eqref{equation:definition_zetak_qk}:
\begin{gather*}
g(e(\tuple{k})) \coloneqq e(\tuple{k}),
\\
g(y_a) \coloneqq \sum_{\tuple{k} \in K^\alpha} (1 - v_{-k_a} q^{-k_a} X_a) e(\tuple{k}),
\\
g(\psi_a) \coloneqq \sum_{\tuple{k} \in K^\alpha} (T_a + P_a(\tuple{k})) Q_a(\tuple{k})^{-1} e(\tuple{k}).
\end{gather*}
Moreover, these homomorphisms are inverse to each other, hence $\H_\alpha^{\tuple{\Lambda}}(q, \tuple{v}) \simeq \qH_\alpha^{\tuple{\Lambda}}(\Gamma)$.
\end{theoreme}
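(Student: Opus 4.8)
The plan is to follow Brundan and Kleshchev~\cite{BrKl} almost verbatim: first verify that the prescribed maps $f$ and $g$ are well-defined $F$-algebra homomorphisms, then check on generators that they are mutually inverse. Uniqueness is immediate, since $f$ and $g$ are prescribed on generating sets. Two preliminary observations legitimise the formulas. The algebras $\sH_n^{\tuple{\Lambda}}(q, \tuple{v})$ and $\sqH_\alpha^{\tuple{\Lambda}}(\Gamma)$ are finite-dimensional (Theorems~\ref{theorem:basis_AK} and~\ref{theorem:generating_family_quiver_cyclo}), so every relation may be checked directly inside them; and the elements $y_a \in \sqH_\alpha^{\tuple{\Lambda}}(\Gamma)$ (Lemma~\ref{lemma:ya_nilpotent}) together with the elements $(1 - v_{-k_a}q^{-k_a}X_a)e(\tuple{k}) \in \sH_n^{\tuple{\Lambda}}(q, \tuple{v})$ are nilpotent, so every power series in the $y_b$ occurring in the formulas for $f$ and $g$ --- the $P_a(\tuple{k})$, the $Q_a(\tuple{k})$, and, by \eqref{equation:Qa_invertible}, the $Q_a(\tuple{k})^{-1}$ --- becomes a genuine polynomial element of the relevant block once the $y_b$ are replaced by their images. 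Finally, on each summand $e(\tuple{k})\sqH_\alpha^{\tuple{\Lambda}}(\Gamma)$ the element $f(X_a)$ acts as the scalar $v_{k_a}q^{k_a}$ plus a nilpotent, so applying $f$ to the polynomial in the $X_a$ that defines the idempotent $e(\tuple{k}) \in \sH_\alpha^{\tuple{\Lambda}}(q, \tuple{v})$ recovers the idempotent $e(\tuple{k}) \in \sqH_\alpha^{\tuple{\Lambda}}(\Gamma)$; in particular $f(e(\tuple{k})) = e(\tuple{k})$, removing any ambiguity.

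For $f$ I would check the defining relations of $\sH_\alpha^{\tuple{\Lambda}}(q, \tuple{v})$ on the images. Commutativity of the $f(X_a)$ is clear; the commutations between $f(T_a)$ and $f(X_b)$ for $b \neq a, a+1$, and between $f(T_a)$ and $f(T_b)$ for $|a-b| > 1$, follow from the analogous relations among the $\psi$'s, $y$'s and idempotents of $\sqH_\alpha^{\tuple{\Lambda}}(\Gamma)$ via Lemma~\ref{lemma:f_psia}. The quadratic relation $(f(T_a)+1)(f(T_a)-q) = 0$ and the relation $T_a X_a T_a = q X_{a+1}$ reduce, on each $e(\tuple{k})$, to power-series identities: expanding $f(T_a)^2 e(\tuple{k})$ with Lemma~\ref{lemma:f_psia}, relation \eqref{relation:quiver_psia^2} produces exactly the factor that is cancelled by \eqref{equation:QaQasa}, leaving $(1 - P_a(\tuple{k}))(q + P_a(\tuple{k}))e(\tuple{k})$, while \eqref{equation:Qa_=} and \eqref{equation:definition_ya(k)} take care of the case $k_a = k_{a+1}$. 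The braid relation for the $f(T_a)$ follows from the braid relation \eqref{relation:quiver_tresse} for the $\psi$'s together with \eqref{equation:Qa_odd}. The cyclotomic relation holds because, on $e(\tuple{k})$ with $k_1 = (i, j)$, one has $f(S) - v_j q^i = -v_j q^i\, y_1$ while every factor $(f(S) - v_{j'} q^{i'})^{\Lambda_{i', j'}}$ with $(i', j') \neq (i, j)$ is invertible on $e(\tuple{k})$; hence $\prod_{i', j'}(f(S) - v_{j'}q^{i'})^{\Lambda_{i', j'}}e(\tuple{k})$ is an invertible multiple of $y_1^{\Lambda_{i, j}}e(\tuple{k})$, which vanishes by \eqref{relation:quiver_cyclo}.

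For $g$ I would dually verify the relations \eqref{relations:affine_quiver} and \eqref{relation:quiver_cyclo} on the images. The idempotent relations and the commutations \eqref{relation:quiver_ya_yb}--\eqref{relation:quiver_psia_psib} are immediate from $T_a e(\tuple{k}) = e(s_a \cdot \tuple{k}) T_a$ and from the commutation of the $X_b$ with the idempotents; the straightening relations \eqref{relation:quiver_psia_ya+1}, \eqref{relation:quiver_ya+1_psia} are short computations in $\sH_\alpha^{\tuple{\Lambda}}(q, \tuple{v})$; the relation \eqref{relation:quiver_psia^2} for $g(\psi_a)^2$ is where the constraints \eqref{equation:Qa_=} and \eqref{equation:QaQasa} are used, one case for each local configuration of $\Gamma$; the braid relation \eqref{relation:quiver_tresse} uses \eqref{equation:Qa_odd}; and $g(y_1)^{\Lambda_{k_1}}e(\tuple{k}) = 0$ holds because $g(y_1)e(\tuple{k})$ is an invertible scalar multiple of $(X_1 - v_{k_1}q^{k_1})e(\tuple{k})$ and the cyclotomic relation of $\sH_n^{\tuple{\Lambda}}(q, \tuple{v})$ forces $(X_1 - v_{k_1}q^{k_1})^{\Lambda_{k_1}}e(\tuple{k}) = 0$ (the remaining factors acting invertibly on $e(\tuple{k})$). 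Once $f$ and $g$ are both homomorphisms it suffices to check $g \circ f$ and $f \circ g$ on generators: one computes $g \circ f(X_a) = \sum_{\tuple{k}} v_{k_a}q^{k_a}(1 - g(y_a))e(\tuple{k}) = \sum_{\tuple{k}} X_a e(\tuple{k}) = X_a$, and likewise $f \circ g(y_a) = y_a$ and $f \circ g(e(\tuple{k})) = e(\tuple{k})$, while $g \circ f(T_a) = T_a$ and $f \circ g(\psi_a) = \psi_a$ follow after substitution and cancellation of $Q_a(\tuple{k})$ against $Q_a(\tuple{k})^{-1}$, using \eqref{equation:Qa_invertible} and the straightening of Lemma~\ref{lemma:f_psia} once more.

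I expect the main obstacle to be the full verification of the quadratic, $\psi^2$ and braid relations: these demand careful power-series bookkeeping through Lemma~\ref{lemma:f_psia}, and I would carry them out exactly as in \cite{BrKl} rather than reproduce them. The genuinely new point relative to \cite{BrKl} is, by contrast, minor: the shape of $\Gamma$ intervenes only through the local configuration of $k_a$ and $k_{a+1}$, and when these lie in different copies of $\Gamma_e$ --- that is, have different $J'$-components --- one has $k_a \nrelbar k_{a+1}$ and the denominator $1 - y_a(\tuple{k}) y_{a+1}(\tuple{k})^{-1}$ appearing in \eqref{equation:candidate_homomorphisms-P_a} is invertible by \eqref{equation:isom_BK-condition_v}; thus every such ``cross-copy'' instance is just a case of the $\nrelbar$-situation already treated by Brundan and Kleshchev, and all of the computations above go through unchanged. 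This is why the detailed verification can be left to \cite{BrKl}.
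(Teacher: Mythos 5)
Your proposal is correct and follows essentially the same route as the paper: both verify that $f$ and $g$ respect the defining relations by repeating the computations of Brundan--Kleshchev with the $v_j$'s inserted (using the property $(\mathrm{BK})$, Lemma~\ref{lemma:f_psia}, the nilpotence of the $y_a$, and the fact that $f(e(\tuple{k})) = e(\tuple{k})$), and then check on generators that the two maps are mutually inverse. Your observation that the only genuinely new case — vertices in different copies of $\Gamma_e$ — falls under the $\nrelbar$ configuration with invertibility guaranteed by \eqref{equation:isom_BK-condition_v} is exactly the point the paper makes as well.
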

We will explain at the beginning of \textsection\ref{subsubsection:beta_homomorphism} how the elements $P_a(\tuple{k})$ and $Q_a(\tuple{k})$ are considered as elements of $\sH_\alpha^{\tuple{\Lambda}}(q, \tuple{v})$.
We  note that there exist such families $\{Q_a(\tuple{k})\}_{a, \tuple{k}}$, see \textsection\ref{subsection:nice_family} for further details.  

%\begin{remarque}
%Recalling $X_1 = S$, we have thus defined $f$ on a bigger set than the generating set $\{S, T_1, \dots, T_{n-1}\}$ of \textsection\ref{subsection:Gr1n}: we will explain at the end of \textsection\ref{subsubsection:alpha_homomorphism} why the given images of $X_a$ for $a > 1$ are indeed the right ones.
%\end{remarque}

\subsection{Proof of Theorems \ref{theorem:BK_generalised} and \ref{theorem:BK_generalised_block}}

In this subsection, we first check that the maps of Theorem~\ref{theorem:BK_generalised_block} indeed define algebras homomorphisms: we check that the different defining relations \eqref{relation:hecke_ordre}--\eqref{relation:hecke_braid3}, \eqref{relation:hecke_cyclo_S} (for $f$) and
%\eqref{relation:quiver_sum_e(k)}--\eqref{relation:quiver_tresse}
\eqref{relations:affine_quiver}, \eqref{relation:quiver_cyclo} (for $g$) are satisfied. The proof is exactly as in \cite[Section 4]{BrKl}: we will only give some details when some $v_j$ are involved. The remaining parts of the argument require only notational changes from~\cite{BrKl}.
%For the remaining parts, it suffices to write $a$ (respectively $\tuple{k}, k$) instead of the $r$ (resp. $\tuple{i}, i$) of \cite{BrKl}.

\subsubsection{The map \texorpdfstring{$f$}{f} is a homomorphism}
\label{subsubsection:alpha_homomorphism}

We prove that the images of the generators of $\sH_n^{\tuple{\Lambda}}(q, \tuple{v})$ by  $f$ satisfy the defining relations.%  is defined over $\sH_n^{\tuple{\Lambda}}(q, \tuple{v})$ (and not only $\sH_\alpha^{\tuple{\Lambda}}(q, \tuple{v})$).

The proof of the quadratic relation \eqref{relation:hecke_ordre} is exactly the same as the one for \cite[Theorem 4.3]{BrKl}. Namely, it suffices to check that for any $\tuple{k} \in K^\alpha$ we have $f(T_a)^2 e(\tuple{k}) = (q-1)f(T_a) e(\tuple{k}) + q e(\tuple{k})$, and the result follows since $\sum_{\tuple{k} \in K^\alpha} e(\tuple{k}) = 1$ in $\sqH_\alpha^{\tuple{\Lambda}}(\Gamma)$. 

The equality $f(X_1)f(X_2) = f(X_2) f(X_1)$ is clear. Hence, to check the length $4$-braid relation \eqref{relation:hecke_ST1ST1} it suffices to prove that $qf(X_2) = f(T_1) f(X_1) f(T_1)$. We will in fact prove that for any $a \in \{1, \dots, n-1\}$:
\begin{equation}
\label{equation:alpha(Xa+1)}
qf(X_{a+1}) = f(T_a) f(X_a) f(T_a).
\end{equation}
Since we have just checked the relation \eqref{relation:hecke_ordre} for $f(T_a)$, it suffices to prove that for any $\tuple{k} \in K^\alpha$:
\[
f(X_a) f(T_a) e(\tuple{k}) = (f(T_a) + 1 - q)f(X_{a+1}) e(\tuple{k}).
\]
Once again, the rest of the proof is exactly the same as in the corresponding part of the proof of \cite[Theorem 4.3]{BrKl}. We write down here some of the details since we have to add some $v_j$ in the calculations. We have:
\begin{align*}
X_a T_a e(\tuple{k}) &= \left(y_a(s_a \cdot \tuple{k}) \psi_a Q_a(\tuple{k}) - y_a (\tuple{k}) P_a(\tuple{k})\right) e(\tuple{k})
\\
&=
\left(\psi_a y_{a+1}(\tuple{k}) Q_a(\tuple{k}) + \delta_{k_a, k_{a+1}} v_{k_a} q^{k_a} Q_a(\tuple{k}) - y_a(\tuple{k}) P_a(\tuple{k})\right) e(\tuple{k}),
\end{align*}
and:
\[
(T_a + 1 - q)X_{a+1} e(\tuple{k}) = \left(\psi_a Q_a(\tuple{k}) - P_a(\tuple{k}) + 1 - q\right) y_{a+1}(\tuple{k}) e(\tuple{k}).
\]
Considering the two cases $k_a \neq k_{a+1}$ and $k_a = k_{a+1}$ separately and using \eqref{equation:candidate_homomorphisms-P_a}, \eqref{equation:definition_ya(k)} and \eqref{equation:Qa_=}, we can easily prove that the two above quantities are equal.

The commutation relations \eqref{relation:hecke_STa} and \eqref{relation:hecke_braid2} are straightforward from the defining relations in $\sqH_\alpha^{\tuple{\Lambda}}(\Gamma)$, and for \eqref{relation:hecke_braid3} we can reproduce the corresponding part of the proof of \cite[Theorem 4.3]{BrKl}

Finally, let us prove that the cyclotomic relation \eqref{relation:hecke_cyclo_S} is satisfied, that is:
\[
\prod_{k \in K} {\left(f(X_1) - v_k q^k\right)}^{\Lambda_k} = 0.
\]
We have, using \eqref{relation:quiver_sum_e(k)} and  \eqref{relation:quiver_e(k)e(k')}:
\begin{align*}
\prod_{k \in K} {\left(f(X_1) - v_k q^k\right)}^{\Lambda_k}
&=
\prod_{k \in K} \left[\sum_{\tuple{k} \in K^\alpha} \left(v_{k_1} q^{k_1}(1 - y_1) - v_k q^k\right)e(\tuple{k})\right]^{\Lambda_k}
\\
&=
\prod_{k \in K} \left[\sum_{\tuple{k} \in K^\alpha} \left(v_{k_1} q^{k_1}(1 - y_1) - v_k q^k\right)^{\Lambda_k} e(\tuple{k})\right]
\\
%\prod_{k \in K} {\left(f(X_1) - v_k q^k\right)}^{\Lambda_k}
&=
\sum_{\tuple{k} \in K^\alpha} \prod_{k \in K} \left[\left(v_{k_1} q^{k_1} (1 - y_1) - v_k q^k\right)^{\Lambda_k} e(\tuple{k})\right].
\end{align*}
By \eqref{relation:quiver_cyclo}, for $\tuple{k} \in K^\alpha$ the term for $k = k_1$ vanishes, hence we get the result.

\bigskip
To conclude, the map $f : \sH_n^{\tuple{\Lambda}}(q, \tuple{v}) \to \sqH_\alpha^{\tuple{\Lambda}}(\Gamma)$ defined on the generators $X_1, T_1, \dots, T_{n-1}$ yields a homomorphism of algebra. By restriction, we get an algebra homomorphism $f : \sH_\alpha^{\tuple{\Lambda}}(q, \tuple{v}) \to \sqH_\alpha^{\tuple{\Lambda}}(\Gamma)$. In particular, the image of $X_a$ for $a > 1$ is the one given in Theorem~\ref{theorem:BK_generalised_block}, thanks to \eqref{equation:definition_Xa+1} and \eqref{equation:alpha(Xa+1)}.

\subsubsection{The map \texorpdfstring{$g$}{g} is a homomorphism}
\label{subsubsection:beta_homomorphism}

In this paragraph, for any $m \in \sqH_\alpha^{\tuple{\Lambda}}(\Gamma)$ we also write $m \coloneqq g(m) \in \sH_\alpha^{\tuple{\Lambda}}(q, \tuple{v})$. In particular, we have:
\[
y_a = \sum_{\tuple{k} \in K^\alpha} (1 - v_{-k_a} q^{-k_a} X_a) e(\tuple{k}) \in \sH_\alpha^{\tuple{\Lambda}}(q, \tuple{v}),
\]
thus we can consider the power series $P_a(\tuple{k})$ and $Q_a(\tuple{k})$ as elements of $\sH_\alpha^{\tuple{\Lambda}}(q, \tuple{v})$, namely:
\[
\psi_a = \sum_{\tuple{k} \in K^\alpha} (T_a + P_a(\tuple{k}))Q_a(\tuple{k})^{-1} e(\tuple{k}) \in \sH_\alpha^{\tuple{\Lambda}}(q, \tuple{v}).
\]

Following Lusztig, define the following ``intertwining element'' in $\sH_\alpha^{\tuple{\Lambda}}(q, \tuple{v})$ for $a \in \{1, \dots, n-1\}$ by:
\[
\Phi_a \coloneqq T_a + (1-q) \sum_{\substack{\tuple{k} \in K^\alpha \\ k_a \neq k_{a+1}}} {(1 - X_a X_{a+1}^{-1})}^{-1} e(\tuple{k}) + \sum_{\substack{\tuple{k} \in K^\alpha \\ k_a = k_{a+1}}} e(\tuple{k}),
\]
where ${(1 - X_a X_{a+1}^{-1})}^{-1} e(\tuple{k})$ denotes the inverse of $(1 - X_a X_{a+1}^{-1})e(\tuple{k})$ in $e(\tuple{k}) \sH_\alpha^{\tuple{\Lambda}}(q, \tuple{v})e(\tuple{k})$. Noticing that $y_a(\tuple{k})e(\tuple{k}) = X_a e(\tuple{k})$, we can check the following equality:
\[
\Phi_a = \sum_{\tuple{k} \in K^\alpha} (T_a + P_a(\tuple{k}))e(\tuple{k}).
\]
We can give an analogue of \cite[Lemma 4.1]{BrKl}. Once again, we just have to write $a$ (respectively $\tuple{k}, k$) instead of their $r$ (resp. $\tuple{i}, i$), both in the statements and the proofs. Among all the relations in the lemma, we will make here an explicit use of the following one:
\begin{equation}
\label{equation:lemme_Phia}
X_{a+1} \Phi_a e(\tuple{k}) = \begin{cases}
\Phi_a X_a e(\tuple{k}) & \text{if } k_a \neq k_{a+1},
\\
\Phi_a X_a e(\tuple{k}) + (qX_{a+1} - X_a)e(\tuple{k}) & \text{if } k_a = k_{a+1}.
\end{cases}
\end{equation}

We now check the different relations of $\sqH_\alpha^{\tuple{\Lambda}}(\Gamma)$. Relations \eqref{relation:quiver_sum_e(k)}--\eqref{relation:quiver_psia_psib}, \eqref{relation:quiver_psia^2}--\eqref{relation:quiver_tresse} and \eqref{relation:quiver_cyclo} follow as in the corresponding part of the proof of \cite[Theorem 4.2]{BrKl}.
To check \eqref{relation:quiver_ya+1_psia}, again we just follow the corresponding part of the proof of \cite[Theorem 4.2]{BrKl}, but we need to add some $v_j$'s. We have:
\[
y_{a+1}\psi_a e(\tuple{k}) = (1 - v_{-k_a} q^{-k_a} X_{a+1})\Phi_a Q_a(\tuple{k})^{-1} e(\tuple{k}).
\]
If $k_a \neq k_{a+1}$, using \eqref{equation:lemme_Phia} we get:
\[
y_{a+1} \psi_a e(\tuple{k})
=
\Phi_a Q_a(\tuple{k})^{-1} (1 - v_{-k_a} q^{-k_a} X_a)e(\tuple{k})
=
\psi_a y_a e(\tuple{k}),
\]
whereas if $k_a= k_{a+1}$ we obtain:
\begin{align*}
y_{a+1} \psi_a e(\tuple{k})
&=
(1 - v_{-k_a}q^{-k_a}X_{a+1})(T_a + 1) Q_a(\tuple{k})^{-1} e(\tuple{k})
\\
&=
\left((T_a + 1) (1 - v_{-k_a} q^{-k_a}X_a) + v_{-k_a} q^{-k_a} X_a - v_{-k_a} q^{1 - k_a} X_{a+1}\right) Q_a(\tuple{k})^{-1} e(\tuple{k})
\\
%y_{a+1} \psi_a e(\tuple{k})
&=
(\psi_a y_a + 1)e(\tuple{k}),
\end{align*}
since $(v_{-k_a} q^{-k_a} X_a - v_{-k_a}q^{1-k_a}X_{a+1})e(\tuple{k}) = Q_a(\tuple{k})e(\tuple{k})$. The proof of 
\eqref{relation:quiver_psia_ya+1} is similar.

\subsubsection{Conclusion}
\label{subsubsection:conclusion}

As in \cite[Lemma 3.4]{BrKl}, we have:
\[
f(e(\tuple{k})) = e(\tuple{k}) \in \sqH_\alpha^{\tuple{\Lambda}}(\Gamma),
\]
for all $\tuple{k} \in K^\alpha$.
It is now an easy exercise to show that $f \circ g$ is the identity of $\sqH_\alpha^{\tuple{\Lambda}}(\Gamma)$, and then that $g \circ f$ is the identity of $\sH_\alpha^{\tuple{\Lambda}}(q, \tuple{v})$. Hence, the homomorphisms $f$ and $g$ are inverse isomorphisms and Theorem~\ref{theorem:BK_generalised_block} is proved.
Summing the $F$-isomorphism $\sH_\alpha^{\tuple{\Lambda}}(q, \tuple{v}) \simeq \sqH_\alpha^{\tuple{\Lambda}}(\Gamma)$ over all $\alpha \comp_{K} n$, we thus get the statement of Theorem~\ref{theorem:BK_generalised}. Note that since $\sH_\alpha^{\tuple{\Lambda}}(q, \tuple{v})$ is zero for all but finitely many $\alpha$, the same thing happens for $\sqH_\alpha^{\tuple{\Lambda}}(\Gamma)$. In particular, the direct sum:
\[
\sqH_n^{\tuple{\Lambda}}(\Gamma) = \bigoplus_{\alpha \comp_{K} n} \sqH_\alpha^{\tuple{\Lambda}}(\Gamma),
\]
has a finite number of non-vanishing terms.

\subsection{An unexpected corollary}
\label{subsection:unexpected_corollary}
For $j \in J'$, let us write $\tuple{\Lambda}^j$ for the restriction of $\tuple{\Lambda}$ to $I \times \{j\} \simeq I$.
Since $\Gamma$ is given by $p'$ disjoint copies of the quiver $\Gamma_e$, we know from \cite[Theorem 6.30]{Ro} that there is an algebra isomorphism:
\[
\sqH_n^{\tuple{\Lambda}}(\Gamma) \simeq \bigoplus_{\lambda \comp_{J'} n} \mathrm{Mat}_{m_\lambda} \left( \sqH_{\lambda_1}^{\tuple{\Lambda}^1}(\Gamma_e) \otimes \dots \otimes \sqH_{\lambda_{p'}}^{\tuple{\Lambda}^{p'}}(\Gamma_e)\right),
\]
where $m_\lambda \coloneqq \frac{n!}{\lambda_1 ! \cdots \lambda_{p'} !}$. For any $j \in J'$, we set:
\[
\sH_{\lambda_j}^{\tuple{\Lambda}^j}(q) \coloneqq \sH_{\lambda_j}^{\tuple{\Lambda}^j}(q, \tuple{v}_{\text{triv}}),
\]
where $\tuple{v}_{\text{triv}}$ has only one coordinate, equal to $1$.
In particular,  we saw from Theorem~\ref{theorem:BK_generalised} or \cite{BrKl} that we have the $F$-isomorphism $\sH_{\lambda_j}^{\tuple{\Lambda}^j}(q) \simeq \sqH_{\lambda_j}^{\tuple{\Lambda}^j}(\Gamma_e)$. We deduce the following result.

\begin{theoreme}
\label{theorem:unexpected_corollary-oplus}
Let $\tuple{v} \in (F^\times)^{p'}$ as in \textsection\ref{subsection:statement}.
We have an (explicit) $F$-algebra isomorphism:
\[
\sH_n^{\tuple{\Lambda}}(q, \tuple{v}) \simeq \bigoplus_{\lambda \comp_{J'} n} \mathrm{Mat}_{m_\lambda} \left(\sH_{\lambda_1}^{\tuple{\Lambda}^1}(q) \otimes \dots \otimes \sH_{\lambda_{p'}}^{\tuple{\Lambda}^{p'}}(q)\right).
\]
\end{theoreme}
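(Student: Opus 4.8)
The plan is to compose three isomorphisms, each of which is already available from the results recalled above. First I would apply Theorem~\ref{theorem:BK_generalised} to identify $\sH_n^{\tuple{\Lambda}}(q, \tuple{v})$ with the cyclotomic quiver Hecke algebra $\sqH_n^{\tuple{\Lambda}}(\Gamma)$, where, as in \textsection\ref{subsection:statement}, the quiver $\Gamma$ is the disjoint union $\bigsqcup_{j \in J'} \Gamma_e$ of $p'$ copies of $\Gamma_e$ and the vertex set $V$ is identified with $K = I \times J'$, the $j$-th copy carrying (the vertices of $\Gamma_e$ labelled by) the weight $\tuple{\Lambda}^j$.

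Next, since $\Gamma$ is a disjoint union of $p'$ quivers, I would invoke \cite[Theorem 6.30]{Ro}, which gives the $F$-algebra isomorphism
\[
\sqH_n^{\tuple{\Lambda}}(\Gamma) \simeq \bigoplus_{\lambda \comp_{J'} n} \mathrm{Mat}_{m_\lambda}\!\left(\sqH_{\lambda_1}^{\tuple{\Lambda}^1}(\Gamma_e) \otimes \cdots \otimes \sqH_{\lambda_{p'}}^{\tuple{\Lambda}^{p'}}(\Gamma_e)\right),
\]
with $m_\lambda = \frac{n!}{\lambda_1! \cdots \lambda_{p'}!}$; here $\lambda_j$ records how many of the $n$ strands carry a colour lying in the $j$-th copy of $\Gamma_e$, the tensor factor is the quiver Hecke algebra of that copy at the corresponding composition, and the matrix factor encodes the choice of which strands are assigned to which copy. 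Finally, for each $j \in J'$ I would use the original Brundan--Kleshchev isomorphism, i.e. the case $p' = 1$, $\tuple{v} = \tuple{v}_{\mathrm{triv}}$ of Theorem~\ref{theorem:BK_generalised} (equivalently \cite{BrKl}), to rewrite each tensor factor as $\sqH_{\lambda_j}^{\tuple{\Lambda}^j}(\Gamma_e) \simeq \sH_{\lambda_j}^{\tuple{\Lambda}^j}(q)$. Substituting these into the displayed decomposition yields exactly the claimed isomorphism. Moreover each arrow in the chain is given by explicit formulas on generators — the maps $f, g$ of Theorem~\ref{theorem:BK_generalised_block}, and the explicit system of matrix units underlying \cite[Theorem 6.30]{Ro} — so the composite is explicit, as asserted.

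The only point that needs care, and which I expect to be the main (though minor) obstacle, is the bookkeeping: one must check that under the identification $V \simeq I \times J'$ of \textsection\ref{subsection:statement} the cyclotomic relations $y_1^{\Lambda_{k_1}} e(\tuple{k}) = 0$ split across the $p'$ copies of $\Gamma_e$ in such a way that the restriction of $\tuple{\Lambda}$ to the $j$-th copy is precisely $\tuple{\Lambda}^j$ — which is just the decomposition $\V = \bigsqcup_{j} \{v_j q^i\}_i$ together with the definition of $\tuple{\Lambda}^j$ — and that the particular choice of the scalars $v_j$ (as opposed to $1$) in the $j$-th factor is immaterial, which is exactly the $p' = 1$ instance of Theorem~\ref{theorem:BK_generalised} applied to that factor. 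Once this is in place, the proof is a purely formal composition of the three isomorphisms above.
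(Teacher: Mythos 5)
Your proposal is correct and is essentially the paper's own argument: the theorem is deduced there by exactly the same chain, namely Theorem~\ref{theorem:BK_generalised} for $\sH_n^{\tuple{\Lambda}}(q,\tuple{v}) \simeq \sqH_n^{\tuple{\Lambda}}(\Gamma)$, the decomposition of \cite[Theorem 6.30]{Ro} for the disjoint union of $p'$ copies of $\Gamma_e$, and the $p'=1$ case of Theorem~\ref{theorem:BK_generalised} (i.e.\ \cite{BrKl}) identifying each factor $\sqH_{\lambda_j}^{\tuple{\Lambda}^j}(\Gamma_e)$ with $\sH_{\lambda_j}^{\tuple{\Lambda}^j}(q)$. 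Your bookkeeping remark about how $\tuple{\Lambda}$ restricts to the copies of $\Gamma_e$ matches the paper's definition of $\tuple{\Lambda}^j$, so nothing is missing.
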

In particular, the algebras $\sH_n^{\tuple{\Lambda}}(q, \tuple{v})$ and $\oplus_{\lambda \comp_{J'} n} \sH_{\lambda_1}^{\tuple{\Lambda}^1}(q) \otimes \dots \otimes \sH_{\lambda_{p'}}^{\tuple{\Lambda}^{p'}}(q)$ are Morita equivalent.
Note that since the following condition is satisfied (recall \eqref{equation:isom_BK-condition_v}):
\[
\prod_{1 \leq j < j' \leq p'} \prod_{i, i' \in I} \prod_{-n < a < n} \left(q^a (v_j q^i) - v_{j'} q^{i'}\right) \in F^\times,
\]
the Morita equivalence is known by \cite[Theorem 1.1]{DiMa}. Therefore, Theorem~\ref{theorem:unexpected_corollary-oplus} provides an explicit isomorphism for which the Morita equivalence of \cite{DiMa} follows.

\begin{remarque}
If $\tuple{\Lambda}^1 = \dots = \tuple{\Lambda}^{p'}$, by \cite[Corollary 3.2]{PA} or \cite{Ro} we know that the algebra of Theorem~\ref{theorem:unexpected_corollary-oplus} is a \emph{cyclotomic Yokonuma--Hecke algebra of type A}, as introduced in \cite{ChPA}.
\end{remarque}

\section{A presentation for \texorpdfstring{$\H_{p, n}^{\tuple{\Lambda}}(q)$}{HpnLambdaq}}
\label{section:presentation}

In this section, we prove our second main result, given in Corollary~\ref{corollary:presentation_H(G(rpn))}: we give a cyclotomic quiver Hecke-like presentation for $\H_{p, n}^{\tuple{\Lambda}}(q)$. The key is to make a careful choice for the family $\{Q_a(\tuple{k})\}_{a, \tuple{k}}$.

\subsection{A nice family}
\label{subsection:nice_family}

We consider the quiver $\Gamma$ with vertex set $V = \{v_j q^i\}_{i \in I, j \in J'} \simeq K = I \times J'$ of \textsection\ref{subsection:statement}, where $v_1, \dots, v_{p'} \in F^\times$ satisfy \eqref{equation:isom_BK-condition_v}.
We give here a particular choice for the family $\{Q_a(\tuple{k})\}_{a, \tuple{k}}$. We recall the definition of the family $\{P_a(\tuple{k})\}_{a, \tuple{k}}$ of \eqref{equation:candidate_homomorphisms-P_a}.

\begin{lemme}[\protect{\cite[(5.4)]{StWe}}]
\label{lemma:choice_Qa_SW}
The family $\{Q_a(\tuple{k})\}_{1 \leq a < n, \tuple{k} \in K^n}$ given by:
\[
Q_a(\tuple{k}) \coloneqq \begin{cases}
1 - q + q y_{a+1} - y_a & \text{if } k_a = k_{a+1},
\\
\frac{1 - P_a(\tuple{k})}{y_{a+1} - y_a} & \text{if } k_a \leftarrow k_{a+1} \text{ or } k_a \leftrightarrows k_{a+1},
\\
1 - P_a(\tuple{k}) & \text{otherwise,}
\end{cases}
\]
satisfies the property $(\mathrm{BK})$.
\end{lemme}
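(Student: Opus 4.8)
The plan is to check, condition by condition, that the displayed family $\{Q_a(\tuple{k})\}$ satisfies property $(\mathrm{BK})$, i.e.\ the invertibility \eqref{equation:Qa_invertible}, the normalisation \eqref{equation:Qa_=}, the quadratic identity \eqref{equation:QaQasa} together with its five sub-cases, and the braid-type identity \eqref{equation:Qa_odd}; all computations take place in $F[[y_1,\dots,y_n]]$. The only arithmetic we need beyond $q\neq 0,1$ is packaged as follows. For $k_a\neq k_{a+1}$ write, recalling \eqref{equation:candidate_homomorphisms-P_a} and \eqref{equation:definition_ya(k)},
\[
P_a(\tuple{k})=(1-q)\,(1-W)^{-1},\qquad W:=y_a(\tuple{k})\,y_{a+1}(\tuple{k})^{-1}=\beta_a\,\frac{1-y_a}{1-y_{a+1}},\quad \beta_a:=\frac{v_{k_a}q^{k_a}}{v_{k_{a+1}}q^{k_{a+1}}},
\]
so that $W$ has constant term $\beta_a$ (and $\beta_a\neq 1$, since $k_a\neq k_{a+1}$), and note the elementary identities
\[
1-P_a(\tuple{k})=\frac{q-W}{1-W},\qquad q+P_a(\tuple{k})=\frac{1-qW}{1-W}.
\]
By construction $\beta_a=q^{-1}$ exactly when $k_a\to k_{a+1}$, $\beta_a=q$ exactly when $k_a\leftarrow k_{a+1}$ (the two coinciding precisely in the $\leftrightarrows$ case, which forces $e=2$), while $\beta_a\neq q^{\pm1}$ in the $\nrelbar$ case; condition \eqref{equation:isom_BK-condition_v} is exactly what makes $1-W$ invertible in the $\nrelbar$ branch.

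First, \eqref{equation:Qa_=} is the top line of the definition, nothing to prove. For \eqref{equation:Qa_invertible} I would argue by cases on the relation between $k_a$ and $k_{a+1}$: when $k_a=k_{a+1}$ the element $1-q+qy_{a+1}-y_a$ has constant term $1-q\neq0$; in the ``otherwise'' branch ($k_a\nrelbar k_{a+1}$ or $k_a\to k_{a+1}$) one has $Q_a(\tuple{k})=1-P_a(\tuple{k})=(q-W)(1-W)^{-1}$, whose constant term is $(q-\beta_a)(1-\beta_a)^{-1}$, nonzero precisely because $\beta_a\neq q$, which is the defining feature of this branch; and in the $\leftarrow$ or $\leftrightarrows$ branch $\beta_a=q$, so $q-W=q(y_a-y_{a+1})(1-y_{a+1})^{-1}$, whence $1-P_a(\tuple{k})$ is divisible by $y_{a+1}-y_a$ and $Q_a(\tuple{k})=-q\bigl((1-y_{a+1})(1-W)\bigr)^{-1}$, which is a unit (its constant term is $-q(1-q)^{-1}\neq0$, using $1-q\neq0$, resp.\ $\mathrm{char}\,F\neq 2$ when $e=2$). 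This case analysis also shows along the way that each $Q_a(\tuple{k})$ is a genuine element of $F[[y_a,y_{a+1}]]$.

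The heart of the matter is \eqref{equation:QaQasa}. The engine is the identity
\[
1-P_a(s_a\cdot\tuple{k})^{s_a}=q+P_a(\tuple{k}),
\]
which I would prove by direct substitution: replacing $\tuple{k}$ by $s_a\cdot\tuple{k}$ swaps $k_a$ with $k_{a+1}$, hence replaces $\beta_a$ by $\beta_a^{-1}$; then applying $s_a$ to the power series replaces $(1-y_a)/(1-y_{a+1})$ by its inverse, so that the ``$W$'' of $s_a\cdot\tuple{k}$, after the $s_a$-twist, is exactly $W^{-1}$, and $1-(1-q)(1-W^{-1})^{-1}=(q-W^{-1})(1-W^{-1})^{-1}=(1-qW)(1-W)^{-1}=q+P_a(\tuple{k})$. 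With this in hand, in each of the four cases one substitutes the correct branch of the $Q$-formula for $\tuple{k}$ and for $s_a\cdot\tuple{k}$ (which carries the opposite arrow-relation, $\nrelbar$ and $\leftrightarrows$ being self-opposite) into $Q_a(\tuple{k})\,Q_a(s_a\cdot\tuple{k})^{s_a}$ and simplifies: in the $\nrelbar$ case one gets $(1-P_a(\tuple{k}))(q+P_a(\tuple{k}))$ immediately; in the $\leftrightarrows$ case the denominators $y_{a+1}-y_a$ and $(y_{a+1}-y_a)^{s_a}=y_a-y_{a+1}$ multiply out to the factor required by \eqref{equation:Qa_<->}; and in the $\to$ and $\leftarrow$ cases one uses that, when $\beta_a=q^{-1}$ (resp.\ $\beta_a=q$), the factor $q+P_a(\tuple{k})$ (resp.\ $1-P_a(\tuple{k})$) acquires a visible $y_a-y_{a+1}$, which cancels the denominator introduced by the $Q$-formula of $s_a\cdot\tuple{k}$ (resp.\ of $\tuple{k}$), yielding the right-hand side of \eqref{equation:Qa_->} (resp.\ \eqref{equation:Qa_<-}). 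Finally \eqref{equation:Qa_odd} is essentially automatic once one notes that $(s_{a+1}s_a\cdot\tuple{k})_{a+1}=(s_as_{a+1}\cdot\tuple{k})_a=k_{a+2}$ and $(s_{a+1}s_a\cdot\tuple{k})_{a+2}=(s_as_{a+1}\cdot\tuple{k})_{a+1}=k_a$: hence $Q_{a+1}(s_{a+1}s_a\cdot\tuple{k})$ and $Q_a(s_as_{a+1}\cdot\tuple{k})$ depend only on the relation between $k_a$ and $k_{a+2}$ (encoded by the ratio $v_{k_a}q^{k_a}/v_{k_{a+2}}q^{k_{a+2}}$), so they lie in the same branch, and applying $s_a$ to the first — a series in $y_{a+1},y_{a+2}$ built from $v_{k_{a+2}}q^{k_{a+2}}(1-y_{a+1})$ and $v_{k_a}q^{k_a}(1-y_{a+2})$ — and $s_{a+1}$ to the second — a series in $y_a,y_{a+1}$ built from $v_{k_{a+2}}q^{k_{a+2}}(1-y_a)$ and $v_{k_a}q^{k_a}(1-y_{a+1})$ — produces the same series in $y_a,y_{a+2}$ term by term, $P$-parts and $(y_{a+1}-y_a)$-type denominators alike.

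The main obstacle is purely bookkeeping, concentrated in \eqref{equation:QaQasa}: one must track which branch of the $Q$-formula applies to $\tuple{k}$ and to $s_a\cdot\tuple{k}$ in each of the five relations, keep the sign flips from $(y_{a+1}-y_a)^{s_a}=y_a-y_{a+1}$ straight, and make sure that the power-series divisibilities invoked (e.g.\ $y_{a+1}-y_a\mid 1-P_a(\tuple{k})$ in the $\leftarrow$ case) are exactly the ones the case hypothesis provides. There is no conceptual difficulty, and by contrast \eqref{equation:Qa_=}, \eqref{equation:Qa_invertible} and \eqref{equation:Qa_odd} are comparatively immediate.
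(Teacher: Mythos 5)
Your proposal is correct and is essentially the paper's own argument: the same well-definedness and invertibility computations, the same pivotal identity $(1-P_a(s_a\cdot\tuple{k}))^{s_a}=q+P_a(\tuple{k})$ (which you prove directly via the substitution $W$, where the paper simply cites \cite[(4.28)]{BrKl}), the same case-by-case verification of \eqref{equation:QaQasa}, and the same observation for \eqref{equation:Qa_odd} (the paper cites \cite[(4.29)]{BrKl} together with the braid relation). One point to pin down when you carry out the bookkeeping — an imprecision you share with the paper's own one-line treatment of these cases — is the sign in the strict $\to$ and $\leftarrow$ cases: with the branches as stated one lands on $(1-P_a(\tuple{k}))(q+P_a(\tuple{k}))/(y_a-y_{a+1})$ when $k_a\to k_{a+1}$ and on $(1-P_a(\tuple{k}))(q+P_a(\tuple{k}))/(y_{a+1}-y_a)$ when $k_a\leftarrow k_{a+1}$, i.e.\ the displayed right-hand sides of \eqref{equation:Qa_->} and \eqref{equation:Qa_<-} with their roles exchanged, so the asserted match holds only up to this orientation/sign convention (equivalently, after replacing the denominator $y_{a+1}-y_a$ in the second branch of the definition of $Q_a(\tuple{k})$ by $y_a-y_{a+1}$).
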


\begin{remarque}
\label{remark:leftarrow_or_leftright}
The condition ``$k_a \leftarrow k_{a+1}$ or $k_a \leftrightarrows k_{a+1}$'' is equivalent to ``$v_{k_a} q^{k_a} = qv_{k_{a+1}} q^{k_{a+1}}$''. With $\tuple{k} = (\tuple{i}, \tuple{j})$, that means ``$i_a = i_{a+1} + 1$ and $j_a = j_{a+1}$''.
\end{remarque}

The family given in \cite[(4.36)]{BrKl} would be the following one:
\begin{equation}
\label{equation:choice_Qa_BK}
Q_a^{\mathrm{BK}}(\tuple{k}) \coloneqq 
\begin{cases}
1 - q + qy_{a+1} - y_a & \text{if } k_a = k_{a+1},
\\
(y_a(\tuple{k}) - q y_{a+1}(\tuple{k})) / (y_a(\tuple{k}) - y_{a+1}(\tuple{k})) & \text{if } k_a \nrelbar k_{a+1},
\\
(y_a(\tuple{k}) - q y_{a+1}(\tuple{k})) / (y_a(\tuple{k}) - y_{a+1}(\tuple{k}))^2 & \text{if } k_a \to k_{a+1},
\\
v_{k_a} q^{k_a} & \text{if } k_a \leftarrow k_{a+1},
\\
v_{k_a}q^{k_a} / (y_a(\tuple{k}) - y_{a+1}(\tuple{k})) & \text{if } k_a \leftrightarrows k_{a+1}.
\end{cases}
\end{equation}
We will see in Remark~\ref{remark:choice_BK_not_adapted} why the choice of Lemma~\ref{lemma:choice_Qa_SW} is more adapted to our problem.
For the convenience of the reader, we will now give a proof of Lemma~\ref{lemma:choice_Qa_SW}.

\begin{proof}[Proof of Lemma~\ref{lemma:choice_Qa_SW}]
First, let us prove that $Q_a(\tuple{k})$ is well-defined. If $k_a \neq k_{a+1}$ we have $P_a(\tuple{k}) = \frac{(1 - q)y_{a+1}(\tuple{k})}{y_{a+1}(\tuple{k}) - y_a(\tuple{k})}$, thus:
\[
1 - P_a(\tuple{k}) = \frac{q y_{a+1}(\tuple{k}) - y_a(\tuple{k})}{y_{a+1}(\tuple{k}) - y_a(\tuple{k})}
=
\frac{q v_{k_{a+1}} q^{k_{a+1}}(1 - y_{a+1}) - v_{k_a} q^{k_a}(1 - y_a)}{y_{a+1}(\tuple{k}) - y_a(\tuple{k})}.
\]
In particular, if $k_a \leftarrow k_{a+1}$ or $k_a \leftrightarrows k_{a+1}$ we get (recall Remark~\ref{remark:leftarrow_or_leftright}):
\[
1 - P_a(\tuple{k}) = \frac{v_{k_a} q^{k_a}(y_a - y_{a+1})}{y_{a+1}(\tuple{k}) - y_a(\tuple{k})},
\]
thus:
\[
\frac{1 - P_a(\tuple{k})}{y_{a+1} - y_a} = \frac{v_{k_a} q^{k_a}}{y_a(\tuple{k}) - y_{a+1}(\tuple{k})},
\]
which is well-defined.

As suggested in \cite{StWe}, we now notice that, if $k_a \neq k_{a+1}$:
\begin{equation}
\label{equation:proof_choice_Qa_SW}
(1 - P_a(s_a \cdot \tuple{k}))^{s_a} = q + P_a(\tuple{k}).
\end{equation}
This is a straightforward consequence of the equality
$P_a(\tuple{k}) + P_a(s_a \cdot \tuple{k})^{s_a} = 1 - q$ (see \cite[(4.28)]{BrKl}). Let us now check that $(\mathrm{BK})$ is satisfied. First, the element $Q_a(\tuple{k})$ is of course invertible when $k_a = k_{a+1}$ (since $1 - q \neq 0$), and the invertibility in the remaining cases follows from the above calculations so \eqref{equation:Qa_invertible} holds. Moreover, equation~\eqref{equation:Qa_=} is true by definition.

We now check the different relations \eqref{equation:QaQasa} involving $Q_a(\tuple{k})Q_a(s_a \cdot \tuple{k})^{s_a}$. If $k_a \nrelbar k_{a+1}$ (in particular, $k_a \neq k_{a+1}$) then $Q_a(\tuple{k}) = 1 - P_a(\tuple{k})$ and we immediately deduce \eqref{equation:Qa_nrelbar} from \eqref{equation:proof_choice_Qa_SW}. If $k_a \to k_{a+1}$ then $Q_a(\tuple{k}) = 1 - P_a(\tuple{k})$ and $Q_a(s_a \cdot \tuple{k}) = \frac{1 - P_a(s_a \cdot \tuple{k})}{y_{a+1} - y_a}$. Thus:
\[
Q_a(\tuple{k}) Q_a(s_a \cdot \tuple{k})^{s_a}
=
(1 - P_a(\tuple{k})) \frac{q + P_a(\tuple{k})}{y_a - y_{a+1}},
\]
so \eqref{equation:Qa_->} holds. The proof of \eqref{equation:Qa_<-} is similar.
%We have $k_a \to k_{a+1}$ thus $Q_a(\tuple{k}) = \frac{1 - P_a(\tuple{k})}{y_{a+1} - y_a}$ and $Q_a(s_a \cdot \tuple{k}) = 1 - P_a(s_a \cdot \tuple{k})$.
%Hence, as $k_a \neq k_{a+1}$ we can use \eqref{equation:proof_choice_Qa(i)_SW} and we get:
%\[
%Q_a(\tuple{k}) Q_a(s_a \cdot \tuple{k})^{s_a}
%=
%\frac{1 - P_a(\tuple{k})}{y_{a+1} - y_a} (q + P_a(\tuple{k})).
%\]
If now $k_a \leftrightarrows k_{a+1}$ then $Q_a(\tuple{k}) = \frac{1 - P_a(\tuple{k})}{y_{a+1} - y_a}$ and $Q_a(s_a \cdot \tuple{k}) = \frac{1 - P_a(s_a \cdot \tuple{k})}{y_{a+1} - y_a}$, thus:
\[
Q_a(\tuple{k}) Q_a(s_a \cdot \tuple{k})^{s_a}
=
\frac{1 - P_a(\tuple{k})}{y_{a+1} - y_a}  \cdot \frac{q + P_a(\tuple{k})}{y_a - y_{a+1}},
\]
so \eqref{equation:Qa_<->} holds.

Finally, to prove equation~\eqref{equation:Qa_odd} it suffices to see that $P_{a+1}(s_{a+1}s_a \cdot \tuple{k})^{s_a} = P_a(s_a s_{a+1} \cdot \tuple{k})^{s_{a+1}}$. This equality follows from \cite[(4.29)]{BrKl} and the braid relation $s_a s_{a+1} s_a = s_{a+1} s_a s_{a+1}$.
\end{proof}

\begin{remarque}
We deduce from the calculations made at the beginning of the proof of Lemma~\ref{lemma:choice_Qa_SW} that $Q_a(\tuple{k}) = Q_a^{\mathrm{BK}}(\tuple{k})$ if $k_a = k_{a+1}$, $k_a \nrelbar k_{a+1}$ or $k_a \leftrightarrows k_{a+1}$.
\end{remarque}

\subsection{Intertwining}
\label{subsection:intertwining}

In this subsection, we show how our previous works allow us to prove our main result (Corollary~\ref{corollary:presentation_H(G(rpn))}).  For $j \in J'$, let us set $v_j \coloneqq \zeta^j$: it follows from the definition of $p'$ that $v_1, \dots, v_{p'}$ satisfy the distinct orbit condition \eqref{equation:isom_BK-condition_v}. In particular, the vertex set of $\Gamma$ is $\V = \{\zeta^j q^i\}_{i \in I, j \in J'}$.
Let us consider a weight $\tuple{\Lambda} = (\Lambda_k)_{k \in K}$ of level $r$, such that:
\begin{equation}
\label{equation:intertwining-Lambdak}
\Lambda_{i, j} = \Lambda_{i, j'} \eqqcolon \Lambda_i, \qquad \text{for all } i \in I \text{ and } j, j' \in J'.
\end{equation}
We suppose that the associated tuple $\tuple{\Lambda} = (\Lambda_i)_{i \in I}$, of level $\omega d$, satisfies the condition of Proposition~\ref{proposition:removing_repetitions-_Lambda}, that is (recall the notation $\eta$ of \eqref{equation:introduction-zetap'_qeta}):
\begin{equation}
\label{equation:intertwining-Lambda_i+eta}
\Lambda_i = \Lambda_{i + \eta}, \qquad \text{for all }  i \in I,
\end{equation}
so that the algebras $\H_n^{\tuple{\Lambda}}(q, \zeta), \H_{p, n}^{\tuple{\Lambda}}(q)$ (recall Definition~\ref{definition:removing_repetitions-Hn_Hpn}) and the shift automorphism of $\H_n^{\tuple{\Lambda}}(q, \zeta)$ (recall Proposition~\ref{proposition:G(r1n)-def_sigma}) are well-defined.
We will use the above condition \eqref{equation:intertwining-Lambda_i+eta} and the results of \textsection\ref{subsection:fixed_subalgebra} to define a particular automorphism $\sigma$ of $\sqH_n^{\tuple{\Lambda}}(\Gamma)$. 

Let us define $\sigma : \V \to \V$ by:
\begin{equation}
\label{equation:intertwining-definition_sigma}
\sigma(v) \coloneqq \zeta v,
\end{equation}
for all  $v \in \V$.
Note that $\sigma$ is well-defined since $\V$ is also given by $\{\zeta^j q^i\}_{i \in I, j \in J}$. Moreover, the reader may have noticed the similarity with the map of Proposition~\ref{proposition:G(r1n)-def_sigma}.

\begin{lemme}
\label{lemma:sigma_v->zeta_v}
The map $\sigma : \V \to \V$ defined on the vertices of $\Gamma$ satisfies the assumptions of \textsection\ref{subsection:fixed_subalgebra}, that is:
\begin{itemize}
\item the map $\sigma : \V \to \V$ is a bijection;
\item if $(\vertex{v}, \vertex{v}')$ is an edge of $\Gamma$ then $(\sigma(\vertex{v}), \sigma(\vertex{v}'))$ is also an edge of $\Gamma$;
\item for any $p_1 \in \{1, \dots, p-1\}$ and any vertex $\vertex{v} \in \V$ we have $\sigma^{p_1}(\vertex{v}) \neq \vertex{v} = \sigma^p(\vertex{v})$.
\end{itemize}
\end{lemme}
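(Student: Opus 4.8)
The plan is to verify the three conditions of \textsection\ref{subsection:fixed_subalgebra} directly, one at a time. The only real input is the alternative description of the vertex set, already noted just before the lemma: $\V = \{\zeta^j q^i\}_{i \in I,\ j \in J}$ (not just $j \in J'$). This is precisely what guarantees that multiplication by $\zeta$ sends $\V$ back into itself, after which each of the three points is a one-line group-theoretic check.

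First I would record the equality $\V = \{\zeta^j q^i\}_{i \in I,\ j \in J}$. The inclusion $\{\zeta^j q^i\}_{i \in I, j \in J} \supseteq \V$ is clear since $J' \subseteq \{1, \dots, p\} \simeq J$. For the reverse inclusion, given $j \in J$ write $j = \jmath + p' a$ with $\jmath \in J'$ and $a \in \mathbb{Z}/\omega\mathbb{Z}$ via the bijection \eqref{equation:map_J'_ZomegaZ_J}; then by \eqref{equation:introduction-zetap'_qeta} one has $\zeta^j q^i = \zeta^\jmath (\zeta^{p'})^a q^i = \zeta^\jmath q^{i + \eta a} \in \V$. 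Granting this, the bijectivity of $\sigma$ is immediate: multiplication by $\zeta$ is injective on $F^\times \supseteq \V$, and $\sigma(\V) = \zeta \cdot \{\zeta^j q^i\}_{i \in I, j \in J} = \{\zeta^{j+1} q^i\}_{i \in I, j \in J} = \{\zeta^j q^i\}_{i \in I, j \in J} = \V$ because $j \mapsto j+1$ permutes $J = \mathbb{Z}/p\mathbb{Z}$. For the edge condition, I would use that by construction the edges of $\Gamma$ are exactly the pairs $(\vertex{v}, q\vertex{v})$ with $\vertex{v} \in \V$; hence if $(\vertex{v}, \vertex{v}')$ is an edge then $\vertex{v}' = q\vertex{v}$, so $(\sigma(\vertex{v}), \sigma(\vertex{v}')) = (\zeta\vertex{v}, q\zeta\vertex{v})$, which is again an edge (from $\sigma(\vertex{v})$ to $q\,\sigma(\vertex{v})$). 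For the last point, $\sigma^{p_1}(\vertex{v}) = \zeta^{p_1}\vertex{v}$ for every $p_1 \geq 0$ and every $\vertex{v} \in \V \subseteq F^\times$; since $\zeta$ is a primitive $p$th root of unity, $\zeta^{p_1} = 1$ if and only if $p \mid p_1$, so $\sigma^{p_1}(\vertex{v}) \neq \vertex{v}$ for $p_1 \in \{1, \dots, p-1\}$, while $\sigma^p(\vertex{v}) = \zeta^p \vertex{v} = \vertex{v}$.

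There is essentially no obstacle here. The only step that is not a one-line computation is the alternative description $\V = \{\zeta^j q^i\}_{i \in I, j \in J}$, and even that is merely an unwinding of the definitions of $p'$ and $\eta$ through \eqref{equation:map_J'_ZomegaZ_J} and \eqref{equation:introduction-zetap'_qeta}; in fact this identity has already been asserted in the sentence preceding the lemma, so the proof can simply cite it.
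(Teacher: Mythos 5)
Your proposal is correct and follows essentially the same route as the paper: the paper also relies on the alternative description $\V = \{\zeta^j q^i\}_{i \in I, j \in J}$ (stated just before the lemma) for well-definedness, deduces the first and third points from $\zeta$ being a primitive $p$th root of unity, and verifies the edge condition via $\zeta(q\vertex{v}) = q(\zeta\vertex{v})$ exactly as you do. You merely spell out the bijection and primitivity arguments in slightly more detail than the paper's terse proof.
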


\begin{proof}
Since $\zeta$ is a primitive $p$th root of unity, we deduce that the first and third points are satisfied. It remains to prove the second one. Let $(\vertex{v}, \vertex{v}')$ be an edge of $\Gamma$. By definition, we have $\vertex{v}' = q\vertex{v}$, thus $\zeta \vertex{v}' = \zeta (q\vertex{v}) = q (\zeta \vertex{v})$. Hence, we have  $\sigma(\vertex{v}') = q \sigma(\vertex{v})$: we have proved that $(\sigma(\vertex{v}), \sigma(\vertex{v}'))$ is an edge of $\Gamma$.
\end{proof}

The action of $\sigma$ on $\V$ is algebraically easy. Let us now describe how $\sigma$ acts ``graphically'' on the set $\V$ of the vertices of $\Gamma$, that is, on $K = I \times J'$. Let $i \in I, j \in J'$ and set $v \coloneqq \zeta^j q^i$. We have:
\begin{equation}
\label{equation:intertwining-description_sigma_translation}
\sigma(\zeta^j q^i) = \zeta^{j+1} q^i.
\end{equation}
Hence, if $j < p'$ then $\sigma$ just translates the vertex $\vertex{v}$ to the copy of $\Gamma_e$ directly on its right. If $j = p'$, we have $j + 1  = p'+1 \notin J'$ thus we write:
\begin{equation}
\label{equation:intertwining-description_sigma_rotate}
\sigma(\zeta^{p'} q^i) = \zeta \zeta^{p'} q^i = \zeta q^{i + \eta}.
\end{equation}
It means that $v$ is translated to the first copy of $\Gamma_e$ \emph{and} rotated by $\eta$. Note that depending on $\Gamma$, there may not be any translation or rotation.  With the examples of Figure~\ref{figure:Gamma}, that gives:
\begin{description}
\item[case $(e, p) = (2, 3)$] we have $p' = 3, \eta = 0$ and the map $\sigma$ is given by the product of 3-cycles $(0_1, 0_2, 0_3)(1_1, 1_2, 1_3)$;
\item[case $(e, p) = (2, 6)$] we have $p' = 3, \eta = 1$ and the map $\sigma$ is given by the 6-cycle $(0_1, 0_2, 0_3, 1_1, 1_2, 1_3)$;
\item[case $(e, p) = (\infty, 2)$] we have $p' = 2, \eta = 0$ and the map $\sigma$ is given by the product of transpositions $\prod_{i \in I} (i_1, i_2)$.
\end{description}
In particular, note that $\sigma$ has indeed order $p$.

\bigskip
By Theorem~\ref{theorem:definition_sigma_quiver} and Lemma~\ref{lemma:sigma_Ilambda}, the permutation $\sigma$ of the vertices of $\V$ induces an isomorphism $\sqH_\alpha^{\tuple{\Lambda}}(\Gamma) \to \sqH_{\sigma \cdot \alpha}^{\sigma \cdot \tuple{\Lambda}}(\Gamma)$ for any $\alpha \comp_K n$. Let us now check that the weight $\tuple{\Lambda}$ satisfies the $\sigma$-stability condition \eqref{equation:weight_adapted}.

\begin{proposition}
\label{proposition:intertwining-Lambda_sigma_stable}
For any $k \in K = I \times J'$ we have $\Lambda_k = \Lambda_{\sigma(k)}$.
\end{proposition}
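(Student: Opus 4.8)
The plan is to unwind the definition of $\sigma$ on $K = I \times J'$ and verify the claimed equality by a case distinction on the second coordinate of $k$. Recall from \eqref{equation:intertwining-description_sigma_translation} that for $k = (i,j) \in I \times J'$ we have $\sigma(\zeta^j q^i) = \zeta^{j+1} q^i$; the only subtlety is that $j+1$ may fall outside $J' = \{1, \dots, p'\}$, and this is precisely the case where the extra hypothesis \eqref{equation:intertwining-Lambda_i+eta} will be needed.

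First I would treat the case $j \in \{1, \dots, p'-1\}$. Here $j+1 \in J'$, so in the coordinates $I \times J'$ we have $\sigma(k) = (i, j+1)$. By the hypothesis \eqref{equation:intertwining-Lambdak}, both $\Lambda_k = \Lambda_{i,j}$ and $\Lambda_{\sigma(k)} = \Lambda_{i, j+1}$ are equal to $\Lambda_i$, whence $\Lambda_k = \Lambda_{\sigma(k)}$. Next I would treat the boundary case $j = p'$: by \eqref{equation:intertwining-description_sigma_rotate} we have $\sigma(\zeta^{p'} q^i) = \zeta q^{i+\eta}$, where $\eta \in I$ is defined by $\zeta^{p'} = q^\eta$ (see \eqref{equation:introduction-zetap'_qeta}), so that $\sigma(k) = (i+\eta, 1)$ in the coordinates $I \times J'$. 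Using \eqref{equation:intertwining-Lambdak} once more, $\Lambda_k = \Lambda_i$ and $\Lambda_{\sigma(k)} = \Lambda_{i+\eta}$, and these coincide precisely by the condition \eqref{equation:intertwining-Lambda_i+eta}. Since these two cases exhaust all $j \in J'$, the equality $\Lambda_k = \Lambda_{\sigma(k)}$ holds for every $k \in K$.

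The argument is entirely a bookkeeping check, and there is no genuine obstacle; the only point requiring care is correctly reading off the coordinates of $\sigma(k)$ inside $I \times J'$ when the second coordinate wraps around, which is exactly the place where the hypothesis \eqref{equation:intertwining-Lambda_i+eta} (equivalently, the condition of Proposition~\ref{proposition:removing_repetitions-_Lambda}) is used. This establishes that $\tuple{\Lambda}$ satisfies the $\sigma$-stability condition \eqref{equation:weight_adapted}, so that the machinery of \textsection\ref{subsection:fixed_subalgebra} applies to the automorphism $\sigma$ of $\sqH_n^{\tuple{\Lambda}}(\Gamma)$.
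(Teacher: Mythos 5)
Your proof is correct and follows the same case analysis as the paper's own argument: $\sigma(i,j)=(i,j+1)$ for $j<p'$ handled by \eqref{equation:intertwining-Lambdak}, and $\sigma(i,p')=(i+\eta,1)$ handled by \eqref{equation:intertwining-Lambda_i+eta}. No issues.
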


\begin{proof}
We have seen above that for $(i, j) \in I \times J'$:
\begin{itemize}
\item if $j < p'$ then $\sigma(i, j) = (i, j + 1)$;
\item if $j = p'$ then $\sigma(i, j) = (i + \eta, 1)$.
\end{itemize}
Thus, we deduce the result from \eqref{equation:intertwining-Lambdak} and \eqref{equation:intertwining-Lambda_i+eta}.
\end{proof}

By Lemma~\ref{lemma:sigma_I[alpha]}, we know that the map $\sigma$ induces an automorphism of the cyclotomic quiver Hecke algebra $\qH_n^{\tuple{\Lambda}}(\Gamma)$. We will refer to it as the \emph{shift automorphism} of $\qH_n^{\tuple{\Lambda}}(\Gamma)$.

\begin{lemme}
\label{lemma:shift_ya_Pa_Qa}
The power series $y_a(\tuple{k}), P_a(\tuple{k})$ and $Q_a(\tuple{k})$ of $\qH_n^{\tuple{\Lambda}}(\Gamma)$ are shift-invariant. Moreover:
\begin{gather*}
y_a(\sigma(\tuple{k}))= \zeta y_a(\tuple{k}),
\\
P_a(\sigma(\tuple{k})) = P_a(\tuple{k}),
\\
 Q_a(\sigma(\tuple{k})) = Q_a(\tuple{k}).
\end{gather*}
\end{lemme}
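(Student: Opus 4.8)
The plan is to reduce the whole statement to two elementary observations: first, each of $y_a(\tuple{k})$, $P_a(\tuple{k})$ and $Q_a(\tuple{k})$ is a power series in the nilpotent generators $y_1, \dots, y_n$ with \emph{scalar} coefficients; second, under the identification $\V \simeq K = I \times J'$ the permutation $\sigma$ acts on vertices, regarded as the scalars $v_{k} q^{k} = \zeta^{j} q^{i}$, simply by multiplication by $\zeta$, which is precisely the content of \eqref{equation:intertwining-definition_sigma} together with \eqref{equation:intertwining-description_sigma_translation}--\eqref{equation:intertwining-description_sigma_rotate}.

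For shift-invariance I would argue as follows. The shift automorphism of $\qH_n^{\tuple{\Lambda}}(\Gamma)$ fixes every $y_b$ and restricts to the identity on $F$, hence it fixes every power series in $y_1, \dots, y_n$ with coefficients in $F$. Now $y_a(\tuple{k}) = v_{k_a} q^{k_a}(1 - y_a) \in F[[y_a]]$ by \eqref{equation:definition_ya(k)}, and — by the computations carried out in the proof of Lemma~\ref{lemma:choice_Qa_SW}, where the relevant inverses and the division by $y_{a+1} - y_a$ are checked (using Lemma~\ref{lemma:zeta^j_q^i_distinct} for the non-vanishing of the constant terms) to produce elements of $F[[y_a, y_{a+1}]]$ — both $P_a(\tuple{k})$ and $Q_a(\tuple{k})$ also lie in $F[[y_a, y_{a+1}]]$. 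Shift-invariance of the three families follows at once.

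Next I would verify the transformation formulas. For $y_a$, the second observation gives $v_{\sigma(k_a)} q^{\sigma(k_a)} = \zeta\, v_{k_a} q^{k_a}$, so
\[
y_a(\sigma(\tuple{k})) = v_{\sigma(k_a)} q^{\sigma(k_a)}(1 - y_a) = \zeta\, v_{k_a} q^{k_a}(1 - y_a) = \zeta\, y_a(\tuple{k}).
\]
For $P_a$: since $\sigma$ is a bijection of $K$, the alternative ``$k_a = k_{a+1}$'' versus ``$k_a \neq k_{a+1}$'' is preserved by $\sigma$; in the first case both values are $1$, and in the second case the factor $\zeta$ cancels in the ratio $y_a(\tuple{k}) y_{a+1}(\tuple{k})^{-1}$, whence $P_a(\sigma(\tuple{k})) = P_a(\tuple{k})$. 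For $Q_a$: Lemma~\ref{lemma:->_compatible} shows that each of the three cases defining $Q_a$ in Lemma~\ref{lemma:choice_Qa_SW} (namely $k_a = k_{a+1}$; $k_a \leftarrow k_{a+1}$ or $k_a \leftrightarrows k_{a+1}$; and ``otherwise'') is preserved upon replacing $\tuple{k}$ by $\sigma(\tuple{k})$, and in each case $Q_a(\tuple{k})$ is an explicit expression in $y_a$, $y_{a+1}$ and $P_a(\tuple{k})$ only; hence $Q_a(\sigma(\tuple{k})) = Q_a(\tuple{k})$ by the formula for $P_a$ just established.

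I do not expect any genuine obstacle here: the lemma is essentially bookkeeping. The only point that needs a little attention is keeping straight the double role of $k_a$ — as an index in $K$ and as the scalar $v_{k_a} q^{k_a}$ — and invoking the descriptions \eqref{equation:intertwining-description_sigma_translation}--\eqref{equation:intertwining-description_sigma_rotate} of $\sigma$ consistently throughout.
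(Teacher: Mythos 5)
Your proposal is correct and follows essentially the same route as the paper: both arguments rest on the observation that $\sigma$ multiplies each vertex scalar $v_{k}q^{k}$ by $\zeta$, giving $y_a(\sigma(\tuple{k})) = \zeta y_a(\tuple{k})$, after which the $\zeta$ cancels in the ratio defining $P_a$ (with Lemma~\ref{lemma:->_compatible} guaranteeing the case distinctions are preserved), and $Q_a$ follows since it is built from $y_a$, $y_{a+1}$ and $P_a(\tuple{k})$ alone. Your slightly more explicit justification of shift-invariance (scalar-coefficient power series in the shift-fixed generators $y_1,\dots,y_n$) is exactly what the paper's ``clear by definition'' abbreviates.
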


\begin{proof}
The first statement is clear since $y_a$ and $y_{a+1}$ are shift-invariant (by definition, just recall Theorem~\ref{theorem:definition_sigma_quiver}). Recall that $\V = \{\zeta^j q^i\}_{i \in I, j \in J'} \simeq K$ is the vertex set of $\Gamma$. The image of $\tuple{k} \in K^n$ in $V^n$ is $(\zeta^{k_1}q^{k_1}, \dots, \zeta^{k_n} q^{k_n})$, where $\zeta^k q^k = \zeta^j q^i$ if $k = (i, j)$ (recall \eqref{equation:definition_zetak_qk}).  In particular, the image of $\sigma(\tuple{k})$ in $V^n$ is $(\zeta \zeta^{k_1} q^{k_1}, \dots, \zeta \zeta^{k_n} q^{k_n})$. Thus, we have:
\[
y_a(\sigma(\tuple{k})) = \zeta \zeta^{k_a} q^{k_a}(1 - y_a) = \zeta y_a(\tuple{k}).
\]
Hence, if $k_a = k_{a+1}$ then by Lemma~\ref{lemma:->_compatible} we have $P_a(\tuple{k}) = P_a(\sigma(\tuple{k}))$, and if $k_a \neq k_{a+1}$ we have:
\begin{align*}
P_a(\sigma(\tuple{k}))
&=
(1 - q){\left(1 - y_a(\sigma(\tuple{k})) y_{a+1}(\sigma(\tuple{k}))^{-1}\right)}^{-1}
\\
&=
(1 - q){\left(1 - \zeta \zeta^{-1} y_a(\tuple{k})y_{a+1}(\tuple{k})^{-1}\right)}^{-1}
\\
%P_a(\sigma(\tuple{k}))
&=
P_a(\tuple{k}).
\end{align*}
The last equality $Q_a(\sigma(\tuple{k})) = Q_a(\tuple{k})$ is now obvious.
\end{proof}

Let us now denote by $\widetilde{\sigma} : \H_n^{\tuple{\Lambda}}(q, \zeta) \to \H_n^{\tuple{\Lambda}}(q, \zeta)$ the shift automorphism of $\H_n^{\tuple{\Lambda}}(q, \zeta)$ (defined in Proposition~\ref{proposition:G(r1n)-def_sigma}). Recalling the choice for $\tuple{v}$ that we made at the beginning of \textsection\ref{subsection:intertwining}, we consider the $F$-algebra isomorphism $f: \H_n^{\tuple{\Lambda}}(q, \zeta) \to \qH_n^{\tuple{\Lambda}}(\Gamma)$ from Theorems~\ref{theorem:BK_generalised} and \ref{theorem:BK_generalised_block}, defined with the family $\{Q_a(\tuple{k})\}_{a, \tuple{k}}$ of Lemma~\ref{lemma:choice_Qa_SW}. 

\begin{theoreme}[Main theorem]
\label{theorem:sigmaalpha_alphasigma}
We have $\sigma^{-1} \circ f = f \circ \widetilde{\sigma}$.
\end{theoreme}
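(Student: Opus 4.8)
The plan is to exploit that both $\sigma^{-1} \circ f$ and $f \circ \widetilde{\sigma}$ are $F$-algebra homomorphisms from $\H_n^{\tuple{\Lambda}}(q, \zeta)$ to $\qH_n^{\tuple{\Lambda}}(\Gamma)$; hence it suffices to check that they agree on the generators $S, T_1, \dots, T_{n-1}$. I will use the explicit formulas for $f$ from Theorems~\ref{theorem:BK_generalised} and \ref{theorem:BK_generalised_block}, namely $f(X_a) = \sum_{\tuple{k}} y_a(\tuple{k}) e(\tuple{k})$ and $f(T_a) = \sum_{\tuple{k}} (\psi_a Q_a(\tuple{k}) - P_a(\tuple{k})) e(\tuple{k})$ (recall $S = X_1$), together with the fact from Theorem~\ref{theorem:definition_sigma_quiver} that $\sigma$ fixes each $y_a$ and each $\psi_a$ and sends $e(\tuple{k})$ to $e(\sigma(\tuple{k}))$ — so that $\sigma^{-1}$ likewise fixes the $y_a$ and $\psi_a$ and sends $e(\tuple{k})$ to $e(\sigma^{-1}(\tuple{k}))$ — and finally the shift-invariance statements of Lemma~\ref{lemma:shift_ya_Pa_Qa}.

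For the generators $T_a$: since $\widetilde{\sigma}(T_a) = T_a$, the right-hand side is just $f(T_a)$. For the left-hand side, applying $\sigma^{-1}$ to $f(T_a)$ leaves $\psi_a$ and the power series $Q_a(\tuple{k}), P_a(\tuple{k})$ untouched (they have scalar coefficients and involve only the $y$'s, which $\sigma^{-1}$ fixes) and merely replaces each $e(\tuple{k})$ by $e(\sigma^{-1}(\tuple{k}))$. Re-indexing the (finite) sum by $\tuple{l} = \sigma^{-1}(\tuple{k})$ turns it into $\sum_{\tuple{l}} (\psi_a Q_a(\sigma(\tuple{l})) - P_a(\sigma(\tuple{l}))) e(\tuple{l})$, and Lemma~\ref{lemma:shift_ya_Pa_Qa}, namely $Q_a(\sigma(\tuple{l})) = Q_a(\tuple{l})$ and $P_a(\sigma(\tuple{l})) = P_a(\tuple{l})$, identifies this with $f(T_a)$.

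For the generator $S = X_1$: here $\widetilde{\sigma}(S) = \zeta S$, so the right-hand side is $\zeta f(X_1)$. For the left-hand side, the same reasoning gives $\sigma^{-1}(f(X_1)) = \sum_{\tuple{k}} y_1(\tuple{k}) e(\sigma^{-1}(\tuple{k})) = \sum_{\tuple{l}} y_1(\sigma(\tuple{l})) e(\tuple{l})$, and now the relation $y_1(\sigma(\tuple{l})) = \zeta y_1(\tuple{l})$ of Lemma~\ref{lemma:shift_ya_Pa_Qa} extracts the scalar $\zeta$, yielding $\zeta f(X_1)$ as well. Having matched both sides on all generators, one concludes $\sigma^{-1} \circ f = f \circ \widetilde{\sigma}$.

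The argument carries essentially no difficulty once Lemma~\ref{lemma:shift_ya_Pa_Qa} is available — that lemma was designed precisely for this purpose, which is why the Stroppel--Webster choice of the family $\{Q_a(\tuple{k})\}_{a, \tuple{k}}$ (Lemma~\ref{lemma:choice_Qa_SW}) is used rather than the original Brundan--Kleshchev family \eqref{equation:choice_Qa_BK} (see Remark~\ref{remark:choice_BK_not_adapted}). The only points that deserve care are the bookkeeping of the re-indexation $\tuple{k} \mapsto \sigma(\tuple{k})$ of the sums and the observation that $\sigma^{-1}$ acts trivially on the ``power-series part'' of the defining formulas for $f$, so that the action of $\sigma^{-1}$ on $f(S)$ and $f(T_a)$ reduces to a permutation of the idempotents $e(\tuple{k})$.
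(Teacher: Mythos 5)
Your proposal is correct and follows the same route as the paper's own proof: reduce to the generators $S, T_1, \dots, T_{n-1}$, observe that $\sigma^{-1}$ fixes the $y_a$ and $\psi_a$ and only permutes the idempotents $e(\tuple{k})$, re-index the finite sums by $\tuple{k} \mapsto \sigma(\tuple{k})$, and invoke Lemma~\ref{lemma:shift_ya_Pa_Qa} to absorb the scalar $\zeta$ for $S$ and to leave $f(T_a)$ unchanged. Nothing is missing.
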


\begin{proof}
Since we deal with algebra homomorphisms, it suffices to check the equality on the generators $S, T_1, \dots, T_{n-1}$ of $\H_n^{\tuple{\Lambda}}(q, \zeta)$. We successively have, using Lemma~\ref{lemma:shift_ya_Pa_Qa} (recall that, by definition, $S = X_1$):
\begin{align*}
\sigma^{-1} \circ f(S)
&=
\sum_{\tuple{k} \in K^n} \sigma^{-1}\big(y_1(\tuple{k}) e(\tuple{k})\big)
\\
&=
\sum_{\tuple{k} \in K^n} y_1(\tuple{k}) e(\sigma^{-1}(\tuple{k}))
\\
&=
\sum_{\tuple{k} \in K^n} y_1(\sigma(\tuple{k})) e(\tuple{k})
\\
&=
\zeta f(S)
\\
&=
f(\zeta S)
\\
%\sigma^{-1} \circ f(S)
&=
f \circ \widetilde\sigma (S),
\end{align*}
and:
\begin{align*}
\sigma^{-1} \circ f(T_a)
&=
\sum_{\tuple{k} \in K^n} \sigma^{-1}\big([\psi_a Q_a(\tuple{k}) - P_a(\tuple{k})] e(\tuple{k})\big)
\\
&=
\sum_{\tuple{k} \in K^n} [\psi_a Q_a(\tuple{k}) - P_a(\tuple{k})] e(\sigma^{-1}(\tuple{k}))
\\
&= \sum_{\tuple{k} \in K^n} [\psi_a Q_a(\sigma(\tuple{k})) - P_a(\sigma(\tuple{k}))] e(\tuple{k})
\\
&= \sum_{\tuple{k} \in K^n} [\psi_a Q_a(\tuple{k}) - P_a(\tuple{k})] e(\tuple{k})
\\
&=
f(T_a)
\\
%\sigma^{-1} \circ  f(T_a)
&=
f \circ \widetilde\sigma(T_a).
\end{align*}
Note that the above sums over $K^n$ are in fact finite, since all but finitely many $e(\tuple{k}) \in \qH_n^{\tuple{\Lambda}}(\Gamma)$ are zero (recall, for instance, \textsection\ref{subsubsection:conclusion}).
\end{proof}

\begin{remarque}
\label{remark:choice_BK_not_adapted}
Theorem~\ref{theorem:sigmaalpha_alphasigma} fails if we consider the homomorphism $f$ built from the family $\{Q_a^\mathrm{BK}(\tuple{k})\}_{a, \tuple{k}}$. For instance, Lemma~\ref{lemma:shift_ya_Pa_Qa} is no longer valid with $Q_a^{\mathrm{BK}}(\tuple{k})$, since if $k_a \leftarrow k_{a+1}$:
\[
Q_a^{\mathrm{BK}}(\sigma(\tuple{k})) = \zeta \zeta^{k_a} q^{k_a} = \zeta Q_a^{\mathrm{BK}}(\tuple{k}),
\]
and the same result holds if $k_a \to k_{a+1}$.
\end{remarque}

We now recall some notation and facts from \textsection\ref{subsection:fixed_subalgebra}.
If $\alpha \comp_K n$, we denote by $[\alpha]$ its orbit under the action of $\langle \sigma\rangle$ (this action is defined in Lemma~\ref{lemma:fixed_subalgebra-sigma_cdot_alpha}). We have an associated subset $K^{[\alpha]} = \sqcup_{\beta \in [\alpha]} K^\beta$ of $K^n$ (see \eqref{equation:affine_case-K[alpha]}).
The quotient set of $K^{[\alpha]}$ by the equivalence relation~$\sim$ generated by $\tuple{k} \sim \sigma(\tuple{k})$ for all $\tuple{k} \in K^{[\alpha]}$ is  $K^{[\alpha]}_\sigma$ (Definition~\ref{definition:affine_case-Kalphasigma}).
Each equivalence class $\gamma \in K^{[\alpha]}_\sigma$ has cardinality $p$, and is given by $\gamma = \{\tuple{k}, \sigma(\tuple{k}), \dots, \sigma^{p-1}(\tuple{k})\}$ for some $\tuple{k} \in K^{[\alpha]}$ (see \eqref{equation:gamma_p'}).
Finally, thanks to the canonical map $K^n / {\sim} \to (K / {\sim})^n$ and Lemma~\ref{lemma:sigma_v->zeta_v}, for any $\gamma \in K_\sigma^{[\alpha]}$ and $a \in \{1, \dots, n\}$ we have well-defined statements  $\gamma_a = \gamma_{a+1}, \gamma_a \to \gamma_{a+1}$, etc. (see Lemma~\ref{lemma:->_compatible} and before Remark~\ref{remark:gammaa_gamma'a}). Moreover, since $\tuple{\Lambda}$ is $\sigma$-stable (Proposition~\ref{proposition:intertwining-Lambda_sigma_stable}) the integer $\Lambda_{\gamma_a}$ is well-defined.

\begin{corollaire}
\label{corollary:presentation_H(G(rpn))}
The $F$-algebra isomorphism $f : \H_n^{\tuple{\Lambda}}(q, \zeta) \to \qH_n^{\tuple{\Lambda}}(\Gamma)$ induces an isomorphism between $\H_{p, n}^{\tuple{\Lambda}}(q)$ and ${\qH_n^{\tuple{\Lambda}}(\Gamma)}^\sigma$.
Hence, we have the following $F$-algebra isomorphism:
\[
\H_{p, n}^{\tuple{\Lambda}}(q) \simeq \bigoplus_{[\alpha]} \qH_{[\alpha]}^{\tuple{\Lambda}}(\Gamma)^\sigma,
\]
where $[\alpha]$ runs over the orbits of the $K$-compositions of $n$ under the action of $\langle \sigma\rangle$, and the subalgebra $\H_{p, [\alpha]}^{\tuple{\Lambda}}(q)$ has a presentation given by the generators
\[
\{e(\gamma)\}_{\gamma \in K^{[\alpha]}_\sigma} \cup \{y_1, \dots, y_n\} \cup \{\psi_1, \dots, \psi_{n-1}\},
\]
and the relations
%\eqref{relation:quiver_sum_e(gamma)_fixed}--\eqref{relation:quiver_tresse_fixed}
\eqref{relations:affine_quiver_fixed} and \eqref{relation:quiver_cyclo_fixed}.
\end{corollaire}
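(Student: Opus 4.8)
The plan is to assemble three earlier results: the identification of $\H_{p, n}^{\tuple{\Lambda}}(q)$ with the $\widetilde\sigma$-fixed subalgebra of $\H_n^{\tuple{\Lambda}}(q, \zeta)$ via $\phi$ (Corollary~\ref{corollary:H(G(rpn))_fixe}), the intertwining relation $\sigma^{-1} \circ f = f \circ \widetilde\sigma$ of Theorem~\ref{theorem:sigmaalpha_alphasigma}, and the presentation of the $\sigma$-fixed subalgebra of a cyclotomic quiver Hecke algebra from Theorem~\ref{theorem:quiver_two_subalgebra_same}.

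First I would note that, since $f$ is an $F$-algebra isomorphism satisfying $\sigma^{-1} \circ f = f \circ \widetilde\sigma$, an element $h \in \H_n^{\tuple{\Lambda}}(q, \zeta)$ is fixed by $\widetilde\sigma$ if and only if $f(h)$ is fixed by $\sigma^{-1}$, which (as $\langle \sigma\rangle = \langle \sigma^{-1}\rangle$) is the same as being fixed by $\sigma$. Hence $f$ restricts to an isomorphism $\H_n^{\tuple{\Lambda}}(q, \zeta)^{\widetilde\sigma} \xrightarrow{\sim} \qH_n^{\tuple{\Lambda}}(\Gamma)^\sigma$, and composing with $\phi : \H_{p, n}^{\tuple{\Lambda}}(q) \xrightarrow{\sim} \H_n^{\tuple{\Lambda}}(q, \zeta)^{\widetilde\sigma}$ from Corollary~\ref{corollary:H(G(rpn))_fixe} yields the isomorphism $\H_{p, n}^{\tuple{\Lambda}}(q) \simeq \qH_n^{\tuple{\Lambda}}(\Gamma)^\sigma$.

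Next I would pass to the orbit-block decomposition. Starting from $\qH_n^{\tuple{\Lambda}}(\Gamma) = \bigoplus_{\alpha \comp_K n} \qH_\alpha^{\tuple{\Lambda}}(\Gamma)$ and grouping the summands by $\langle \sigma\rangle$-orbits gives $\qH_n^{\tuple{\Lambda}}(\Gamma) = \bigoplus_{[\alpha]} \qH_{[\alpha]}^{\tuple{\Lambda}}(\Gamma)$. Because $\tuple{\Lambda}$ is $\sigma$-stable (Proposition~\ref{proposition:intertwining-Lambda_sigma_stable}), Lemmas~\ref{lemma:sigma_Ilambda} and~\ref{lemma:fixed_subalgebra-sigma_cdot_alpha} show that the shift automorphism $\sigma$ of $\qH_n^{\tuple{\Lambda}}(\Gamma)$ carries $\qH_\alpha^{\tuple{\Lambda}}(\Gamma)$ onto $\qH_{\sigma \cdot \alpha}^{\tuple{\Lambda}}(\Gamma)$; it therefore preserves each block $\qH_{[\alpha]}^{\tuple{\Lambda}}(\Gamma)$ and restricts there to the automorphism of \textsection\ref{subsubsection:fixed_cyclotomic_subalgebra}. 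Taking $\sigma$-fixed points then commutes with the direct sum, so $\qH_n^{\tuple{\Lambda}}(\Gamma)^\sigma = \bigoplus_{[\alpha]} \qH_{[\alpha]}^{\tuple{\Lambda}}(\Gamma)^\sigma$. Transporting this along the isomorphism of the previous paragraph, and defining $\H_{p, [\alpha]}^{\tuple{\Lambda}}(q)$ to be the corresponding summand --- equivalently, $e_{[\alpha]}\,\H_{p, n}^{\tuple{\Lambda}}(q)$ where $e_{[\alpha]}$ is the preimage under $f\circ\phi$ of the central idempotent $\sum_{\gamma \in K^{[\alpha]}_\sigma} e(\gamma)$ --- I obtain $\H_{p, n}^{\tuple{\Lambda}}(q) \simeq \bigoplus_{[\alpha]} \qH_{[\alpha]}^{\tuple{\Lambda}}(\Gamma)^\sigma$ together with $\H_{p, [\alpha]}^{\tuple{\Lambda}}(q) \simeq \qH_{[\alpha]}^{\tuple{\Lambda}}(\Gamma)^\sigma$.

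Finally, Theorem~\ref{theorem:quiver_two_subalgebra_same} presents $\qH_{[\alpha]}^{\tuple{\Lambda}}(\Gamma)^\sigma$ by the generators $\{e(\gamma)\}_{\gamma \in K^{[\alpha]}_\sigma} \cup \{y_1, \dots, y_n\} \cup \{\psi_1, \dots, \psi_{n-1}\}$ subject to \eqref{relations:affine_quiver_fixed} and \eqref{relation:quiver_cyclo_fixed}, and pulling this back along $\H_{p, [\alpha]}^{\tuple{\Lambda}}(q) \simeq \qH_{[\alpha]}^{\tuple{\Lambda}}(\Gamma)^\sigma$ gives the stated presentation. I expect the only genuinely delicate point --- the main obstacle, such as it is --- to be the verification that $\sigma$ respects the orbit-block decomposition, so that passing to $\sigma$-fixed points distributes over the direct sum; this is exactly where the $\sigma$-stability of $\tuple{\Lambda}$ established in Proposition~\ref{proposition:intertwining-Lambda_sigma_stable} is used. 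Everything else is a formal assembly of results already in hand.
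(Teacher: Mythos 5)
Your proposal is correct and follows essentially the same route as the paper: the paper likewise combines Theorem~\ref{theorem:sigmaalpha_alphasigma} with Corollary~\ref{corollary:H(G(rpn))_fixe} to identify $\H_{p,n}^{\tuple{\Lambda}}(q)$ with $\qH_n^{\tuple{\Lambda}}(\Gamma)^\sigma$, then invokes the decomposition $\qH_n^{\tuple{\Lambda}}(\Gamma)^\sigma = \oplus_{[\alpha]}\qH_{[\alpha]}^{\tuple{\Lambda}}(\Gamma)^\sigma$ (finite by Theorem~\ref{theorem:BK_generalised_block}) and Theorem~\ref{theorem:quiver_two_subalgebra_same} for the presentation. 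Your extra verifications (fixed by $\sigma^{-1}$ equals fixed by $\sigma$, and $\sigma$-stability of the blocks via Proposition~\ref{proposition:intertwining-Lambda_sigma_stable}) are exactly the points the paper treats more tersely, so there is nothing to change.
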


\begin{proof}
Using Theorem~\ref{theorem:sigmaalpha_alphasigma}, for $h \in \H_n^{\tuple{\Lambda}}(q, \zeta)$ we have:
\[
\widetilde{\sigma}(h) = h \iff f \circ \widetilde{\sigma}(h) = f(h) \iff \sigma^{-1} \circ f(h) = f(h) \iff f(h) = \sigma \circ f(h),
\]
hence:
\[
h \text{ is fixed under } \widetilde\sigma \iff f(h) \text{ is fixed under } \sigma.
\]
Using Corollary~\ref{corollary:H(G(rpn))_fixe}, we get:
\[
\H_{p, n}^{\tuple{\Lambda}}(q)
\simeq
\H_n^{\tuple{\Lambda}}(q, \zeta)^{\tilde{\sigma}}
\simeq
\qH_n^{\tuple{\Lambda}}(\Gamma)^\sigma,
\]
as desired. We deduce the second statement from the equality $\qH_n^{\tuple{\Lambda}}(\Gamma)^\sigma = \oplus_{[\alpha]} \qH_{[\alpha]}^{\tuple{\Lambda}}(\Gamma)^\sigma$ (note that this direct sum is finite by Theorem~\ref{theorem:BK_generalised_block})  and Theorem~\ref{theorem:quiver_two_subalgebra_same}, where we gave a presentation for $\qH_{[\alpha]}^{\tuple{\Lambda}}(\Gamma)^\sigma$.
\end{proof}

%\begin{remarque}
%We deduce from Theorem~\ref{theorem:homomorphism_Ariki} and Corollary~\ref{corollary:presentation_H(G(rpn))} that the homomorphism $\qH_n^{\tuple{\Lambda}}(\Gamma)^\sigma \to \qH_n^{\tuple{\Lambda}}(\Gamma)$ of Proposition~\ref{proposition:cyclotomic_case-analogue_Ariki_homomorphism} is (at least, in this particular case) one-to-one. 
%\end{remarque}

Recall from Remark~\ref{remark:grading_qH_cyclo_fixed} that $\qH_n^{\tuple{\Lambda}}(\Gamma)$ is naturally $\mathbb{Z}$-graded. From this grading, Theorem~\ref{theorem:BK_generalised} and  the isomorphism $f$, we can endow $\H_n^{\tuple{\Lambda}}(q, \zeta)$ with a (non-trivial) $\mathbb{Z}$-grading.

\begin{corollaire}
\label{corollary:intertwining-graded_subalgebra}
The shift automorphism $\widetilde\sigma : \H_n^{\tuple{\Lambda}}(q, \zeta) \to \H_n^{\tuple{\Lambda}}(q, \zeta)$ is homogeneous with respect to the previous grading. Moreover,  the subalgebra $\H_{p, n}^{\tuple{\Lambda}}(q)$ is a graded subalgebra of $\H_n^{\tuple{\Lambda}}(q, \zeta)$.
\end{corollaire}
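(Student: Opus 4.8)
The plan is to deduce both assertions from the corresponding facts on the quiver side, which are already in hand, by transport along the isomorphism $f$. The key point to keep in mind throughout is that the grading on $\H_n^{\tuple{\Lambda}}(q, \zeta)$ is \emph{defined} by pulling back the $\mathbb{Z}$-grading of $\qH_n^{\tuple{\Lambda}}(\Gamma)$ through $f$; so by construction $f : \H_n^{\tuple{\Lambda}}(q, \zeta) \to \qH_n^{\tuple{\Lambda}}(\Gamma)$, and hence also $f^{-1}$, is an isomorphism of graded algebras.

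First I would check that the shift automorphism $\sigma$ of $\qH_n^{\tuple{\Lambda}}(\Gamma)$ is homogeneous. By Lemma~\ref{lemma:sigma_v->zeta_v} the map $\sigma : \V \to \V$ is a quiver automorphism, hence by Lemma~\ref{lemma:->_compatible} it preserves the relations $\to$, $\leftarrow$, $\leftrightarrows$, $=$ and $\neq$ among vertices. Therefore the automorphism $\sigma$ of $\qH_\alpha(\Gamma)$ from Theorem~\ref{theorem:definition_sigma_quiver} is homogeneous (Remark~\ref{remark:sigma_qH_graded}), and this passes to the cyclotomic quotient (Remark~\ref{remark:grading_qH_cyclo_fixed}); consequently $\sigma^{-1}$ is homogeneous as well. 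Now by the Main Theorem~\ref{theorem:sigmaalpha_alphasigma} we have $\widetilde\sigma = f^{-1} \circ \sigma^{-1} \circ f$, a composition of homogeneous maps, so $\widetilde\sigma$ is homogeneous. This settles the first claim.

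For the second claim, I would use that $f$ identifies $\H_{p, n}^{\tuple{\Lambda}}(q)$ with $\qH_n^{\tuple{\Lambda}}(\Gamma)^\sigma$ (Corollary~\ref{corollary:presentation_H(G(rpn))}, equivalently Corollary~\ref{corollary:H(G(rpn))_fixe} together with Theorem~\ref{theorem:sigmaalpha_alphasigma}). By Remark~\ref{remark:grading_qH_cyclo_fixed} the subalgebra $\qH_n^{\tuple{\Lambda}}(\Gamma)^\sigma$ is a graded subalgebra of $\qH_n^{\tuple{\Lambda}}(\Gamma)$, and since $f$ is a graded isomorphism it follows immediately that $\H_{p, n}^{\tuple{\Lambda}}(q) \simeq \H_n^{\tuple{\Lambda}}(q, \zeta)^{\widetilde\sigma}$ is a graded subalgebra of $\H_n^{\tuple{\Lambda}}(q, \zeta)$. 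Alternatively, one can argue intrinsically now that $\widetilde\sigma$ is known to be homogeneous: writing any $h \in \H_n^{\tuple{\Lambda}}(q, \zeta)$ as a finite sum of homogeneous components $h = \sum_d h_d$ gives $\widetilde\sigma(h) = \sum_d \widetilde\sigma(h_d)$ with each $\widetilde\sigma(h_d)$ of degree $d$, so $\widetilde\sigma(h) = h$ if and only if $\widetilde\sigma(h_d) = h_d$ for every $d$; hence the fixed-point subalgebra is spanned by its homogeneous elements.

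I do not expect any real obstacle here: all the substance is contained in the homogeneity of $\sigma$ on the quiver side (Remark~\ref{remark:sigma_qH_graded}, itself a consequence of Lemma~\ref{lemma:->_compatible}) and in the Main Theorem~\ref{theorem:sigmaalpha_alphasigma}, both already proved. The only thing one has to be careful about is purely bookkeeping: the grading on the Ariki--Koike side is by definition the one transported along $f$, which is exactly what makes $f$ a graded isomorphism and lets the argument go through without any computation.
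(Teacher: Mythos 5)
Your proof is correct and follows essentially the same route as the paper: transport the grading through $f$, use the homogeneity of $\sigma$ on the quiver side (Remarks~\ref{remark:sigma_qH_graded} and \ref{remark:grading_qH_cyclo_fixed}) together with Theorem~\ref{theorem:sigmaalpha_alphasigma} to get homogeneity of $\widetilde\sigma$, and invoke Corollary~\ref{corollary:presentation_H(G(rpn))} for the graded subalgebra claim. The extra intrinsic argument via homogeneous components is a harmless elaboration of the same idea.
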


\begin{proof}
Recall from Remark~\ref{remark:grading_qH_cyclo_fixed} that $\sigma : \qH_n^{\tuple{\Lambda}}(\Gamma) \to \qH_n^{\tuple{\Lambda}}(\Gamma)$ is homegeneous and that $\qH_n^{\tuple{\Lambda}}(\Gamma)^\sigma$ is a graded subalgebra. We thus deduce the first assertion from Theorem~\ref{theorem:sigmaalpha_alphasigma} and the second one from Corollary~\ref{corollary:presentation_H(G(rpn))}.
\end{proof}

We now give an analogue of a classical corollary of \cite[Theorem 1.1]{BrKl}.

\begin{corollaire}
\label{corollary:intertwining-independence_q}
If $\tilde{q} \in F \setminus \{0, 1\}$ has the same order $e \in \mathbb{N}_{\geq 2} \cup \{\infty\}$ as $q$ then:
\[
\H_{p, n}^{\tuple{\Lambda}}(\tilde{q}) \simeq \H_{p, n}^{\tuple{\Lambda}}(q),
\]
as $F$-algebras.
\end{corollaire}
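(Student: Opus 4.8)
The plan is to observe that by Corollary~\ref{corollary:presentation_H(G(rpn))}, the algebra $\H_{p, n}^{\tuple{\Lambda}}(q)$ is isomorphic to $\qH_n^{\tuple{\Lambda}}(\Gamma)^\sigma$, where $\Gamma$ is $p'$ disjoint copies of $\Gamma_e$ and $\sigma$ is the shift automorphism coming from $\vertex{v} \mapsto \zeta \vertex{v}$. The key point is that neither the quiver $\Gamma$, nor the weight $\tuple{\Lambda}$, nor the permutation $\sigma$ of the vertex set actually depends on $q$ itself: they depend only on $e$ (through $p'$ and $\eta$, by Lemma~\ref{lemma:introduction-p'_gcd} and Lemma~\ref{lemma:remove_repetitions}), on $p$, and on $\zeta$. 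Indeed, the vertex set is $\V = \{\zeta^j q^i\}_{i \in I, j \in J'}$, which we have already agreed to identify canonically with $K = I \times J'$; under this identification the arrows (there is an arrow $(i,j) \to (i',j')$ iff $j = j'$ and $i' = i+1$ in $I = \mathbb{Z}/e\mathbb{Z}$) and the action of $\sigma$ on $K$ (namely $\sigma(i,j) = (i, j+1)$ if $j < p'$ and $\sigma(i, p') = (i + \eta, 1)$, by \eqref{equation:intertwining-description_sigma_translation} and \eqref{equation:intertwining-description_sigma_rotate}) are described purely combinatorially in terms of $e, p, \zeta$. Likewise the condition \eqref{equation:intertwining-Lambda_i+eta} that makes everything well-defined only involves $\eta$, hence only $e$ and $p$.

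Concretely, I would argue as follows. Fix $\tilde q \in F \setminus \{0,1\}$ of order $e$. The integer $p' = \min\{m \in \mathbb{N}^* : \zeta^m \in \langle \tilde q\rangle\}$ is the same for $\tilde q$ as for $q$, since by Lemma~\ref{lemma:introduction-p'_gcd} it equals $p/\gcd(p,e)$ (or $p$ if $e = \infty$); similarly the element $\tilde\eta \in I$ with $\zeta^{p'} = \tilde q^{\tilde\eta}$ satisfies $\omega = e/\gcd(e,\tilde\eta)$ by \eqref{equation:lemma_choice_parameters_omega}, which pins down $\tilde\eta$ modulo $e$ up to the same ambiguity as $\eta$ — and in any case the subgroup $\eta I \subseteq I$, hence the quotient $I/\eta I$ that the weight factors through (Proposition~\ref{proposition:removing_repetitions-_Lambda}), depends only on $\gcd(e,\eta) = \gcd(e, \tilde\eta)$, which is determined by $e$ and $p$. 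So the hypothesis \eqref{equation:intertwining-Lambda_i+eta} holds for $\tilde q$ exactly when it holds for $q$, and then the quiver $\Gamma$ attached to $\tilde q$ (with $v_j \coloneqq \zeta^j$ as in \textsection\ref{subsection:intertwining}) is \emph{isomorphic as a quiver} to the one attached to $q$ — in fact, identifying both vertex sets with $K = I \times J'$, they are literally the \emph{same} labelled quiver, and $\sigma$ acts on $K$ by the same formula. The weight $\tuple{\Lambda} \in \mathbb{N}^{(K)}$ is the given data, unchanged.

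It then follows that $\qH_n^{\tuple{\Lambda}}(\Gamma)$ is literally the same $F$-algebra whether one builds it from $q$ or from $\tilde q$ — it only sees the abstract quiver $\Gamma$ and the weight — and the shift automorphism $\sigma$ of $\qH_n^{\tuple{\Lambda}}(\Gamma)$ is the same map in both cases, hence so is its fixed-point subalgebra $\qH_n^{\tuple{\Lambda}}(\Gamma)^\sigma$. Applying Corollary~\ref{corollary:presentation_H(G(rpn))} twice gives
\[
\H_{p, n}^{\tuple{\Lambda}}(\tilde q) \simeq \qH_n^{\tuple{\Lambda}}(\Gamma)^\sigma \simeq \H_{p, n}^{\tuple{\Lambda}}(q),
\]
as $F$-algebras, which is the claim. (Alternatively, and perhaps more transparently, one invokes the presentation of $\H_{p,[\alpha]}^{\tuple{\Lambda}}(q)$ by the generators $\{e(\gamma)\} \cup \{y_a\} \cup \{\psi_a\}$ and relations \eqref{relations:affine_quiver_fixed}, \eqref{relation:quiver_cyclo_fixed} from Corollary~\ref{corollary:presentation_H(G(rpn))}: inspection shows no parameter $q$ or $\tilde q$ appears anywhere in that presentation, only the combinatorics of $\Gamma$ — i.e. of $e$ — and of $\sigma$, so the presentations for $\tilde q$ and $q$ coincide term by term, and the direct sum over orbits $[\alpha]$ is indexed by the same set.) The one point that needs a word of care — the main (minor) obstacle — is checking that the hypothesis \eqref{equation:intertwining-Lambda_i+eta} really is insensitive to replacing $q$ by $\tilde q$, i.e. that $\eta I = \tilde\eta I$ as subgroups of $I = \mathbb{Z}/e\mathbb{Z}$; this is exactly \eqref{equation:lemma_choice_parameters_omega} together with the fact that a cyclic group has a unique subgroup of each order, so $\eta I$ is the unique subgroup of $I$ of order $\omega = e/\gcd(e,\eta)$, and $\omega$ depends only on $p$ and $e$. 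Everything else is the observation that the constructions downstream of the quiver and the weight never mention $q$.
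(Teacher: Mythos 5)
Your reduction via Corollary~\ref{corollary:presentation_H(G(rpn))} to the fixed-point algebra $\qH_n^{\tuple{\Lambda}}(\Gamma)^\sigma$ is the same first step as the paper's, but the pivotal claim that ``$\sigma$ acts on $K$ by the same formula'' for $\tilde q$ as for $q$ (keeping the same $\zeta$) is false in general, and this is exactly where the real content of the corollary sits. The automorphism $\sigma$ depends on $q$ through the element $\eta$ of \eqref{equation:introduction-zetap'_qeta}: by \eqref{equation:intertwining-description_sigma_rotate} it sends $(i,p')$ to $(i+\eta,1)$. If $e<\infty$ and $\tilde q=q^a$ with $a$ invertible modulo $e$, the element $\tilde\eta$ defined by $\zeta^{p'}=\tilde q^{\tilde\eta}$ is $\tilde\eta=a^{-1}\eta$, which in general differs from $\eta$; for instance $e=8$, $p=4$, $\zeta=q^2$, $\tilde q=q^3$ gives $\eta=2$ but $\tilde\eta=6$. (Note that $\eta$ and $\tilde\eta$ are each \emph{uniquely} determined, not ``pinned down up to ambiguity''.) Your unique-subgroup argument only shows $\eta I=\tilde\eta I$, i.e.\ that the hypothesis \eqref{equation:intertwining-Lambda_i+eta} is insensitive to the change of parameter; it does not make the two shift automorphisms equal, and they are not even conjugate by a weight-preserving automorphism of $\Gamma$ (automorphisms of a disjoint union of copies of $\Gamma_e$ are rotations of the components combined with permutations of the components, and a short computation shows that intertwining forces $\tilde\eta=\eta$). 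So for fixed $\zeta$ you obtain fixed points of two genuinely different automorphisms, and asserting that these subalgebras are ``literally the same'' assumes precisely what has to be proved.

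The paper closes this gap by exploiting the freedom in the choice of the primitive $p$th root of unity: $\H_{p,n}^{\tuple{\Lambda}}(\tilde q)$ is defined without reference to $\zeta$, so one may realise it inside $\H_n^{\tuple{\Lambda}}(\tilde q,\tilde\zeta)$ for a \emph{different} primitive $p$th root of unity $\tilde\zeta$ chosen so that $\tilde q^{\eta}=\tilde\zeta^{p'}$; then the shift automorphism attached to $(\tilde q,\tilde\zeta)$ has the same $\eta$, hence is literally the same $\sigma$, and applying Corollary~\ref{corollary:presentation_H(G(rpn))} twice gives the isomorphism. The only delicate point is the existence of such a $\tilde\zeta$: writing $\tilde q=q^{a+ke}$, one needs some $k$ with $a+ke$ coprime to $p$, so that $\tilde\zeta\coloneqq\zeta^{a+ke}$ has order $p$; the paper obtains this from Dirichlet's theorem on primes in arithmetic progressions (see also \cite[Lemma 3.5]{Hu_crystal}). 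As written, your argument is complete only when $\eta=0$ (e.g.\ $e=\infty$ or $\gcd(p,e)=1$); in general you must either add this re-choice of $\zeta$ or supply an independent proof that rotation by $\eta$ and rotation by $a^{-1}\eta$ yield isomorphic fixed-point algebras.
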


\begin{proof}
We know from Lemma~\ref{lemma:introduction-p'_gcd} and Theorem~\ref{theorem:BK_generalised} that the algebras $\H_n^{\tuple{\Lambda}}(q)$ and $\H_n^{\tuple{\Lambda}}(\tilde{q})$ are isomorphic to the same cyclotomic quiver Hecke algebra $\qH_n^{\tuple{\Lambda}}(\Gamma)$. Moreover, we have the following isomorphism:
\[
\H_{p, n}^{\tuple{\Lambda}}(q) \simeq \qH_n^{\tuple{\Lambda}}(\Gamma)^\sigma,
\]
where $\sigma$ is uniquely determined by the quiver $\Gamma$ and the element $\eta \in I$ such that $q^\eta = \zeta^{p'}$. To prove that $\H_{p, n}^{\tuple{\Lambda}}(\tilde{q}) \simeq \H_{p, n}^{\tuple{\Lambda}}(q)$, it thus suffices to prove that
%, where $\Gamma$ is a disjoint union of $p' = \begin{cases} p & \text{if } e = \infty \\ \frac{p}{\mathrm{gcd}(p, e)} & \text{if } e < \infty\end{cases}$ quivers $\Gamma_e$.
% each quiver $\Gamma_e$ being given by:
%\begin{itemize}
%\item a two-sided infinite line if $e = \infty$;
%\item a cycle with $e$ vertices if $e < \infty$.
%\end{itemize}
%Recalling  the description of $\sigma$ given in \eqref{equation:intertwining-description_sigma_translation} and \eqref{equation:intertwining-description_sigma_rotate}, if $\vertex{v}$ is a vertex of $\Gamma$ in the copy of $\Gamma_e$ labelled by $j \in J' = \{1, \dots, p'\}$ then $\sigma(\vertex{v})$ is given by:
%\begin{itemize}
%\item the copy of $\vertex{v}$ in the $(j+1)$th copy of $\Gamma_e$ if $j < p'$;
%\item the copy of $\vertex{v}$ on the first copy of $\Gamma_e$, translated by $\eta$, if $j = p'$.
%\end{itemize}
there is a primitive $p$th root of unity $\tilde{\zeta} \in F^\times$ such that:
%\begin{equation}
%\label{equation:intertwining-tildeqeta_zetap'}
\[
\tilde{q}^\eta = \tilde{\zeta}^{p'}
\]
%\end{equation}
%Choosing $v_j \coloneqq \tilde{\zeta}^j$ for $j \in J'$ in Theorem~\ref{theorem:BK_generalised}, we know that $\H_n^{\tuple{\Lambda}}(\tilde{q}, \tilde{\zeta}) \simeq \qH_n^{\tuple{\Lambda}}(\Gamma)$. By the description of the action of the shift automorphism on $I \times J'$ and by  Corollary~\ref{corollary:presentation_H(G(rpn))}, we know  that $\H_{p, n}^{\tuple{\Lambda}}(q)$ and $\H_{p, n}^{\tuple{\Lambda}}(\tilde{q})$ are isomorphic to $\qH_n^{\tuple{\Lambda}}(\Gamma)^\sigma$ for the common shift automorphism $\sigma$. We deduce the statement of the corollary.
(recall from Lemma~\ref{lemma:introduction-p'_gcd} that~$p'$ does not depend on the chosen primitive $p$th root of unity).
To deal with the case $e = \infty$, it suffices to set $\tilde{\zeta} \coloneqq \zeta$. Recall that, in that case, we have $\eta = 0$ and $p' = p$. Hence, we now assume that $e < \infty$. Since $q$ and $\tilde{q}$ are both primitive $e$th roots of unity, there is some $a \in \mathbb{Z}$, invertible modulo $e$, such that $\tilde{q} = q^a$. In particular, for any $k \in \mathbb{Z}$ we have $\tilde{q} = q^{a + ke}$. Since $q^\eta = \zeta^{p'}$, we get:
\[
\tilde{q}^\eta = {(\zeta^{a+ke})}^{p'}.
\]
Therefore, it suffices to prove that there is some $k \in \mathbb{Z}$ such that $a + ke$ is invertible modulo~$p$, that is, such that $\tilde{\zeta} \coloneqq \zeta^{a + ke}$ is a primitive $p$th root of unity. A quick (but very powerful) argument is to use Dirichlet's theorem about arithmetic progression (see also \cite[Lemma 3.5]{Hu_crystal}): since $a$ and $e$ are coprime, the set $\{a + ke\}_{k \in \mathbb{N}}$ contains infinitely many prime numbers. In particular, it contains a prime $\wp$ which does not divide $p$, hence which is coprime to $p$. It now suffices to choose $k \in \mathbb{N}$ such that $\wp = a + ke$.
\end{proof}

%
%\begin{proof}
%Let $\tilde{\Gamma}$ be the quiver from Theorem~\ref{theorem:BK_generalised} such that $\H_n^{\tuple{\Lambda}}(\tilde{q}, \zeta) \simeq \qH_n^{\tuple{\Lambda}}(\tilde{\Gamma})$  and $\tilde{\sigma}$ be the automorphism of  $\qH_n^{\tuple{\Lambda}}(\tilde{\Gamma})$ from Theorem~\ref{theorem:quiver_two_subalgebra_same} such that $\H_{p, n}^{\tuple{\Lambda}}(\tilde{q}) \simeq \qH_n^{\tuple{\Lambda}}(\tilde{\Gamma})^{\tilde{\sigma}}$.
%Let us prove that:
%\[
%p' = \min\{m \in \mathbb{N}^* : \zeta^m \in \langle \tilde{q}\rangle\}.
%\]
%This is true if $e = \infty$ (we thus have $p' = p$), and immediate if $e < \infty$ since $\langle \tilde{q} \rangle = \langle q\rangle$. As a consequence, both quivers $\Gamma$ and $\tilde{\Gamma}$ are given by $p'$ copies of $\Gamma_e$. 
%
%We know that the action of $\sigma$ (respectively $\tilde{\sigma}$) on the vertices of $\Gamma$ (resp. $\tilde{\Gamma}$) is determined by $\eta$ (resp. $\tilde{\eta}$ with $\zeta^{p'} = \tilde{q}^{\tilde{\eta}}$). We have $q^\eta = \tilde{q}^{\tilde{\eta}}$ thus $\tilde{q} = q^\nu$ for some $\nu \in I^\times$.
%Theorem~\ref{theorem:quiver_two_subalgebra_same} concludes the proof.
%%%%%%%%%%%%%
%{\huge autre rédaction}
%Since $\langle q \rangle = \langle \tilde{q} \rangle$, there is some $\bar{\nu} \in I^\times$ such that $\tilde{q} = q^{\bar{\nu}}$; let $\nu \in \mathbb{Z}$ such that $\bar{\nu}$ is the image of $\nu$ by the canonical projection $\mathbb{Z} \to I$.
%\end{proof}

\appendix

\section{About \texorpdfstring{$\H_{p, n}^{\tuple{\Lambda}}(q)$}{HpnLambdaq}}
\label{section:appendix_induction}

The aim of this appendix is to give details for the statements of Remarks~\ref{remark:G(rpn)_case_p=1} and \ref{remark:hecke_G(rpn)-equivalent_presentation}.

\subsection{Case \texorpdfstring{$p = 1$}{p=1}}
\label{subsection:appendix-p1}

We prove here the statement of Remark~\ref{remark:G(rpn)_case_p=1}: the presentation of $\H_{p, n}^{\tuple{\varLambda}}(q)$ given at \textsection\ref{subsection:G(rpn)} gives the Ariki--Koike algebra $\H_n^{\tuple{\varLambda}}(q) = \H_n^{\tuple{\varLambda}}(q, 1)$ of \textsection\ref{subsection:Gr1n}. Note that since $p = 1$, the relation \eqref{relation:hecke_G(rpn)_bigbraid} becomes $s t'_1 = t_1 s$, thus we have:
\begin{equation}
\label{relation:appendix_tresse_p=1}
t'_1 = s^{-1} t_1 s.
\end{equation}

The result follows from the theorem below.

\begin{theoreme}
\label{theorem:appendix-case_p=1}
The algebra homomorphisms $\phi : \H_{1, n}^{\tuple{\varLambda}}(q) \to \H_n^{\tuple{\varLambda}}(q)$ and $\psi : \H_n^{\tuple{\varLambda}}(q) \to \H_{1, n}^{\tuple{\varLambda}}(q)$  given by:
\begin{align*}
\phi(s) &\coloneqq S,
\\
\phi(t'_1) &\coloneqq S^{-1} T_1 S,
\\
\phi(t_a) &\coloneqq T_a, \qquad \text{for all } a \in \{1, \dots, n-1\},
\\
\intertext{and:}
\psi(S) &\coloneqq s,
\\
\psi(T_a) &\coloneqq t_a, \qquad \text{for all }  a \in \{1, \dots, n-1\},
\end{align*}
are well-defined and inverse to each other.
\end{theoreme}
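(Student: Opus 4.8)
The statement asserts that $\phi$ and $\psi$ are mutually inverse algebra isomorphisms between $\H_{1,n}^{\tuple{\varLambda}}(q)$ (with the $G(r,p,n)$-style presentation, specialised to $p=1$) and the Ariki--Koike algebra $\H_n^{\tuple{\varLambda}}(q)=\H_n^{\tuple{\varLambda}}(q,1)$. The plan is to verify well-definedness of each map on generators by checking defining relations, then check that the two composites are the identity on generators.

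First I would check that $\psi$ is well-defined. The relations of $\H_n^{\tuple{\varLambda}}(q)$ are \eqref{relation:hecke_cyclo_S}--\eqref{relation:hecke_braid3} with $\zeta=1$; all of these clearly map to relations among $s,t_1,\dots,t_{n-1}$ in $\H_{1,n}^{\tuple{\varLambda}}(q)$ except possibly \eqref{relation:hecke_ST1ST1}, the length-$4$ braid relation $ST_1ST_1=T_1ST_1S$. Since $p=1$, relation \eqref{relation:hecke_G(rpn)_bigbraid} reads $s t_1'=t_1 s$, i.e.\ $t_1'=s^{-1}t_1 s$ as in \eqref{relation:appendix_tresse_p=1}. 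Substituting $t_1'=s^{-1}t_1s$ into the braid relation \eqref{relation:hecke_G(rpn)_braid6}, $(t_1't_1t_2)^2=(t_2t_1't_1)^2$, I would derive the identity $st_1st_1=t_1st_1s$, so $\psi$ respects \eqref{relation:hecke_ST1ST1}. The cyclotomic relation \eqref{relation:hecke_cyclo_S} for $\zeta=1$ is exactly \eqref{relation:hecke_G(rpn)_cyclo_s} (both read $\prod_i(S-q^i)^{\varLambda_i}=0$), and the quadratic and type-$A$ braid relations among the $t_a$ are among the defining relations of $\H_{1,n}^{\tuple{\varLambda}}(q)$. Hence $\psi$ is a well-defined algebra homomorphism.

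Next I would check that $\phi$ is well-defined, i.e.\ that the images $\phi(s)=S$, $\phi(t_1')=S^{-1}T_1S$, $\phi(t_a)=T_a$ satisfy all the defining relations of $\H_{1,n}^{\tuple{\varLambda}}(q)$. The cyclotomic relation \eqref{relation:hecke_G(rpn)_cyclo_s} becomes \eqref{relation:hecke_cyclo_S}; the quadratic relations \eqref{relation:hecke_G(rpn)_ordre_ta} hold because conjugation by $S$ preserves the quadratic equation satisfied by $T_1$; the braid relations not involving $t_1'$ are immediate. For the relations involving $t_1'$ I would use \eqref{relation:hecke_ST1ST1} repeatedly: e.g.\ $\phi(s)\phi(t_1')=S\cdot S^{-1}T_1S=T_1S$ while $\phi(t_1)\phi(s)=T_1S$, giving \eqref{relation:hecke_G(rpn)_bigbraid}; the relation $s t_1' t_1=t_1' t_1 s$ \eqref{relation:hecke_G(rpn)_braid3} becomes $S\cdot S^{-1}T_1S\cdot T_1 = S^{-1}T_1S\cdot T_1\cdot S$, i.e.\ $T_1ST_1=S^{-1}T_1ST_1S$, equivalently $ST_1ST_1=T_1ST_1S$, which is \eqref{relation:hecke_ST1ST1}; similarly $t_1't_2t_1'=t_2t_1't_2$ \eqref{relation:hecke_G(rpn)_ordre_ta}ff.\ translates, using $ST_2=T_2S$, into the ordinary braid relation $T_1T_2T_1=T_2T_1T_2$; and $(t_1't_1t_2)^2=(t_2t_1't_1)^2$ \eqref{relation:hecke_G(rpn)_braid6} translates, again via $ST_2=T_2S$ and \eqref{relation:hecke_ST1ST1}, into a consequence of the braid relations in $\H_n^{\tuple{\varLambda}}(q)$. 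The commutation $t_1't_a=t_at_1'$ for $a\geq 3$ follows from $ST_a=T_aS$ and $T_1T_a=T_aT_1$ for $a\geq 3$. So $\phi$ is well-defined.

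Finally I would check the composites on generators. $\psi\circ\phi(s)=\psi(S)=s$; $\psi\circ\phi(t_a)=\psi(T_a)=t_a$; and $\psi\circ\phi(t_1')=\psi(S^{-1}T_1S)=s^{-1}t_1s=t_1'$ by \eqref{relation:appendix_tresse_p=1}. Conversely $\phi\circ\psi(S)=\phi(s)=S$ and $\phi\circ\psi(T_a)=\phi(t_a)=T_a$. Since $\H_n^{\tuple{\varLambda}}(q)$ is generated by $S,T_1,\dots,T_{n-1}$, this shows $\phi\circ\psi=\mathrm{id}$, and likewise $\psi\circ\phi=\mathrm{id}$ since $\H_{1,n}^{\tuple{\varLambda}}(q)$ is generated by $s,t_1',t_1,\dots,t_{n-1}$. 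I expect the only real work to be the bookkeeping in translating the four-factor braid relation \eqref{relation:hecke_G(rpn)_braid6} and the mixed relations involving $t_1'$ into consequences of \eqref{relation:hecke_ST1ST1} and the type-$A$ braid relations; the rest is routine substitution.
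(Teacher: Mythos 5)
Your overall architecture is the same as the paper's: verify $\psi$ on the defining relations of $\H_n^{\tuple{\varLambda}}(q)$, verify $\phi$ on the defining relations of $\H_{1,n}^{\tuple{\varLambda}}(q)$ (with the only genuinely hard check being \eqref{relation:hecke_G(rpn)_braid6}, which the paper settles by an explicit computation reducing it to \eqref{relation:hecke_ST1ST1}), and then check both composites on generators, using $t'_1=s^{-1}t_1s$ from \eqref{relation:appendix_tresse_p=1} for $\psi\circ\phi(t'_1)$. All of that part of your plan is sound and matches the paper.

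However, there is a genuine gap in your verification of $\psi$. You claim that the image of the four-factor braid relation, $st_1st_1=t_1st_1s$, is obtained by ``substituting $t'_1=s^{-1}t_1s$ into \eqref{relation:hecke_G(rpn)_braid6}''. This is the wrong relation: for $n=2$ (which is allowed, since the paper only assumes $n\geq 2$) the generator $t_2$ and hence \eqref{relation:hecke_G(rpn)_braid6} do not even exist, so your derivation cannot get started; and for $n\geq 3$ there is no straightforward way to extract the four-braid from \eqref{relation:hecke_G(rpn)_braid6} alone --- indeed the paper's own manipulation of \eqref{relation:hecke_G(rpn)_braid6} uses the rearranged four-braid \eqref{relation:appendix-ST1ST1_modified} at intermediate steps, so reversing that computation would be circular. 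The correct (and one-line) source is \eqref{relation:hecke_G(rpn)_braid3}: substituting $t'_1=s^{-1}t_1s$ into $st'_1t_1=t'_1t_1s$ gives $t_1st_1=s^{-1}t_1st_1s$, i.e.\ exactly $st_1st_1=t_1st_1s$. This is precisely what the paper does, and it is the same observation you already make (in the other direction) when checking that $\phi$ respects \eqref{relation:hecke_G(rpn)_braid3}; so the fix is immediate, but as written your argument for the well-definedness of $\psi$ does not go through.
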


\begin{proof}
We first check that $\psi$ is an algebra homomorphism: all relations are straightforward except \eqref{relation:hecke_ST1ST1}, but it follows from \eqref{relation:hecke_G(rpn)_braid3} and \eqref{relation:appendix_tresse_p=1}. Concerning $\phi$, again all relations are straightforward, except \eqref{relation:hecke_G(rpn)_braid6} (if $n \geq 3$). Note the following consequence of \eqref{relation:hecke_ST1ST1}:
\begin{equation}
\label{relation:appendix-ST1ST1_modified}
S^{-1} T_1 S T_1 = T_1 S T_1 S^{-1}.
\end{equation}
In the following calculation, we adopt the following conventions:
\begin{itemize}
\item we use color when a quantity simplifies;
\item we use underbrace when we will use a relation;
\item we use parenthesis when we did use a relation.
\end{itemize}
We have:
\begin{align*}
&\phi(t'_1) \phi(t_1) \phi(t_2) \phi(t'_1) \phi(t_1) \phi(t_2)
=
\phi(t_2) \phi(t'_1) \phi(t_1) \phi(t_2) \phi(t'_1) \phi(t_1)
\\
\iff&
[S^{-1} T_1 S] T_1 T_2 [\underbrace{S^{-1} T_1 S] T_1}_{\eqref{relation:appendix-ST1ST1_modified}} T_2
=
\underbrace{T_2 [S^{-1}}_{\eqref{relation:hecke_STa}} T_1 S] T_1 T_2 [\underbrace{S^{-1} T_1 S] T_1}_{\eqref{relation:appendix-ST1ST1_modified}}.
\\
\iff& 
\textcolor{blue}{S^{-1}}T_1 S \underbrace{T_1 T_2 (T_1}_{\eqref{relation:hecke_braid3}} S T_1 \underbrace{S^{-1}) T_2}
=
(\textcolor{blue}{S^{-1}}T_2) T_1 S \underbrace{T_1 T_2 (T_1}_{\eqref{relation:hecke_braid3}} S T_1 S^{-1})
\\
\iff& 
T_1 \underbrace{S (T_2} T_1 \underbrace{T_2) S} T_1 (T_2 \textcolor{blue}{S^{-1}})
=
T_2 T_1 \underbrace{S (T_2} T_1 \underbrace{T_2) S} T_1 \textcolor{blue}{S^{-1}}
\\
\iff&
T_1 (T_2 S) T_1 (S \underbrace{T_2) T_1 T_2}
=
\underbrace{T_2 T_1 (T_2} S) T_1  (S T_2) T_1
\\
\iff&
\textcolor{blue}{T_1 T_2} S T_1 S (T_1 \textcolor{green}{T_2 T_1})
=
(\textcolor{blue}{T_1 T_2} T_1) S T_1 S \textcolor{green}{T_2 T_1}
\\
\iff&
S T_1 S T_1 = T_1 S T_1 S,
\end{align*}
which allows us to conclude. Finally, the composition $\phi \circ \psi$ is the identity on the set of generators $\{S, T_1, \dots, T_{n-1}\}$, and using \eqref{relation:appendix_tresse_p=1} we find that $\psi \circ \phi$ is the identity on the set of generators $\{s, t'_1, t_1, \dots, t_{n-1}\}$. Hence, the algebras homomorphisms $\phi$ and $\psi$ are inverse isomorphisms and this concludes the proof.
\end{proof}

\subsection{Two equivalent relations}
\label{subsection:appendix-two_equivalent_relations}

We suppose that $p \geq 2$.
We prove here the statement of Remark~\ref{remark:hecke_G(rpn)-equivalent_presentation}: in the algebra $\H_{p, n}^{\tuple{\varLambda}}(q)$, the relations \eqref{relation:hecke_G(rpn)_bigbraid} and \eqref{relation:hecke_G(rpn)_strange} are equivalent. We will even prove a slightly more general statement  (cf. Proposition~\ref{proposition:appendix-braid}). Let $A$ be a unitary ring and $q \in A^\times$ an invertible element. Let $s, t'_1, t_1$ some symbols which satisfy:
\begin{equation}
\label{relation:ordre_ta}
(t'_1 + 1)(t'_1 - q) = (t_1 + 1)(t_1 - q) = 0.
\end{equation}

\begin{lemme}
\label{lemma:appendix-equality}
We have:
\[
{(q^{-1} t'_1 t_1)}^{2-p} t_1 s t'_1 + (q-1) \sum_{k = 1}^{p-2} {(q^{-1} t'_1 t_1)}^{1-k} s t'_1
=
(\underbrace{t_1^{-1} {t'_1}^{-1} t_1^{-1} {t'_1}^{-1}\dots}_{p-2 \text{ factors}}) (\underbrace{\dots t_1 t'_1 t_1 t'_1}_{p - 2 \text{ factors}}) t_1 s t'_1.
\]
\end{lemme}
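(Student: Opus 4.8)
The plan is to first peel off the common right factor $s\,t'_1$. Every summand on either side of the asserted equality is literally of the form $X\cdot s\cdot t'_1$ for some $X$ in the subring generated by $t_1$ and $t'_1$: on the left $X=(q^{-1}t'_1t_1)^{2-p}t_1$ for the first term and $X=(q-1)(q^{-1}t'_1t_1)^{1-k}$ for the $k$-th summand, while on the right $X=(t_1^{-1}{t'_1}^{-1}\cdots)(\cdots t_1t'_1)\,t_1$ (each group having $p-2$ factors). Hence it suffices to prove the $s$-free identity $L_p=R_p$, where $L_p$ denotes the sum of the left-hand $X$'s and $R_p$ the right-hand $X$; this identity involves only $t_1,t'_1$ and the quadratic relations \eqref{relation:ordre_ta}, which I would use in the form $q\,t_1^{-1}=t_1-(q-1)$ and $q\,{t'_1}^{-1}=t'_1-(q-1)$.

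I would then prove $L_p=R_p$ by induction on $p$, the case $p=2$ being $L_2=t_1=R_2$. Write $c\coloneqq q^{-1}t'_1t_1$. The geometric-series shape of $L_p$ gives at once the additive recursion $L_{p+1}=L_p+c^{1-p}(t_1-t'_1)$: indeed $L_{p+1}-L_p=c^{2-p}\bigl(c^{-1}t_1-(t_1-(q-1))\bigr)=c^{2-p}\bigl(q\,t_1^{-1}{t'_1}^{-1}t_1-q\,t_1^{-1}\bigr)=c^{2-p}q\,t_1^{-1}{t'_1}^{-1}(t_1-t'_1)=c^{1-p}(t_1-t'_1)$, using $c^{-1}=q\,t_1^{-1}{t'_1}^{-1}$. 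On the right-hand side I would rewrite $R_p$ through the alternating words $u_n\coloneqq\underbrace{\cdots t'_1t_1}_{n}$ of length $n$ ending in $t_1$: a direct inspection gives $R_p=u_{p-2}^{-1}u_{p-1}$ and $u_{n+1}=u_{n-1}\,t'_1t_1$, whence $R_{p+1}=u_{p-1}^{-1}u_p=u_{p-1}^{-1}u_{p-2}\,t'_1t_1=R_p^{-1}(t'_1t_1)$, i.e.\ $R_pR_{p+1}=t'_1t_1$. Combining these with the inductive hypothesis $L_p=R_p$ reduces the claim $L_{p+1}=R_{p+1}$ to the purely left-hand identity $L_pL_{p+1}=t'_1t_1$.

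Finally I would establish $L_pL_{p+1}=t'_1t_1$ by a further induction on $p$: the base case is $L_2L_3=t_1\bigl(c^{-1}t_1+(q-1)\bigr)=q\,{t'_1}^{-1}t_1+(q-1)t_1=t'_1t_1$ by \eqref{relation:ordre_ta}; for the step one expands $L_pL_{p+1}$ using $L_{p+1}=L_p+c^{1-p}(t_1-t'_1)$ and $L_{p-1}=L_p-c^{2-p}(t_1-t'_1)$ together with the hypothesis $L_{p-1}L_p=t'_1t_1$, and is left with the identity $c^{2-p}(t_1-t'_1)L_p=-L_p\,c^{1-p}(t_1-t'_1)$, which one checks by a direct computation expanding $L_p$ and using $q\,{t'_1}^{-1}=t'_1-(q-1)$. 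This proves Lemma~\ref{lemma:appendix-equality}; combining it with \eqref{relation:hecke_G(rpn)_braid3} then yields the equivalence of \eqref{relation:hecke_G(rpn)_bigbraid} and \eqref{relation:hecke_G(rpn)_strange} asserted in Remark~\ref{remark:hecke_G(rpn)-equivalent_presentation}. I expect the only genuine obstacle to be the bookkeeping — keeping track of the $\pm1$ exponents and the parity-dependent shape of the words $u_n$ (and of $L_p$) through the nested inductions — rather than any conceptual difficulty; this is presumably why the paper merely asserts that the identity ``can be shown by induction''.
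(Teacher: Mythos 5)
Your plan is correct, but it is organized quite differently from the paper's proof, so let me compare. The paper proves the displayed identity by a single induction on $p$ carried out directly on the two-sided expression: it factors $(q^{-1}t'_1t_1)^{-1}$ out of the level-$p$ left-hand side, applies the inductive hypothesis, and then absorbs the leftover $(q-1)st'_1$ by one application of $q\,t_1^{-1}=t_1-(q-1)$ or $q\,{t'_1}^{-1}=t'_1-(q-1)$, splitting according to the parity of $p$. You instead peel off the common right factor $st'_1$ (a legitimate reduction: it suffices to prove the $s$-free identity $L_p=R_p$ and multiply on the right) and replace the parity discussion by two parity-free recursions, $L_{p+1}=L_p+c^{1-p}(t_1-t'_1)$ and $R_{p+1}=R_p^{-1}t'_1t_1$, both of which I checked, so that the induction $L_p=R_p$ amounts to saying that both sequences satisfy the recursion $X_{p+1}=X_p^{-1}t'_1t_1$ from the common initial value $t_1$ (note you tacitly use that $L_p$ is invertible, which is fine because the inductive hypothesis identifies it with the unit $R_p$). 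This is conceptually tidier than the paper's computation. The price is that all of the computational content is concentrated in the identity $c^{2-p}(t_1-t'_1)L_p=-L_p\,c^{1-p}(t_1-t'_1)$, which you wave through as ``a direct computation expanding $L_p$''. The identity is indeed true for every $p$: after left-multiplying by $c^{p-2}$ it becomes $(t_1-t'_1)c^{2-p}t_1+t_1c^{1-p}(t_1-t'_1)+(q-1)\sum_{k=1}^{p-2}\bigl[(t_1-t'_1)c^{1-k}+c^{-k}(t_1-t'_1)\bigr]=0$, and the difference of two consecutive values of $p$ collapses to zero using $t_1c^{-1}=q\,{t'_1}^{-1}$, $c^{-1}t'_1=q\,t_1^{-1}$, $ct_1=(q-1)c+t'_1$ and $t_1c^{-m}t_1^{-1}=(q^{-1}t_1t'_1)^{-m}$; but for general $p$ this is a family of identities whose verification is itself a telescoping induction, not a one-shot check, so your nested inductions end up doing at least as much work as the paper's single one, only packaged differently. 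With that last computation actually written out, your argument is complete and correct.
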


\begin{proof}
For $p = 2$ we get:
\[
t_1 s t'_1 = t_1 s t'_1,
\]
which is obviously true. If the equality is satisfied for $p - 1 \geq 2$, we get:

\begin{gather*}
{(q^{-1} t'_1 t_1)}^{2-p} t_1 s t'_1 + (q-1) \sum_{k = 1}^{p-2} {(q^{-1} t'_1 t_1)}^{1-k} s t'_1
\\
=
(q^{-1} t'_1 t_1)^{2-p} t_1 s t'_1
+ (q-1) \sum_{k = 2}^{p-2} (q^{-1}t'_1 t_1)^{1-k} st'_1
+ (q-1) s t'_1
\\
=
(q^{-1} t'_1 t_1)^{-1} (q^{-1} t'_1 t_1)^{3-p} t_1 s t'_1
+ (q-1)(q^{-1} t'_1 t_1)^{-1} \sum_{k = 1}^{p - 3} (q^{-1}t'_1 t_1)^{1-k} st'_1
+ (q-1) s t'_1
\\
\displaybreak[0]
= (q^{-1} t'_1 t_1)^{-1} \left[ (q^{-1} t'_1 t_1)^{2-(p-1)} t_1 s t'_1
+ (q-1) \sum_{k = 1}^{(p-1) - 2} (q^{-1}t'_1 t_1)^{1-k} st'_1\right]
+ (q-1) s t'_1
\\
= (q^{-1} t'_1 t_1)^{-1} (\underbrace{t_1^{-1} {t'_1}^{-1}\dots}_{p-3}) (\underbrace{\dots t_1 t'_1}_{p-3}) t_1 s t'_1+ (q-1) s t'_1
\\
= q  (\underbrace{t_1^{-1} {t'_1}^{-1}\dots}_{p-1}) (\underbrace{\dots t_1 t'_1}_{p-3}) t_1 s t'_1+ (q-1) s t'_1.
\end{gather*}
Let us now distinguish between two cases. If $p$ is even, using $q t_1^{-1} = t_1 - (q-1)$ we get:
\begin{gather*}
{(q^{-1} t'_1 t_1)}^{2-p} t_1 s t'_1 + (q-1) \sum_{k = 1}^{p-2} {(q^{-1} t'_1 t_1)}^{1-k} s t'_1
\\
=
\textcolor{blue}{q} (\underbrace{t_1^{-1} {t'_1}^{-1}  \dots {t'_1}^{-1} \textcolor{blue}{t_1^{-1}}}_{p-1})(\underbrace{t'_1 t_1 \dots t_1 t'_1}_{p-3}) t_1 s t'_1 + (q-1) s t'_1
\\
=
(\underbrace{t_1^{-1} {t'_1}^{-1}  \dots {t'_1}^{-1}}_{p-2})[t_1 - (q-1)](\underbrace{t'_1 t_1 \dots t_1 t'_1}_{p-3}) t_1 s t'_1 + (q-1) s t'_1
\\
=
(\underbrace{t_1^{-1} {t'_1}^{-1}  \dots {t'_1}^{-1}}_{p-2})(\underbrace{t_1 t'_1 t_1 \dots t_1 t'_1}_{p-2}) t_1 s t'_1 - (q-1)(\underbrace{t_1^{-1} {t'_1}^{-1}  \dots {t'_1}^{-1}}_{p-2})(\underbrace{t'_1 t_1 \dots t_1 t'_1}_{p-3}) t_1 s t'_1 + (q-1) s t'_1
\\
= (\underbrace{t_1^{-1} {t'_1}^{-1}  \dots {t'_1}^{-1}}_{p-2})(\underbrace{t_1 t'_1 t_1 \dots t_1 t'_1}_{p-2}) t_1 s t'_1 - (q-1)t_1^{-1} t_1 s t'_1 + (q-1) s t'_1
\\
=(\underbrace{t_1^{-1} {t'_1}^{-1}  \dots {t'_1}^{-1}}_{p-2})(\underbrace{t_1 t'_1 t_1 \dots t_1 t'_1}_{p-2}) t_1 s t'_1,
\end{gather*}
thus we are done. If $p$ is odd, similarly we obtain, now using $q{t'_1}^{-1} = t'_1 - (q-1)$:
\begin{gather*}
{(q^{-1} t'_1 t_1)}^{2-p} t_1 s t'_1 + (q-1) \sum_{k = 1}^{p-2} {(q^{-1} t'_1 t_1)}^{1-k} s t'_1
\\
=
\textcolor{blue}{q} (\underbrace{t_1^{-1} {t'_1}^{-1}  \dots t_1^{-1} \textcolor{blue}{{t'_1}^{-1}}}_{p-1})(\underbrace{t_1 t'_1 \dots t_1 t'_1}_{p-3}) t_1 s t'_1 + (q-1) s t'_1
\\
=
(\underbrace{t_1^{-1} {t'_1}^{-1}  \dots t_1^{-1}}_{p-2})[t'_1 - (q-1)](\underbrace{t_1 t'_1 \dots t_1 t'_1}_{p-3}) t_1 s t'_1 + (q-1) s t'_1
\\
=
(\underbrace{t_1^{-1} {t'_1}^{-1}  \dots t_1^{-1}}_{p-2})(\underbrace{t'_1 t_1 t'_1 \dots t_1 t'_1}_{p-2}) t_1 s t'_1 - (q-1)(\underbrace{t_1^{-1} {t'_1}^{-1}  \dots t_1^{-1}}_{p-2})(\underbrace{t_1 t'_1 \dots t_1 t'_1}_{p-3}) t_1 s t'_1 + (q-1) s t'_1
\\
\displaybreak[0]
= (\underbrace{t_1^{-1} {t'_1}^{-1}  \dots t_1^{-1}}_{p-2})(\underbrace{t'_1 t_1 t'_1 \dots t_1 t'_1}_{p-2}) t_1 s t'_1 - (q-1)t_1^{-1} t_1 s t'_1 + (q-1) s t'_1
\\
=
(\underbrace{t_1^{-1} {t'_1}^{-1}  \dots t_1^{-1}}_{p-2})(\underbrace{t'_1 t_1 t'_1 \dots t_1 t'_1}_{p-2}) t_1 s t'_1,
\end{gather*}
thus we are done.
\end{proof}

\begin{proposition}
\label{proposition:appendix-braid}
We assume that $s, t'_1, t_1$ satisfy, in addition to \eqref{relation:ordre_ta}, the following relation:
\begin{equation}
\label{relation:mini_tresse}
s t'_1 t_1 = t'_1 t_1 s.
\end{equation}
The relations:
\begin{equation}
\tag{Ar}
\label{relation:ariki}
s t'_1 t_1 = {(q^{-1} t'_1 t_1)}^{2-p} t_1 s t'_1 + (q-1) \sum_{k = 1}^{p-2} {(q^{-1} t'_1 t_1)}^{1-k} s t'_1,
\end{equation}
and:
\begin{equation}
\tag{BMR}
\label{relation:bmr}
\underbrace{s t'_1 t_1 t'_1 t_1\dots}_{p + 1} = \underbrace{t_1 s t'_1 t_1 t'_1\dots}_{p+1},
\end{equation}
are equivalent.
\end{proposition}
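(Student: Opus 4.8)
The plan is to reduce each of the relations \eqref{relation:ariki} and \eqref{relation:bmr} to one and the same relation, using only the quadratic relations \eqref{relation:ordre_ta} and the commutation relation \eqref{relation:mini_tresse}.

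First I would invoke Lemma~\ref{lemma:appendix-equality}, whose proof uses only \eqref{relation:ordre_ta}, to rewrite the right-hand side of \eqref{relation:ariki}: in the presence of \eqref{relation:ordre_ta}, relation \eqref{relation:ariki} is equivalent to
\[
s t'_1 t_1 = (\underbrace{t_1^{-1} {t'_1}^{-1} t_1^{-1} {t'_1}^{-1}\dots}_{p - 2 \text{ factors}})(\underbrace{\dots t_1 t'_1 t_1 t'_1}_{p - 2 \text{ factors}})\, t_1 s t'_1 .
\]
Every word in $t_1, t'_1$ occurring here is invertible (again by \eqref{relation:ordre_ta}), so multiplying on the left by the inverse of the first bracketed factor is an equivalence. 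Doing this, and then applying \eqref{relation:mini_tresse} in the form $s t'_1 t_1 = t'_1 t_1 s$ to the left-hand side, I would concatenate the alternating words that appear and obtain a relation of the form $w\, s = w'\, s\, t'_1$, where $w$ is an alternating word $t'_1 t_1 t'_1 \cdots$ and $w'$ an alternating word $t_1 t'_1 t_1 \cdots$, whose lengths and first/last letters depend only on the parity of $p$.

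Independently, I would perform the analogous reduction on \eqref{relation:bmr}: on its left-hand side $\underbrace{s t'_1 t_1 t'_1 t_1 \dots}_{p+1}$ one pushes $s$ rightwards past consecutive blocks $t'_1 t_1$ by repeated use of \eqref{relation:mini_tresse}, and on its right-hand side $\underbrace{t_1 s t'_1 t_1 t'_1 \dots}_{p+1}$ one does the same to the string following the initial $t_1 s$. After collecting terms, \eqref{relation:bmr} becomes exactly the relation $w\, s = w'\, s\, t'_1$ produced above from \eqref{relation:ariki}. Since every step in both reductions is an equivalence (left multiplication by an invertible element, or an application of \eqref{relation:mini_tresse}), this yields \eqref{relation:ariki} $\iff$ \eqref{relation:bmr}, which is the assertion of Proposition~\ref{proposition:appendix-braid}.

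As in the proof of Lemma~\ref{lemma:appendix-equality}, it is cleanest to run this computation separately for $p$ even and $p$ odd. The only delicate point — and the main, if minor, obstacle — is purely combinatorial bookkeeping: keeping track of the lengths and of the leading and trailing letters of the various alternating words $t_1 t'_1 t_1 \cdots$ and $t'_1 t_1 t'_1 \cdots$, so that the reduced forms of \eqref{relation:ariki} and of \eqref{relation:bmr} are recognised as literally the same relation. (The base case $p = 2$ is immediate, since both relations then read $s t'_1 t_1 = t_1 s t'_1$.)
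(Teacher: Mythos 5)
Your proposal is correct and follows essentially the same route as the paper: invoke Lemma~\ref{lemma:appendix-equality} to rewrite \eqref{relation:ariki}, clear the inverses by left multiplication by the (invertible) alternating word, use \eqref{relation:mini_tresse} to move $s$, and treat the parities of $p$ separately. The only cosmetic difference is that you push $s$ to the right and meet \eqref{relation:bmr} at a common intermediate form $w\,s = w'\,s\,t'_1$, whereas the paper pushes $s$ to the left and lands on \eqref{relation:bmr} directly.
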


\begin{proof}
By Lemma~\ref{lemma:appendix-equality}, relation \eqref{relation:ariki} is equivalent to:
\begin{equation}
\label{relation:proof}
s t'_1 t_1 =  (\underbrace{t_1^{-1} {t'_1}^{-1} t_1^{-1} {t'_1}^{-1}\dots}_{p-2 \text{ factors}}) (\underbrace{\dots t_1 t'_1 t_1 t'_1}_{p - 2 \text{ factors}}) t_1 s t'_1.
\end{equation}
If $p$ is even, this reads:
\[
st'_1 t_1 = (\underbrace{t_1^{-1} {t'_1}^{-1} \dots t_1^{-1} {t'_1}^{-1}}_{p-2})
(\underbrace{t_1 t'_1 \dots t_1 t'_1}_{p-2}) t_1 s t'_1,
\]
whence we obtain:
\[
\underbrace{t'_1 t_1 \dots t'_1 t_1}_{p-2} s t'_1 t_1 = \underbrace{t_1 t'_1 \dots t_1 t'_1}_{p-2} t_1 s t'_1.
\]
Thus, using \eqref{relation:mini_tresse} to bring $s$ to the left on both sides, we get:
\[
\underbrace{s t'_1 t_1 \dots t'_1 t_1}_{p + 1} = \underbrace{t_1 s t'_1 t_1 \dots t_1 t'_1}_{p+1},
\]
which is the desired result: the relations \eqref{relation:ariki} and \eqref{relation:bmr} are equivalent. If now $p$ is odd, relation \eqref{relation:proof} reads:
\[
st'_1 t_1 = (\underbrace{t_1^{-1} {t'_1}^{-1} \dots {t'_1}^{-1} t_1^{-1}}_{p-2})
(\underbrace{t'_1 t_1 \dots t_1 t'_1}_{p-2}) t_1 s t'_1,
\]
whence we obtain:
\[
\underbrace{t_1 t'_1 \dots t'_1 t_1}_{p-2} s t'_1 t_1 = \underbrace{t'_1 t_1 \dots t_1 t'_1}_{p-2} t_1 s t'_1.
\]
Thus, using \eqref{relation:mini_tresse} to bring $s$ to the left on both sides, we get:
\[
\underbrace{t_1 s t'_1 \dots t'_1 t_1}_{p + 1} = \underbrace{s t'_1 t_1 \dots t_1 t'_1}_{p+1},
\]
which is the desired result: the relations \eqref{relation:ariki} and \eqref{relation:bmr} are thus equivalent.
\end{proof}

Using Proposition~\ref{proposition:appendix-braid}, it is now clear that the algebra $\H_{p, n}^{\tuple{\varLambda}}(q)$ is isomorphic to the one defined by Ariki in \cite{Ar_rep} as stated in Remark~\ref{remark:hecke_G(rpn)-equivalent_presentation}.

\end{document}